\documentclass{scrartcl}
\usepackage{preamble}

\begin{document}

\maketitle

\begin{abstract}
     \small{\textsc{Abstract.} We prove that certain categories of Bousfield localizations are presentable and give explicit regular cardinals $\kappa$ for which they are $\kappa$-presentable. As an application, we show that the category of analytic rings in the sense of Clausen and Scholze is $(2^{\aleph_0})^+$-presentable.}
\end{abstract}

\section{Introduction} 
Many constructions in geometry and homotopy theory replace an object by one satisfying a prescribed condition. Sheafification, for example, associates a sheaf to a presheaf, while a completion functor replaces a module by a complete one. A common feature is that the resulting objects form a full subcategory whose inclusion admits a left adjoint. Such left adjoints are called \emph{reflective localizations}.

In this paper, we study reflective localizations of $\infty$-categories, which we henceforth call \emph{categories}. Explicit constructions in this setting must keep track of higher homotopies and their coherences, making abstract existence criteria especially valuable. For presentable categories, such criteria take a particularly simple form: by the reflection theorem, the inclusion of a full subcategory $\mathsf{D}\hookrightarrow\mathsf{C}$ admits a left adjoint if and only if $\mathsf{D}$ is closed under small limits and sufficiently filtered colimits in $\mathsf{C}$ \cite[Thm.~1.1]{reflectionprinciple}.

Informally, a presentable category is a large category controlled by a small collection of objects. More precisely, a category is \emph{$\kappa$-presentable}, for a regular cardinal $\kappa$, if it admits small colimits and is generated under $\kappa$-filtered colimits by a small subcategory of $\kappa$-compact objects. A category is \emph{presentable} if it is $\kappa$-presentable for some $\kappa$. A reflective localization between presentable categories is traditionally called a \emph{Bousfield localization}.

We ask whether Bousfield localizations themselves form a presentable category when their sources and targets are allowed to vary subject to fixed size bounds. We prove that they do, both with and without compatible symmetric monoidal structures, and give explicit cardinal bounds. We also obtain an accessibility bound for the subcategory of localizations whose right adjoints preserve small colimits. Our main motivating application is the presentability of the category of analytic rings in the theory of condensed mathematics developed by Clausen and Scholze.

\subsection*{Presentability of categories of localizations}

Let $\kappa$ be an uncountable regular cardinal. We denote by $\mathsf{Pr}^{\mathsf{L}}_\kappa$ the category of $\kappa$-presentable categories and functors preserving small colimits and $\kappa$-compact objects. The category $\mathsf{Pr}^{\mathsf{L}}_\kappa$ is itself $\kappa$-presentable \cite[Prop.~2.3]{aoki2025higherpresentablecategorieslimits}, providing an ambient category in which to study localizations together with their sources and targets.

For $\mathsf{V}\in\mathsf{Pr}^{\mathsf{L}}_\kappa$, we denote by
\[\mathsf{Loc}_\kappa(\mathsf{V})\subseteq\mathsf{Fun}\mleft(\Delta^1,(\mathsf{Pr}^{\mathsf{L}}_\kappa)_{\mathsf{V}/}\mright)\]
the full subcategory spanned by Bousfield localizations. Concretely, an object is a localization together with compatible functors from $\mathsf{V}$ to its source and target, all lying in $\mathsf{Pr}^{\mathsf{L}}_\kappa$. Its morphisms are commutative squares compatible with the corresponding structure functors from $\mathsf{V}$.

We also consider a symmetric monoidal variant. The category $\mathsf{CAlg}(\mathsf{Pr}^{\mathsf{L}}_\kappa)$ consists of the $\kappa$-presentable symmetric monoidal categories whose tensor product preserves small colimits separately in each variable, whose tensor unit is $\kappa$-compact, and whose $\kappa$-compact objects are closed under tensor products. For $\mathsf{W}\in\mathsf{CAlg}(\mathsf{Pr}^{\mathsf{L}}_\kappa)$, we define
\[\mathsf{Loc}^\otimes_\kappa(\mathsf{W})\subseteq\mathsf{Fun}\mleft(\Delta^1,\mathsf{CAlg}(\mathsf{Pr}^{\mathsf{L}}_\kappa)_{\mathsf{W}/}\mright)\]
in the same way, now requiring the localizations and the structure functors from $\mathsf{W}$ to be symmetric monoidal.

\begin{introthm}[\ref{walking localization}, \ref{result on monoidal localizations}]\label{thm A}
The categories $\mathsf{Loc}_\kappa(\mathsf{V})$ and $\mathsf{Loc}^\otimes_\kappa(\mathsf{W})$ are $\kappa$-presentable. Moreover, their natural inclusions into the respective arrow categories preserve small colimits and $\kappa$-compact objects, and hence are morphisms in $\mathsf{Pr}^{\mathsf{L}}_\kappa$.
\end{introthm}

The proof relies on a criterion for full faithfulness due to Haine--Ramzi--Steinebrunner \cite[Prop.~1.4]{haine2025fullyfaithfulfunctorspushouts}. Applying this criterion to right adjoints, we construct a functor
\[P_\kappa\colon\mathsf{Fun}(\Delta^1,\mathsf{Pr}^{\mathsf{L}}_\kappa)\to\mathsf{Fun}(\Delta^1\times\Delta^1,\mathsf{Pr}^{\mathsf{L}}_\kappa)\]
with the property that a morphism $F$ in $\mathsf{Pr}^{\mathsf{L}}_\kappa$ is a Bousfield localization if and only if the commutative square $P_\kappa(F)$ is a pushout.

Thus $\mathsf{Loc}_\kappa(\ast)$ is the fiber product of $P_\kappa$ and the functor sending each span to its pushout square. We prove that both functors lie in $\mathsf{Pr}^{\mathsf{L}}_\kappa$, so this pullback gives the asserted presentability and preservation properties. Passing to coslice categories gives the general case; compatibility with tensor products gives the symmetric monoidal statement.

\subsection*{Colimit-preserving right adjoints}

For the application to analytic rings, we impose an additional condition on a Bousfield localization: we require its fully faithful right adjoint to preserve small colimits. We denote the corresponding full subcategories by
\[\overline{\mathsf{Loc}}_\kappa(\mathsf{V})\subseteq\mathsf{Loc}_\kappa(\mathsf{V}),\qquad\overline{\mathsf{Loc}}^\otimes_\kappa(\mathsf{W})\subseteq\mathsf{Loc}^\otimes_\kappa(\mathsf{W}).\]
In the symmetric monoidal setting, this condition is weaker than the familiar notion of a \emph{smashing localization} \cite[Thm.~7.7]{reflectionprinciple}. The distinction is essential for our application: the completion functor associated with an analytic ring, introduced below, is smashing if and only if it is an equivalence. 

Recall that, for a regular cardinal $\mu$, a category is \emph{$\mu$-accessible} if it satisfies the definition of $\mu$-presentability above, except that it is not required to admit all small colimits. The following theorem shows that the subcategories introduced above are accessible and provides an explicit bound on their accessibility rank.

\begin{introthm}[\ref{main categorical theorem}, \ref{main theorem on monoidal localizations}]\label{thm B}
Let $\mu>\kappa$ be a regular cardinal satisfying $\operatorname{ecf}(\mu)\geq\kappa$, and suppose that $\mathsf{V}$ and $\mathsf{W}$ are $\mu$-compact. Then $\overline{\mathsf{Loc}}_\kappa(\mathsf{V})$ and $\overline{\mathsf{Loc}}^\otimes_\kappa(\mathsf{W})$ are $\mu$-accessible. Moreover, the natural inclusions
\[\overline{\mathsf{Loc}}_\kappa(\mathsf{V})\hookrightarrow \mathsf{Loc}_\kappa(\mathsf{V}) \quad\text{and}\quad \overline{\mathsf{Loc}}^\otimes_\kappa(\mathsf{W})\hookrightarrow \mathsf{Loc}^\otimes_\kappa(\mathsf{W})\]
preserve $\mu$-compact objects and $\kappa$-filtered colimits.
\end{introthm}

Here $\operatorname{ecf}(\mu)$ denotes the \emph{exponential cofinality} of $\mu$; see Remark~\ref{exponential cofinality}. The smallest regular cardinal $\mu>\kappa$ satisfying this condition is $(2^{<\kappa})^+$. Although the proof is more involved than that of the preceding theorem, it follows the same general strategy. We construct a functor
\[Q_\kappa\colon\mathsf{Fun}(\Delta^1,\mathsf{Pr}^{\mathsf{L}}_\kappa)\to\mathsf{Fun}(\Delta^1,\mathsf{Ani}),\]
where $\mathsf{Ani}$ denotes the category of \emph{animae} (also known as \emph{$\infty$-groupoids} or \emph{spaces}), such that $Q_\kappa(F)$ is an equivalence if and only if the right adjoint of $F$ preserves small colimits.

Restricting $Q_\kappa$ to $\mathsf{Loc}_\kappa(\ast)$, we obtain $\overline{\mathsf{Loc}}_\kappa(\ast)$ as the pullback of $\mathsf{Loc}_{\kappa}(\ast)\to\mathsf{Fun}(\Delta^{1},\mathsf{Ani})$ along the inclusion of the full subcategory of equivalences into $\mathsf{Fun}(\Delta^{1},\mathsf{Ani})$. We prove that $Q_\kappa$ preserves $\kappa$-filtered colimits and $\mu$-compact objects, so this pullback yields the stated accessibility bound and preservation properties. As above, coslice categories and compatibility with tensor products give the general and symmetric monoidal cases.

\subsection*{Applications to condensed mathematics}

Condensed mathematics provides a setting for studying algebraic objects equipped with topologies using the tools of homological algebra. Analytic rings enrich this framework with a notion of completeness for modules, allowing constructions such as tensor products to be followed by completion. Throughout this introduction, rings and abelian groups are understood to be animated, and we leave the adjective \emph{light} implicit. We write $\mathsf{Ring}$ and $\mathsf{Ab}$ for the corresponding categories, and $\mathsf{Cond}(\mathsf{Ring})$ and $\mathsf{Cond}(\mathsf{Ab})$ for their categories of condensed objects.

For a condensed ring $A^\triangleright$, we write $\mathsf{D}(A^\triangleright)$ for its derived category of condensed modules. A \emph{pre-analytic ring} is a pair $A=(A^\triangleright,\mathsf{D}(A))$ consisting of a condensed ring $A^\triangleright$ and a full subcategory $\mathsf{D}(A)$ of $\mathsf{D}(A^\triangleright)$. We think of its objects as the complete $A^\triangleright$-modules, and require the following conditions:
\begin{enumerate}[label=(\roman*)]
    \item The subcategory $\mathsf{D}(A)$ is closed under small limits and colimits. We denote its inclusion by $\iota_A$ and the left adjoint of $\iota_A$ by $-\otimes_{A^\triangleright}A$.
    \item For every $M\in\mathsf{D}(A^\triangleright)$ and $N\in\mathsf{D}(A)$, the object $\underline{\operatorname{Hom}}(M,N)$ lies in $\mathsf{D}(A)$.
    \item The endofunctor $L_A\colonequals\iota_A\circ(-\otimes_{A^\triangleright}A)$ preserves connective objects.
\end{enumerate}

By the reflection theorem of Ragimov--Schlank \cite[Thm.~1.1]{reflectionprinciple}, the left adjoint in (i) exists and $\mathsf{D}(A)$ is presentable. Thus
\[-\otimes_{A^\triangleright}A\colon\mathsf{D}(A^\triangleright)\to\mathsf{D}(A)\]
is a Bousfield localization, which we regard as a completion functor. By (i), its fully faithful right adjoint $\iota_A$ preserves small colimits, so this localization belongs to the class considered in the preceding subsection.

A morphism of pre-analytic rings $A\to B$ is a morphism of condensed rings $\varphi\colon A^\triangleright\to B^\triangleright$ such that the restriction-of-scalars functor
\[\varphi_*\colon\mathsf{D}(B^\triangleright)\to\mathsf{D}(A^\triangleright)\]
maps $\mathsf{D}(B)$ into $\mathsf{D}(A)$. We denote by $\mathsf{PAnRing}$ the category of pre-analytic rings. An \emph{analytic ring} is a pre-analytic ring $A$ for which $A^\triangleright$ lies in $\mathsf{D}(A)$, so that the underlying ring is itself complete. We denote by $\mathsf{AnRing}\subseteq\mathsf{PAnRing}$ the full subcategory consisting of analytic rings.

\begin{introthm}[\ref{PAnRing is presentable}, \ref{AnRing is presentable}]
The categories $\mathsf{PAnRing}$ and $\mathsf{AnRing}$ are $\mu$-presentable, for $\mu=(2^{\aleph_0})^+$.
\end{introthm}

The $\mu$-presentability of analytic rings was announced in \cite[Lecture~15]{ClausenScholzeAnalyticStacks}, but, to our knowledge, a proof has not appeared in the literature; with the question being raised as an open point in \cite[\S1.2, Footnote~2]{camargo2026notessolidgeometry}. As an immediate consequence of presentability, we obtain the following completeness statement, for which we are not aware of an independent proof.

\begin{introcor}[\ref{PAnRing is complete}, \ref{AnRing is complete}]
The categories $\mathsf{PAnRing}$ and $\mathsf{AnRing}$ are complete. Moreover, the inclusion $\mathsf{AnRing}\hookrightarrow\mathsf{PAnRing}$ preserves and reflects small limits. 
\end{introcor}

To prove the theorem, we first use condition (iii) to equip
$\mathsf{D}(A)$ with the inherited $t$-structure from $\mathsf{D}(A^\triangleright)$. We then show that a pre-analytic ring is determined by its connective part, while condition~(ii) ensures that completion is compatible with the symmetric monoidal structure. Setting $\mathsf{W}=\mathsf{Cond}(\mathsf{Ab})$, we obtain a cartesian diagram
\[
\begin{tikzcd}[column sep=large]
    {\mathsf{PAnRing}}
        & {\overline{\mathsf{Loc}}_{\aleph_1}^\otimes(\mathsf{W})} \\
    {\mathsf{Cond}(\mathsf{Ring})}
        & {\mathsf{CAlg}(\mathsf{Pr}^{\mathsf{L}}_{\aleph_1})_{\mathsf{W}/},}
    \arrow[from=1-1, to=1-2]
    \arrow[from=1-1, to=2-1]
    \arrow["s", from=1-2, to=2-2]
    \arrow["{\mathsf{D}(-)_{\geq0}}", from=2-1, to=2-2]
\end{tikzcd}
\]
where $s$ sends a localization to its source.

The $\mu$-accessibility of $\mathsf{PAnRing}$ then follows from the general categorical results established in Theorems~\ref{thm A} and \ref{thm B}. Since $\mathsf{PAnRing}$ admits small colimits, it is in fact $\mu$-presentable. Finally, $\mathsf{AnRing}$ is a reflective localization of $\mathsf{PAnRing}$ whose inclusion preserves filtered colimits; consequently, $\mathsf{AnRing}$ is also $\mu$-presentable.

\subsection*{Notation and conventions}

In addition to the notation introduced above, we use the following conventions.
\begin{itemize}[leftmargin=*, itemsep=0.5\baselineskip, topsep=0.5\baselineskip, parsep=0pt, before={\interlinepenalty=10000}]
    \item \emph{Cardinals.} For a cardinal $\kappa$, we write $2^{<\kappa}=\sup_{\tau<\kappa}2^\tau$, where $\tau$ ranges over cardinals, and denote by $\kappa^+$ the least cardinal strictly greater than $\kappa$. The cardinality of a set $X$ is denoted by $\# X$.

    \item \emph{Categories and animae.} We denote the categories of small categories and small animae by $\mathsf{Cat}$ and $\mathsf{Ani}$, respectively, and their large counterparts by $\widehat{\mathsf{Cat}}$ and $\widehat{\mathsf{Ani}}$.

    \item \emph{Compact objects.} For a regular cardinal $\kappa$, we denote by $\mathsf{C}^\kappa\subseteq\mathsf{C}$ the full subcategory of $\kappa$-compact objects: those objects $X$ for which $\operatorname{Map}_{\mathsf{C}}(X,-)$ preserves $\kappa$-filtered colimits.

    \item \emph{Presentable categories.} We write $\mathsf{Pr}^{\mathsf{L}}$ and $\mathsf{Pr}^{\mathsf{R}}$ for the categories of presentable categories and left and right adjoints, respectively. For regular $\kappa$, the variants $\mathsf{Pr}^{\mathsf{L}}_\kappa$ and $\mathsf{Pr}^{\mathsf{R}}_\kappa$ have $\kappa$-presentable categories as objects. Morphisms in the former preserve small colimits and $\kappa$-compact objects; morphisms in the latter preserve small limits and $\kappa$-filtered colimits. Their full subcategories spanned by stable categories are denoted by $\mathsf{Pr}^{\mathsf{L}}_{\kappa,\mathrm{st}}$ and $\mathsf{Pr}^{\mathsf{R}}_{\kappa,\mathrm{st}}$.

    \item \emph{Accessible categories.} We write $\mathsf{Acc}$ for the category of accessible categories and accessible functors, namely functors preserving $\lambda$-filtered colimits for some regular cardinal $\lambda$. For regular $\kappa$, the category $\mathsf{Acc}_\kappa$ has $\kappa$-accessible categories as objects and functors preserving both $\kappa$-filtered colimits and $\kappa$-compact objects as morphisms.

    \item \emph{Groupoid completion and cores.} For a category $\mathsf{C}$, we denote by $\mleft|\mathsf{C}\mright|$ its groupoid completion, obtained by inverting all morphisms, and by $\mathsf{C}^\simeq$ its core, obtained by retaining only equivalences. These constructions are left and right adjoint, respectively, to the inclusion $\mathsf{Ani}\hookrightarrow\mathsf{Cat}$. We also write $\mleft|S_\ast\mright|$ for the geometric realization of a simplicial object $S_\ast$.
\end{itemize}

\vspace{0.5em}

\textsc{Acknowledgements.} The idea of studying properties of the category $\mathsf{AnRing}$ of analytic rings, including its presentability, through a suitable category of Bousfield localizations goes back to Niklas Kipp's doctoral thesis~\cite{Kipp}. We are grateful to him for useful discussions related to this approach.

\vspace{0.5em}

\textsc{Disclosure on AI use.} The authors used AI tools for literature search and to assist with language editing and presentation. The authors take full responsibility for the accuracy and integrity of all mathematical content.

\section{The category of localizations}\label{sec2}
\begin{definition}
Let $\kappa$ be a regular cardinal, and let $\mathsf{V}$ be a $\kappa$-presentable category. We denote by
\[\mathsf{Loc}_\kappa(\mathsf{V})\subseteq\mathsf{Fun}\mleft(\Delta^1,(\mathsf{Pr}^\mathsf{L}_{\kappa})_{\mathsf{V}/}\mright)\]
the full subcategory spanned by the morphisms that are Bousfield localizations. For the terminal category $\ast$, we denote $\mathsf{Loc}_\kappa(\ast)$ simply as $\mathsf{Loc}_\kappa$.
\end{definition}

\begin{theorem}\label{walking localization}
    Let $\kappa\geq \aleph_1$ be a regular cardinal, and let $\mathsf{V}$ be a $\kappa$-presentable category. Then the natural inclusion
    \[\mathsf{Loc}_\kappa(\mathsf{V})\hookrightarrow \mathsf{Fun}\mleft(\Delta^1,(\mathsf{Pr}^\mathsf{L}_{\kappa})_{\mathsf{V}/}\mright)\]
    is a morphism in $\mathsf{Pr}^\mathsf{L}_{\kappa}$. In particular, the category $\mathsf{Loc}_\kappa(\mathsf{V})$ is $\kappa$-presentable.
\end{theorem}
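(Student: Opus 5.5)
We follow the strategy sketched in the introduction, treating first $\mathsf{V}=\ast$ and then passing to coslices. The key input is the Haine--Ramzi--Steinebrunner criterion \cite[Prop.~1.4]{haine2025fullyfaithfulfunctorspushouts}. In its dual form for right adjoints, it says that a colimit-preserving functor $F\colon\mathsf{C}\to\mathsf{D}$ between presentable categories has fully faithful right adjoint if and only if a certain canonically associated square is a pushout in $\mathsf{Pr}^{\mathsf{L}}$. Concretely, we take the square with corners $\mathsf{C}\otimes\mathsf{Fun}(\Delta^1,\mathsf{Ani})$ (or $\mathsf{Fun}(\Delta^1,\mathsf{C})$), $\mathsf{C}$, $\mathsf{D}$ and $\mathsf{D}$, built from the inclusion of $\mathsf{C}$ along source and target. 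Equivalently, $\mathsf{D}\otimes_{\mathsf{C}}\mathsf{D}\simeq\mathsf{D}$ via the fold map, i.e.\ $F$ is an epimorphism in $\mathsf{Pr}^{\mathsf{L}}$.

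The first step is to produce from this a functor
\[P_\kappa\colon\mathsf{Fun}(\Delta^1,\mathsf{Pr}^{\mathsf{L}}_\kappa)\to\mathsf{Fun}(\Delta^1\times\Delta^1,\mathsf{Pr}^{\mathsf{L}}_\kappa).\]
It sends $F$ to the square consisting of $F$, $F$, the codiagonal-type maps, and the identity of $\mathsf{D}$. We must check that this square lives in $\mathsf{Pr}^{\mathsf{L}}_\kappa$, which uses $\kappa\geq\aleph_1$ so that $\mathsf{Pr}^{\mathsf{L}}_\kappa$ is closed under the relevant tensor products and pushouts. Since each vertex of $P_\kappa(F)$ is obtained from $F$ by functorial operations (identities, tensoring with fixed $\kappa$-presentable categories), $P_\kappa$ preserves small colimits and $\kappa$-compact objects. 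It is therefore a morphism in $\mathsf{Pr}^{\mathsf{L}}_\kappa$.

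The second step is to consider the full subcategory $\mathsf{PO}\subseteq\mathsf{Fun}(\Delta^1\times\Delta^1,\mathsf{Pr}^{\mathsf{L}}_\kappa)$ of pushout squares. It is equivalent to $\mathsf{Fun}(\Lambda^2_0,\mathsf{Pr}^{\mathsf{L}}_\kappa)$ via left Kan extension, and that extension is left adjoint to restriction. The inclusion $\mathsf{PO}\hookrightarrow\mathsf{Fun}(\Delta^1\times\Delta^1,\mathsf{Pr}^{\mathsf{L}}_\kappa)$ thus preserves colimits, because colimits commute with pushouts. It also preserves $\kappa$-compact objects, because $\kappa$-compact objects of a functor category on a finite diagram are the pointwise $\kappa$-compact ones, and $\kappa$-compact objects are closed under finite colimits.

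With both steps in place, the criterion gives
\[\mathsf{Loc}_\kappa\simeq\mathsf{Fun}(\Delta^1,\mathsf{Pr}^{\mathsf{L}}_\kappa)\times_{\mathsf{Fun}(\Delta^1\times\Delta^1,\mathsf{Pr}^{\mathsf{L}}_\kappa)}\mathsf{PO}.\]
Since $\mathsf{Pr}^{\mathsf{L}}_\kappa$ is closed under small limits in $\widehat{\mathsf{Cat}}$ (limits in $\mathsf{Pr}^{\mathsf{L}}_\kappa$ are computed underlying, as $\mathsf{Pr}^{\mathsf{L}}_\kappa\simeq\mathsf{Cat}^{\mathrm{rex}}_\kappa$-type idempotent-complete small categories), this pullback lies in $\mathsf{Pr}^{\mathsf{L}}_\kappa$. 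The projection to $\mathsf{Fun}(\Delta^1,\mathsf{Pr}^{\mathsf{L}}_\kappa)$ is then a morphism of $\mathsf{Pr}^{\mathsf{L}}_\kappa$.

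For general $\mathsf{V}$, observe that $\mathsf{Fun}(\Delta^1,(\mathsf{Pr}^{\mathsf{L}}_\kappa)_{\mathsf{V}/})\simeq\mathsf{Fun}(\Delta^1,\mathsf{Pr}^{\mathsf{L}}_\kappa)_{c(\mathsf{V})/}$, where $c(\mathsf{V})$ is the constant arrow. Since $\mathrm{id}_\mathsf{V}$ is a localization, $\mathsf{Loc}_\kappa(\mathsf{V})\simeq(\mathsf{Loc}_\kappa)_{\mathrm{id}_\mathsf{V}/}$. Coslices of a $\kappa$-presentable category under an object are $\kappa$-presentable, and a map in $\mathsf{Pr}^{\mathsf{L}}_\kappa$ induces one on coslices, since colimits and $\kappa$-compact objects in coslices are detected via pushouts with the base.

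The main obstacle is the precise form of the criterion and ensuring $P_\kappa$ lands in and is a morphism of $\mathsf{Pr}^{\mathsf{L}}_\kappa$. The delicate point is the identification of pushouts in $\mathsf{Pr}^{\mathsf{L}}_\kappa$ with those in $\mathsf{Pr}^{\mathsf{L}}$. We would handle this using $\mathsf{Pr}^{\mathsf{L}}_\kappa\simeq\mathsf{Cat}^{\mathrm{rex}(\kappa),\mathrm{idem}}$ and the fact that the inclusion $\mathsf{Pr}^{\mathsf{L}}_\kappa\to\mathsf{Pr}^{\mathsf{L}}$ preserves colimits.
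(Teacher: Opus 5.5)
\textbf{The criterion in Step 1 is wrong.} The square you actually use (edges $F$, $F$ and identities of $\mathsf{D}$) is a pushout iff the fold map $\mathsf{D}\sqcup_{\mathsf{C}}\mathsf{D}\to\mathsf{D}$ is an equivalence, i.e.\ iff $F$ is an epimorphism in $\mathsf{Pr}^{\mathsf{L}}_\kappa$. Colimits in $\mathsf{Pr}^{\mathsf{L}}_\kappa$ correspond to limits in $\mathsf{Pr}^{\mathsf{R}}_\kappa$, which are computed in $\widehat{\mathsf{Cat}}$. So this condition says the diagonal $\mathsf{D}\to\mathsf{D}\times_{\mathsf{C}}\mathsf{D}$ is an equivalence, i.e.\ $G$ is a monomorphism in $\widehat{\mathsf{Cat}}$: the inclusion of a possibly \emph{non-full} subcategory.

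Every localization is an epimorphism, but not conversely. Take the inclusion $\mathsf{Rex}_\kappa\hookrightarrow\mathsf{Cat}$ from the proof of Lemma~\ref{compactness bound}. Its left adjoint $\mathsf{Cat}\to\mathsf{Rex}_\kappa$ is a morphism of $\mathsf{Pr}^{\mathsf{L}}_\kappa$, and the inclusion is a monomorphism, since admitting and preserving $\kappa$-small colimits are properties. But the inclusion is not full. So your fiber product computes the category of epimorphisms in $\mathsf{Pr}^{\mathsf{L}}_\kappa$, which strictly contains $\mathsf{Loc}_\kappa$.

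\textbf{The fix.} Fullness is only detected after cotensoring with $\partial\Delta^1\hookrightarrow\Delta^1$. The correct $P_\kappa(F)$ has top edge $F\times F\colon\mathsf{C}\times\mathsf{C}\to\mathsf{D}\times\mathsf{D}$, bottom edge $\mathsf{Fun}(\Delta^1,F)$, and vertical maps the left adjoints $i_\sharp$ of restriction. Its square of right adjoints is a pullback iff $\mathsf{Fun}(\Delta^1,\mathsf{D})\simeq\mathsf{Fun}(\Delta^1,\mathsf{C})\times_{\mathsf{C}\times\mathsf{C}}(\mathsf{D}\times\mathsf{D})$, i.e.\ iff $G$ is fully faithful.

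For this square, ``tensoring with fixed $\kappa$-presentable categories'' does not suffice to see that $P_\kappa$ preserves $\kappa$-compact objects. You need $\mathsf{P}(K^{\mathrm{op}})$ to be $\kappa$-compact in $\mathsf{Pr}^{\mathsf{L}}_\kappa$ for $K=\partial\Delta^1,\Delta^1$. This follows from $\operatorname{Map}_{\mathsf{Pr}^{\mathsf{L}}_\kappa}(\mathsf{P}(K^{\mathrm{op}}),\mathsf{C})\simeq\operatorname{Map}_{\mathsf{Cat}}(K^{\mathrm{op}},\mathsf{C}^\kappa)$, compactness of $K^{\mathrm{op}}$ in $\mathsf{Cat}$, and the fact that $\mathsf{C}\mapsto\mathsf{C}^\kappa$ preserves $\kappa$-filtered colimits. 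Combine this with $\mathsf{Pr}^{\mathsf{L}}_\kappa\in\mathsf{CAlg}(\mathsf{Pr}^{\mathsf{L}}_\kappa)$.

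\textbf{The pullback step is also misjustified.} $\mathsf{Pr}^{\mathsf{L}}_\kappa$ is not closed under small limits in $\widehat{\mathsf{Cat}}$. For $\#I\geq\kappa$, the diagonal $\mathsf{Ani}\to\mathsf{Ani}^I$ sends $\ast$ to a non-$\kappa$-compact object, so $\mathsf{Ani}^I$ with its projections is not the product in $\mathsf{Pr}^{\mathsf{L}}_\kappa$. Passing through $\mathsf{Rex}_\kappa$ computes limits as $\mathsf{Ind}_\kappa$ of the limit of the $\kappa$-compact parts. Comparing that with the limit in $\widehat{\mathsf{Cat}}$ is exactly the nontrivial point.

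What you need is that a pullback in $\widehat{\mathsf{Cat}}$ of a cospan in $\mathsf{Pr}^{\mathsf{L}}_\kappa$ is $\kappa$-presentable, with projections in $\mathsf{Pr}^{\mathsf{L}}_\kappa$. This holds for uncountable $\kappa$; the paper invokes \cite[Prop.~3.2]{aoki2025higherpresentablecategorieslimits}. This, and not closure under tensor products or pushouts, is where $\kappa\geq\aleph_1$ is used.

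Your treatment of pushout squares via left Kan extension from $\Lambda^2_0$ matches the paper, as does the reduction $\mathsf{Loc}_\kappa(\mathsf{V})\simeq(\mathsf{Loc}_\kappa)_{\operatorname{id}_{\mathsf{V}}/}$. However, the coslice statement for a base object that is not $\kappa$-compact needs a genuine argument (compare Proposition~\ref{lemma on coslices}).
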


The proof of this result requires some preparation. Let $i\colon \partial\Delta^1\hookrightarrow \Delta^1$ denote the natural inclusion. If $\mathsf{C}$ is a category admitting binary coproducts, then the restriction functor $i^*\colon \mathsf{Fun}(\Delta^1,\mathsf{C})\to \mathsf{Fun}(\partial\Delta^1,\mathsf{C})\simeq \mathsf{C}\times \mathsf{C}$ admits a left adjoint
\begin{align*}
    i_\sharp\colon \mathsf{C}\times \mathsf{C} &\to \mathsf{Fun}(\Delta^1,\mathsf{C})\\
    (X,Y) &\mapsto (X \to X\sqcup Y).
\end{align*}
Moreover, for every morphism $F\colon \mathsf{C}\to \mathsf{D}$ in $\mathsf{Pr}^\mathsf{L}_\kappa$, the functors $i_\sharp$ fit into a commutative square
    \[\begin{tikzcd}
    	{\mathsf{C}\times \mathsf{C}} && {\mathsf{D}\times \mathsf{D}} \\
    	{\mathsf{Fun}(\Delta^1,\mathsf{C})} && {\mathsf{Fun}(\Delta^1,\mathsf{D}).}
    	\arrow["{F\times F}", from=1-1, to=1-3]
    	\arrow["{i_\sharp}"', from=1-1, to=2-1]
    	\arrow["{i_\sharp}", from=1-3, to=2-3]
    	\arrow["{\mathsf{Fun}(\Delta^1,F)}", from=2-1, to=2-3]
    \end{tikzcd}\]

We now consider the functor
\[P_{\kappa}\colon \mathsf{Fun}(\Delta^1,\mathsf{Pr}^\mathsf{L}_\kappa)\to \mathsf{Fun}(\Delta^1\times \Delta^1,\mathsf{Pr}^\mathsf{L}_\kappa)\]
that sends a morphism $F\colon \mathsf{C}\to \mathsf{D}$ in $\mathsf{Pr}^\mathsf{L}_\kappa$ to the square displayed above.

\begin{lemma}\label{localization_lemma}
Let $\kappa$ be a regular cardinal and let $F$ be a morphism in $\mathsf{Pr}^\mathsf{L}_\kappa$. Then $F$ is a Bousfield localization if and only if the commutative square $P_\kappa(F)$ is a pushout in $\mathsf{Pr}^\mathsf{L}_\kappa$.
\end{lemma}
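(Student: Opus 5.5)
The plan is to pass to right adjoints and reduce the pushout condition to a pullback condition in $\widehat{\mathsf{Cat}}$, which we can then test fiberwise. Write $F\colon\mathsf{C}\to\mathsf{D}$ with right adjoint $G$.

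First, I would check that the square $P_\kappa(F)$ is a pushout in $\mathsf{Pr}^\mathsf{L}_\kappa$ if and only if it is a pushout in $\mathsf{Pr}^\mathsf{L}$. This holds because the inclusion $\mathsf{Pr}^\mathsf{L}_\kappa\hookrightarrow\mathsf{Pr}^\mathsf{L}$ preserves and reflects small colimits. Preservation is part of the standard package around \cite[Prop.~2.3]{aoki2025higherpresentablecategorieslimits}. For reflection, I would use that a cocone in $\mathsf{Pr}^\mathsf{L}_\kappa$ which is colimiting in $\mathsf{Pr}^\mathsf{L}$ is colimiting in $\mathsf{Pr}^\mathsf{L}_\kappa$, via the compact generation of the colimit.

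Next, I would use the equivalence $\mathsf{Pr}^\mathsf{L}\simeq(\mathsf{Pr}^\mathsf{R})^{\mathrm{op}}$ and the fact that limits in $\mathsf{Pr}^\mathsf{R}$ are computed in $\widehat{\mathsf{Cat}}$. With these, $P_\kappa(F)$ is a pushout if and only if the square of right adjoints
\[\begin{tikzcd}
\mathsf{Fun}(\Delta^1,\mathsf{D}) \arrow[r,"G\circ-"] \arrow[d,"i^*"'] & \mathsf{Fun}(\Delta^1,\mathsf{C}) \arrow[d,"i^*"] \\
\mathsf{D}\times\mathsf{D} \arrow[r,"G\times G"] & \mathsf{C}\times\mathsf{C}
\end{tikzcd}\]
is a pullback of categories. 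Here I need to confirm that the right adjoint of $i_\sharp$ is $i^*$ and that the right adjoint of $\mathsf{Fun}(\Delta^1,F)$ is $\mathsf{Fun}(\Delta^1,G)$. Both follow by pointwise computation, since adjunctions are inherited by functor categories.

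The main step is to identify when this square is cartesian. Let $\mathsf{P}=\mathsf{Fun}(\Delta^1,\mathsf{C})\times_{\mathsf{C}\times\mathsf{C}}(\mathsf{D}\times\mathsf{D})$. Its objects are triples $(d,d',f\colon Gd\to Gd')$. The comparison functor $\Phi\colon\mathsf{Fun}(\Delta^1,\mathsf{D})\to\mathsf{P}$ lies over $\mathsf{D}\times\mathsf{D}$ and sends $u\colon d\to d'$ to $(d,d',Gu)$. Both source and target are cartesian fibrations over $\mathsf{D}\times\mathsf{D}$ via restriction, or more precisely bifibrations over the two factors. The target is a pullback of the bifibration $i^*\colon\mathsf{Fun}(\Delta^1,\mathsf{C})\to\mathsf{C}\times\mathsf{C}$, and $\Phi$ preserves the relevant (co)cartesian edges, namely precomposition and postcomposition. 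Hence $\Phi$ is an equivalence if and only if it induces equivalences on fibers. Over $(d,d')$ these fibers are the animae $\operatorname{Map}_\mathsf{D}(d,d')$ and $\operatorname{Map}_\mathsf{C}(Gd,Gd')$, and the induced map is the action of $G$ on mapping animae. So $\Phi$ is an equivalence if and only if $G$ is fully faithful, that is, if and only if $F$ is a Bousfield localization.

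I expect the fiberwise criterion to be the main obstacle: one must make sure that a functor between (bi)fibrations over $\mathsf{D}\times\mathsf{D}$ which is fiberwise an equivalence is globally an equivalence. This is exactly the content of the Haine--Ramzi--Steinebrunner criterion \cite[Prop.~1.4]{haine2025fullyfaithfulfunctorspushouts}, and I would invoke it directly to avoid handling the two-variable fibration by hand.
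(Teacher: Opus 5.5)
Your proposal is correct and follows the paper's argument. You pass to right adjoints, reduce to the square of right adjoints being a pullback in $\widehat{\mathsf{Cat}}$, and conclude with \cite[Prop.~1.4]{haine2025fullyfaithfulfunctorspushouts}, which packages exactly your fiberwise comparison of $\operatorname{Map}_{\mathsf{D}}(d,d')$ with $\operatorname{Map}_{\mathsf{C}}(Gd,Gd')$. The paper skips your detour through $\mathsf{Pr}^\mathsf{L}$ by using $(\mathsf{Pr}^\mathsf{L}_\kappa)^{\mathrm{op}}\simeq\mathsf{Pr}^\mathsf{R}_\kappa$ directly; if you keep the detour, justify reflection of colimits by noting that the inclusion $\mathsf{Pr}^\mathsf{L}_\kappa\hookrightarrow\mathsf{Pr}^\mathsf{L}$ preserves colimits and is conservative, rather than by the vaguer appeal to compact generation.
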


\begin{proof}
   Let $G\colon \mathsf{D}\to \mathsf{C}$ be the right adjoint of $F$. By the equivalence $(\mathsf{Pr}^\mathsf{L}_\kappa)^\mathrm{op}\simeq \mathsf{Pr}^\mathsf{R}_\kappa$, the square $P_\kappa(F)$ is a pushout in $\mathsf{Pr}^\mathsf{L}_\kappa$ if and only if the square
\[\begin{tikzcd}
	{\mathsf{Fun}(\Delta^1,\mathsf{D})} && {\mathsf{Fun}(\Delta^1,\mathsf{C})} \\
	{\mathsf{D}\times \mathsf{D}} && {\mathsf{C}\times \mathsf{C}}
	\arrow["{\mathsf{Fun}(\Delta^1,G)}", from=1-1, to=1-3]
	\arrow["{i^*}"', from=1-1, to=2-1]
	\arrow["{i^*}", from=1-3, to=2-3]
	\arrow["{G\times G}", from=2-1, to=2-3]
\end{tikzcd}\]
   is a pullback in $\mathsf{Pr}^\mathsf{R}_\kappa$ (or, equivalently, in $\widehat{\mathsf{Cat}}$). This holds precisely if $G$ is fully faithful \cite[Prop.~1.4]{haine2025fullyfaithfulfunctorspushouts}.
\end{proof}

\begin{lemma}\label{Q is nice}
    Let $\kappa$ be a regular cardinal. Then $P_\kappa$ is a morphism in $\mathsf{Pr}^\mathsf{L}_\kappa$.
\end{lemma}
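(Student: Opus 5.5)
We have to show that $P_\kappa\colon \mathsf{Fun}(\Delta^1,\mathsf{Pr}^\mathsf{L}_\kappa)\to \mathsf{Fun}(\Delta^1\times\Delta^1,\mathsf{Pr}^\mathsf{L}_\kappa)$ preserves small colimits and $\kappa$-compact objects. The plan is to write $P_\kappa$ as a square assembled from functors $\mathsf{Pr}^\mathsf{L}_\kappa\to\mathsf{Pr}^\mathsf{L}_\kappa$, each of which is a tensor product with a fixed compact object, and then to reduce to the functor-category versions objectwise.

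First, identify the vertices. For $\mathsf{C}\in\mathsf{Pr}^\mathsf{L}_\kappa$ we have $\mathsf{C}\times\mathsf{C}\simeq \mathsf{C}\otimes(\mathsf{Ani}\times\mathsf{Ani})$ and $\mathsf{Fun}(\Delta^1,\mathsf{C})\simeq \mathsf{C}\otimes\mathsf{Fun}(\Delta^1,\mathsf{Ani})$, using the Lurie tensor product. Under these identifications, $i_\sharp\colon \mathsf{C}\times\mathsf{C}\to\mathsf{Fun}(\Delta^1,\mathsf{C})$ is $\mathsf{C}\otimes i_\sharp^{\mathsf{Ani}}$, because both are colimit-preserving and agree on generators. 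So $P_\kappa(F)$ is obtained by applying the bifunctor $-\otimes-$ to the pair consisting of the arrow $F$ and the arrow $i_\sharp^{\mathsf{Ani}}\colon \mathsf{Ani}\times\mathsf{Ani}\to\mathsf{Fun}(\Delta^1,\mathsf{Ani})$. Here $\mathsf{Ani}\times\mathsf{Ani}$ and $\mathsf{Fun}(\Delta^1,\mathsf{Ani})=\mathsf{Fun}(\Delta^1{}^{\mathrm{op}},\mathsf{Ani})^{\mathrm{op}\,\ldots}$ are presheaf categories on finite categories. Hence they are compact (indeed $\aleph_0$-presentable, being $\mathcal P$ of finite categories), and $i_\sharp^{\mathsf{Ani}}$ preserves compact objects.

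Next, show that for a fixed $\mathsf{E}=\mathcal{P}(K)$ with $K$ a finite category, the functor $\mathsf{C}\mapsto\mathsf{C}\otimes\mathsf{E}\simeq\mathsf{Fun}(K^{\mathrm{op}},\mathsf{C})$ is an endomorphism of $\mathsf{Pr}^\mathsf{L}_\kappa$ that preserves small colimits and $\kappa$-compact objects. Well-definedness amounts to the standard fact that $\mathsf{Fun}(K^{\mathrm{op}},\mathsf{C})$ is $\kappa$-presentable, with $\kappa$-compact objects generated by $\mathsf{C}^\kappa\otimes\mathsf{E}^{\aleph_0}$, and that $\mathsf{Fun}(K^{\mathrm{op}},F)$ preserves them. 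The functor preserves colimits because $-\otimes\mathsf{E}$ is a left adjoint on $\mathsf{Pr}^\mathsf{L}$, with right adjoint $\mathsf{Fun}^\mathsf{L}(\mathsf{E},-)$. One then has to check that this passes to $\mathsf{Pr}^\mathsf{L}_\kappa$, whose colimits are computed as in $\mathsf{Pr}^\mathsf{L}$ (or directly, since $\mathsf{Pr}^\mathsf{L}_\kappa\simeq\mathsf{Cat}^{\mathrm{rex}}_\kappa$ via $\mathsf{C}\mapsto\mathsf{C}^\kappa$, where the functor becomes $\mathsf{C}^\kappa\mapsto\mathsf{Fun}(K^{\mathrm{op}},\mathsf{C}^\kappa)$, which commutes with colimits of $\kappa$-cocomplete categories). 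For preservation of $\kappa$-compact objects, I would use that $\mathsf{Pr}^\mathsf{L}_\kappa$ is $\kappa$-presentable and that its right adjoint $\mathsf{Fun}^\mathsf{L}(\mathsf{E},-)\simeq(-)^{K}$ preserves $\kappa$-filtered colimits, since finite limits commute with $\kappa$-filtered colimits in $\mathsf{Cat}^{\mathrm{rex}}_\kappa$. A left adjoint whose right adjoint preserves $\kappa$-filtered colimits preserves $\kappa$-compact objects.

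Finally, assemble the pieces. Colimits and $\kappa$-compact objects in $\mathsf{Fun}(\Delta^1,\mathcal{X})$ and $\mathsf{Fun}(\Delta^1\times\Delta^1,\mathcal{X})$, for $\mathcal{X}=\mathsf{Pr}^\mathsf{L}_\kappa$, are computed and detected pointwise. The colimit statement is standard. For compactness, I would show that a diagram is $\kappa$-compact if and only if it is pointwise $\kappa$-compact, since the indexing posets are finite and $\kappa\ge\aleph_0$. This uses that $\mathsf{Fun}(K,\mathcal{X})$ is generated by left Kan extensions from points of compact objects, and that the pointwise criterion holds for finite $K$. The four vertex functors of $P_\kappa$ are $\mathsf{C}\times\mathsf{C}$, $\mathsf{D}\times\mathsf{D}$, $\mathsf{Fun}(\Delta^1,\mathsf{C})$ and $\mathsf{Fun}(\Delta^1,\mathsf{D})$, namely the functors above precomposed with evaluation. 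So $P_\kappa$ preserves colimits and compact objects pointwise, and we are done. I expect the main obstacle to be this step together with the compactness claim of the previous step: making precise the pointwise characterization of $\kappa$-compact diagrams in $\mathsf{Pr}^\mathsf{L}_\kappa$ (in particular closure of compacts under finite limits in $\mathsf{Pr}^\mathsf{L}_\kappa$), and the coherent identification of $i_\sharp$ with $\mathsf{C}\otimes i_\sharp^{\mathsf{Ani}}$ functorially in $F$.
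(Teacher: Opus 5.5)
Your proposal is correct, and its skeleton matches the paper's. You reduce, via the pointwise description of colimits and $\kappa$-compact objects in $\mathsf{Fun}(\Delta^1,\mathsf{Pr}^\mathsf{L}_\kappa)$ and $\mathsf{Fun}(\Delta^1\times\Delta^1,\mathsf{Pr}^\mathsf{L}_\kappa)$, to the vertex endofunctors $\mathsf{C}\mapsto\mathsf{C}\times\mathsf{C}$ and $\mathsf{C}\mapsto\mathsf{Fun}(\Delta^1,\mathsf{C})$. You then identify these with tensoring by a presheaf category $\mathsf{P}(K)$ on a finite poset.

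The genuine difference is the compactness step. The paper first shows that $\mathsf{P}(K)$ is itself $\kappa$-compact in $\mathsf{Pr}^\mathsf{L}_\kappa$. A colimit-preserving functor out of $\mathsf{P}(K)$ preserves $\kappa$-compacts iff it sends representables to $\kappa$-compacts, so $\operatorname{Map}_{\mathsf{Pr}^\mathsf{L}_\kappa}(\mathsf{P}(K),\mathsf{C})\simeq\operatorname{Map}_{\mathsf{Cat}}(K,\mathsf{C}^\kappa)$, and $\mathsf{C}\mapsto\mathsf{C}^\kappa$ preserves $\kappa$-filtered colimits. It then uses that $\mathsf{Pr}^\mathsf{L}_\kappa$ is a commutative algebra in $\mathsf{Pr}^\mathsf{L}_\kappa$, so tensoring with a $\kappa$-compact object preserves colimits and $\kappa$-compacts.

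You instead use the adjunction $\mathsf{P}(K)\otimes-\dashv\mathsf{Fun}(K,-)$ and show the right adjoint preserves $\kappa$-filtered colimits. This avoids the commutative algebra structure on $\mathsf{Pr}^\mathsf{L}_\kappa$, in particular closure of its $\kappa$-compacts under tensor products. Your description $P_\kappa(F)\simeq F\otimes i^{\mathsf{Ani}}_\sharp$ also gives an honest construction of $P_\kappa$ as a functor, which the paper describes only objectwise. The paper's route, in turn, yields the reusable fact that presheaf categories are compact in $\mathsf{Pr}^\mathsf{L}_\kappa$, which it invokes again in Proposition~\ref{compactness of local objects}.

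Three points need tightening.

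First, the adjunction on $\mathsf{Pr}^\mathsf{L}$ must restrict to endofunctors of $\mathsf{Pr}^\mathsf{L}_\kappa$. Concretely, a colimit-preserving $G\colon\mathsf{Fun}(K^{\mathrm{op}},\mathsf{C})\to\mathsf{D}$ must preserve $\kappa$-compacts iff its adjunct $\mathsf{C}\to\mathsf{Fun}(K,\mathsf{D})$ does. This holds because $\mathsf{Fun}(K^{\mathrm{op}},\mathsf{C})^\kappa$ is generated under $\kappa$-small colimits and retracts by the objects $\operatorname{Map}_K(-,k)\otimes c$ with $c\in\mathsf{C}^\kappa$; it is the parametrized form of the paper's claim about representables. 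Once this is in place, colimit preservation in $\mathsf{Pr}^\mathsf{L}_\kappa$ is automatic and the detour through $\mathsf{Pr}^\mathsf{L}$ is unnecessary.

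Second, your reason for $\mathsf{Fun}(K,-)$ preserving $\kappa$-filtered colimits (that finite limits commute with them) does not apply as stated. $\mathsf{Fun}(\Delta^1,-)$ is a cotensor, not a finite limit of copies of its input. Argue instead from three facts:
\begin{itemize}
\item $\kappa$-filtered colimits in $\mathsf{Pr}^\mathsf{L}_\kappa$ are computed on $\kappa$-compact parts in $\mathsf{Cat}$;
\item $\mathsf{Fun}(K,\mathsf{D})^\kappa\simeq\mathsf{Fun}(K,\mathsf{D}^\kappa)$;
\item $\mathsf{Fun}(K,-)\colon\mathsf{Cat}\to\mathsf{Cat}$ preserves $\kappa$-filtered colimits because $K\times\Delta^1$ is compact in $\mathsf{Cat}$, as in Step~I of the proof of Proposition~\ref{key proposition 2}.
\end{itemize}

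Third, the obstacle you anticipate is not one. The pointwise criterion for $\kappa$-compactness in $\mathsf{Fun}(K,\mathsf{X})$, for a finite poset $K$, holds in any $\kappa$-compactly generated $\mathsf{X}$ (Kerodon, Tag~06HY) and needs nothing about finite limits of compact objects. It applies here since $\mathsf{Pr}^\mathsf{L}_\kappa$ is $\kappa$-presentable.
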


\begin{proof}
According to \cite[Prop.~2.3]{aoki2025higherpresentablecategorieslimits}, $\mathsf{Pr}^\mathsf{L}_\kappa$ is a commutative algebra in $\mathsf{Pr}^\mathsf{L}_\kappa$. Thus, since $\Delta^1$ and $\Delta^1\times \Delta^1$ are finite posets, \cite[Tag~\href{https://kerodon.net/tag/06HY}{06HY}]{kerodon} implies that the source and target of $P_\kappa$ are $\kappa$-presentable, and that $\kappa$-compactness in these categories is checked pointwise. It therefore suffices to show that the endofunctors
\[\mathsf{C}\mapsto \mathsf{Fun}(\partial\Delta^1,\mathsf{C})\simeq \mathsf{C}\times\mathsf{C},\qquad\mathsf{C}\mapsto\mathsf{Fun}(\Delta^1,\mathsf{C})\]
of $\mathsf{Pr}^\mathsf{L}_\kappa$ preserve small colimits and
$\kappa$-compact objects of $\mathsf{Pr}^\mathsf{L}_\kappa$.

Let $K$ be either $\partial\Delta^1$ or $\Delta^1$, and let $\yo\colon K^\mathrm{op}\to \mathsf{P}(K^\mathrm{op})$ denote the Yoneda embedding. Given a small-colimit-preserving functor $F\colon \mathsf{P}(K^\mathrm{op})\to \mathsf{C}$, we claim that $F$ preserves $\kappa$-compact objects if and only if $F\circ \yo$ takes values in $\mathsf{C}^\kappa$. Since colimits in functor categories are computed pointwise, the Yoneda lemma implies that every representable presheaf is compact. Thus, if $F$ preserves $\kappa$-compact objects, then $F\circ \yo$ takes values in $\mathsf{C}^\kappa$, proving one implication.

Conversely, suppose that $F\circ \yo$ takes values in $\mathsf{C}^\kappa$, and let $f$ be a $\kappa$-compact object of $\mathsf{P}(K^\mathrm{op})$. By \cite[Prop.~5.3.4.17]{lurie2009HTT}, $f$ is a retract of a $\kappa$-small colimit of representables $g$. Since $F$ preserves small colimits, $F(g)$ is therefore a $\kappa$-small colimit of objects in $\mathsf{C}^\kappa$. As $\mathsf{C}^\kappa$ is closed under $\kappa$-small colimits in $\mathsf{C}$ \cite[Tag~\href{https://kerodon.net/tag/064W}{064W}]{kerodon}, it follows that $F(g)$ is $\kappa$-compact. Similarly, given that $F(f)$ is a retract of $F(g)$, \cite[Tag~\href{https://kerodon.net/tag/064Z}{064Z}]{kerodon} implies that $F(f)$ is also $\kappa$-compact. This proves the converse and completes the proof of the claim.

The claim above shows that the universal property of the Yoneda embedding \cite[Tag~\href{https://kerodon.net/tag/03W9}{03W9}]{kerodon} restricts to an equivalence
\[\yo^*\colon \operatorname{Map}_{\mathsf{Pr}^\mathsf{L}_\kappa}(\mathsf{P}(K^\mathrm{op}),\mathsf{C})\xrightarrow{\ \sim \ }\operatorname{Map}_{\mathsf{Cat}}(K^\mathrm{op},\mathsf{C}^\kappa).\]
Now $K^\mathrm{op}$ is a compact object of $\mathsf{Cat}$ \cite[Tag~\href{https://kerodon.net/tag/068G}{068G}]{kerodon}, while the functor
\[\mathsf{Pr}^\mathsf{L}_\kappa \to \mathsf{Cat},\qquad \mathsf{C}\mapsto \mathsf{C}^\kappa\]
preserves $\kappa$-filtered colimits \cite[Props.~5.5.7.10-5.5.7.11 and Cor.~4.4.5.21]{lurie2009HTT}. It follows that $\mathsf{P}(K^\mathrm{op})$ is a $\kappa$-compact object of $\mathsf{Pr}^\mathsf{L}_\kappa$.

Since $\mathsf{Pr}^\mathsf{L}_\kappa$ is a commutative algebra
object in $\mathsf{Pr}^\mathsf{L}_\kappa$, tensoring with
$\mathsf{P}(K^\mathrm{op})$ therefore preserves small colimits
and $\kappa$-compact objects.
Finally, the equivalence
\[\mathsf{P}(K^\mathrm{op})\otimes\mathsf{C}\simeq\mathsf{Fun}(K,\mathsf{C}),\]
which is natural in $\mathsf{C}\in\mathsf{Pr}^\mathsf{L}_\kappa$
\cite[Tags~\href{https://kerodon.net/tag/03W9}{03W9}, \href{https://kerodon.net/tag/071C}{071C}, \href{https://kerodon.net/tag/071F}{071F}]{kerodon}, identifies this functor
with $\mathsf{Fun}(K,{-})$. The two choices of $K$ give the
required endofunctors, and hence the result.
\end{proof}

\begin{proof}[Proof of Theorem~\ref{walking localization}]
Let $u\colon \Lambda^2_0\hookrightarrow \Delta^1\times \Delta^1$ be the natural inclusion of the walking span into the walking commutative square. As above, if $\mathsf{C}$ admits pushouts, the restriction functor $u^*\colon \mathsf{Fun}(\Delta^1\times \Delta^1,\mathsf{C})\to \mathsf{Fun}(\Lambda^2_0,\mathsf{C})$ admits a fully faithful left adjoint
\[u_\sharp\colon \mathsf{Fun}(\Lambda^2_0,\mathsf{C}) \to \mathsf{Fun}(\Delta^1\times \Delta^1,\mathsf{C}),\]
that sends a span $X\leftarrow Y\to Z$ to the pushout
\[\begin{tikzcd}[ampersand replacement=\&]
	Y \& Z \\
	X \& {X\sqcup_Y Z.}
	\arrow[from=1-1, to=1-2]
	\arrow[from=1-1, to=2-1]
	\arrow[from=1-2, to=2-2]
	\arrow[from=2-1, to=2-2]
\end{tikzcd}\]

Lemma~\ref{localization_lemma} can then be reformulated as asserting that the commutative square
\[\begin{tikzcd}[ampersand replacement=\&]
	{\mathsf{Loc}_\kappa} \&\& {\mathsf{Fun}(\Lambda^2_0,\mathsf{Pr}^\mathsf{L}_\kappa)} \\
	{\mathsf{Fun}(\Delta^1,\mathsf{Pr}^\mathsf{L}_\kappa)} \&\& {\mathsf{Fun}(\Delta^1\times \Delta^1,\mathsf{Pr}^\mathsf{L}_\kappa)}
	\arrow[from=1-1, to=1-3]
	\arrow[hook, from=1-1, to=2-1]
	\arrow["{u_\sharp}", hook, from=1-3, to=2-3]
	\arrow["P_\kappa", from=2-1, to=2-3]
\end{tikzcd}\]
is cartesian. Given that $\Delta^1$, $\Delta^1\times \Delta^1$, and $\Lambda^2_0$ are finite posets, \cite[Prop.~2.3]{aoki2025higherpresentablecategorieslimits} and \cite[Tag \href{https://kerodon.net/tag/06HY}{06HY}]{kerodon} imply that the categories
\[\mathsf{Fun}(\Delta^1,\mathsf{Pr}^\mathsf{L}_\kappa), \quad \mathsf{Fun}(\Delta^1\times \Delta^1,\mathsf{Pr}^\mathsf{L}_\kappa), \quad \text{and}\quad \mathsf{Fun}(\Lambda^2_0,\mathsf{Pr}^\mathsf{L}_\kappa)\]
are all $\kappa$-presentable. 

Note that, since the functor $u^*$ preserves all small colimits, its left adjoint $u_\sharp$ is a morphism in $\mathsf{Pr}^\mathsf{L}_\kappa$. Moreover, by Lemma~\ref{Q is nice}, so is the functor $P_\kappa$. We conclude from \cite[Prop.~3.2]{aoki2025higherpresentablecategorieslimits} that $\mathsf{Loc}_\kappa$ is $\kappa$-presentable and that the inclusion
\[j\colon \mathsf{Loc}_\kappa\hookrightarrow \mathsf{Fun}(\Delta^1,\mathsf{Pr}^\mathsf{L}_\kappa)\]
is a morphism in $\mathsf{Pr}^\mathsf{L}_\kappa$. 

Let $s\colon \mathsf{Fun}(\Delta^1, \mathsf{Pr}^\mathsf{L}_\kappa)\to \mathsf{Pr}^\mathsf{L}_\kappa$ be the functor that maps a morphism $\mathsf{C}\to \mathsf{D}$ in $\mathsf{Pr}^\mathsf{L}_\kappa$ to $\mathsf{C}$. Since the square
\[
\begin{tikzcd}
	{(\mathsf{Pr}^{\mathsf{L}}_{\kappa})_{\mathsf{V}/}} && {\mathsf{Fun}(\Delta^{1},\mathsf{Pr}^{\mathsf{L}}_{\kappa})} \\
	{*} && {\mathsf{Pr}^{\mathsf{L}}_{\kappa}}
	\arrow[from=1-1, to=1-3]
	\arrow[from=1-1, to=2-1]
	\arrow["s", from=1-3, to=2-3]
	\arrow["{\mathsf{V}}", from=2-1, to=2-3]
\end{tikzcd}\]
is cartesian and the functor $\mathsf{Fun}(\Delta^{1},-)$ preserves limits, we obtain a natural equivalence
\[\mathsf{Fun}\mleft(\Delta^{1},(\mathsf{Pr}^{\mathsf{L}}_{\kappa})_{\mathsf{V}/}\mright)\simeq\{\operatorname{id}_{\mathsf{V}}\}\times_{\mathsf{Fun}(\Delta^{1},\mathsf{Pr}^{\mathsf{L}}_{\kappa})}\mathsf{Fun}(\Delta^{1},\mathsf{Fun}(\Delta^{1},\mathsf{Pr}^{\mathsf{L}}_{\kappa}))\simeq\mathsf{Fun}(\Delta^{1},\mathsf{Pr}^{\mathsf{L}}_{\kappa})_{\operatorname{id}_{\mathsf{V}}/}.\]
Under this equivalence, the full subcategory $\mathsf{Loc}_\kappa(\mathsf{V})$ of $\mathsf{Fun}(\Delta^{1},(\mathsf{Pr}^{\mathsf{L}}_{\kappa})_{\mathsf{V}/})$ corresponds to the full subcategory $(\mathsf{Loc}_{\kappa})_{\operatorname{id}_{\mathsf{V}}/}$ of $\mathsf{Fun}(\Delta^{1},\mathsf{Pr}^{\mathsf{L}}_{\kappa})_{\operatorname{id}_{\mathsf{V}}/}$. The desired result then follows from Proposition~\ref{lemma on coslices}.
\end{proof}

We now study a symmetric monoidal analogue of Theorem~\ref{walking localization}.

\begin{definition}
Let $\kappa$ be a regular cardinal, and let $\mathsf{W}$ be a commutative algebra object in $\mathsf{Pr}^\mathsf{L}_{\kappa}$. We denote by
\[\mathsf{Loc}^\otimes_\kappa(\mathsf{W})\subseteq\mathsf{Fun}\mleft(\Delta^1,\mathsf{CAlg}(\mathsf{Pr}^\mathsf{L}_{\kappa})_{\mathsf{W}/}\mright)\]
the full subcategory spanned by the morphisms that are Bousfield localizations. For $\mathsf{W}=\mathsf{Ani}$, we denote $\mathsf{Loc}^\otimes_\kappa(\mathsf{W})$ simply as $\mathsf{Loc}_\kappa^\otimes$.
\end{definition}

\begin{theorem}\label{result on monoidal localizations}
    Let $\kappa\geq \aleph_1$ be a regular cardinal, and let $\mathsf{W}$ be a commutative algebra object in $\mathsf{Pr}^\mathsf{L}_{\kappa}$. Then the natural inclusion
    \[\mathsf{Loc}^\otimes_\kappa(\mathsf{W})\hookrightarrow\mathsf{Fun}\mleft(\Delta^1,\mathsf{CAlg}(\mathsf{Pr}^\mathsf{L}_{\kappa})_{\mathsf{W}/}\mright)\]
    is a morphism in $\mathsf{Pr}^\mathsf{L}_{\kappa}$. In particular, the category $\mathsf{Loc}^\otimes_\kappa(\mathsf{W})$ is $\kappa$-presentable.
\end{theorem}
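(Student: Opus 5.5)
The plan is to mirror the proof of Theorem~\ref{walking localization}, working inside $\mathsf{CAlg}(\mathsf{Pr}^\mathsf{L}_\kappa)$ instead of $\mathsf{Pr}^\mathsf{L}_\kappa$.

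The first step is to record that the forgetful functor $U\colon\mathsf{CAlg}(\mathsf{Pr}^\mathsf{L}_\kappa)\to\mathsf{Pr}^\mathsf{L}_\kappa$ detects Bousfield localizations. A symmetric monoidal morphism is a Bousfield localization exactly when its underlying morphism is, since being a localization only concerns the right adjoint being fully faithful. Consequently
\[\mathsf{Loc}^\otimes_\kappa\simeq \mathsf{Fun}(\Delta^1,\mathsf{CAlg}(\mathsf{Pr}^\mathsf{L}_\kappa))\times_{\mathsf{Fun}(\Delta^1,\mathsf{Pr}^\mathsf{L}_\kappa)}\mathsf{Loc}_\kappa,\]
and this square is cartesian because $\mathsf{Loc}_\kappa$ is a full subcategory.

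The second step is to apply \cite[Prop.~3.2]{aoki2025higherpresentablecategorieslimits} to this fiber product. This requires three inputs:
\begin{enumerate}
\item $\mathsf{CAlg}(\mathsf{Pr}^\mathsf{L}_\kappa)$ is $\kappa$-presentable, hence so is $\mathsf{Fun}(\Delta^1,\mathsf{CAlg}(\mathsf{Pr}^\mathsf{L}_\kappa))$ by \cite[Tag~06HY]{kerodon}, with $\kappa$-compactness checked pointwise;
\item the inclusion $\mathsf{Loc}_\kappa\hookrightarrow\mathsf{Fun}(\Delta^1,\mathsf{Pr}^\mathsf{L}_\kappa)$ lies in $\mathsf{Pr}^\mathsf{L}_\kappa$, which is the proof of Theorem~\ref{walking localization};
\item $U$ lies in $\mathsf{Pr}^\mathsf{L}_\kappa$, so that $\mathsf{Fun}(\Delta^1,U)$ does as well.
\end{enumerate}
The conclusion is that the projection $\mathsf{Loc}^\otimes_\kappa\to\mathsf{Fun}(\Delta^1,\mathsf{CAlg}(\mathsf{Pr}^\mathsf{L}_\kappa))$ is a morphism in $\mathsf{Pr}^\mathsf{L}_\kappa$. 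Item (1) should follow because $\mathsf{CAlg}$ of a presentably symmetric monoidal $\kappa$-presentable category, whose $\kappa$-compact objects are closed under tensor products and contain the unit, is again $\kappa$-presentable.

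The main obstacle is item (3): showing that $U$ preserves small colimits and $\kappa$-compact objects. Colimits of commutative algebras in $\mathsf{Pr}^\mathsf{L}$ are not computed underlying in general, but sifted colimits are. Coproducts are tensor products, which differ from coproducts in $\mathsf{Pr}^\mathsf{L}_\kappa$. The fiber product in the second step as written therefore needs care. I would instead avoid needing $U$ to preserve colimits by replacing $P_\kappa$ with its symmetric monoidal analogue. The functor $\mathsf{C}\mapsto\mathsf{Fun}(K,\mathsf{C})\simeq\mathsf{P}(K^{\mathrm{op}})\otimes\mathsf{C}$ lifts to $\mathsf{CAlg}$ by giving $\mathsf{P}(K^{\mathrm{op}})$ the pointwise (cartesian) commutative algebra structure, with tensor product $\mathsf{C}\otimes\mathsf{P}(K^{\mathrm{op}})$. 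This is the pushout, i.e.\ the coproduct, in $\mathsf{CAlg}(\mathsf{Pr}^\mathsf{L}_\kappa)$ with a $\kappa$-compact object, so it preserves colimits and $\kappa$-compact objects. The real difficulty is that $i_\sharp$ is not symmetric monoidal for the pointwise structures. I would instead use a criterion purely in terms of the right adjoint, such as fully faithfulness of $G$ being detected on underlying categories. I would also check that pushouts in $\mathsf{CAlg}(\mathsf{Pr}^\mathsf{L}_\kappa)$ along $\mathsf{Fun}$-type algebras are computed underlying, using that the relevant pushouts there are relative tensor products, which are geometric realizations and hence sifted colimits preserved by $U$.

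The final step is to pass to $\mathsf{Loc}^\otimes_\kappa(\mathsf{W})$ exactly as at the end of the proof of Theorem~\ref{walking localization}. There is an identification
\[\mathsf{Fun}\bigl(\Delta^1,\mathsf{CAlg}(\mathsf{Pr}^\mathsf{L}_\kappa)_{\mathsf{W}/}\bigr)\simeq\mathsf{Fun}\bigl(\Delta^1,\mathsf{CAlg}(\mathsf{Pr}^\mathsf{L}_\kappa)\bigr)_{\mathrm{id}_\mathsf{W}/},\]
under which $\mathsf{Loc}^\otimes_\kappa(\mathsf{W})$ corresponds to $(\mathsf{Loc}^\otimes_\kappa)_{\mathrm{id}_\mathsf{W}/}$. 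Proposition~\ref{lemma on coslices} then concludes the proof.
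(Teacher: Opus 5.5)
There is a genuine gap. As you yourself note, item (3) of your second step is false: the forgetful functor $\mathsf{CAlg}(\mathsf{Pr}^{\mathsf{L}}_\kappa)\to\mathsf{Pr}^{\mathsf{L}}_\kappa$ preserves neither the initial object nor coproducts, so the pullback argument does not apply. The workaround you sketch does not close this hole, and parts of it are incorrect:
\begin{itemize}
\item The functor $\mathsf{C}\mapsto\mathsf{C}\otimes\mathsf{P}(K^{\mathrm{op}})$ is the coproduct with a fixed object in $\mathsf{CAlg}(\mathsf{Pr}^{\mathsf{L}}_\kappa)$, and such a functor does not preserve small colimits. For instance, it sends the initial object $\mathsf{Ani}$ to $\mathsf{P}(K^{\mathrm{op}})$.
\item Pushouts in $\mathsf{CAlg}(\mathsf{Pr}^{\mathsf{L}}_\kappa)$ are relative tensor products $B\otimes_A C$, that is, realizations of the terms $B\otimes A^{\otimes n}\otimes C$. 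Their underlying categories are not the pushouts in $\mathsf{Pr}^{\mathsf{L}}_\kappa$, which are fiber products along right adjoints. So pushouts are not computed underlying, even though geometric realizations are.
\item The Haine--Ramzi--Steinebrunner square is built from $i_\sharp$, which has no monoidal counterpart. There is therefore no $\mathsf{CAlg}$-version of $P_\kappa$ to pull back along.
\end{itemize}
Detecting full faithfulness on underlying categories only restates your first step. The actual difficulty is compatibility with colimits formed in $\mathsf{CAlg}$, and with $\kappa$-compact objects there. Finally, your coslice step presupposes the case $\mathsf{W}=\mathsf{Ani}$, which you have not established.

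The missing idea is that tensor products of Bousfield localizations are again Bousfield localizations \cite[Tag~071G]{kerodon}. Since $\operatorname{id}_{\mathsf{Ani}}$ is one as well, $\mathsf{Loc}_\kappa$ is closed under the pointwise tensor product on $\mathsf{Fun}(\Delta^1,\mathsf{Pr}^{\mathsf{L}}_\kappa)$. Combined with Theorem~\ref{walking localization}, this makes the inclusion $j\colon\mathsf{Loc}_\kappa\hookrightarrow\mathsf{Fun}(\Delta^1,\mathsf{Pr}^{\mathsf{L}}_\kappa)$ a morphism in $\mathsf{CAlg}(\mathsf{Pr}^{\mathsf{L}}_\kappa)$.

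Under $\mathsf{CAlg}(\mathsf{Fun}(\Delta^1,\mathsf{Pr}^{\mathsf{L}}_\kappa))\simeq\mathsf{Fun}(\Delta^1,\mathsf{CAlg}(\mathsf{Pr}^{\mathsf{L}}_\kappa))$, your fiber product is then exactly $\mathsf{CAlg}(\mathsf{Loc}_\kappa)$, embedded via $\mathsf{CAlg}(j)$. The right adjoint $R$ of $j$ is lax symmetric monoidal. Hence $\mathsf{CAlg}(j)$ is a left adjoint, so it preserves colimits automatically, and its right adjoint is $R$ on underlying objects. Since $R$ preserves $\kappa$-filtered colimits, and these are computed underlying in $\mathsf{CAlg}$, the functor $\mathsf{CAlg}(j)$ also preserves $\kappa$-compact objects and so lies in $\mathsf{Pr}^{\mathsf{L}}_\kappa$.

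This bypasses the forgetful functor entirely: colimits are controlled by the adjunction rather than computed in underlying categories. Your final coslice step, via Proposition~\ref{lemma on coslices}, then completes the proof.
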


The proof uses the pointwise symmetric monoidal structure on the arrow category. We first show that the inclusion of $\mathsf{Loc}_\kappa$ is compatible with this structure.

\begin{lemma}\label{commutative algebras in the functor category}
Let $\kappa\geq \aleph_1$ be a regular cardinal, and endow $\mathsf{Fun}(\Delta^1,\mathsf{Pr}^{\mathsf{L}}_\kappa)$ with the pointwise symmetric monoidal structure. Then the following statements hold.
    \begin{enumerate}
        \item The category $\mathsf{Fun}(\Delta^1,\mathsf{Pr}^{\mathsf{L}}_\kappa)$ is a commutative algebra object in $\mathsf{Pr}^{\mathsf{L}}_\kappa$, and there is a natural equivalence
        \[\mathsf{CAlg}\mleft(\mathsf{Fun}(\Delta^1,\mathsf{Pr}^{\mathsf{L}}_\kappa)\mright)\simeq\mathsf{Fun}\mleft(\Delta^1,\mathsf{CAlg}(\mathsf{Pr}^{\mathsf{L}}_\kappa)\mright)\]
        in $\mathsf{Pr}^{\mathsf{L}}_\kappa$.
        \item The full subcategory $\mathsf{Loc}_\kappa\subseteq \mathsf{Fun}(\Delta^1,\mathsf{Pr}^{\mathsf{L}}_\kappa)$ contains the symmetric monoidal unit and is closed under tensor products. The induced symmetric monoidal structure makes the inclusion
        \[\mathsf{Loc}_\kappa\hookrightarrow\mathsf{Fun}(\Delta^1,\mathsf{Pr}^{\mathsf{L}}_\kappa)\]
        a morphism in $\mathsf{CAlg}(\mathsf{Pr}^{\mathsf{L}}_\kappa)$.
    \end{enumerate}
\end{lemma}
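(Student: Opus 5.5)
For part (1), the plan is to use that $\mathsf{Pr}^{\mathsf{L}}_\kappa$ is a commutative algebra in $\mathsf{Pr}^{\mathsf{L}}_\kappa$ by \cite[Prop.~2.3]{aoki2025higherpresentablecategorieslimits}. By the proof of Lemma~\ref{Q is nice}, $\mathsf{Fun}(\Delta^1,\mathsf{Pr}^{\mathsf{L}}_\kappa)\simeq \mathsf{P}((\Delta^1)^{\mathrm{op}})\otimes \mathsf{Pr}^{\mathsf{L}}_\kappa$. Here $\mathsf{P}((\Delta^1)^{\mathrm{op}})=\mathsf{Fun}(\Delta^1,\mathsf{Ani})$ is a commutative algebra in $\mathsf{Pr}^{\mathsf{L}}_\kappa$ for the pointwise (cartesian) structure, since $\Delta^1$ has finite products. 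Its unit is the terminal object, which is compact, and pointwise products of $\kappa$-compact objects are $\kappa$-compact, because compactness is checked pointwise. Tensoring two commutative algebras gives a commutative algebra, and the resulting structure is the pointwise one: the pointwise tensor product preserves colimits in each variable and preserves $\kappa$-compact objects, again since these are detected pointwise.

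For the equivalence of $\mathsf{CAlg}$'s, I would use that $\mathsf{Fun}(\Delta^1,-)$ commutes with $\mathsf{CAlg}$ for pointwise structures, which is \cite[Prop.~2.1.3.4/Rem.~2.1.3.5]{HA}-type reasoning: $\mathsf{CAlg}(\mathsf{Fun}(K,\mathsf{C}))\simeq\mathsf{Fun}(K,\mathsf{CAlg}(\mathsf{C}))$. I would then check that this equivalence lives in $\mathsf{Pr}^{\mathsf{L}}_\kappa$. Both sides are $\kappa$-presentable: $\mathsf{CAlg}(\mathsf{Pr}^{\mathsf{L}}_\kappa)$ is $\kappa$-presentable by the cited results, and functor categories on finite posets preserve $\kappa$-presentability by Tag~06HY. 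An equivalence of categories automatically preserves colimits and compact objects, so nothing more is needed.

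For part (2), the unit of $\mathsf{Fun}(\Delta^1,\mathsf{Pr}^{\mathsf{L}}_\kappa)$ is $\operatorname{id}_{\mathsf{Ani}}$, which is trivially a Bousfield localization. For closure under tensor products, let $F\colon\mathsf{C}\to\mathsf{D}$ and $F'\colon\mathsf{C}'\to\mathsf{D}'$ be localizations. Then $F\otimes F'$ factors as $(F\otimes\mathsf{D}')\circ(\mathsf{C}\otimes F')$, so it suffices to show that $\mathsf{E}\otimes F$ is a localization for any $\mathsf{E}$. The plan is to use Lemma~\ref{localization_lemma}: $F$ is a localization iff $P_\kappa(F)$ is a pushout. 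Because the square $P_\kappa(F)$ is natural and built from $\mathsf{Fun}(K,-)\simeq \mathsf{P}(K^{\mathrm{op}})\otimes -$, we get $P_\kappa(\mathsf{E}\otimes F)\simeq \mathsf{E}\otimes P_\kappa(F)$. Since $\mathsf{E}\otimes-$ preserves colimits, it preserves pushouts, and so $\mathsf{E}\otimes F$ is again a localization.

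The main obstacle is verifying that the identification $\mathsf{Fun}(K,\mathsf{E}\otimes\mathsf{C})\simeq \mathsf{E}\otimes\mathsf{Fun}(K,\mathsf{C})$ is compatible with the maps $i_\sharp$ and $\mathsf{Fun}(\Delta^1,F)$ in the square. I would handle this by noting that $i_\sharp$ arises as $i_!\otimes\mathsf{C}$ for the left Kan extension $i_!\colon\mathsf{P}(\partial\Delta^1)\to\mathsf{P}(\Delta^1)$, which makes everything natural in $\mathsf{C}$ and in $\mathsf{E}$. The inclusion is a morphism in $\mathsf{Pr}^{\mathsf{L}}_\kappa$ by Theorem~\ref{walking localization}. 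A full subcategory that contains the unit and is closed under tensors inherits a symmetric monoidal structure making the inclusion symmetric monoidal (\cite[Prop.~2.2.1.1]{HA}), with the tensor still colimit-preserving in each variable since the inclusion preserves and reflects colimits. The unit $\operatorname{id}_{\mathsf{Ani}}$ is $\kappa$-compact in $\mathsf{Fun}(\Delta^1,\mathsf{Pr}^{\mathsf{L}}_\kappa)$. Because the inclusion preserves $\kappa$-filtered colimits and is fully faithful, compactness of the unit in $\mathsf{Loc}_\kappa$ follows, and likewise closure of $\mathsf{Loc}_\kappa^\kappa$ under tensors. The latter uses that $\kappa$-compact objects of $\mathsf{Loc}_\kappa$ are exactly those compact in the ambient category, which holds since the inclusion preserves compact objects and reflects them, being fully faithful and closed under $\kappa$-filtered colimits.
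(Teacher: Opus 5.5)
Your proposal is correct. The bookkeeping in part (2) matches the paper: restriction of the monoidal structure via HA~2.2.1.1, colimits in each variable, and reflection of $\kappa$-compactness by the fully faithful inclusion, which preserves $\kappa$-filtered colimits.

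The genuine difference is how you obtain closure of $\mathsf{Loc}_\kappa$ under tensor products. The paper simply cites \cite[Tag~071G]{kerodon}. You instead reduce, via $F\otimes F'\simeq(F\otimes\mathsf{D}')\circ(\mathsf{C}\otimes F')$ and closure of localizations under composition, to the case $\mathsf{E}\otimes F$. You then deduce this case from Lemma~\ref{localization_lemma} by observing that $P_\kappa$ is $\mathsf{Pr}^{\mathsf{L}}_\kappa$-linear: $P_\kappa(F)\simeq i_!\otimes F$ as a square. Hence $P_\kappa(\mathsf{E}\otimes F)\simeq\mathsf{E}\otimes P_\kappa(F)$ remains a pushout.

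Your route buys self-containment: it uses only the Haine--Ramzi--Steinebrunner criterion already invoked in Section~2, and it makes the linearity of $P_\kappa$ explicit. The cost is the coherence you correctly flagged. The identification $\mathsf{Fun}(K,\mathsf{C})\simeq\mathsf{P}(K^{\mathrm{op}})\otimes\mathsf{C}$ must send $i_\sharp$ to $i_!\otimes\mathsf{C}$ and $\mathsf{Fun}(K,F)$ to $\mathsf{P}(K^{\mathrm{op}})\otimes F$ compatibly, so that the whole square, not just its vertices, is $i_!\otimes F$. This does hold, but the citation sidesteps it.

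In part (1), the paper cites \cite[Prop.~2.17]{AokiTTG}. Your detour through $\mathsf{P}((\Delta^1)^{\mathrm{op}})\otimes\mathsf{Pr}^{\mathsf{L}}_\kappa$ does not by itself identify the induced structure with the pointwise one. Your direct pointwise check does suffice: colimit preservation in each variable, a $\kappa$-compact unit, and tensor products of $\kappa$-compact objects being $\kappa$-compact, all via Tag~06HY. So the detour can be dropped.

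Finally, do not leave the $\kappa$-presentability of $\mathsf{CAlg}(\mathsf{Pr}^{\mathsf{L}}_\kappa)$ to unnamed results. It is presentable by \cite[Cor.~3.2.3.5]{higheralgebra}. The free algebras on $\kappa$-compact objects are $\kappa$-compact generators because the forgetful functor is conservative and preserves $\kappa$-filtered colimits, as the paper spells out.
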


\begin{proof}
According to \cite[Prop.~2.3]{aoki2025higherpresentablecategorieslimits}, $\mathsf{Pr}^{\mathsf{L}}_\kappa$ is a commutative algebra object in $\mathsf{Pr}^{\mathsf{L}}_\kappa$. Since $\Delta^1$ is a finite poset, \cite[Prop.~2.17]{AokiTTG} implies that the same holds for $\mathsf{Fun}(\Delta^1,\mathsf{Pr}^{\mathsf{L}}_\kappa)$ equipped with the pointwise symmetric monoidal structure. Moreover, \cite[Rem.~2.1.3.4]{higheralgebra} gives a natural equivalence
\[\mathsf{CAlg}\mleft(\mathsf{Fun}(\Delta^1,\mathsf{Pr}^{\mathsf{L}}_\kappa)\mright)\simeq\mathsf{Fun}\mleft(\Delta^1,\mathsf{CAlg}(\mathsf{Pr}^{\mathsf{L}}_\kappa)\mright).\]

Recall that, for every commutative algebra object $\mathsf{C}$ in $\mathsf{Pr}^{\mathsf{L}}_\kappa$, the category $\mathsf{CAlg}(\mathsf{C})$ is $\kappa$-presentable. Indeed, it is presentable by \cite[Cor.~3.2.3.5]{higheralgebra}, and the free commutative algebras on $\kappa$-compact generators of $\mathsf{C}$ form a set of $\kappa$-compact generators, since the forgetful functor preserves $\kappa$-filtered colimits \cite[Cor.~3.2.3.2]{higheralgebra}. Together with \cite[Tag~\href{https://kerodon.net/tag/06HY}{06HY}]{kerodon}, this shows that both sides of the displayed equivalence are $\kappa$-presentable.

To prove (2), note that the symmetric monoidal unit of $\mathsf{Fun}(\Delta^1,\mathsf{Pr}^{\mathsf{L}}_\kappa)$ is $\operatorname{id}_{\mathsf{Ani}}$, which is a Bousfield localization. Moreover, tensor products of Bousfield localizations remain Bousfield localizations \cite[Tag~\href{https://kerodon.net/tag/071G}{071G}]{kerodon}. It follows from \cite[Prop.~2.2.1.1 and Rem.~2.2.1.2]{higheralgebra} that the pointwise symmetric monoidal structure restricts to $\mathsf{Loc}_\kappa$, making the inclusion
\[j\colon \mathsf{Loc}_\kappa\hookrightarrow\mathsf{Fun}(\Delta^1,\mathsf{Pr}^{\mathsf{L}}_\kappa)\]
a symmetric monoidal functor.

By Theorem~\ref{walking localization}, applied to $\mathsf{V}=\ast$, the category $\mathsf{Loc}_\kappa$ is $\kappa$-presentable and $j$ preserves small colimits and $\kappa$-compact objects. Since $j$ is fully faithful and preserves small colimits, the tensor product on $\mathsf{Loc}_\kappa$ preserves small colimits separately in each variable. Moreover, full faithfulness and preservation of $\kappa$-filtered colimits imply that $j$ also reflects $\kappa$-compact objects. Thus, the $\kappa$-compactness of the symmetric monoidal unit and the stability of $\kappa$-compact objects under tensor products follow from the corresponding properties in $\mathsf{Fun}(\Delta^1,\mathsf{Pr}^{\mathsf{L}}_\kappa)$. This proves that $j$ is a morphism in $\mathsf{CAlg}(\mathsf{Pr}^{\mathsf{L}}_\kappa)$.
\end{proof}

\begin{proof}[Proof of Theorem~\ref{result on monoidal localizations}.]
By Lemma~\ref{commutative algebras in the functor category}, both $\mathsf{Loc}_\kappa$ and the inclusion $j\colon\mathsf{Loc}_\kappa\hookrightarrow\mathsf{Fun}(\Delta^1,\mathsf{Pr}^{\mathsf{L}}_\kappa)$ lie in $\mathsf{CAlg}(\mathsf{Pr}^\mathsf{L}_\kappa)$. As a result, it induces a fully faithful functor
\[\mathsf{CAlg}(j)\colon \mathsf{CAlg}(\mathsf{Loc}_\kappa)\hookrightarrow\mathsf{CAlg}\mleft(\mathsf{Fun}(\Delta^1,\mathsf{Pr}^{\mathsf{L}}_\kappa)\mright)\simeq\mathsf{Fun}\mleft(\Delta^1,\mathsf{CAlg}(\mathsf{Pr}^{\mathsf{L}}_\kappa)\mright),\]
whose essential image consists of the morphisms whose underlying functors are Bousfield localizations \cite[Rem.~2.2.1.5]{higheralgebra}.

We claim that this functor is a morphism in $\mathsf{Pr}^{\mathsf{L}}_\kappa$. Let $R$ be the right adjoint of $j$. By \cite[Rem.~2.2.1.5]{higheralgebra}, the functor $\mathsf{CAlg}(j)$ admits a right adjoint whose underlying functor is given by $R$. Since $j$ preserves $\kappa$-compact objects, $R$ preserves $\kappa$-filtered colimits. These colimits are computed on underlying objects in categories of commutative algebras \cite[Cor.~3.2.3.2]{higheralgebra}, so the right adjoint of $\mathsf{CAlg}(j)$ also preserves $\kappa$-filtered colimits. Thus, $\mathsf{CAlg}(j)$ preserves small colimits and $\kappa$-compact objects, proving the claim.

As in the proof of Theorem~\ref{walking localization}, there is a natural equivalence
\[\mathsf{Fun}\mleft(\Delta^1,\mathsf{CAlg}(\mathsf{Pr}^{\mathsf{L}}_\kappa)_{\mathsf{W}/}\mright)\simeq\mathsf{Fun}\mleft(\Delta^1,\mathsf{CAlg}(\mathsf{Pr}^{\mathsf{L}}_\kappa)\mright)_{\operatorname{id}_{\mathsf{W}}/}.\]
Since $\operatorname{id}_{\mathsf{W}}$ is a Bousfield localization, it determines an object of $\mathsf{CAlg}(\mathsf{Loc}_\kappa)$. Under the displayed equivalence, the inclusion of $\mathsf{Loc}^\otimes_\kappa(\mathsf{W})$ identifies with the functor
\[\mathsf{CAlg}(\mathsf{Loc}_\kappa)_{\operatorname{id}_{\mathsf{W}}/}\hookrightarrow\mathsf{Fun}\mleft(\Delta^1,\mathsf{CAlg}(\mathsf{Pr}^{\mathsf{L}}_\kappa)\mright)_{\operatorname{id}_{\mathsf{W}}/}\]
induced by $\mathsf{CAlg}(j)$. The desired result then follows from Proposition~\ref{lemma on coslices}.
\end{proof}

\begin{remark}
The same arguments prove variants of Theorems~\ref{walking localization} and \ref{result on monoidal localizations} in which coslice categories are replaced by slice categories. More precisely, under the same hypotheses, the full subcategories of
\[\mathsf{Fun}\mleft(\Delta^1,(\mathsf{Pr}^{\mathsf{L}}_\kappa)_{/\mathsf{V}}\mright)
\qquad\text{and}\qquad
\mathsf{Fun}\mleft(\Delta^1,\mathsf{CAlg}(\mathsf{Pr}^{\mathsf{L}}_\kappa)_{/\mathsf{W}}\mright)\]
spanned by the morphisms that are Bousfield localizations are $\kappa$-presentable, and their natural inclusions are morphisms in $\mathsf{Pr}^{\mathsf{L}}_\kappa$. It suffices to replace the application of Proposition~\ref{lemma on coslices} by \cite[Tags~\href{https://kerodon.net/tag/06HU}{06HU} and \href{https://kerodon.net/tag/06P6}{06P6}]{kerodon}.
\end{remark}

\section{Colimit-preserving right adjoints}\label{sec3}
\begin{definition}
    Let $\kappa$ be a regular cardinal, and let $\mathsf{V}$ be a $\kappa$-presentable category. We denote by
    \[\overline{\mathsf{Loc}}_\kappa(\mathsf{V})\subseteq \mathsf{Loc}_\kappa(\mathsf{V})\]
    the full subcategory spanned by those Bousfield localizations whose fully faithful right adjoints preserve small colimits. For the terminal category $\ast$, we denote $\overline{\mathsf{Loc}}_\kappa(\ast)$ simply as $\overline{\mathsf{Loc}}_\kappa$.
\end{definition}

The main result of this section implies, in particular, that $\overline{\mathsf{Loc}}_\kappa(\mathsf{V})$ is $\mu$-accessible for every $\mu$-compact object $\mathsf{V}$ of $\mathsf{Pr}^\mathsf{L}_\kappa$, provided that $\mu>\kappa\geq\aleph_{1}$ are regular and $\operatorname{ecf}(\mu)\geq\kappa$. Here $\operatorname{ecf}(\mu)$ denotes the \emph{exponential cofinality} of $\mu$. The smallest regular cardinal $\mu>\kappa$ satisfying this condition is $(2^{<\kappa})^+$; see Remark~\ref{exponential cofinality}.

\begin{theorem}\label{main categorical theorem}
Let $\mu>\kappa\geq \aleph_1$ be regular cardinals satisfying $\operatorname{ecf}(\mu)\geq \kappa$. For a $\mu$-compact object $\mathsf{V}$ of $\mathsf{Pr}^\mathsf{L}_\kappa$, the natural inclusion
\[\overline{\mathsf{Loc}}_\kappa(\mathsf{V})\hookrightarrow\mathsf{Loc}_\kappa(\mathsf{V})\]
is a morphism in $\mathsf{Acc}_{\mu}$ and preserves $\kappa$-filtered colimits. In particular, the category $\overline{\mathsf{Loc}}_\kappa(\mathsf{V})$ is $\mu$-accessible.
\end{theorem}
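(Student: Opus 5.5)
Following the outline in the introduction, I would construct a functor
\[Q_\kappa\colon \mathsf{Fun}(\Delta^1,\mathsf{Pr}^\mathsf{L}_\kappa)\to\mathsf{Fun}(\Delta^1,\mathsf{Ani})\]
that detects colimit preservation of right adjoints. Then I would realize $\overline{\mathsf{Loc}}_\kappa$ as the pullback of $\mathsf{Loc}_\kappa\to\mathsf{Fun}(\Delta^1,\mathsf{Ani})$ along the full subcategory $\mathsf{Fun}(\Delta^1,\mathsf{Ani})^{\simeq\text{-arrows}}\simeq\mathsf{Ani}$ of equivalences, and finally pass to coslices as in the proof of Theorem~\ref{walking localization}.

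\textbf{Step 1: choosing $Q_\kappa$.} For $F\colon\mathsf{C}\to\mathsf{D}$ with right adjoint $G$, note that $G$ preserves $\kappa$-filtered colimits automatically. So $G$ preserves all small colimits iff it preserves $\kappa$-small colimits. Equivalently, since $G$ is determined on generators, it suffices to test against the $\kappa$-compact objects of $\mathsf{C}$ mapping into $\kappa$-small colimits in $\mathsf{D}$. I would encode this universally. Let $\mathsf{E}$ be the $\kappa$-presentable category $\mathsf{P}_\kappa$ freely generated by a $\kappa$-small diagram shape together with its colimit cocone. Concretely, take
\[\mathsf{Fun}(K^{\triangleright}, -)\to\mathsf{Fun}(K,-)\]
for the universal $\kappa$-small $K$; to avoid a proper class, use the mapping animae $\operatorname{Map}_{\mathsf{Pr}^\mathsf{L}_\kappa}(\mathsf{P}(\text{test}),-)$. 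Then set
\[Q_\kappa(F)=\Bigl(\operatorname{Map}(\mathsf{A},\mathsf{C})\times_{\operatorname{Map}(\mathsf{A},\mathsf{D})}\operatorname{Map}(\mathsf{B},\mathsf{D})\to\cdots\Bigr),\]
where $\mathsf{A}\to\mathsf{B}$ is a fixed morphism in $\mathsf{Pr}^\mathsf{L}_\kappa$ between $\mu$-compact objects. The map is built so that a lift of the relevant comparison exists uniquely iff $G(\operatorname{colim})\simeq\operatorname{colim} G$. The natural choice is to use the Haine--Ramzi--Steinebrunner style square again, comparing $G$ applied to $\mathsf{Fun}(K^\triangleright,-)$ with the pullback along the colimit map. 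This makes $Q_\kappa(F)$ a morphism of mapping animae out of objects built from $\mathsf{P}(K)$ for $\kappa$-small $K$.

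\textbf{Step 2: properties of $Q_\kappa$.} Corepresentable functors $\operatorname{Map}(\mathsf{X},-)$ on $\mathsf{Pr}^\mathsf{L}_\kappa$ preserve $\mu$-filtered colimits when $\mathsf{X}$ is $\mu$-compact. The main technical claim is stronger: they preserve $\kappa$-filtered colimits and send $\mu$-compact objects to $\mu$-compact animae, i.e.\ $\mu$-small ones. The $\mu$-smallness uses $\operatorname{ecf}(\mu)\ge\kappa$. A $\mu$-compact $\kappa$-presentable category has fewer than $\mu$ $\kappa$-compact objects, with small mapping animae, and $\operatorname{Map}$ between such categories is computed as functors between $\kappa$-small-colimit-closed subcategories. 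This yields cardinality bounds of the form $\lambda^{<\kappa}$ with $\lambda<\mu$, which is $<\mu$ exactly under the exponential cofinality hypothesis (Remark~\ref{exponential cofinality}). Preservation of $\kappa$-filtered colimits should follow because the test objects are $\kappa$-compact in the appropriate sense on the localization side, since there $\mathsf{D}$ is determined by $\mathsf{C}$. I expect this step to be the main obstacle: arranging that $Q_\kappa$ commutes with $\kappa$-filtered, not merely $\mu$-filtered, colimits, while still detecting all $\kappa$-small colimits.

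\textbf{Step 3: conclusion.} Given Steps 1--2, I would apply the accessible analogue of \cite[Prop.~3.2]{aoki2025higherpresentablecategorieslimits}: the pullback in $\mathsf{Acc}_\mu$ of $\mu$-accessible categories along functors preserving $\mu$-compacts and $\kappa$-filtered colimits. The inclusion of equivalences $\mathsf{Ani}\to\mathsf{Fun}(\Delta^1,\mathsf{Ani})$ clearly qualifies. This gives that $\overline{\mathsf{Loc}}_\kappa\hookrightarrow\mathsf{Loc}_\kappa$ is in $\mathsf{Acc}_\mu$ and preserves $\kappa$-filtered colimits. For general $\mu$-compact $\mathsf{V}$, identify $\overline{\mathsf{Loc}}_\kappa(\mathsf{V})\simeq(\overline{\mathsf{Loc}}_\kappa)_{\operatorname{id}_\mathsf{V}/}$ as before. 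Then apply the $\mathsf{Acc}_\mu$ version of Proposition~\ref{lemma on coslices}, which uses $\mu$-compactness of $\operatorname{id}_\mathsf{V}$ in $\mathsf{Loc}_\kappa$, inherited from that of $\mathsf{V}$ since $\mathsf{V}\mapsto\operatorname{id}_\mathsf{V}$ is left adjoint to a functor preserving $\mu$-filtered colimits.
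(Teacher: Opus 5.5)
Your Step~3 matches the paper: you form the pullback of $\mathsf{Loc}_\kappa\to\mathsf{Fun}(\Delta^1,\mathsf{Ani})$ along $\mathsf{Eq}(\mathsf{Ani})\simeq\mathsf{Ani}$, then pass to the accessible coslice at $\operatorname{id}_{\mathsf{V}}$. But the core of the argument, an actual functor $Q_\kappa$ detecting colimit preservation of $G$, is missing. Your formula $\operatorname{Map}(\mathsf{A},\mathsf{C})\times_{\operatorname{Map}(\mathsf{A},\mathsf{D})}\operatorname{Map}(\mathsf{B},\mathsf{D})\to\cdots$ has no specified target. Nothing supports the claim that a lift exists uniquely exactly when $G$ commutes with the colimit.

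The obstruction is as follows. The comparison to be tested is $\operatorname{Map}_{\mathsf{C}}(X,\operatorname{colim}_{\mathsf{K}}G\circ v)\to\operatorname{Map}_{\mathsf{D}}(F(X),\operatorname{colim}_{\mathsf{K}}v)$ for $X\in\mathsf{C}^\kappa$ and $v\colon\mathsf{K}\to\mathsf{D}^\kappa$. It involves $G\circ v$, which does not land in $\mathsf{C}^\kappa$. Limits of mapping animae out of test objects such as $\mathsf{P}(\mathsf{K})$ only produce limits of cores like $\mathsf{Fun}(\mathsf{K},\mathsf{C}^\kappa)^\simeq$, and there is no evident way to recover this source from them. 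Nor does the Haine--Ramzi--Steinebrunner trick have an evident analogue. Colimit cocones in $\mathsf{Fun}(\mathsf{K}^{\triangleright},-)$ are not closed under limits, so ``$G$ preserves colimit cocones'' is not visibly a pullback condition in $\mathsf{Pr}^{\mathsf{R}}_\kappa$ that could be dualized to $\mathsf{Pr}^{\mathsf{L}}_\kappa$.

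The missing idea (Proposition~\ref{key proposition}) is to express $G\circ v$ through $F$ alone. The category $\mathsf{J}_\kappa(F,v)$ of pairs $(u\colon\mathsf{K}\to\mathsf{C}^\kappa,\ \alpha\colon F\circ u\to v)$ is, by adjunction, equivalent to $\mathsf{Fun}(\mathsf{K},\mathsf{C}^\kappa)_{/G\circ v}$. Hence it is $\kappa$-filtered with colimit $G\circ v$, and the source becomes $\operatorname{colim}_{\mathsf{J}_\kappa(F,v)}\operatorname{Map}(X,\operatorname{colim}_{\mathsf{K}}u)$. This is a colimit over non-invertible data. That is why the paper takes as source of $Q^{\mathsf{K}}_\kappa(F)$ the groupoid completion $|\mathsf{S}^{\mathsf{K}}_\kappa(F)|$ of an explicit category. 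It identifies this source fibrewise over $(\mathsf{C}^\kappa)^\simeq\times\mathsf{Fun}(\mathsf{K},\mathsf{D}^\kappa)^\simeq$ via straightening and Lemma~\ref{groupoid completion and fibers}. The target is a fibre product of cores $T^{\mathsf{K}}_\kappa(F)$. Moreover, $\mathsf{K}$ ranges over the set $\operatorname{I}_\kappa$ of spans and $\kappa$-small discrete categories, with $Q_\kappa$ the coproduct, rather than over a single ``universal'' $\kappa$-small shape.

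Built this way, $Q_\kappa$ also removes the obstacle you flag in Step~2, with no appeal to the localization condition. Every ingredient preserves $\kappa$-filtered colimits, and also $\mu$-compact objects when $\operatorname{ecf}(\mu)\geq\kappa$. The ingredients are $\mathsf{C}\mapsto\mathsf{C}^\kappa$, $\mathsf{Fun}(\mathsf{K},-)$ and $\mathsf{Fun}(\mathsf{K},-)^\simeq$ for $\kappa$-small $\mathsf{K}$, fibre products in $\mathsf{Cat}$, and groupoid completion.

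Your claims about corepresentables, by contrast, hold only for $\kappa$-compact test objects such as $\mathsf{P}(\mathsf{K})$ with $\mathsf{K}$ $\kappa$-small. Take $\mathsf{X}=\mathsf{P}(S)$ for a set $S$ with $\kappa\leq\#S<\mu$; it is $\mu$-compact. Then $\operatorname{Map}(\mathsf{X},-)\simeq\prod_{S}((-)^\kappa)^\simeq$ does not preserve $\kappa$-filtered colimits. For $\#S=2^{<\kappa}$ and $\mu=(2^{<\kappa})^+$, its value on $\mathsf{X}$ itself is not $\mu$-small.

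Finally, the fact that $\mathsf{C}^\kappa$ is $\mu$-small for $\mu$-compact $\mathsf{C}\in\mathsf{Pr}^{\mathsf{L}}_\kappa$ is not a formality. It is Lemma~\ref{compactness bound}, proved via $\mathsf{Pr}^{\mathsf{L}}_\kappa\simeq\mathsf{Rex}_\kappa$, monadicity over $\mathsf{Cat}$ and the Henry--Meadows pullback square. Your argument would need to supply it.
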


The proof of this result, like that of the main theorem of the previous section, requires some preparation. We begin with a simple observation. Let $F\colon \mathsf{C}\to \mathsf{D}$ be a morphism in $\mathsf{Pr}^\mathsf{L}_\kappa$, with right adjoint $G\colon \mathsf{D}\to \mathsf{C}$. Since $G$ preserves $\kappa$-filtered colimits, it preserves all small colimits if and only if its restriction to $\mathsf{D}^\kappa$ preserves pushouts and $\kappa$-small coproducts \cite[Props.~4.4.2.7 and 5.5.1.9]{lurie2009HTT}. This motivates the following definition.

\begin{definition}\label{I kappa}
    Let $\kappa$ be a regular cardinal. We denote by $\operatorname{I}_\kappa$ a set consisting of the walking span $\Lambda^2_0$ and one discrete category of each cardinality strictly less than $\kappa$.
\end{definition}

We introduce the following notation, which will be convenient for the next lemma. Given a full subcategory $\mathsf{D}\subseteq\mathsf{C}$ and an object $S\in\mathsf{C}$, we set
\[\mathsf{D}_{/S}\colonequals \mathsf{D}\times_{\mathsf{C}}\mathsf{C}_{/S}.\]
Since the projection $\mathsf{C}_{/S}\to\mathsf{C}$ is an isofibration \cite[Tag~\href{https://kerodon.net/tag/0236}{0236}]{kerodon}, this fiber product has the expected concrete description: it is the full subcategory of $\mathsf{C}_{/S}$ spanned by the morphisms $X\to S$ whose source $X$ belongs to $\mathsf{D}$ \cite[Tag~\href{https://kerodon.net/tag/033T}{033T}]{kerodon}.

\begin{lemma}\label{basic lemma on functor categories}
    Let $\kappa$ be a regular cardinal, $\mathsf{C}$ be a $\kappa$-accessible category, and let $\mathsf{K}\in \operatorname{I}_\kappa$. For every diagram $u\colon \mathsf{K}\to \mathsf{C}$, the category
    \[\mathsf{Fun}(\mathsf{K},\mathsf{C}^\kappa)_{/u}\]
    is essentially small and $\kappa$-filtered. Its tautological diagram has colimit $u$ in $\mathsf{Fun}(\mathsf{K},\mathsf{C})$.
\end{lemma}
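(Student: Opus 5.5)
Two cases, according to the two kinds of elements of $\operatorname{I}_\kappa$.

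\emph{Discrete case.} Suppose $\mathsf{K}$ is discrete of cardinality $\lambda<\kappa$. Then
\[\mathsf{Fun}(\mathsf{K},\mathsf{C}^\kappa)_{/u}\simeq\prod_{k\in\mathsf{K}}(\mathsf{C}^\kappa)_{/u(k)}.\]
Since $\mathsf{C}$ is $\kappa$-accessible, each factor $(\mathsf{C}^\kappa)_{/u(k)}$ is essentially small and $\kappa$-filtered, and its tautological diagram has colimit $u(k)$ \cite[Prop.~5.4.2.2 and Cor.~5.3.5.4]{lurie2009HTT}. A product of fewer than $\kappa$ essentially small $\kappa$-filtered categories is again essentially small and $\kappa$-filtered. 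Indeed, a $\kappa$-small diagram in the product is a family of $\kappa$-small diagrams, and we extend each one separately. Moreover, each projection from the product to a factor is cofinal: its comma categories are products of the remaining $\kappa$-filtered factors, hence weakly contractible. Colimits in $\mathsf{Fun}(\mathsf{K},\mathsf{C})=\prod_k\mathsf{C}$ are computed componentwise, so the colimit of the tautological diagram is $u$.

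\emph{Span case.} Now take $\mathsf{K}=\Lambda^2_0$. The plan is to identify $\mathsf{Fun}(\mathsf{K},\mathsf{C}^\kappa)$ with $\mathsf{Fun}(\mathsf{K},\mathsf{C})^\kappa$ and then apply the general statement for accessible categories. We first show that $\mathsf{Fun}(\mathsf{K},\mathsf{C})$ is $\kappa$-accessible with $\kappa$-compact objects exactly the pointwise $\kappa$-compact diagrams. For a finite poset $\mathsf{K}$ and $\kappa$ uncountable, this is \cite[Prop.~5.4.4.3]{lurie2009HTT}, or the Kerodon analogue of Tag~06HY used above for presentable categories. If $\kappa=\aleph_0$, we argue directly instead: $\Lambda^2_0$ has no nonidentity endomorphisms and is finite, so the usual argument via left Kan extension from a vertex shows that pointwise compact diagrams are compact and generate.

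With this identification, $\mathsf{Fun}(\mathsf{K},\mathsf{C}^\kappa)_{/u}=\mathsf{Fun}(\mathsf{K},\mathsf{C})^\kappa_{/u}$, and the discrete-case argument applies again. For a $\kappa$-accessible category $\mathsf{E}$ and any $u\in\mathsf{E}$, the category $(\mathsf{E}^\kappa)_{/u}$ is essentially small and $\kappa$-filtered, with tautological colimit $u$. This follows because $\mathsf{E}\simeq\mathrm{Ind}_\kappa(\mathsf{E}^\kappa)$, and under this equivalence $u$ corresponds to the $\kappa$-filtered presheaf whose category of elements is $(\mathsf{E}^\kappa)_{/u}$.

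The main obstacle is the identification of $\kappa$-compact objects of $\mathsf{Fun}(\Lambda^2_0,\mathsf{C})$ with pointwise $\kappa$-compact diagrams, together with the statement that these generate. I would derive it from the adjunctions $\mathrm{ev}_k\dashv(\text{right Kan extension})$ and $(\text{left Kan extension})\dashv\mathrm{ev}_k$. The latter exist because $\Lambda^2_0$ is a finite poset and $\kappa$-filtered colimits commute with the finite limits involved. Concretely, the left adjoint of $\mathrm{ev}_k$ sends $X$ to the diagram given by coproducts of $X$ over hom-sets, which in a poset are just $X$ or empty. A diagram is then a finite colimit of such free diagrams, which yields both directions of the identification.
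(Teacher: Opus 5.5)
\textbf{Comparison with the paper.} Your plan for the span is the paper's proof, and the paper runs it uniformly for every $\mathsf{K}\in\operatorname{I}_\kappa$. It cites \cite[Tags~06HY and 06J3]{kerodon} for the facts that $\mathsf{Fun}(\mathsf{K},\mathsf{C})$ is $\kappa$-accessible and that its $\kappa$-compact objects are the diagrams landing in $\mathsf{C}^\kappa$. Hence $\mathsf{Fun}(\mathsf{K},\mathsf{C})\simeq\mathrm{Ind}_\kappa(\mathsf{Fun}(\mathsf{K},\mathsf{C}^\kappa))$, and the paper concludes with the standard description of an object of an $\mathrm{Ind}_\kappa$-completion as the colimit over its slice. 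Your discrete case is a different and correct route: the slice splits as a product, products of $\kappa$-filtered categories are $\kappa$-filtered, and the projections are cofinal by Quillen's Theorem A. It needs only the one-object statement, but it does not cover the span.

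\textbf{The gap.} It lies in your direct argument for $\kappa=\aleph_0$ in the span case $b\leftarrow a\to c$. The Kan extensions you use need (co)limits that $\mathsf{C}$ may not have:
\begin{itemize}
\item the left adjoint of $\mathrm{ev}_b$ takes the value $\coprod_{\emptyset}X$, i.e.\ an initial object of $\mathsf{C}$, at $a$ and at $c$;
\item the right adjoint of $\mathrm{ev}_c$ takes the value of an empty limit, i.e.\ a terminal object, at $b$;
\item writing a span as a finite colimit of free diagrams requires coproducts and pushouts in $\mathsf{C}$.
\end{itemize}
A $\kappa$-accessible category need not have any of these. The nerve of the category of fields is $\aleph_0$-accessible, yet it has no initial or terminal object and lacks pushouts and coproducts. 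Your justification (``$\kappa$-filtered colimits commute with the finite limits involved'') only addresses whether these adjoints preserve $\kappa$-filtered colimits, not whether they exist.

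\textbf{What the argument does prove.} As written, this step proves the lemma only when $\mathsf{C}$ has finite colimits and a terminal object, e.g.\ when $\mathsf{C}$ is presentable. That is in fact the only case in which the paper uses the lemma: in Proposition~\ref{key proposition}, $\mathsf{C}$ is the source of a morphism of $\mathsf{Pr}^{\mathsf{L}}_\kappa$. In that case your argument goes through. Generation follows, for instance, from Proposition~\ref{Mathews lemma}, since the free diagrams on $\kappa$-compact objects detect equivalences.

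\textbf{How to close it.} For the statement in the generality given, you have two options. You can cite a result valid for finite posets at every regular $\kappa$ and arbitrary $\kappa$-accessible $\mathsf{C}$, as the paper does. Alternatively, replace the Kan-extension argument by one that never forms (co)limits in $\mathsf{C}$. For example, build cones in $\mathsf{Fun}(\Lambda^2_0,\mathsf{C}^\kappa)_{/u}$ by first choosing the vertex $a$ in the $\kappa$-filtered category $(\mathsf{C}^\kappa)_{/u(a)}$. Then choose the two legs in $(\mathsf{C}^\kappa)_{/u(b)}$ and $(\mathsf{C}^\kappa)_{/u(c)}$; this is the Ind-analogue of Artin--Mazur's argument for finite loopless diagrams.
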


\begin{proof}
By \cite[Tags~\href{https://kerodon.net/tag/06HY}{06HY} and \href{https://kerodon.net/tag/06J3}{06J3}]{kerodon}, the category $\mathsf{Fun}(\mathsf{K},\mathsf{C})$ is $\kappa$-compactly generated, and its $\kappa$-compact objects are precisely the functors that factor through $\mathsf{C}^\kappa\hookrightarrow\mathsf{C}$. Hence the inclusion
\[\mathsf{Fun}(\mathsf{K},\mathsf{C}^\kappa)\hookrightarrow \mathsf{Fun}(\mathsf{K},\mathsf{C})\]
exhibits $\mathsf{Fun}(\mathsf{K},\mathsf{C})$ as the $\operatorname{Ind}_\kappa$-completion of $\mathsf{Fun}(\mathsf{K},\mathsf{C}^\kappa)$ \cite[Tag~\href{https://kerodon.net/tag/067C}{067C}]{kerodon}. Moreover, since $\mathsf{C}^\kappa$ is essentially small and $\mathsf{K}$ is small, the category $\mathsf{Fun}(\mathsf{K},\mathsf{C}^\kappa)$ is essentially small. The result therefore follows from \cite[Tag~\href{https://kerodon.net/tag/06GA}{06GA}]{kerodon}.
\end{proof}

\begin{lemma}\label{groupoid completion and fibers}
    Let $\mathsf{S}\to B$ be a functor from a small category to a small anima. This functor factors uniquely through the groupoid completion $\mathsf{S}\to |\mathsf{S}|$ and, for every $x\in B$, the natural map
    \[\mleft|\mathsf{S}\times_B\{x\}\mright|\to \mleft|\mathsf{S}\mright|\times_B\{x\}\]
    is an equivalence.
\end{lemma}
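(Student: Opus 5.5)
The plan has two parts. The factorization is the universal property of groupoid completion, since $B$ is an anima. For the fiber statement, I would use the straightening/unstraightening correspondence over anima. Functors $\mathsf{S}\to B$ correspond to functors $B\to\mathsf{Cat}$, via cocartesian fibrations over $B$; these are automatically also cartesian because $B$ is a groupoid. Under this correspondence, $\mathsf{S}$ is the lax (equivalently, ordinary) colimit of the functor $b\mapsto \mathsf{S}_b\colonequals\mathsf{S}\times_B\{b\}$. This uses that a functor to an anima is both cocartesian and cartesian after replacing it by an isofibration, which is harmless since the fiber product $\mathsf{S}\times_B\{x\}$ is taken in the homotopy-invariant sense.

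\textbf{Step 1: the colimit description.} For an anima $B$ and a functor $\mathsf{S}_{(-)}\colon B\to\mathsf{Cat}$, the total category of the unstraightening is the colimit $\operatorname{colim}_{b\in B}\mathsf{S}_b$ in $\mathsf{Cat}$. Indeed, for groupoid bases the Grothendieck construction computes the colimit, which is the lax colimit with all morphisms inverted; over a groupoid every lax cone is already strict (cf.\ \cite[Cor.~3.3.4.3]{lurie2009HTT}).

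\textbf{Step 2: applying groupoid completion.} The functor $|{-}|\colon\mathsf{Cat}\to\mathsf{Ani}$ is a left adjoint, so it preserves colimits. Hence $|\mathsf{S}|\simeq\operatorname{colim}_{b\in B}|\mathsf{S}_b|$ in $\mathsf{Ani}$, and this identification is compatible with the maps to $B$ because $B\simeq\operatorname{colim}_{b\in B}\ast$.

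\textbf{Step 3: descent in $\mathsf{Ani}$.} By descent in the $\infty$-topos $\mathsf{Ani}$ (colimits are universal and the straightening equivalence $\mathsf{Ani}_{/B}\simeq\mathsf{Fun}(B,\mathsf{Ani})$ holds), the fiber over $x$ of $\operatorname{colim}_b|\mathsf{S}_b|\to B$ is exactly $|\mathsf{S}_x|$. More precisely, the object of $\mathsf{Ani}_{/B}$ corresponding to the functor $b\mapsto|\mathsf{S}_b|$ is its colimit over $B$, and its fiber at $x$ is the value $|\mathsf{S}_x|$. It remains to check that the comparison map in the statement is the one produced by this identification; this follows by naturality, restricting along $\{x\}\to B$.

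An alternative route is to apply Quillen's Theorem B in its $\infty$-categorical form: transition maps between fibers are equivalences because $B$ is a groupoid, so fibers of the completed map agree with completions of fibers. The main obstacle is Step 1 together with the naturality in Step 3, namely making sure that the colimit decomposition of $\mathsf{S}$ is genuinely compatible with the projection to $B$. I would handle this by working entirely in $\mathsf{Cat}_{/B}\simeq\mathsf{Fun}(B,\mathsf{Cat})$ for anima $B$, and observing that $|{-}|$ over $B$ is induced by postcomposition with $|{-}|\colon\mathsf{Cat}\to\mathsf{Ani}$ under this equivalence.
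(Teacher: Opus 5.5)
Your argument is correct, but it takes a different route from the paper.

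\textbf{The paper's route.} The paper works with the Rezk nerve. It writes $|\mathsf{S}|$ as the geometric realization of the simplicial anima $[n]\mapsto N_n(\mathsf{S})=\mathsf{Fun}([n],\mathsf{S})^\simeq$. It then notes that each $N_n\simeq\operatorname{Map}_{\mathsf{Cat}}([n],-)$ preserves pullbacks and is constant on the anima $B$. This gives $N_n(\mathsf{S}\times_B\{x\})\simeq N_n(\mathsf{S})\times_B\{x\}$ levelwise, and the paper concludes by universality of colimits in $\mathsf{Ani}$, applied to a single geometric realization.

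\textbf{Your route.} You straighten over $B$, using $\mathsf{Cat}_{/B}\simeq\mathsf{Fun}(B,\mathsf{Cat})$ and $\mathsf{Ani}_{/B}\simeq\mathsf{Fun}(B,\mathsf{Ani})$.

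\textbf{What each buys.} The paper's argument is lighter. It needs only the Rezk-nerve model of groupoid completion and descent in $\mathsf{Ani}$, and never invokes straightening for $\mathsf{Cat}$-valued functors. The comparison map is also visibly the realization of levelwise equivalences, so no mate or Beck--Chevalley identification is needed. Your argument is more conceptual and proves more: relative groupoid completion $\mathsf{Cat}_{/B}\to\mathsf{Ani}_{/B}$ is identified with postcomposition by $|-|$, naturally in $x$.

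\textbf{A simplification.} Your closing observation already suffices by itself, so Steps 1 and 2 can be dropped:
\begin{itemize}
\item $(\mathsf{S}\to B)\mapsto(|\mathsf{S}|\to B)$ is left adjoint to $\mathsf{Ani}_{/B}\hookrightarrow\mathsf{Cat}_{/B}$.
\item Under straightening, this inclusion corresponds to postcomposition with $\mathsf{Ani}\hookrightarrow\mathsf{Cat}$.
\item Uniqueness of adjoints then identifies relative groupoid completion with postcomposition by $|-|$.
\item Fibers correspond to evaluations, which gives the claim.
\end{itemize}

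\textbf{A correction in Step 1.} The colimit of a $\mathsf{Cat}$-valued functor is its Grothendieck construction with the \emph{cocartesian} edges inverted, not all morphisms; inverting all morphisms would produce $|\mathsf{S}|$. The real reason nothing needs inverting over a groupoid base is that the cocartesian edges are exactly the equivalences of $\mathsf{S}$ (dual of \cite[Prop.~2.4.1.5]{lurie2009HTT}). The same fact shows that every functor over $B$ preserves cocartesian edges. This is what makes $\mathsf{Cat}_{/B}\simeq\mathsf{Fun}(B,\mathsf{Cat})$ hold in the first place.
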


\begin{proof}
The existence and uniqueness of the factorization follow from the universal property of groupoid completion $\mleft|-\mright|\colon \mathsf{Cat}\to \mathsf{Ani}$, as the left adjoint to the natural inclusion $\mathsf{Ani}\hookrightarrow\mathsf{Cat}$. The \emph{Rezk nerve}
\[N\colon \mathsf{Cat}\to \mathsf{Fun}(\Delta^\mathsf{op},\mathsf{Ani}),\qquad
\mathsf{S}\mapsto \mleft([n]\mapsto N_n(\mathsf{S})\colonequals\mathsf{Fun}([n],\mathsf{S})^\simeq\mright),\]
is fully faithful, with essential image the complete Segal animae. Under this identification, animae correspond to constant simplicial animae, and groupoid completion is computed by geometric realization:
\[\mleft|\mathsf{S}\mright|\simeq \operatornamewithlimits{colim}_{[n]\in\Delta^\mathsf{op}}N_n(\mathsf{S}).\]
See \cite[\S2]{aaron}.

Since $N_n\simeq\operatorname{Map}_{\mathsf{Cat}}([n],-)$ preserves limits, we have natural equivalences
\[N_n(\mathsf{S}\times_B\{x\})\simeq N_n(\mathsf{S})\times_B\{x\},\]
compatible with the simplicial structure maps. Taking geometric realizations and using the universality of colimits in $\mathsf{Ani}$ \cite[Tag~\href{https://kerodon.net/tag/05V5}{05V5}]{kerodon}, we conclude that the natural map
\[\mleft|\mathsf{S}\times_B\{x\}\mright|\to \mleft|\mathsf{S}\mright|\times_B\{x\}\]
is an equivalence.
\end{proof}

We now construct a functor that detects whether the right adjoint of a morphism in $\mathsf{Pr}^\mathsf{L}_\kappa$ preserves small colimits. In the proof of Theorem~\ref{main categorical theorem}, this functor plays a role analogous to that of $P_\kappa$ in the proof of Theorem~\ref{walking localization}.

\begin{proposition}\label{key proposition}
    Let $\kappa$ be a regular cardinal. There exists a functor
    \[Q_\kappa\colon \mathsf{Fun}(\Delta^1,\mathsf{Pr}^\mathsf{L}_\kappa)\to \mathsf{Fun}(\Delta^1,\mathsf{Ani})\]
    such that $Q_\kappa(F)$ is an equivalence if and only if the right adjoint of $F$ preserves small colimits.
\end{proposition}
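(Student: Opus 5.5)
Let $F\colon\mathsf{C}\to\mathsf{D}$ be a morphism in $\mathsf{Pr}^\mathsf{L}_\kappa$ with right adjoint $G$. By the observation preceding Definition~\ref{I kappa}, $G$ preserves small colimits if and only if, for every $\mathsf{K}\in\operatorname{I}_\kappa$ and every diagram $u\colon\mathsf{K}\to\mathsf{D}^\kappa$, the comparison map $\operatorname{colim} G u\to G(\operatorname{colim} u)$ is an equivalence. The plan is to turn this into a single map of animae. Since $\mathsf{C}$ is $\kappa$-compactly generated, the comparison map is an equivalence if and only if, for every $c\in\mathsf{C}^\kappa$, the induced map on $\operatorname{Map}_\mathsf{C}(c,-)$ is an equivalence. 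By adjunction this becomes
\[\operatorname{Map}_\mathsf{C}(c,\operatorname{colim} Gu)\to\operatorname{Map}_\mathsf{D}(Fc,\operatorname{colim} u).\]

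We encode all these maps simultaneously as fibers of one map of animae, using Lemmas~\ref{basic lemma on functor categories} and \ref{groupoid completion and fibers}.

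\textbf{Domain.} Fix $\mathsf{K}$ and consider the small category $\mathsf{S}_\mathsf{K}(F)$ of triples $(u, v, c\to\operatorname{colim} v)$. Here $u\colon\mathsf{K}\to\mathsf{D}^\kappa$, $v\in\mathsf{Fun}(\mathsf{K},\mathsf{C}^\kappa)_{/Gu}$, and $c\in\mathsf{C}^\kappa$. By Lemma~\ref{basic lemma on functor categories}, $\mathsf{Fun}(\mathsf{K},\mathsf{C}^\kappa)_{/Gu}$ is $\kappa$-filtered with colimit $Gu$. Since $c$ is $\kappa$-compact, $\operatorname{Map}(c,\operatorname{colim} Gu)$ is the colimit over $v$ of $\operatorname{Map}(c,\operatorname{colim} v)$. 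Because filtered colimits in $\mathsf{Ani}$ are groupoid completions of the corresponding Grothendieck constructions, the groupoid completion of this category over the fixed pair $(u,c)$ recovers $\operatorname{Map}_\mathsf{C}(c,\operatorname{colim} Gu)$.

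\textbf{Target.} The analogous category $\mathsf{T}_\mathsf{K}(F)$ uses $w\in\mathsf{Fun}(\mathsf{K},\mathsf{D}^\kappa)_{/u}$, whose colimit is $u$ and which is in fact cofinal with terminal object $\operatorname{id}_u$, together with a map $Fc\to\operatorname{colim} w$. There is a functor $\mathsf{S}_\mathsf{K}(F)\to\mathsf{T}_\mathsf{K}(F)$ sending $v$ to the composite $Fv\to FGu\to u$ and applying $F$ to $c\to\operatorname{colim} v$.

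\textbf{Definition of $Q_\kappa$.} Define
\[Q_\kappa(F)\colon\coprod_{\mathsf{K}\in\operatorname{I}_\kappa}\bigl|\mathsf{S}_\mathsf{K}(F)\bigr|\to\coprod_{\mathsf{K}\in\operatorname{I}_\kappa}\bigl|\mathsf{T}_\mathsf{K}(F)\bigr|.\]
Both sides map compatibly to the base anima $B_\mathsf{K}=\mathsf{Fun}(\mathsf{K},\mathsf{D}^\kappa)^\simeq\times(\mathsf{C}^\kappa)^\simeq$. By Lemma~\ref{groupoid completion and fibers}, the fiber over $(u,c)$ is exactly the comparison map displayed above. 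A map over a common base is an equivalence if and only if it is an equivalence on every fiber. Hence $Q_\kappa(F)$ is an equivalence if and only if $G$ preserves the relevant colimits, that is, if and only if $G$ preserves all small colimits.

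\textbf{Main obstacle.} The hardest step is functoriality in $F$. A commutative square in $\mathsf{Pr}^\mathsf{L}_\kappa$ is not compatible with right adjoints in a naive way, since there is only a Beck--Chevalley map between them. One therefore has to choose the categories $\mathsf{S}_\mathsf{K}$ and $\mathsf{T}_\mathsf{K}$ so that they are expressed purely through left adjoints and $\kappa$-compact objects. The data of $v$ over $Gu$ should be rewritten as $v$ together with $Fv\to u$ via adjunction, with $u$ compact. Then everything is a construction on the functor $F^\kappa\colon\mathsf{C}^\kappa\to\mathsf{D}^\kappa$ together with mapping spaces in $\operatorname{Ind}_\kappa$, which is functorial via $\mathsf{Pr}^\mathsf{L}_\kappa\simeq$ idempotent-complete categories with $\kappa$-small colimits. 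Working this out carefully, with straightening to produce an honest functor to $\mathsf{Fun}(\Delta^1,\mathsf{Ani})$, is where I expect most of the effort to go. The fiberwise identification itself is routine given the two preceding lemmas.
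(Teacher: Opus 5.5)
Your fiberwise analysis is the paper's. You use the same reduction to $\mathsf{K}\in\operatorname{I}_\kappa$ and $\kappa$-compact test objects, the same $\kappa$-filtered slice from Lemma~\ref{basic lemma on functor categories}, the same base anima $(\mathsf{C}^\kappa)^\simeq\times\mathsf{Fun}(\mathsf{K},\mathsf{D}^\kappa)^\simeq$, and the same appeal to Lemma~\ref{groupoid completion and fibers} plus straightening to read off the comparison map on fibers. Your target is a harmless variant. The paper takes it to be directly the anima of triples $(X,v,g\colon F(X)\to\operatorname{colim}_\mathsf{K}v)$. You use the slice $\mathsf{Fun}(\mathsf{K},\mathsf{D}^\kappa)_{/u}$ and its terminal object instead, which absorbs the final composition with $\operatorname{colim}_\mathsf{K}$ of the counit at the price of one more groupoid completion.

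What is missing is the functor itself, which is what the statement asserts. As you define it, $\mathsf{S}_\mathsf{K}(F)$ refers to $G$, its morphisms are left unspecified, and the map to $\mathsf{T}_\mathsf{K}(F)$ uses the counit. None of this is functorial along morphisms of $\mathsf{Fun}(\Delta^1,\mathsf{Pr}^\mathsf{L}_\kappa)$, and you name the remedy without carrying it out. The paper does it as follows; in its notation, $v$ is the diagram in $\mathsf{D}^\kappa$.
\begin{itemize}
\item $\mathsf{J}^\mathsf{K}_\kappa(F)$ is the pullback of $(s,t)\colon\mathsf{Fun}(\mathsf{K}\times\Delta^1,\mathsf{D}^\kappa)\to\mathsf{Fun}(\mathsf{K},\mathsf{D}^\kappa)\times\mathsf{Fun}(\mathsf{K},\mathsf{D}^\kappa)$ along $(F\circ-)\times i$, where $i$ is the core inclusion. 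Its objects are $(u,v,\alpha\colon F\circ u\to v)$, and by adjunction its fiber over $v$ is $\mathsf{Fun}(\mathsf{K},\mathsf{C}^\kappa)_{/G\circ v}$.
\item $\mathsf{S}^\mathsf{K}_\kappa(F)$ is the pullback of $(s,t)\colon\mathsf{Fun}(\Delta^1,\mathsf{C}^\kappa)\to\mathsf{C}^\kappa\times\mathsf{C}^\kappa$ along the functor $(\mathsf{C}^\kappa)^\simeq\times\mathsf{J}^\mathsf{K}_\kappa(F)\to\mathsf{C}^\kappa\times\mathsf{C}^\kappa$ given by the core inclusion and $\operatorname{colim}_\mathsf{K}$. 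This is what forces the $X$- and $v$-components of morphisms to be invertible.
\item The comparison is the composite of $F(f)$, the inverse of the equivalence $\operatorname{colim}_\mathsf{K}(F\circ-)\to F(\operatorname{colim}_\mathsf{K}-)$ on $\mathsf{Fun}(\mathsf{K},\mathsf{C}^\kappa)$, and $\operatorname{colim}_\mathsf{K}\alpha$. It sends every morphism of $\mathsf{S}^\mathsf{K}_\kappa(F)$ to an equivalence, so it lands in the anima $T^\mathsf{K}_\kappa(F)$ and factors through $|\mathsf{S}^\mathsf{K}_\kappa(F)|$.
\end{itemize}
Because $\operatorname{colim}_\mathsf{K}$ preserves $\kappa$-compactness, everything stays inside $\mathsf{C}^\kappa$ and $\mathsf{D}^\kappa$, so no mapping spaces in $\operatorname{Ind}_\kappa$ are needed. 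Every ingredient is obtained from $F^\kappa$ by cores, functor categories, fiber products and groupoid completion, so functoriality in $F$ is automatic. In particular, straightening is never needed to build the functor, only pointwise to identify fibers. Your $\mathsf{T}_\mathsf{K}$ and the map into it can be made functorial in exactly the same way.
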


\begin{proof}
    Let $F\colon \mathsf{C}\to \mathsf{D}$ be a morphism in $\mathsf{Pr}^\mathsf{L}_\kappa$, with right adjoint $G\colon \mathsf{D}\to \mathsf{C}$. Fix an object $\mathsf{K}$ of $\operatorname{I}_\kappa$ and a diagram $v\colon \mathsf{K}\to \mathsf{D}^\kappa$. Since $\operatorname{Map}_{\mathsf{C}}(c,-)$ are jointly conservative in $\mathsf{C}$ for $c\in\mathsf{C}^\kappa$, the natural map
    \[\varphi\colon \operatornamewithlimits{colim}_{\mathsf{K}} G\circ v\to G\mleft( \operatornamewithlimits{colim}_{\mathsf{K}}v\mright)\]
    is an equivalence if and only if $\operatorname{Map}_{\mathsf{C}}(X,\varphi)$ is an equivalence for all $X\in \mathsf{C}^\kappa$.

    To express these maps in finitary terms and solely in terms of the left adjoint $F$, consider the category $\mathsf{J}_\kappa(F,v)$ defined by the fiber product
\[\begin{tikzcd}[ampersand replacement=\&]
	{\mathsf{J}_\kappa(F,v)} \&\& {\mathsf{Fun}(\mathsf{K},\mathsf{D}^\kappa)_{/v}} \\
	{\mathsf{Fun}(\mathsf{K},\mathsf{C}^\kappa)} \&\& {\mathsf{Fun}(\mathsf{K},\mathsf{D}^\kappa).}
	\arrow[from=1-1, to=1-3]
	\arrow[from=1-1, to=2-1]
	\arrow[from=1-3, to=2-3]
	\arrow["{F\circ -}", from=2-1, to=2-3]
\end{tikzcd}\]
Thus, an object of $\mathsf{J}_\kappa(F,v)$ is a pair $(u,\alpha)$ consisting of a functor $u\colon \mathsf{K}\to\mathsf{C}^\kappa$ and a natural transformation $\alpha\colon F\circ u\to v$. By adjunction, we obtain an equivalence
\[\mathsf{J}_\kappa(F,v)\simeq \mathsf{Fun}(\mathsf{K},\mathsf{C}^\kappa)_{/G\circ v}\]
over $\mathsf{Fun}(\mathsf{K},\mathsf{C}^\kappa)$.

Lemma~\ref{basic lemma on functor categories} consequently shows that $\mathsf{J}_\kappa(F,v)$ is $\kappa$-filtered and that the tautological diagram
\[\Phi_\kappa(F,v)\colon \mathsf{J}_\kappa(F,v)\to\mathsf{Fun}(\mathsf{K},\mathsf{C}^\kappa)\hookrightarrow\mathsf{Fun}(\mathsf{K},\mathsf{C})\]
has colimit $G\circ v$. Since, by \cite[Tag~\href{https://kerodon.net/tag/064X}{064X}]{kerodon}, a $\kappa$-small colimit of $\kappa$-compact objects remains $\kappa$-compact, we conclude that $\operatorname{Map}_{\mathsf{C}}(X,\varphi)$ is a morphism of the form
\begin{equation}\label{the desired equivalence}
\operatornamewithlimits{colim}_{\mathsf{J}_\kappa(F,v)}\operatorname{Map}_{\mathsf{C}^\kappa}\mleft(X,\operatornamewithlimits{colim}_{\mathsf{K}}\Phi_\kappa(F,v)\mright) \to  \operatorname{Map}_{\mathsf{D}^\kappa}\mleft(F(X),\operatornamewithlimits{colim}_{\mathsf{K}}v\mright).
    \tag{$\ast$}
\end{equation}

For each $\mathsf{K}\in \operatorname{I}_\kappa$, denote by $Q^\mathsf{K}_{\kappa}(F)$ the colimit in $\mathsf{Fun}(\Delta^1,\mathsf{Ani})$ of the maps \eqref{the desired equivalence} indexed by the anima
\[B^\mathsf{K}_\kappa(F)\colonequals (\mathsf{C}^\kappa)^\simeq\times \mathsf{Fun}(\mathsf{K},\mathsf{D}^\kappa)^\simeq,\]
and let $Q_{\kappa}(F)$ be the coproduct of the maps $Q^\mathsf{K}_{\kappa}(F)$ over all $\mathsf{K}\in \operatorname{I}_\kappa$. Since small colimits are strongly universal in $\mathsf{Ani}$ \cite[Tag~\href{https://kerodon.net/tag/05VY}{05VY}]{kerodon}, the preceding discussion shows that $Q_{\kappa}(F)$ is an equivalence if and only if the right adjoint $G$ preserves small colimits. In the remainder of the proof, we give an equivalent construction of $Q^\mathsf{K}_\kappa(F)$ that makes its functoriality in $F$ transparent.

\textsc{Step I: Construction of the target of} $Q^\mathsf{K}_\kappa(F)$. Denote by $s$ and $t$ the source and target functors $\mathsf{Fun}(\Delta^1,\mathsf{D}^\kappa)\to \mathsf{D}^\kappa$ and consider the fiber product
\[\begin{tikzcd}[ampersand replacement=\&]
	{T^\mathsf{K}_\kappa(F)} \&\& {\mathsf{Fun}(\Delta^1,\mathsf{D}^\kappa)^\simeq} \\
	{B^\mathsf{K}_\kappa(F)} \&\& {(\mathsf{D}^\kappa)^\simeq\times (\mathsf{D}^\kappa)^\simeq.}
	\arrow[from=1-1, to=1-3]
	\arrow[from=1-1, to=2-1]
	\arrow["{(s,t)}", from=1-3, to=2-3]
	\arrow["{F\times\operatornamewithlimits{colim}_\mathsf{K}}", from=2-1, to=2-3]
\end{tikzcd}\]
Since $(s,t)\colon \mathsf{Fun}(\Delta^1,\mathsf{D}^\kappa)^\simeq\to (\mathsf{D}^\kappa)^\simeq\times (\mathsf{D}^\kappa)^\simeq$ is a Kan fibration \cite[Tag~\href{https://kerodon.net/tag/01F4}{01F4}]{kerodon}, objects of $T^\mathsf{K}_\kappa(F)$ are triples $(X,v,g)$, where
\[X\in\mathsf{C}^\kappa,\qquad v\colon\mathsf{K}\to\mathsf{D}^\kappa,\qquad g\colon F(X)\to\operatornamewithlimits{colim}_{\mathsf{K}}v.\]

The projection $T^\mathsf{K}_\kappa(F)\to B^\mathsf{K}_\kappa(F)$ is itself a Kan fibration, being a base change of the source--target
functor, hence a left fibration. Its fiber over $(X,v)$ is naturally equivalent to
\[\operatorname{Map}_{\mathsf{D}^\kappa}\mleft(F(X),\operatornamewithlimits{colim}_{\mathsf{K}}v\mright).\]
Its straightening is therefore precisely the diagram $B^\mathsf{K}_\kappa(F)\to \mathsf{Ani}$ whose colimit defines the target of $Q^\mathsf{K}_\kappa(F)$. By \cite[Tag~\href{https://kerodon.net/tag/02VE}{02VE}]{kerodon}, this colimit is the groupoid completion $\mleft| T^\mathsf{K}_\kappa(F)\mright|$ of $T^\mathsf{K}_\kappa(F)$. Since $T^\mathsf{K}_\kappa(F)$ is already an anima, we obtain
\[\text{target of }Q^\mathsf{K}_\kappa(F)\simeq T^\mathsf{K}_\kappa(F).\]

\textsc{Step II: Construction of the source of} $Q^\mathsf{K}_\kappa(F)$. Let $i\colon \mathsf{Fun}(\mathsf{K},\mathsf{D}^\kappa)^\simeq\hookrightarrow \mathsf{Fun}(\mathsf{K},\mathsf{D}^\kappa)$ denote the inclusion of the core. As above, denote by $s$ and $t$ the source and target functors $\mathsf{Fun}(\mathsf{K}\times\Delta^1,\mathsf{D}^\kappa)\to \mathsf{Fun}(\mathsf{K},\mathsf{D}^\kappa)$. Define $\mathsf{J}^\mathsf{K}_\kappa(F)$ by the cartesian square
\[\begin{tikzcd}[ampersand replacement=\&]
	{\mathsf{J}^\mathsf{K}_\kappa(F)} \&\& {\mathsf{Fun}(\mathsf{K}\times \Delta^1,\mathsf{D}^\kappa)} \\
	{\mathsf{Fun}(\mathsf{K},\mathsf{C}^\kappa)\times \mathsf{Fun}(\mathsf{K},\mathsf{D}^\kappa)^\simeq} \&\& {\mathsf{Fun}(\mathsf{K},\mathsf{D}^\kappa)\times \mathsf{Fun}(\mathsf{K},\mathsf{D}^\kappa).}
	\arrow[from=1-1, to=1-3]
	\arrow[from=1-1, to=2-1]
	\arrow["{(s,t)}", from=1-3, to=2-3]
	\arrow["{(F\circ -)\times i}", from=2-1, to=2-3]
\end{tikzcd}\]
The source--target functor is an isofibration by \cite[Tag~\href{https://kerodon.net/tag/01F3}{01F3}]{kerodon}, hence the objects of $\mathsf{J}^\mathsf{K}_\kappa(F)$ are triples $(u,v,\alpha)$ consisting of functors
\[u\colon\mathsf{K}\to\mathsf{C}^\kappa,\qquad v\colon\mathsf{K}\to\mathsf{D}^\kappa\]
and a natural transformation $\alpha\colon F\circ u\to v$. Taking the fiber of $\mathsf{J}^\mathsf{K}_\kappa(F)\to \mathsf{Fun}(\mathsf{K},\mathsf{D}^\kappa)^\simeq$ over a functor $v\colon \mathsf{K}\to \mathsf{D}^\kappa$, we recover the category $\mathsf{J}_\kappa(F,v)$.

Again, let $i\colon(\mathsf{C}^\kappa)^\simeq\hookrightarrow\mathsf{C}^\kappa$ denote the inclusion of the core, and write the source and target functors $\mathsf{Fun}(\Delta^1,\mathsf{C}^\kappa)\to\mathsf{C}^\kappa$ as $s$ and $t$, respectively. Define $\mathsf{S}^\mathsf{K}_\kappa(F)$ as the fiber product
\[\begin{tikzcd}[ampersand replacement=\&]
	{\mathsf{S}^\mathsf{K}_\kappa(F)} \&\&\& {\mathsf{Fun}(\Delta^1,\mathsf{C}^\kappa)} \\
	{(\mathsf{C}^\kappa)^\simeq\times \mathsf{J}^\mathsf{K}_\kappa(F)} \& {(\mathsf{C}^\kappa)^\simeq\times \mathsf{Fun}(\mathsf{K},\mathsf{C}^\kappa)} \&\& {\mathsf{C}^\kappa\times\mathsf{C}^\kappa.}
	\arrow[from=1-1, to=1-4]
	\arrow[from=1-1, to=2-1]
	\arrow["{(s,t)}", from=1-4, to=2-4]
	\arrow[from=2-1, to=2-2]
	\arrow["{i\times\operatornamewithlimits{colim}_\mathsf{K}}", from=2-2, to=2-4]
\end{tikzcd}\]
The objects of $\mathsf{S}^\mathsf{K}_\kappa(F)$ are quintuples $(X,u,v,\alpha,f)$, where
\[X\in\mathsf{C}^\kappa,\qquad u\colon\mathsf{K}\to\mathsf{C}^\kappa,\qquad v\colon\mathsf{K}\to\mathsf{D}^\kappa,\qquad \alpha\colon F\circ u\to v,\qquad f\colon X\to\operatornamewithlimits{colim}_{\mathsf{K}}u.\]

By composing the natural projections, we obtain a functor
\[U\colon \mathsf{S}^\mathsf{K}_\kappa(F)\to (\mathsf{C}^\kappa)^\simeq\times \mathsf{J}^\mathsf{K}_\kappa(F)\to (\mathsf{C}^\kappa)^\simeq\times\mathsf{Fun}(\mathsf{K},\mathsf{C}^\kappa)\times \mathsf{Fun}(\mathsf{K},\mathsf{D}^\kappa)^\simeq\to B^\mathsf{K}_\kappa(F),\]
that maps a quintuple $(X,u,v,\alpha,f)$ to the pair $(X,v)$. Given that the fiber of the projection $\mathsf{J}^\mathsf{K}_\kappa(F)\to \mathsf{Fun}(\mathsf{K},\mathsf{D}^\kappa)^\simeq$ at $v$ is naturally equivalent to $\mathsf{J}_\kappa(F,v)$, we obtain that the fiber $\mathsf{S}^\mathsf{K}_\kappa(F)_{(X,v)}$ of $U$ at $(X,v)\in B^\mathsf{K}_\kappa(F)$ fits into the cartesian diagram
\[\begin{tikzcd}[ampersand replacement=\&]
	{\mathsf{S}^\mathsf{K}_\kappa(F)_{(X,v)}} \&\&\& {(\mathsf{C}^\kappa)_{X/}} \\
	{\mathsf{J}_\kappa(F,v)} \& {\mathsf{Fun}(\mathsf{K},\mathsf{C}^\kappa)} \&\& {\mathsf{C}^\kappa.}
	\arrow[from=1-1, to=1-4]
	\arrow[from=1-1, to=2-1]
	\arrow[from=1-4, to=2-4]
	\arrow[from=2-1, to=2-2]
	\arrow["{\operatornamewithlimits{colim}_\mathsf{K}}", from=2-2, to=2-4]
\end{tikzcd}\]

According to \cite[Tag~\href{https://kerodon.net/tag/018K}{018K}]{kerodon}, the natural functor $(\mathsf{C}^\kappa)_{X/}\to \mathsf{C}^\kappa$ is a left fibration; hence so is its pullback $\mathsf{S}^\mathsf{K}_\kappa(F)_{(X,v)}\to \mathsf{J}_\kappa(F,v)$. Its straightening is precisely the functor $\mathsf{J}_\kappa(F,v)\to \mathsf{Ani}$ that maps $(u,\alpha)$ to
\[\operatorname{Map}_{\mathsf{C}^\kappa}\mleft(X,\operatornamewithlimits{colim}_{\mathsf{K}}u\mright).\]
As a result, Lemma~\ref{groupoid completion and fibers} and \cite[Tag~\href{https://kerodon.net/tag/02VE}{02VE}]{kerodon} imply that
\[\mleft|\mathsf{S}^\mathsf{K}_\kappa(F)\mright|_{(X,v)}\simeq \mleft|\mathsf{S}^\mathsf{K}_\kappa(F)_{(X,v)}\mright|\simeq \operatornamewithlimits{colim}_{\mathsf{J}_\kappa(F,v)}\operatorname{Map}_{\mathsf{C}^\kappa}\mleft(X,\operatornamewithlimits{colim}_{\mathsf{K}}\Phi_\kappa(F,v)\mright).\]

Since $B^\mathsf{K}_\kappa(F)$ is an anima, the functor $U$ factors uniquely through the groupoid completion; yielding a functor $\widetilde{U}\colon \mleft|\mathsf{S}^\mathsf{K}_\kappa(F)\mright|\to B^\mathsf{K}_\kappa(F)$. One more application of \cite[Tag~\href{https://kerodon.net/tag/02VE}{02VE}]{kerodon} yields that
\[\text{source of }Q^\mathsf{K}_\kappa(F)\simeq \mleft|\mathsf{S}^\mathsf{K}_\kappa(F)\mright|.\]

\textsc{Step III: Construction of the map} $Q^\mathsf{K}_\kappa(F)\colon \mleft|\mathsf{S}^\mathsf{K}_\kappa(F)\mright|\to T^\mathsf{K}_\kappa(F)$. We begin by constructing a functor $R\colon\mathsf{S}^\mathsf{K}_\kappa(F)\to \mathsf{Fun}(\Delta^1,\mathsf{D}^\kappa)$ sending $(X,f,u,v,\alpha)$ to the composite
\[F(X) \xrightarrow{\ F(f)\ }F\mleft(\operatornamewithlimits{colim}_\mathsf{K} u\mright)\simeq \operatornamewithlimits{colim}_\mathsf{K} F\circ u \xrightarrow{\ \operatornamewithlimits{colim}_\mathsf{K}\alpha\ }\operatornamewithlimits{colim}_\mathsf{K} v.\]
We construct separately the three functors corresponding to the three morphisms in this composite.

In what follows, any unlabeled functor is understood to be a projection from a fiber product. The first functor $R_1\colon \mathsf{S}^\mathsf{K}_\kappa(F)\to \mathsf{Fun}(\Delta^1,\mathsf{D}^\kappa)$ is the composite
\[\mathsf{S}^\mathsf{K}_\kappa(F)\to \mathsf{Fun}(\Delta^1,\mathsf{C}^\kappa)\xrightarrow{\ F\circ -\ }\mathsf{Fun}(\Delta^1,\mathsf{D}^\kappa).\]
By the cartesian square defining $\mathsf{S}^\mathsf{K}_\kappa(F)$, it sends $(X,f,u,v,\alpha)$ to $F(f)\colon F(X)\to F\mleft(\operatornamewithlimits{colim}_\mathsf{K}u\mright)$. Similarly, the third functor $R_3\colon \mathsf{S}^\mathsf{K}_\kappa(F)\to \mathsf{Fun}(\Delta^1,\mathsf{D}^\kappa)$ is the composite
\[\mathsf{S}^\mathsf{K}_\kappa(F)\to \mathsf{J}^\mathsf{K}_\kappa(F)\to \mathsf{Fun}(\mathsf{K}\times \Delta^1,\mathsf{D}^\kappa)\simeq \mathsf{Fun}(\Delta^1,\mathsf{Fun}(\mathsf{K},\mathsf{D}^\kappa))\xrightarrow{\ \operatornamewithlimits{colim}_\mathsf{K}\ }\mathsf{Fun}(\Delta^1,\mathsf{D}^\kappa).\]

It remains to construct the middle morphism. Let $\gamma_{\mathsf{K}}(F)\colon \mathsf{Fun}(\mathsf{K},\mathsf{C}^\kappa)\to \mathsf{Fun}(\Delta^1,\mathsf{D}^\kappa)$ be the functor induced by the natural transformation
\[\operatornamewithlimits{colim}_\mathsf{K} (F\circ -)\to F\mleft(\operatornamewithlimits{colim}_\mathsf{K}-\mright).\]
Since $F$ preserves $\kappa$-small colimits, $\gamma_\mathsf{K}(F)$ takes every diagram $u\colon\mathsf{K}\to\mathsf{C}^\kappa$ to an equivalence. Let $\overline{\gamma}_{\mathsf{K}}(F)\colon \mathsf{Fun}(\mathsf{K},\mathsf{C}^\kappa)\to \mathsf{Fun}(\Delta^1,\mathsf{D}^\kappa)$ be the functor obtained by taking the inverse of this equivalence. We then define $R_2$ as the composite
\[\mathsf{S}^\mathsf{K}_\kappa(F)\to \mathsf{J}^\mathsf{K}_\kappa(F)\to \mathsf{Fun}(\mathsf{K},\mathsf{C}^\kappa)\xrightarrow{\ \overline{\gamma}_{\mathsf{K}}(F) \ } \mathsf{Fun}(\Delta^1,\mathsf{D}^\kappa).\]

Finally, we define $R\colon \mathsf{S}^\mathsf{K}_\kappa(F)\to \mathsf{Fun}(\Delta^1,\mathsf{D}^\kappa)$ as the composite
\[\mathsf{S}^\mathsf{K}_\kappa(F) \xrightarrow{\ (R_1,R_2,R_3)\ } \mathsf{Fun}(\Delta^1,\mathsf{D}^\kappa)\times_{\mathsf{D}^\kappa} \mathsf{Fun}(\Delta^1,\mathsf{D}^\kappa)\times_{\mathsf{D}^\kappa} \mathsf{Fun}(\Delta^1,\mathsf{D}^\kappa)\to \mathsf{Fun}(\Delta^1,\mathsf{D}^\kappa),\]
where the morphism on the right is composition. By definition, a morphism
\[(X,f,u,v,\alpha)\to (X^\prime,f^\prime,u^\prime,v^\prime,\alpha^\prime)\]
in $\mathsf{S}^\mathsf{K}_\kappa(F)$ has components
$X\to X^\prime$ and $v\to v^\prime$ that are equivalences. Hence $R$ carries every morphism to an equivalence in $\mathsf{Fun}(\Delta^1,\mathsf{D}^\kappa)$, so that it factors through $\mathsf{Fun}(\Delta^1,\mathsf{D}^\kappa)^\simeq$.

Using the universal property of pullbacks, the functor $R\colon \mathsf{S}^\mathsf{K}_\kappa(F)\to \mathsf{Fun}(\Delta^1,\mathsf{D}^\kappa)^\simeq$ together with the natural projection $\mathsf{S}^\mathsf{K}_\kappa(F)\to B^\mathsf{K}_\kappa(F)$ give rise to a functor $\mathsf{S}^\mathsf{K}_\kappa(F)\to T^\mathsf{K}_\kappa(F)$. Since $T^\mathsf{K}_\kappa(F)$ is an anima, it induces the desired map
\[Q^\mathsf{K}_\kappa(F)\colon\mleft|\mathsf{S}^\mathsf{K}_\kappa(F)\mright|\to T^\mathsf{K}_\kappa(F).\]
Opening up the constructions above, we see that the map induced on the fibers over $(X,v)\in B^\mathsf{K}_\kappa(F)$ is the map \eqref{the desired equivalence}.
\end{proof}

We next establish the properties of $Q_\kappa$ needed for the accessibility argument. We first prove a compactness bound for the functor $\mathsf{C}\mapsto\mathsf{C}^\kappa$. 

\begin{lemma}\label{compactness bound}
    Let $\mu>\kappa\geq \aleph_1$ be regular cardinals satisfying $\operatorname{ecf}(\mu)\geq \kappa$. Then the functor
    \[\mathsf{Pr}^\mathsf{L}_\kappa\to \mathsf{Cat},\quad \mathsf{C}\mapsto \mathsf{C}^\kappa\]
     preserves $\mu$-compact objects.
\end{lemma}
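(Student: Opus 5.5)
The plan is to identify $\mu$-compact objects on both sides explicitly and compare them through the equivalence $\mathsf{Pr}^\mathsf{L}_\kappa\simeq\mathsf{Cat}^{\mathrm{rex}}_{\kappa,\mathrm{idem}}$, $\mathsf{C}\mapsto\mathsf{C}^\kappa$, with inverse $\operatorname{Ind}_\kappa$. Here $\mathsf{Cat}^{\mathrm{rex}}_{\kappa,\mathrm{idem}}$ denotes small idempotent-complete categories with $\kappa$-small colimits and $\kappa$-small-colimit-preserving functors.

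On the target side I will use the standard description for $\mu$ uncountable regular: a small category $\mathsf{E}$ is $\mu$-compact in $\mathsf{Cat}$ if and only if $\pi_0(\mathsf{E}^\simeq)$ has cardinality $<\mu$ and every mapping anima of $\mathsf{E}$ is $\mu$-compact in $\mathsf{Ani}$. It then suffices to show that the full subcategory $\mathcal{M}\subseteq\mathsf{Pr}^\mathsf{L}_\kappa$ of those $\mathsf{C}$ for which $\mathsf{C}^\kappa$ has these two properties satisfies two conditions:
\begin{enumerate}
\item $\mathcal{M}$ contains the $\kappa$-compact objects of $\mathsf{Pr}^\mathsf{L}_\kappa$;
\item $\mathcal{M}$ is closed under $\mu$-small colimits and retracts.
\end{enumerate}
Since $\mathsf{Pr}^\mathsf{L}_\kappa$ is $\kappa$-presentable, hence $\mu$-presentable with $\mu$-compact objects the retracts of $\mu$-small colimits of $\kappa$-compact ones \cite[Prop.~5.4.2.11]{lurie2009HTT}, this proves the lemma. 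Closure under retracts is immediate, because $\mathsf{C}\mapsto\mathsf{C}^\kappa$ is a functor and the conditions pass to retracts in $\mathsf{Cat}$.

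For the generators, a $\kappa$-compact object of $\mathsf{Pr}^\mathsf{L}_\kappa$ is a retract of $\operatorname{Ind}_\kappa$ of a category freely generated under $\kappa$-small colimits by a $\kappa$-small category $\mathsf{A}$ with $\kappa$-small relations. Its $\kappa$-compact objects are then retracts of $\kappa$-small colimits of representables in $\mathsf{P}(\mathsf{A})$. I will bound the cardinalities by transfinite closure: starting from $\mathsf{A}$, iterate ``add all $\kappa$-small colimits'' $\kappa$ times. Each stage has at most $\lambda^{<\kappa}$ objects when the previous one has $\lambda$, and mapping animae are $\kappa$-small limits of colimits of the previous ones. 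Everything therefore stays below $(2^{<\kappa})^+\leq\mu$.

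For closure under $\mu$-small colimits, I will write a $\mu$-small colimit $\mathsf{C}=\operatornamewithlimits{colim}_i\mathsf{C}_i$ in $\mathsf{Pr}^\mathsf{L}_\kappa$ as the limit of the right adjoints in $\widehat{\mathsf{Cat}}$. Then $\mathsf{C}^\kappa$ is the closure under $\kappa$-small colimits and retracts of the union of the images of the $\mathsf{C}_i^\kappa$, since these images generate $\mathsf{C}$ under $\kappa$-filtered colimits. The object count follows again from transfinite closure. If the input has $\lambda<\mu$ isomorphism classes, then $\kappa$ rounds of adding $\kappa$-small colimits give at most $\lambda^{<\kappa}$ classes, and the hypothesis $\operatorname{ecf}(\mu)\geq\kappa$ (Remark~\ref{exponential cofinality}) is precisely what guarantees $\lambda^{<\kappa}<\mu$.

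For the mapping animae, maps out of a $\kappa$-small colimit are a $\kappa$-small limit, so it suffices to bound $\operatorname{Map}_{\mathsf{C}}(F_i x, F_j y)$ for $x\in\mathsf{C}_i^\kappa$, $y\in\mathsf{C}_j^\kappa$. I would compute this as $\operatorname{Map}_{\mathsf{C}_i}(x,G_iF_jy)$. I would then express $G_iF_j y$ as a colimit over the ($\mu$-small) indexing diagram, using the formula for colimits in $\mathsf{Pr}^\mathsf{L}$ as geometric realizations of bar-type constructions \cite[Thm.~5.5.3.18]{lurie2009HTT}. Finally I would use $\kappa$-compactness of $x$ together with the induction hypothesis on the $\mathsf{C}_i$.

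The main obstacle is this mapping-anima bound. The colimit formula produces $\kappa$-filtered, but not $\kappa$-small, colimits of $\mu$-small animae, and one must check that the indexing categories are themselves $\mu$-small. If that proves delicate, I would fall back to reducing to pushouts and $\mu$-small coproducts. Coproducts in $\mathsf{Pr}^\mathsf{L}_\kappa$ are products, so they are easy. For pushouts, the right-adjoint limit is a fiber product of categories, and mapping animae there are fiber products of mapping animae, which are $\mu$-small. A $\mu$-small colimit is then handled as a $\mu$-small filtered colimit of finite ones, with a cardinality estimate on $\kappa$-compact objects in $\mu$-small $\kappa$-filtered colimits via \cite[Prop.~5.5.7.11]{lurie2009HTT}.
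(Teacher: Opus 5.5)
\textbf{There is a genuine gap: the mapping-anima bound for colimits in $\mathsf{Pr}^{\mathsf{L}}_\kappa$, which is the step you flag yourself.}

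The preliminary parts are fine:
\begin{itemize}
\item the description of $\mathsf{Cat}^\mu$ and closure under retracts;
\item the reduction to pushouts and $\mu$-small coproducts, which already suffices by \cite[Prop.~4.4.2.7]{lurie2009HTT}, so the detour through filtered colimits (along which $(-)^\kappa$ need not commute) is unnecessary;
\item the generator step, provided you drop the relations and use only the free objects $\mathsf{P}(\mathsf{A})$, whose $\kappa$-compact part $\mathsf{Fun}(\mathsf{A}^{\mathrm{op}},\mathsf{Ani}^\kappa)$ is visibly $\mu$-small.
\end{itemize}
Neither of your routes bounds the mapping animae of $\mathsf{C}^\kappa$ when $\mathsf{C}$ is a colimit.

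Your reduction ``it suffices to bound $\operatorname{Map}_{\mathsf{C}}(F_ix,F_jy)$'' is not valid. Maps \emph{out of} a $\kappa$-small colimit form a $\kappa$-small limit. But maps from $F_ix$ \emph{into} $Z=\operatornamewithlimits{colim}_l F_{j_l}y_l$ are $\operatorname{Map}_{\mathsf{C}_i}(x,G_iZ)$, and $G_i$ does not preserve $\kappa$-small colimits. What you actually need is a size bound on $G_iZ$ for every $Z\in\mathsf{C}^\kappa$. Describing colimits in $\mathsf{Pr}^{\mathsf{L}}$ as limits along right adjoints gives no formula for $G_iF_j$ that would provide one.

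Your pushout fallback claims that mapping animae of $\mathsf{C}_1\times_{\mathsf{C}_0}\mathsf{C}_2$ are $\mu$-small because they are fiber products of mapping animae in the factors. This tacitly assumes that the components $G_1Z$, $G_2Z$ of a $\kappa$-compact $Z$ are $\kappa$-compact, which fails in general. A Bousfield localization is such a pushout (cf.\ Proposition~\ref{compactness of local objects}), and there the component of $LX$ is $LX$ itself in the ambient category. For example, $L\mathbb{Z}=\mathbb{Z}_p$ for $p$-completion on $\mathsf{D}(\mathbb{Z})$, which is not $\aleph_1$-compact. So the hypothesis on the $\mathsf{C}_k^\kappa$ says nothing about these mapping animae. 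Your object count inherits the same gap, since equivalence classes of $\kappa$-small diagrams in a stage are only bounded once its mapping animae are.

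\textbf{The missing ingredient is the compactness statement for fiber products} \cite[Tag~\href{https://kerodon.net/tag/06JE}{06JE}]{kerodon}. It says that projections out of a pullback of $\mu$-accessible categories preserve $\mu$-compact objects, when the legs lie in $\mathsf{Acc}_\mu$ and preserve $\kappa$-filtered colimits. It applies here because $\mu\unrhd\kappa$ by Remark~\ref{exponential cofinality}.

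With it, your fallback can be repaired as follows, given $\mathsf{C}_0,\mathsf{C}_1,\mathsf{C}_2\in\mathcal{M}$.
\begin{itemize}
\item Each leg $R_k\colon\mathsf{C}_k\to\mathsf{C}_0$ lies in $\mathsf{Acc}_\mu$. Indeed, for $Z\in\mathsf{C}_k^\mu$, the object $R_kZ$ is a presheaf on the $\mu$-small category $\mathsf{C}_0^\kappa$ with $\mu$-small values $\operatorname{Map}(L_kw,Z)$.
\item Hence $G_k$ carries $\mathsf{C}^\kappa\subseteq\mathsf{C}^\mu$ into $\mathsf{C}_k^\mu$.
\item So $\operatorname{Map}_{\mathsf{C}}(F_kx,Z)\simeq\operatorname{Map}_{\mathsf{C}_k}(x,G_kZ)$ is $\mu$-small for $x\in\mathsf{C}_k^\kappa$.
\item The condition $\operatorname{ecf}(\mu)\geq\kappa$ then propagates this bound to all mapping animae of $\mathsf{C}^\kappa$.
\end{itemize}

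The paper uses the same tool, but once and globally rather than inside an induction over colimits. It identifies $\mathsf{Pr}^{\mathsf{L}}_\kappa\simeq\mathsf{Rex}_\kappa$ with algebras over a $\kappa$-accessible monad $T$ on $\mathsf{Cat}$. It then presents $\mathsf{Rex}_\kappa$, via \cite{monads}, as the pullback of $\mathsf{Cat}\to\mathsf{P}(\mathsf{Cat}^\kappa)\leftarrow\mathsf{P}(\mathsf{Th}_{\mathsf{A}}(T))$, and checks that both legs lie in $\mathsf{Acc}_\mu$. The only size inputs are:
\begin{itemize}
\item $\mathsf{Fun}(\mathsf{D},\mathsf{E})^\simeq$ is $\mu$-small for $\kappa$-small $\mathsf{D}$ and $\mu$-small $\mathsf{E}$;
\item $T(\mathsf{E})\simeq\mathsf{Fun}(\mathsf{E}^{\mathrm{op}},\mathsf{Ani}^\kappa)$ is $\mu$-small for $\kappa$-small $\mathsf{E}$.
\end{itemize}
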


\begin{proof}
Let $\mathsf{Rex}_\kappa$ denote the subcategory of $\mathsf{Cat}$ whose objects are small categories admitting $\kappa$-small colimits and whose morphisms are functors preserving $\kappa$-small colimits. By \cite[Prop.~5.5.7.8 and Rem.~5.5.7.9]{lurie2009HTT}, the functor $\mathsf{Pr}^\mathsf{L}_\kappa\to \mathsf{Cat}$ appearing in the statement of the lemma factors through an equivalence $\mathsf{Pr}^\mathsf{L}_\kappa\simeq \mathsf{Rex}_\kappa$. It therefore suffices to show that the natural inclusion
\[i\colon \mathsf{Rex}_\kappa\hookrightarrow \mathsf{Cat}\]
preserves $\mu$-compact objects.

By the universal property of presheaf categories \cite[Thm.~5.1.5.6]{lurie2009HTT}, the inclusion $i$ admits a left adjoint $F$, given on objects by $F(\mathsf{C})=\mathsf{P}(\mathsf{C})^\kappa$. Moreover, this adjunction is monadic by \cite[Prop.~2.7]{aoki2025higherpresentablecategorieslimits}, yielding an equivalence between $\mathsf{Rex}_\kappa$ and the category of algebras over the monad $T\colonequals i\circ F$ on $\mathsf{Cat}$. Since $F$ preserves all colimits and $i$ preserves $\kappa$-filtered colimits \cite[Prop.~5.5.7.11]{lurie2009HTT}, the monad $T$ also preserves $\kappa$-filtered colimits.

To match the notation of \cite{monads}, set $\mathsf{A}\colonequals\mathsf{Cat}^\kappa$ and let $\mathsf{Th}_\mathsf{A}(T)\subseteq\mathsf{Rex}_\kappa$ be the essential image of the composite
\[\mathsf{A}=\mathsf{Cat}^\kappa\hookrightarrow\mathsf{Cat}\xrightarrow{\ F\ }\mathsf{Rex}_\kappa.\]
We continue to write $F\colon\mathsf{A}\to\mathsf{Th}_\mathsf{A}(T)$ for the induced functor. By \cite[Thm.~6.7]{monads} and \cite[Lem.~2.4]{aoki2025higherpresentablecategorieslimits}, there is a pullback square
\[\begin{tikzcd}[ampersand replacement=\&]
    {\mathsf{Rex}_\kappa} \&\& {\mathsf{P}(\mathsf{Th}_\mathsf{A}(T))} \\
    {\mathsf{Cat}} \&\& {\mathsf{P}(\mathsf{A}),}
    \arrow[from=1-1, to=1-3]
    \arrow["i"', hook, from=1-1, to=2-1]
    \arrow["{-\circ F^\mathrm{op}}", from=1-3, to=2-3]
    \arrow["\yo", from=2-1, to=2-3]
\end{tikzcd}\]
where $\yo$ denotes the restricted Yoneda embedding.

Next, we claim that both $\yo$ and $-\circ F^\mathrm{op}$ belong to $\mathsf{Acc}_\mu$ and preserve $\kappa$-filtered colimits. Their sources and targets are all $\aleph_0$-presentable: presheaf categories are compactly generated by representables, while $\mathsf{Cat}$ is $\aleph_0$-presentable by \cite[Lem.~2.4]{aoki2025higherpresentablecategorieslimits}. Since colimits in presheaf categories are computed pointwise, precomposition with $F^\mathrm{op}$ preserves all small colimits. On the other hand, since every object of $\mathsf{A}=\mathsf{Cat}^\kappa$ is $\kappa$-compact in $\mathsf{Cat}$, the restricted Yoneda embedding $\yo$ preserves $\kappa$-filtered colimits.

To verify that $\yo$ preserves $\mu$-compact objects, first note that $\mathsf{A}$ is $\mu$-small \cite[Tag~\href{https://kerodon.net/tag/03UW}{03UW}]{kerodon}. Hence, by \cite[Tag~\href{https://kerodon.net/tag/06JY}{06JY}]{kerodon}, a presheaf on $\mathsf{A}$ is $\mu$-compact if and only if it takes values in $\mathsf{Ani}^\mu$. Now let $\mathsf{C}$ be a $\mu$-compact object of $\mathsf{Cat}$. For every $\mathsf{D}\in\mathsf{A}=\mathsf{Cat}^\kappa$, we have
\[\yo(\mathsf{C})(\mathsf{D})=\operatorname{Map}_{\mathsf{Cat}}(\mathsf{D},\mathsf{C})\simeq\mathsf{Fun}(\mathsf{D},\mathsf{C})^\simeq.\]
Since $\mathsf{D}$ is $\kappa$-small and $\mathsf{C}$ is $\mu$-small \cite[Tag~\href{https://kerodon.net/tag/068N}{068N}]{kerodon}, we obtain that this anima is $\mu$-compact by \cite[Tags~\href{https://kerodon.net/tag/03ST}{03ST}, \href{https://kerodon.net/tag/03SV}{03SV}, and \href{https://kerodon.net/tag/0680}{0680}]{kerodon}. Thus $\yo(\mathsf{C})$ takes values in $\mathsf{Ani}^\mu$, proving the claim.

It remains to show that $-\circ F^\mathrm{op}$ preserves $\mu$-compact objects. By \cite[Prop.~5.3.4.17]{lurie2009HTT}, every $\mu$-compact presheaf is a retract of a $\mu$-small colimit of representables. Since $-\circ F^\mathrm{op}$ preserves small colimits, it therefore suffices to verify that it maps representables to $\mu$-compact objects. For $\mathsf{E}\in\mathsf{A}=\mathsf{Cat}^\kappa$, adjunction identifies the image of the presheaf represented by $F(\mathsf{E})$ with
\[\operatorname{Map}_{\mathsf{Rex}_\kappa}(F(-),F(\mathsf{E}))\simeq \operatorname{Map}_{\mathsf{Cat}}(-,T(\mathsf{E}))\simeq \mathsf{Fun}(-,T(\mathsf{E}))^\simeq.\]
On the other hand, another application of \cite[Tag~\href{https://kerodon.net/tag/06JY}{06JY}]{kerodon} gives that
\[T(\mathsf{E})\simeq\mathsf{Fun}(\mathsf{E}^\mathrm{op},\mathsf{Ani}^\kappa).\]
Since $\mathsf{E}$ is $\kappa$-small and $\mathsf{Ani}^\kappa$ is $\mu$-small, the same size estimates used above imply that $T(\mathsf{E})$ is a $\mu$-compact category. Hence the preceding paragraph shows that $\mathsf{Fun}(-,T(\mathsf{E}))^\simeq$ is a $\mu$-compact presheaf, as desired.

Having established that $\yo$ and $-\circ F^\mathrm{op}$ belong to $\mathsf{Acc}_\mu$ and preserve $\kappa$-filtered colimits, we may apply \cite[Tag~\href{https://kerodon.net/tag/06JE}{06JE}]{kerodon} to conclude that the natural inclusion $i\colon \mathsf{Rex}_\kappa\hookrightarrow\mathsf{Cat}$ preserves $\mu$-compact objects.
\end{proof}

\begin{remark}[On exponential cofinality]\label{exponential cofinality}
Let $\kappa,\mu$ be regular cardinals. Then $\mathrm{ecf}(\mu)\geq\kappa$ in the sense of \cite[Tag~\href{https://kerodon.net/tag/03RT}{03RT}]{kerodon} holds if and only if $\mu\gg\kappa$ in the sense of \cite[Def.~A.2.6.3]{lurie2009HTT}. Either condition implies $\mu\unrhd\kappa$ in the sense of \cite[Tag~\href{https://kerodon.net/tag/061K}{061K}]{kerodon}.

Indeed, $\mathrm{ecf}(\mu)\geq\kappa$ means that every product of $\mu$-small sets indexed by a $\kappa$-small set is $\mu$-small \cite[Tag~\href{https://kerodon.net/tag/03RV}{03RV}]{kerodon}. Applying this to constant families gives $\mu_0^{\kappa_0}<\mu$ for all cardinals $\kappa_0<\kappa$ and $\mu_0<\mu$, which is precisely the condition $\mu\gg\kappa$.

Conversely, suppose $\mu\gg\kappa$, and let $\{T_i\}_{i\in I}$ be a family of $\mu$-small sets indexed by a $\kappa$-small set. Writing $\kappa_0\colonequals\#I$, we have $\kappa_0<2^{\kappa_0}<\mu$ by Cantor's theorem \cite[Tag~\href{https://kerodon.net/tag/03R4}{03R4}]{kerodon}. Regularity of $\mu$ therefore gives a cardinal $\mu_0<\mu$, the supremum of the $\# T_i$, such that $\#T_i\leq\mu_0$ for every $i\in I$ \cite[Tag~\href{https://kerodon.net/tag/03RJ}{03RJ}]{kerodon}. Hence
\[\#\mleft(\prod_{i\in I}T_i\mright)\leq\mu_0^{\kappa_0}<\mu,\]
proving $\mathrm{ecf}(\mu)\geq\kappa$. Since either condition forces $\mu\geq \kappa$, the implication $\mu\unrhd\kappa$ follows from \cite[Tag~\href{https://kerodon.net/tag/061P}{061P}]{kerodon}.

The smallest regular cardinal $\mu>\kappa$ satisfying these equivalent conditions is $\mu=(2^{<\kappa})^+$. This cardinal satisfies $\mu\gg\kappa$ by \cite[Prop.~5.4.7]{borceux1994HCA2}. To verify minimality, let $\mu>\kappa$ be any regular cardinal satisfying $\mu\gg\kappa$. Then $2^{\kappa_0}<\mu$ for every $\kappa_0<\kappa$, and regularity of $\mu$ gives
\[2^{<\kappa}=\sup_{\kappa_0<\kappa}2^{\kappa_0}<\mu.\]
Here, we used that $\#\mleft\{\text{cardinals }\kappa_0<\kappa\mright\}\leq \kappa<\mu$, which can be proved by seeing $\kappa$ as the set of \emph{ordinals} $\alpha<\kappa$. Thus $(2^{<\kappa})^+\leq\mu$, as claimed.
\end{remark}

\begin{remark}[Lurie's simplicial argument]
    The proof of \cite[Lem.~4.8.4.2]{higheralgebra} yields an alternative proof of Lemma~\ref{compactness bound}. If $\mathcal{K}$ is a $\kappa$-small collection of $\kappa$-small simplicial sets and $\mu\gg\kappa$, Lurie shows that every object of $\mathsf{Cat}(\mathcal{K})$ is a $\mu$-filtered colimit of $\mu$-small categories in $\mathsf{Cat}(\mathcal{K})$. Consequently, every $\mu$-compact object of $\mathsf{Cat}(\mathcal{K})$ is a retract of a $\mu$-small category, and hence $\mu$-compact in $\mathsf{Cat}$.

    For $\mathcal{K}=I_{\kappa}$ as in Definition~\ref{I kappa}, we have $\mathsf{Cat}(\mathcal{K})=\mathsf{Rex}_{\kappa}$. In this case, the hypothesis that $\mathcal{K}$ is $\kappa$-small holds when $\kappa$ is a successor cardinal, but fails when $\kappa$ is a limit cardinal. However, this hypothesis is used in Lurie's proof only to establish claim $(\ast)$, where it ensures that every $\mu$-small simplicial subset is contained in a $\mu$-small subcategory whose inclusion is a morphism in $\mathsf{Cat}(\mathcal{K})$.
    
    Under our assumptions $\mu>\kappa\geq\aleph_1$ and $\mathrm{ecf}(\mu)\geq\kappa$, the collection of $\mathsf{K}$-shaped diagrams in any $\mu$-small simplicial set is $\mu$-small for every $\kappa$-small simplicial set $\mathsf{K}$ \cite[Tag~\href{https://kerodon.net/tag/03SG}{03SG}]{kerodon}. Moreover, $I_\kappa$ is itself $\mu$-small, since $\#I_\kappa\leq\kappa<\mu$. Consequently, the construction used to establish $(\ast)$ in \emph{loc.~cit.} still involves adjoining only a $\mu$-small simplicial subset at each stage, and the rest of the argument carries over verbatim.
\end{remark}

\begin{proposition}\label{key proposition 2}
    Let $\mu>\kappa\geq \aleph_1$ be regular cardinals satisfying $\operatorname{ecf}(\mu)\geq \kappa$. The functor
    \[Q_\kappa\colon \mathsf{Fun}(\Delta^1,\mathsf{Pr}^\mathsf{L}_\kappa)\to \mathsf{Fun}(\Delta^1,\mathsf{Ani})\]
    constructed in Proposition~\ref{key proposition} preserves $\kappa$-filtered colimits and $\mu$-compact objects. In particular, it is a morphism in $\mathsf{Acc}_\mu$.
\end{proposition}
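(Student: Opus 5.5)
Since $Q_\kappa$ is the coproduct over the $\mu$-small (indeed of cardinality $\leq\kappa<\mu$) set $\operatorname{I}_\kappa$ of the functors $Q^\mathsf{K}_\kappa$, and $\mu$-small coproducts in $\mathsf{Fun}(\Delta^1,\mathsf{Ani})$ commute with filtered colimits and preserve $\mu$-compact objects, it suffices to treat a single $\mathsf{K}\in\operatorname{I}_\kappa$. Moreover, $\kappa$-filtered colimits and $\mu$-compactness in $\mathsf{Fun}(\Delta^1,\mathsf{Ani})$ are detected pointwise (\cite[Tags~\href{https://kerodon.net/tag/06HY}{06HY} and \href{https://kerodon.net/tag/06J3}{06J3}]{kerodon}). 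So I will show separately that the source functor $F\mapsto|\mathsf{S}^\mathsf{K}_\kappa(F)|$ and the target functor $F\mapsto T^\mathsf{K}_\kappa(F)$ preserve $\kappa$-filtered colimits and $\mu$-compact objects.

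The key structural observation is that, by Steps I–III of Proposition~\ref{key proposition}, both $T^\mathsf{K}_\kappa(F)$ and $\mathsf{S}^\mathsf{K}_\kappa(F)$ are built from the pair $(\mathsf{C}^\kappa,\mathsf{D}^\kappa)$ together with $F\colon\mathsf{C}^\kappa\to\mathsf{D}^\kappa$ by a fixed finite recipe. This recipe uses cores, the functors $\mathsf{Fun}(L,-)$ for $L\in\{\Delta^1,\mathsf{K},\mathsf{K}\times\Delta^1\}$, composition with $\operatorname{colim}_\mathsf{K}$ (which is functorial for $\kappa$-small-colimit-preserving functors), and finite limits in $\mathsf{Cat}$; the source additionally uses groupoid completion. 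Thus I factor each functor as
\[\mathsf{Fun}(\Delta^1,\mathsf{Pr}^\mathsf{L}_\kappa)\simeq\mathsf{Fun}(\Delta^1,\mathsf{Rex}_\kappa)\to\mathsf{Fun}(\Delta^1,\mathsf{Cat})\to\mathsf{Cat}\ \text{or}\ \mathsf{Ani},\]
where the first map is $\mathsf{C}\mapsto\mathsf{C}^\kappa$ applied pointwise. By Lemma~\ref{compactness bound} and \cite[Props.~5.5.7.10--5.5.7.11]{lurie2009HTT}, this first map preserves $\kappa$-filtered colimits and $\mu$-compact objects. The functor $\operatorname{colim}_\mathsf{K}$ is natural in morphisms of $\mathsf{Rex}_\kappa$, so the recipe really is a functor on $\mathsf{Fun}(\Delta^1,\mathsf{Rex}_\kappa)$, but I will remember that colimits there are computed in $\mathsf{Cat}$ only for $\kappa$-filtered diagrams.

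Next I check each ingredient. Finite limits in $\mathsf{Cat}$ and in $\mathsf{Ani}$ commute with filtered colimits. They also preserve $\mu$-compact objects, since $\mu$-compact categories are exactly retracts of $\mu$-small ones and $\mu$-small categories are closed under finite limits. The functor $\mathsf{Fun}(L,-)$ for $\kappa$-small $L$ preserves $\kappa$-filtered colimits because $L$ is $\kappa$-compact in $\mathsf{Cat}$ (\cite[Tag~\href{https://kerodon.net/tag/068G}{068G}]{kerodon}). It preserves $\mu$-compactness by the same size estimate used in Lemma~\ref{compactness bound}, which needs $\operatorname{ecf}(\mu)\geq\kappa$ (\cite[Tag~\href{https://kerodon.net/tag/03SG}{03SG}]{kerodon}). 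The core functor $(-)^\simeq$ and groupoid completion $|-|$ preserve filtered colimits and $\mu$-compact objects. For $(-)^\simeq$, note that $\mathsf{C}^\simeq=\mathsf{Fun}(\ast,\mathsf{C})^\simeq$ and that it commutes with filtered colimits; for $|-|$, it is a left adjoint whose right adjoint $\mathsf{Ani}\hookrightarrow\mathsf{Cat}$ preserves filtered colimits. The functor $\operatorname{colim}_\mathsf{K}\colon\mathsf{Fun}(\mathsf{K},\mathsf{C}^\kappa)\to\mathsf{C}^\kappa$ is compatible with $\kappa$-filtered colimits of the $\mathsf{C}^\kappa$ because $\kappa$-small colimits in a $\kappa$-filtered colimit of $\mathsf{Rex}_\kappa$ are computed levelwise (\cite[Prop.~5.5.7.11]{lurie2009HTT}).

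The main obstacle is the $\mu$-compactness claim, because $\mathsf{Fun}(\Delta^1,\mathsf{Pr}^\mathsf{L}_\kappa)$ is not a category where "built by finite limits from $\mu$-compact pieces" is automatically meaningful. I will handle this by reducing to the generators: a $\mu$-compact object of $\mathsf{Fun}(\Delta^1,\mathsf{Pr}^\mathsf{L}_\kappa)$ is an arrow $F$ with both source and target $\mu$-compact (pointwise criterion). Then $\mathsf{C}^\kappa$ and $\mathsf{D}^\kappa$ are $\mu$-compact in $\mathsf{Cat}$ by Lemma~\ref{compactness bound}, hence are retracts of $\mu$-small categories. I can then bound directly that $\mathsf{S}^\mathsf{K}_\kappa(F)$ and $T^\mathsf{K}_\kappa(F)$ are $\mu$-compact categories or animae, with the retract data transported functorially through the recipe. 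Finally, $|\mathsf{S}^\mathsf{K}_\kappa(F)|$ is $\mu$-compact, and so $Q^\mathsf{K}_\kappa(F)$ is $\mu$-compact in $\mathsf{Fun}(\Delta^1,\mathsf{Ani})$. Membership in $\mathsf{Acc}_\mu$ then follows, since $\kappa$-filtered colimits are in particular $\mu$-filtered.
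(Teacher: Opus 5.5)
Your proposal is correct and follows the paper's proof essentially step by step. You reduce to a single $\mathsf{K}$ and to the source and target functors pointwise, then check that cores, $\mathsf{Fun}(L,-)$, fiber products and groupoid completion preserve $\kappa$-filtered colimits and, via Lemma~\ref{compactness bound} and $\operatorname{ecf}(\mu)\geq\kappa$, preserve $\mu$-small categories; you also usefully make explicit the compatibility of $\operatorname{colim}_{\mathsf{K}}$ with $\kappa$-filtered colimits, which the paper leaves implicit.

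Three small fixes are needed:
\begin{itemize}
\item For $\mathsf{Fun}(L,-)\colon\mathsf{Cat}\to\mathsf{Cat}$ you need $\Delta^1\times L$ to be $\kappa$-compact, via cartesian closedness, not just $L$.
\item Since $\mu$ is uncountable, $\mu$-compact categories are already essentially $\mu$-small, so no retract data needs transporting. It could not be transported through a recipe requiring $\kappa$-small colimits anyway.
\item In your last sentence the inclusion is backwards: $\mu$-filtered colimits are in particular $\kappa$-filtered, not conversely.
\end{itemize}
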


\begin{proof}
The functor $Q_\kappa$ is the coproduct of the functors $Q_\kappa^\mathsf{K}$, indexed by $\mathsf{K}\in \operatorname{I}_\kappa$. Since colimits commute with colimits, $Q_\kappa$ preserves $\kappa$-filtered colimits if each $Q_\kappa^\mathsf{K}$ does. Likewise, \cite[Tag~\href{https://kerodon.net/tag/064X}{064X}]{kerodon} shows that $Q_\kappa$ preserves $\mu$-compact objects provided that each $Q_\kappa^\mathsf{K}$ does. Colimits in functor categories being computed pointwise, \cite[Tag~\href{https://kerodon.net/tag/06HY}{06HY}]{kerodon} then implies that it is enough to verify that, for every $\mathsf{K}\in \operatorname{I}_\kappa$, the functors
\[\mleft|\mathsf{S}^\mathsf{K}_\kappa\mright|, T^\mathsf{K}_\kappa\colon \mathsf{Fun}(\Delta^1,\mathsf{Pr}^\mathsf{L}_\kappa)\to \mathsf{Ani},\]
constructed in the proof of Proposition~\ref{key proposition}, preserve $\kappa$-filtered colimits and $\mu$-compact objects.

\textsc{Step I: Preservation of} $\kappa$\textsc{-filtered colimits}. Let $F\colon \mathsf{C}\to \mathsf{D}$ be the colimit of a $\kappa$-filtered diagram
\[\mathsf{I}\to \mathsf{Fun}(\Delta^1,\mathsf{Pr}^\mathsf{L}_\kappa),\qquad i\mapsto \mleft[F_i\colon \mathsf{C}_i\to \mathsf{D}_i\mright].\]
Since colimits in functor categories are computed pointwise, we have
\[\mathsf{C}\simeq \operatornamewithlimits{colim}_{i\in\mathsf{I}}\mathsf{C}_i \qquad\text{and}\qquad\mathsf{D}\simeq \operatornamewithlimits{colim}_{i\in\mathsf{I}}\mathsf{D}_i\]
in $\mathsf{Pr}^\mathsf{L}_\kappa$. By \cite[Cor.~4.4.5.21 and Props.~5.5.7.8--5.5.7.11]{lurie2009HTT}, the functor $\mathsf{Pr}^\mathsf{L}_\kappa\to \mathsf{Cat}$, that maps a $\kappa$-presentable category $\mathsf{E}$ to the full subcategory $\mathsf{E}^\kappa$, preserves $\kappa$-filtered colimits. It follows that
\[\mathsf{C}^\kappa\simeq\operatornamewithlimits{colim}_{i\in\mathsf{I}}\mathsf{C}_i^\kappa\qquad\text{and}\qquad\mathsf{D}^\kappa\simeq\operatornamewithlimits{colim}_{i\in\mathsf{I}}\mathsf{D}_i^\kappa,\]
where the colimits on the right-hand side are computed in $\mathsf{Cat}$.

Now let $\mathsf{K}$ be a $\kappa$-small category. In particular, $\mathsf{K}$ may be any element of $\operatorname{I}_\kappa$, the category $\Delta^1$, or any finite product of such categories. By \cite[Tags~\href{https://kerodon.net/tag/068N}{068N} and \href{https://kerodon.net/tag/068G}{068G}]{kerodon}, $\mathsf{K}$ is a $\kappa$-compact object of $\mathsf{Cat}$. Hence the functor
\[\mathsf{Fun}(\mathsf{K},-)^\simeq \simeq \operatorname{Map}_{\mathsf{Cat}}(\mathsf{K},-)\colon \mathsf{Cat}\to \mathsf{Ani}\]
preserves $\kappa$-filtered colimits. Taking $\mathsf{K}=\ast$, we conclude in particular that the functor $(-)^\simeq$ preserves $\kappa$-filtered colimits.

Since filtered colimits are left exact in $\mathsf{Ani}$ \cite[Tag~\href{https://kerodon.net/tag/05Y0}{05Y0}]{kerodon}, it follows that the functor $T^\mathsf{K}_\kappa$ preserves $\kappa$-filtered colimits. The proof that
\[\mathsf{S}^\mathsf{K}_\kappa\colon \mathsf{Fun}(\Delta^1,\mathsf{Pr}^\mathsf{L}_\kappa)\to \mathsf{Cat}\]
preserves $\kappa$-filtered colimits is analogous. Indeed, it suffices to use that filtered colimits are left exact in $\mathsf{Cat}$ and that the functor
\[\mathsf{Fun}(\mathsf{K},-)\colon \mathsf{Cat}\to \mathsf{Cat}\]
preserves $\kappa$-filtered colimits. The first of these facts is proved in \cite[Lem.~4.5.3.1]{lurie2018spectral}.

For the second assertion, let $\mathsf{E}=\operatornamewithlimits{colim}_{i\in \mathsf{I}}\mathsf{E}_i$ be a $\kappa$-filtered colimit in $\mathsf{Cat}$, and consider the natural functor
\[\operatornamewithlimits{colim}_{i\in \mathsf{I}}\mathsf{Fun}(\mathsf{K},\mathsf{E}_i)\to \mathsf{Fun}(\mathsf{K},\mathsf{E}).\]
Since $\mathsf{Cat}$ is compactly generated by $\Delta^1$ \cite[Lem.~2.4]{aoki2025higherpresentablecategorieslimits}, the functor $\operatorname{Map}_\mathsf{Cat}(\Delta^1,-)$ is conservative. It therefore suffices to show that the induced map
\[\begin{tikzcd}[ampersand replacement=\&, row sep=scriptsize]
	{\operatornamewithlimits{colim}_{i\in \mathsf{I}}\operatorname{Map}_\mathsf{Cat}\mleft(\Delta^1,\mathsf{Fun}(\mathsf{K},\mathsf{E}_i)\mright)} \& {\operatorname{Map}_\mathsf{Cat}\mleft(\Delta^1,\operatornamewithlimits{colim}_{i\in \mathsf{I}}\mathsf{Fun}(\mathsf{K},\mathsf{E}_i)\mright)} \\
	\& {\operatorname{Map}_\mathsf{Cat}\mleft(\Delta^1,\mathsf{Fun}(\mathsf{K},\mathsf{E})\mright)}
	\arrow["\sim", from=1-1, to=1-2]
    \arrow[
    from=1-2,
    to=2-2,
    rounded corners,
    to path={
        -- ([xshift=2ex]\tikztostart.east)
        |- ([xshift=-2ex,yshift=1.5em]\tikztotarget.west)
        -- ([xshift=-2ex]\tikztotarget.west)
        -- (\tikztotarget.west)
        \tikztonodes
    }
    ]
\end{tikzcd}\]
is an equivalence.

Since $\mathsf{Cat}$ is cartesian closed, this reduces to the fact that $\Delta^1\times\mathsf{K}$ is $\kappa$-compact in $\mathsf{Cat}$. Finally, we note that the groupoid completion functor 
\[\mleft|-\mright|\colon \mathsf{Cat}\to \mathsf{Ani}\]
also preserves $\kappa$-filtered colimits, as it is left adjoint to the inclusion $\mathsf{Ani}\hookrightarrow\mathsf{Cat}$.

\textsc{Step II: Preservation of} $\mu$\textsc{-compact objects}. Let $F\colon \mathsf{C}\to \mathsf{D}$ be a $\mu$-compact object of $\mathsf{Fun}(\Delta^1,\mathsf{Pr}^\mathsf{L}_\kappa)$. By \cite[Tag~\href{https://kerodon.net/tag/06HY}{06HY}]{kerodon}, both $\mathsf{C}$ and $\mathsf{D}$ are $\mu$-compact objects of $\mathsf{Pr}^\mathsf{L}_\kappa$. Lemma~\ref{compactness bound} therefore implies that
\[\mathsf{C}^\kappa,\mathsf{D}^\kappa\in \mathsf{Cat}^\mu.\]
Equivalently, $\mathsf{C}^\kappa$ and $\mathsf{D}^\kappa$ are $\mu$-small categories \cite[Tag~\href{https://kerodon.net/tag/068N}{068N}]{kerodon}.

As in the first step, let $\mathsf{K}$ be a $\kappa$-small category. Since $\operatorname{ecf}(\mu)\geq \kappa$, the functors
\[\mathsf{Fun}(\mathsf{K},-)\colon \mathsf{Cat}\to \mathsf{Cat}\quad\text{and}\quad\mathsf{Fun}(\mathsf{K},-)^\simeq\colon \mathsf{Cat}\to \mathsf{Ani}\]
preserve $\mu$-compact objects \cite[Tags~\href{https://kerodon.net/tag/03ST}{03ST}, \href{https://kerodon.net/tag/03SV}{03SV}, and \href{https://kerodon.net/tag/0680}{0680}]{kerodon}. In particular, taking $\mathsf{K}=\ast$, we conclude that the functor
\[(-)^\simeq\colon \mathsf{Cat}\to \mathsf{Ani}\]
also preserves $\mu$-compact objects.

By \cite[Tag~\href{https://kerodon.net/tag/03SZ}{03SZ}]{kerodon}, the full subcategory $\mathsf{Cat}^\mu\subseteq \mathsf{Cat}$ is stable under fiber products. Moreover, the groupoid completion functor
\[\mleft|-\mright|\colon \mathsf{Cat}\to \mathsf{Ani}\]
preserves $\mu$-compact objects by \cite[Tags~\href{https://kerodon.net/tag/02ST}{02ST} and \href{https://kerodon.net/tag/05P8}{05P8}]{kerodon}. Since $\mleft|\mathsf{S}^\mathsf{K}_\kappa(F)\mright|$ and $T^\mathsf{K}_\kappa(F)$ are obtained from $F$ by iterating the operations considered above, it follows that both are $\mu$-compact objects of $\mathsf{Ani}$.
\end{proof}

We can now prove Theorem~\ref{main categorical theorem} by combining the criterion of Proposition~\ref{key proposition} with the preservation properties established above.

\begin{proof}[Proof of Theorem~\ref{main categorical theorem}]
From Proposition~\ref{key proposition}, we obtain a cartesian diagram
\[\begin{tikzcd}
	{\overline{\mathsf{Loc}}_\kappa} && {\mathsf{Eq}(\mathsf{Ani})} \\
	{\mathsf{Loc}_\kappa} & {\mathsf{Fun}(\Delta^1,\mathsf{Pr}^\mathsf{L}_\kappa)} & {\mathsf{Fun}(\Delta^1,\mathsf{Ani}),}
	\arrow[from=1-1, to=1-3]
	\arrow[hook, from=1-1, to=2-1]
	\arrow[hook, from=1-3, to=2-3]
	\arrow[hook, from=2-1, to=2-2]
	\arrow["{Q_\kappa}", from=2-2, to=2-3]
\end{tikzcd}\]
where $\mathsf{Eq}(\mathsf{Ani})\subseteq \mathsf{Fun}(\Delta^1,\mathsf{Ani})$ denotes the full subcategory spanned by the equivalences. By Theorem~\ref{walking localization}, the category $\mathsf{Loc}_\kappa$ is $\kappa$-presentable and the inclusion
\[\mathsf{Loc}_\kappa\hookrightarrow \mathsf{Fun}(\Delta^1,\mathsf{Pr}^\mathsf{L}_\kappa)\]
is a morphism in $\mathsf{Pr}^\mathsf{L}_\kappa$. Likewise, Proposition~\ref{key proposition 2} shows that $Q_\kappa$ is a morphism in $\mathsf{Acc}_\mu$ which preserves $\kappa$-filtered colimits.

Consider now the natural inclusion $\mathsf{Eq}(\mathsf{Ani})\hookrightarrow \mathsf{Fun}(\Delta^1,\mathsf{Ani})$. Evaluation at either endpoint induces an equivalence $\mathsf{Eq}(\mathsf{Ani})\simeq \mathsf{Ani}$, under which this inclusion is naturally equivalent to the functor
\[\delta\colon \mathsf{Ani}\hookrightarrow\mathsf{Fun}(\Delta^1,\mathsf{Ani}),\quad X\mapsto (\operatorname{id}\colon X\to X).\]
The category $\mathsf{Ani}$ is $\aleph_0$-presentable \cite[Tag~\href{https://kerodon.net/tag/06KJ}{06KJ}]{kerodon}. Since colimits in functor categories are computed pointwise, the functor $\delta$ preserves small colimits. Moreover, it preserves compact objects by \cite[Tag~\href{https://kerodon.net/tag/06HY}{06HY}]{kerodon}. Thus $\delta$, and therefore the inclusion above, is a morphism in $\mathsf{Pr}^\mathsf{L}_{\aleph_0}$.

Since every morphism in the cartesian diagram above lies in $\mathsf{Acc}_\mu$ and preserves $\kappa$-filtered colimits, \cite[Tags~\href{https://kerodon.net/tag/06LM}{06LM} and \href{https://kerodon.net/tag/06AH}{06AH}]{kerodon} implies that the natural inclusion
\[\overline{\mathsf{Loc}}_\kappa\hookrightarrow\mathsf{Loc}_\kappa\]
is a morphism in $\mathsf{Acc}_\mu$ that preserves $\kappa$-filtered colimits. The desired result now follows by the same argument as in the proof of Theorem~\ref{walking localization}, with Proposition~\ref{lemma on coslices} being replaced by Proposition~\ref{lemma on accessible coslices}.

For the reader's convenience, let us verify the compactness condition needed to apply Proposition~\ref{lemma on accessible coslices}. Since $\mathsf{V}$ is $\mu$-compact in $\mathsf{Pr}^\mathsf{L}_\kappa$, \cite[Tag~\href{https://kerodon.net/tag/06HY}{06HY}]{kerodon} implies that $\operatorname{id}_\mathsf{V}$ is $\mu$-compact in $\mathsf{Fun}(\Delta^1,\mathsf{Pr}^\mathsf{L}_\kappa)$. Using that the inclusions
\[
\overline{\mathsf{Loc}}_\kappa\hookrightarrow \mathsf{Loc}_\kappa\hookrightarrow \mathsf{Fun}(\Delta^1,\mathsf{Pr}^\mathsf{L}_\kappa)
\]
are fully faithful and preserve $\mu$-filtered colimits, we deduce that $\operatorname{id}_\mathsf{V}$ is $\mu$-compact in $\overline{\mathsf{Loc}}_\kappa$, as required.
\end{proof}

We now study a symmetric monoidal analogue of Theorem~\ref{main categorical theorem}.

\begin{definition}\label{barLoc tensor}
Let $\kappa$ be a regular cardinal, and let $\mathsf{W}$ be a commutative algebra object in $\mathsf{Pr}^\mathsf{L}_{\kappa}$. We denote by
\[\overline{\mathsf{Loc}}^\otimes_\kappa(\mathsf{W})\subseteq\mathsf{Loc}^\otimes_\kappa(\mathsf{W})\]
the full subcategory spanned by those Bousfield localizations whose fully faithful right adjoints preserve small colimits. For $\mathsf{W}=\mathsf{Ani}$, we denote $\overline{\mathsf{Loc}}^\otimes_\kappa(\mathsf{W})$ simply as $\overline{\mathsf{Loc}}_\kappa^\otimes$.
\end{definition}

\begin{theorem}\label{main theorem on monoidal localizations}
    Let $\mu>\kappa\geq \aleph_1$ be regular cardinals satisfying $\operatorname{ecf}(\mu)\geq \kappa$. For a $\mu$-compact object $\mathsf{W}$ of $\mathsf{CAlg}(\mathsf{Pr}^\mathsf{L}_\kappa)$, the natural inclusion
    \[\overline{\mathsf{Loc}}^\otimes_\kappa(\mathsf{W})\hookrightarrow\mathsf{Loc}^\otimes_\kappa(\mathsf{W})\]
    is a morphism in $\mathsf{Acc}_{\mu}$ and preserves $\kappa$-filtered colimits. In particular, the category $\overline{\mathsf{Loc}}^\otimes_\kappa(\mathsf{W})$ is $\mu$-accessible.
\end{theorem}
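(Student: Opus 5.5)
The plan is to mimic the proof of Theorem~\ref{result on monoidal localizations}, with Theorem~\ref{main categorical theorem} playing the role that Theorem~\ref{walking localization} played there.

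\textbf{Step 1: the symmetric monoidal structure restricts.} First we check that the pointwise symmetric monoidal structure on $\mathsf{Fun}(\Delta^1,\mathsf{Pr}^\mathsf{L}_\kappa)$ restricts further from $\mathsf{Loc}_\kappa$ to $\overline{\mathsf{Loc}}_\kappa$.
\begin{itemize}
\item The unit $\operatorname{id}_{\mathsf{Ani}}$ clearly has colimit-preserving right adjoint.
\item For tensor products, let $F\colon\mathsf{C}\to\mathsf{D}$ and $F'\colon\mathsf{C}'\to\mathsf{D}'$ be localizations with colimit-preserving right adjoints $G,G'$. Then $G$ and $G'$ are morphisms in $\mathsf{Pr}^\mathsf{L}$, so $G\otimes G'$ is defined and colimit-preserving.
\item We must show that $G\otimes G'$ is right adjoint to $F\otimes F'$. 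The adjunction data (unit, counit, triangle identities) of $F\dashv G$ and $F'\dashv G'$ live in the $(\infty,2)$-category $\mathsf{Pr}^\mathsf{L}$, and the tensor product is a $2$-functor in each variable. Hence $F\otimes F'\dashv G\otimes G'$, and the right adjoint of $F\otimes F'$ preserves small colimits.
\end{itemize}
It follows that $\overline{\mathsf{Loc}}_\kappa$ is a symmetric monoidal full subcategory, and the inclusion $\overline{j}\colon\overline{\mathsf{Loc}}_\kappa\hookrightarrow\mathsf{Loc}_\kappa$ is symmetric monoidal.

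\textbf{Step 2: pass to commutative algebras.} By \cite[Rem.~2.2.1.5]{higheralgebra}, $\overline{j}$ induces a fully faithful functor
\[\mathsf{CAlg}(\overline{j})\colon\mathsf{CAlg}(\overline{\mathsf{Loc}}_\kappa)\hookrightarrow\mathsf{CAlg}(\mathsf{Loc}_\kappa).\]
Its essential image consists of the symmetric monoidal localizations whose underlying right adjoint preserves colimits.

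\begin{itemize}
\item \emph{Filtered colimits.} By Theorem~\ref{main categorical theorem} with $\mathsf{V}=\ast$, the functor $\overline{j}$ preserves $\kappa$-filtered colimits. These colimits are computed underlying in $\mathsf{CAlg}$ \cite[Cor.~3.2.3.2]{higheralgebra}, so $\mathsf{CAlg}(\overline{j})$ also preserves $\kappa$-filtered colimits.
\item \emph{Accessibility.} $\mathsf{CAlg}(\overline{\mathsf{Loc}}_\kappa)$ is the pullback of $\mathsf{CAlg}(\mathsf{Loc}_\kappa)\to\mathsf{Loc}_\kappa$ along $\overline{j}$, since the condition is a property of the underlying object. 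So it is a fiber product of a $\mu$-accessible inclusion, in the same way as the cartesian square in the proof of Theorem~\ref{main categorical theorem}.
\item \emph{Compact objects.} The forgetful functor $\mathsf{CAlg}(\mathsf{Loc}_\kappa)\to\mathsf{Loc}_\kappa$ preserves $\kappa$-filtered colimits. Since $\mu>\kappa$, it preserves $\mu$-filtered colimits, so its left adjoint preserves $\mu$-compact objects. The forgetful functor itself lies in $\mathsf{Acc}_\mu$, because free algebras on $\mu$-compact objects generate under $\mu$-filtered colimits and are preserved (using $\operatorname{ecf}(\mu)\geq\kappa\ge\aleph_1$ to control symmetric powers).
\item Applying \cite[Tags~\href{https://kerodon.net/tag/06LM}{06LM} and \href{https://kerodon.net/tag/06AH}{06AH}]{kerodon} to this pullback square then shows that $\mathsf{CAlg}(\overline{j})$ is a morphism in $\mathsf{Acc}_\mu$ preserving $\kappa$-filtered colimits.
\end{itemize}
This step is the main obstacle. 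The point is to get $\mu$-compact objects preserved, not merely accessibility. If the $\mathsf{Acc}_\mu$ claim about the forgetful functor proves troublesome, the fallback is to realize the whole situation as the pullback
\[\mathsf{CAlg}(\overline{\mathsf{Loc}}_\kappa)=\mathsf{CAlg}(\mathsf{Loc}_\kappa)\times_{\mathsf{Fun}(\Delta^1,\mathsf{Ani})}\mathsf{Eq}(\mathsf{Ani}),\]
formed via $Q_\kappa$ composed with the forgetful functors. This uses Proposition~\ref{key proposition 2} directly, once we know that the forgetful functor $\mathsf{CAlg}(\mathsf{Pr}^\mathsf{L}_\kappa)\to\mathsf{Pr}^\mathsf{L}_\kappa$ preserves $\mu$-compact objects.

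\textbf{Step 3: pass to the coslice.} Finally, identify
\[\mathsf{Fun}\mleft(\Delta^1,\mathsf{CAlg}(\mathsf{Pr}^\mathsf{L}_\kappa)_{\mathsf{W}/}\mright)\simeq\mathsf{Fun}\mleft(\Delta^1,\mathsf{CAlg}(\mathsf{Pr}^\mathsf{L}_\kappa)\mright)_{\operatorname{id}_\mathsf{W}/}.\]
Under this identification, the inclusion in the theorem becomes the functor $\mathsf{CAlg}(\overline{\mathsf{Loc}}_\kappa)_{\operatorname{id}_\mathsf{W}/}\hookrightarrow\mathsf{CAlg}(\mathsf{Loc}_\kappa)_{\operatorname{id}_\mathsf{W}/}$ induced by $\mathsf{CAlg}(\overline{j})$.

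\begin{itemize}
\item $\operatorname{id}_\mathsf{W}$ lies in $\mathsf{CAlg}(\overline{\mathsf{Loc}}_\kappa)$, since its right adjoint is the identity.
\item $\operatorname{id}_\mathsf{W}$ is $\mu$-compact there. It is $\mu$-compact in $\mathsf{Fun}(\Delta^1,\mathsf{CAlg}(\mathsf{Pr}^\mathsf{L}_\kappa))$ by \cite[Tag~\href{https://kerodon.net/tag/06HY}{06HY}]{kerodon}, and the fully faithful inclusions into that category preserve $\mu$-filtered colimits.
\end{itemize}
Proposition~\ref{lemma on accessible coslices} then gives the claim.
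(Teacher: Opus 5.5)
Your three steps follow the paper's proof: Lemma~\ref{loc bar symmetric monoidal}, then the cartesian square exhibiting $\mathsf{CAlg}(\overline{\mathsf{Loc}}_\kappa)$ as the pullback of the forgetful functor $U\colon\mathsf{CAlg}(\mathsf{Loc}_\kappa)\to\mathsf{Loc}_\kappa$ along $\overline{\mathsf{Loc}}_\kappa\hookrightarrow\mathsf{Loc}_\kappa$, then Proposition~\ref{lemma on accessible coslices}. However, the input you flag as the main obstacle is not established. You need $U$ to preserve $\mu$-compact objects, and your justification fails. Free algebras on $\mu$-compact objects do \emph{not} generate under $\mu$-filtered colimits, since otherwise every $\mu$-compact algebra would be a retract of a free one. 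For instance, $\mathsf{H}\mathbb{F}_2$ is an $\aleph_1$-compact $\mathbb{E}_\infty$-ring that admits no map to a free $\mathbb{E}_\infty$-ring: composing with the augmentation to $\mathbb{S}$ would give a ring map $\mathbb{F}_2\to\mathbb{Z}$ on $\pi_0$. What is true is that $\mu$-compact algebras are retracts of $\mu$-small colimits of such free algebras. Since $U$ does not preserve arbitrary colimits, knowing that $UF(X)$ is $\mu$-compact tells you nothing about $U$ of these colimits. (That the left adjoint $F$ preserves $\mu$-compact objects is true but beside the point.)

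The missing idea is the bar-resolution argument of Proposition~\ref{forgetful functor preserves compacts}, which the paper applies with $\mu$ in place of $\kappa$. This is legitimate because $\mu$-compact objects of $\mathsf{Loc}_\kappa$ are retracts of $\mu$-small colimits of $\kappa$-compact ones, hence closed under $\otimes$, so $\mathsf{Loc}_\kappa$ is a commutative algebra in $\mathsf{Pr}^\mathsf{L}_\mu$. Write $\mathsf{K}$ for the algebras whose underlying object is $\mu$-compact. The argument runs as follows.

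The resolution $A\simeq\mleft|F(T^{\bullet}U(A))\mright|$ gives $\mathsf{K}\subseteq\mathsf{CAlg}(\mathsf{Loc}_\kappa)^\mu$.

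For a $\mu$-small diagram in $\mathsf{K}$, the levelwise colimit of the resolutions has terms $F(X_n)$ with $X_n$ $\mu$-compact, so $U(F(X_n))=T(X_n)$ is $\mu$-compact. Since $U$ commutes with geometric realizations, $\mathsf{K}$ is closed under $\mu$-small colimits.

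Since $\mathsf{K}$ contains the free algebras on $\mu$-compact objects, it detects equivalences. Hence $\mathsf{Ind}_\mu(\mathsf{K})\simeq\mathsf{CAlg}(\mathsf{Loc}_\kappa)$ and $\mathsf{CAlg}(\mathsf{Loc}_\kappa)^\mu=\mathsf{K}$.

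This needs only $\mu\geq\aleph_1$, for the orbits over $\mathsf{B}S_n$, the countable coproduct and the geometric realizations; it does not use $\operatorname{ecf}(\mu)\geq\kappa$. Your fallback via $Q_\kappa$ does not avoid the issue, since it requires the same statement for $\mathsf{CAlg}(\mathsf{Pr}^\mathsf{L}_\kappa)\to\mathsf{Pr}^\mathsf{L}_\kappa$, which you also leave open. Once this proposition is supplied, the rest of your argument goes through as written.
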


The proof uses the symmetric monoidal structure on $\mathsf{Loc}_\kappa$ constructed in the previous section. We first show that this structure restricts to $\overline{\mathsf{Loc}}_\kappa$.

\begin{lemma}\label{loc bar symmetric monoidal}
Let $\kappa\geq \aleph_1$ be a regular cardinal. Endow $\mathsf{Loc}_\kappa$ with the symmetric monoidal structure of Lemma~\ref{commutative algebras in the functor category}. Then, the full subcategory $\overline{\mathsf{Loc}}_\kappa\subseteq\mathsf{Loc}_\kappa$ contains the symmetric monoidal unit and is closed under tensor products.
\end{lemma}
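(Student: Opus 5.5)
The plan is to verify the two claims directly, using the description of the tensor product on $\mathsf{Loc}_\kappa$ from Lemma~\ref{commutative algebras in the functor category}. That description is pointwise: the tensor product of two localizations $F_1\colon\mathsf{C}_1\to\mathsf{D}_1$ and $F_2\colon\mathsf{C}_2\to\mathsf{D}_2$ is $F_1\otimes F_2\colon \mathsf{C}_1\otimes\mathsf{C}_2\to\mathsf{D}_1\otimes\mathsf{D}_2$. The unit is $\operatorname{id}_{\mathsf{Ani}}$, whose right adjoint is the identity; it trivially preserves small colimits, so the unit lies in $\overline{\mathsf{Loc}}_\kappa$.

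For closure under tensor products, I would use the standard factorization
\[F_1\otimes F_2\simeq (F_1\otimes \operatorname{id}_{\mathsf{D}_2})\circ(\operatorname{id}_{\mathsf{C}_1}\otimes F_2).\]
Since a composite of right adjoints preserving small colimits again preserves small colimits, it suffices to treat a single factor. So let $F\colon\mathsf{C}\to\mathsf{D}$ be a morphism in $\mathsf{Pr}^\mathsf{L}$ whose right adjoint $G$ preserves small colimits, and let $\mathsf{E}$ be presentable. We must show that the right adjoint of $F\otimes\operatorname{id}_\mathsf{E}$ preserves small colimits.

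For this step I would use the identification $\mathsf{C}\otimes\mathsf{E}\simeq\mathsf{Fun}^{\mathsf{R}}(\mathsf{E}^{\mathrm{op}},\mathsf{C})$, the category of limit-preserving functors $\mathsf{E}^{\mathrm{op}}\to\mathsf{C}$, which is natural in $\mathsf{C}\in\mathsf{Pr}^\mathsf{L}$. Because $G$ preserves small colimits, it is itself a morphism in $\mathsf{Pr}^\mathsf{L}$. I would then argue that $G\otimes\operatorname{id}_\mathsf{E}$ is right adjoint to $F\otimes\operatorname{id}_\mathsf{E}$. Tensoring with $\mathsf{E}$ is a $2$-functor on $\mathsf{Pr}^\mathsf{L}$, so it carries the adjunction $F\dashv G$, which lives entirely in $\mathsf{Pr}^\mathsf{L}$ because $G$ is colimit-preserving, to an adjunction $F\otimes\operatorname{id}\dashv G\otimes\operatorname{id}$. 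Concretely, under the identification above both functors are postcomposition with $F$ and with $G$, and the unit and counit are whiskered pointwise. Since $G\otimes\operatorname{id}_\mathsf{E}$ is a morphism in $\mathsf{Pr}^\mathsf{L}$, it preserves small colimits, and uniqueness of adjoints finishes the argument.

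The expected main obstacle is justifying this $2$-functoriality, namely that $-\otimes\mathsf{E}$ preserves adjunctions internal to $\mathsf{Pr}^\mathsf{L}$. I would first try the explicit model: under $\mathsf{C}\otimes\mathsf{E}\simeq\mathsf{Fun}^{\mathsf{R}}(\mathsf{E}^{\mathrm{op}},\mathsf{C})$, the functor $F\otimes\operatorname{id}$ is postcomposition with $F$. This is valid here because, when $G$ preserves colimits, $F$ preserves limits as well. Indeed, $F$ is a localization whose right adjoint is colimit-preserving, so $F\simeq F\circ G\circ F$, and its limit-preservation follows from $G$ creating limits in the essential image. If that fails, I would fall back on the $(\infty,2)$-categorical enhancement of $\mathsf{Pr}^\mathsf{L}$. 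An alternative route for this step is the criterion of Proposition~\ref{key proposition}, checking colimit-preservation on generators of the form $d\otimes e$ with $d\in\mathsf{D}^\kappa$ and $e\in\mathsf{E}^\kappa$.
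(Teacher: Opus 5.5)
Your main line is the paper's argument. The paper regards presentable categories, with left adjoints as $1$-morphisms and arbitrary natural transformations as $2$-morphisms, as a symmetric monoidal $(\infty,2)$-category. It observes that $F\dashv G$ and $F'\dashv G'$ are adjunctions internal to it because $G$ and $G'$ preserve colimits. It then applies the $2$-functor $-\otimes-$ to the adjunction $(F,F')\dashv(G,G')$, obtaining $F\otimes F'\dashv G\otimes G'$ in one step. Your reduction to a single tensor factor is harmless but not needed, and your fallback to the $(\infty,2)$-categorical enhancement is exactly this proof.

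The concrete verification you propose to try first, however, rests on a false claim: a localization whose right adjoint preserves colimits need not preserve limits. Take $F=\operatorname{colim}\colon\mathsf{Fun}(\Lambda^2_0,\mathsf{Ani})\to\mathsf{Ani}$. Its right adjoint is the constant-diagram functor, which preserves colimits and is fully faithful because $\Lambda^2_0$ is weakly contractible, so $F\in\overline{\mathsf{Loc}}_\kappa$. But $F$ sends the product of two copies of $\ast\leftarrow S^0\to\ast$ to $\Sigma(S^0\times S^0)\simeq S^1\vee S^1\vee S^1$ rather than to $S^1\times S^1$. The identity $F\simeq FGF$ only controls objects in the essential image of $G$. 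Hence $F\circ-$ does not preserve limit-preserving functors, and $F\otimes\operatorname{id}_{\mathsf{E}}$ is in general not postcomposition with $F$. In the model $\mathsf{Fun}^{\mathsf{R}}(\mathsf{E}^{\mathrm{op}},-)$ it is the \emph{right} adjoint that is postcomposition: for every $F$ in $\mathsf{Pr}^{\mathsf{L}}$, the right adjoint of $F\otimes\operatorname{id}_{\mathsf{E}}$ is $G\circ-$. This still gives an elementary proof. If $G$ preserves colimits it has a right adjoint $H$. Since $G$ and $H$ both preserve limits, the adjunction $G\circ-\dashv H\circ-$ restricts to limit-preserving functors. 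Thus $G\circ-$ is itself a left adjoint and preserves small colimits.
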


\begin{proof}
Let $\widehat{\mathdutchcal{Cat}}$ denote the very large 2-category of large categories, and let $\mathdutchcal{Pr}^\mathsf{L}\subseteq\widehat{\mathdutchcal{Cat}}$ be the sub-2-category whose objects are the presentable categories, whose 1-morphisms are the left adjoints, and whose 2-morphisms are arbitrary natural transformations. As in \cite[\S1.2]{basechange}, the Lurie tensor product equips $\mathdutchcal{Pr}^\mathsf{L}$ with the structure of a symmetric monoidal 2-category. In particular, it induces a 2-functor
\[- \otimes -\colon \mathdutchcal{Pr}^\mathsf{L}\times \mathdutchcal{Pr}^\mathsf{L}\to \mathdutchcal{Pr}^\mathsf{L}.\]

Let $F$ and $F^\prime$ be objects of $\overline{\mathsf{Loc}}_\kappa$, with right adjoints $G$ and $G^\prime$, respectively. Since $G$ and $G^\prime$ preserve small colimits, they are themselves 1-morphisms in $\mathdutchcal{Pr}^\mathsf{L}$. Thus the adjunction
\[(F,F^\prime)\dashv(G,G^\prime)\]
is internal to the 2-category $\mathdutchcal{Pr}^\mathsf{L}\times\mathdutchcal{Pr}^\mathsf{L}$. Since 2-functors preserve adjunctions, applying the tensor product gives an adjunction
\[F\otimes F^\prime\dashv G\otimes G^\prime.\]
In particular, $G\otimes G^\prime$ preserves small colimits and so $F\otimes F^\prime$ lies in $\overline{\mathsf{Loc}}_\kappa$. Finally, the symmetric monoidal unit $\operatorname{id}_{\mathsf{Ani}}$ clearly belongs to $\overline{\mathsf{Loc}}_\kappa$.
\end{proof}

\begin{proof}[Proof of Theorem~\ref{main theorem on monoidal localizations}]
By Lemma~\ref{loc bar symmetric monoidal}, the category $\overline{\mathsf{Loc}}_\kappa$ is symmetric monoidal, and the inclusion $\overline{\mathsf{Loc}}_\kappa\hookrightarrow \mathsf{Loc}_\kappa$ is a symmetric monoidal functor. We therefore obtain a cartesian diagram
\[
\begin{tikzcd}[ampersand replacement=\&]
	{\mathsf{CAlg}(\overline{\mathsf{Loc}}_\kappa)} \&\& {\mathsf{CAlg}(\mathsf{Loc}_\kappa)} \\
	{\overline{\mathsf{Loc}}_\kappa} \&\& {\mathsf{Loc}_\kappa,}
	\arrow[hook, from=1-1, to=1-3]
	\arrow[from=1-1, to=2-1]
	\arrow[from=1-3, to=2-3]
	\arrow[hook, from=2-1, to=2-3]
\end{tikzcd}
\]
where the vertical arrows are the forgetful functors. The functor $\mathsf{CAlg}(\mathsf{Loc}_\kappa)\to \mathsf{Loc}_\kappa$ preserves $\kappa$-filtered colimits by \cite[Cor.~3.2.3.2]{higheralgebra} and $\mu$-compact objects by Proposition~\ref{forgetful functor preserves compacts}. Moreover, Theorem~\ref{main categorical theorem} shows that the inclusion $\overline{\mathsf{Loc}}_\kappa\hookrightarrow \mathsf{Loc}_\kappa$ has the same two properties.

Applying \cite[Tags~\href{https://kerodon.net/tag/06LM}{06LM} and \href{https://kerodon.net/tag/06AH}{06AH}]{kerodon} to the cartesian square above, we deduce that
\[\mathsf{CAlg}(\overline{\mathsf{Loc}}_\kappa)\hookrightarrow\mathsf{CAlg}(\mathsf{Loc}_\kappa)\]
lies in $\mathsf{Acc}_\mu$ and preserves $\kappa$-filtered colimits. As observed in the final paragraph of the proof of Theorem~\ref{main categorical theorem}, the object $\operatorname{id}_\mathsf{W}$ is $\mu$-compact in $\mathsf{CAlg}(\overline{\mathsf{Loc}}_\kappa)$. Proposition~\ref{lemma on accessible coslices} therefore implies that the induced functor
\[\mathsf{CAlg}(\overline{\mathsf{Loc}}_\kappa)_{\operatorname{id}_\mathsf{W}/}\hookrightarrow\mathsf{CAlg}(\mathsf{Loc}_\kappa)_{\operatorname{id}_\mathsf{W}/}\]
also lies in $\mathsf{Acc}_\mu$ and preserves $\kappa$-filtered colimits.

Finally, under the canonical equivalence
\[\mathsf{Fun}\mleft(\Delta^1,\mathsf{CAlg}(\mathsf{Pr}^{\mathsf{L}}_\kappa)_{\mathsf{W}/}\mright)\simeq\mathsf{Fun}\mleft(\Delta^1,\mathsf{CAlg}(\mathsf{Pr}^{\mathsf{L}}_\kappa)\mright)_{\operatorname{id}_{\mathsf{W}}/},\]
the latter functor identifies with the desired inclusion $\overline{\mathsf{Loc}}^\otimes_\kappa(\mathsf{W})\hookrightarrow\mathsf{Loc}^\otimes_\kappa(\mathsf{W})$, finishing the proof.
\end{proof}

Combining the main theorems of Sections~\ref{sec2} and~\ref{sec3}, we obtain the following corollary, which is the form of these results that we will use in the rest of the paper.

\begin{corollary}\label{main theorem on monoidal localizations2}
    Let $\mu>\kappa\geq \aleph_1$ be regular cardinals satisfying $\operatorname{ecf}(\mu)\geq \kappa$. For a $\mu$-compact object $\mathsf{W}$ of $\mathsf{CAlg}(\mathsf{Pr}^\mathsf{L}_\kappa)$, the source and target functors
    \[s,t\colon \overline{\mathsf{Loc}}^\otimes_\kappa(\mathsf{W})\to \mathsf{CAlg}(\mathsf{Pr}^\mathsf{L}_{\kappa})_{\mathsf{W}/},\]
    which send a localization $\mathsf{C}\to\mathsf{D}$ to $\mathsf{C}$ and $\mathsf{D}$, respectively, are morphisms in $\mathsf{Acc}_{\mu}$ that preserve $\kappa$-filtered colimits.
\end{corollary}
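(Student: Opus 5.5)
We factor each of $s,t$ as the composite
\[\overline{\mathsf{Loc}}^\otimes_\kappa(\mathsf{W})\hookrightarrow\mathsf{Loc}^\otimes_\kappa(\mathsf{W})\hookrightarrow\mathsf{Fun}\mleft(\Delta^1,\mathsf{CAlg}(\mathsf{Pr}^\mathsf{L}_{\kappa})_{\mathsf{W}/}\mright)\xrightarrow{\ \mathrm{ev}_0,\ \mathrm{ev}_1\ }\mathsf{CAlg}(\mathsf{Pr}^\mathsf{L}_{\kappa})_{\mathsf{W}/}\]
and check each factor separately. Since morphisms in $\mathsf{Acc}_\mu$ that preserve $\kappa$-filtered colimits are closed under composition, it then suffices to show that each factor preserves $\kappa$-filtered colimits and $\mu$-compact objects.

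The first factor has exactly these properties by Theorem~\ref{main theorem on monoidal localizations}. The second factor is a morphism in $\mathsf{Pr}^\mathsf{L}_\kappa$ by Theorem~\ref{result on monoidal localizations}. Hence it preserves all small colimits, in particular $\kappa$-filtered ones. It also preserves $\kappa$-compact objects. We upgrade this to $\mu$-compact objects using the standard fact that a colimit-preserving functor between $\kappa$-presentable categories which preserves $\kappa$-compact objects also preserves $\mu$-compact objects for $\mu\geq\kappa$ regular. Indeed, by \cite[Prop.~5.3.4.17]{lurie2009HTT}, every $\mu$-compact object is a retract of a $\mu$-small colimit of $\kappa$-compact objects. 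Alternatively, the right adjoint preserves $\kappa$-filtered, hence $\mu$-filtered, colimits. Note also that $\mathsf{CAlg}(\mathsf{Pr}^\mathsf{L}_\kappa)_{\mathsf{W}/}$ is $\kappa$-presentable: $\mathsf{CAlg}(\mathsf{Pr}^\mathsf{L}_\kappa)$ is $\kappa$-presentable, as recalled in the proof of Lemma~\ref{commutative algebras in the functor category}, and $\mathsf{W}$ is $\mu$-compact, so Proposition~\ref{lemma on coslices} (or simply presentability of coslices) applies.

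The last factor is evaluation at an endpoint on $\mathsf{Fun}(\Delta^1,\mathsf{E})$ with $\mathsf{E}=\mathsf{CAlg}(\mathsf{Pr}^\mathsf{L}_\kappa)_{\mathsf{W}/}$. Colimits in functor categories are computed pointwise, so evaluation preserves all small colimits. It also preserves $\mu$-compact objects, since by \cite[Tag~\href{https://kerodon.net/tag/06HY}{06HY}]{kerodon} compactness in $\mathsf{Fun}(\Delta^1,\mathsf{E})$ is checked pointwise for a finite poset. Alternatively, evaluation has a left adjoint given by left Kan extension, which preserves compact objects, so evaluation preserves $\kappa$-filtered colimits. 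Evaluation also has a right adjoint, so it preserves colimits and is a morphism in $\mathsf{Pr}^\mathsf{L}_\kappa$, which gives $\mu$-compactness as in the previous step.

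The only point needing care, and the main obstacle, is this pointwise compactness statement for $\mathsf{Fun}(\Delta^1,\mathsf{E})$ when $\mathsf{E}$ is a coslice rather than $\mathsf{Pr}^\mathsf{L}_\kappa$ itself. We handle it by invoking \cite[Tag~\href{https://kerodon.net/tag/06HY}{06HY}]{kerodon} directly for the $\kappa$-presentable category $\mathsf{E}$, with $\mu\geq\kappa$. Composing the three factors then gives the corollary for both $s$ and $t$.
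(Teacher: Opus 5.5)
Your proposal is correct and is essentially the paper's proof. You factor $s,t$ through the inclusions $\overline{\mathsf{Loc}}^\otimes_\kappa(\mathsf{W})\hookrightarrow\mathsf{Loc}^\otimes_\kappa(\mathsf{W})\hookrightarrow\mathsf{Fun}(\Delta^1,\mathsf{CAlg}(\mathsf{Pr}^\mathsf{L}_\kappa)_{\mathsf{W}/})$, handled by Theorems~\ref{main theorem on monoidal localizations} and~\ref{result on monoidal localizations}, followed by the endpoint evaluations, which are morphisms in $\mathsf{Pr}^\mathsf{L}_\kappa$ by Proposition~\ref{lemma on coslices} and \cite[Tag~06HY]{kerodon}; your explicit upgrade from $\kappa$-compact to $\mu$-compact preservation via the right adjoint is a step the paper leaves implicit, though the $\mu$-compactness of $\mathsf{W}$ is not needed to apply Proposition~\ref{lemma on coslices}, and merely having a right adjoint does not make evaluation a morphism in $\mathsf{Pr}^\mathsf{L}_\kappa$, so your primary argument via \cite[Tag~06HY]{kerodon} is the one doing the work.
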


\begin{proof}
By \cite[Prop.~2.3]{aoki2025higherpresentablecategorieslimits}, Proposition~\ref{lemma on coslices}, and \cite[Tag~\href{https://kerodon.net/tag/06HY}{06HY}]{kerodon}, the source and target functors
\[\mathsf{Fun}\mleft(\Delta^1,\mathsf{CAlg}(\mathsf{Pr}^\mathsf{L}_{\kappa})_{\mathsf{W}/}\mright)\to \mathsf{CAlg}(\mathsf{Pr}^\mathsf{L}_{\kappa})_{\mathsf{W}/}\]
are morphisms in $\mathsf{Pr}^\mathsf{L}_{\kappa}$. The result therefore follows by restricting these functors along the natural inclusions
\[\overline{\mathsf{Loc}}^\otimes_\kappa(\mathsf{W})\hookrightarrow \mathsf{Loc}^\otimes_\kappa(\mathsf{W})\hookrightarrow\mathsf{Fun}\mleft(\Delta^1,\mathsf{CAlg}(\mathsf{Pr}^\mathsf{L}_{\kappa})_{\mathsf{W}/}\mright),\]
which belong to $\mathsf{Acc}_\mu$ (and preserve $\kappa$-filtered colimits) and $\mathsf{Pr}^\mathsf{L}_\kappa$, respectively, by Theorem~\ref{main theorem on monoidal localizations} and Theorem~\ref{result on monoidal localizations}.
\end{proof}

\begin{remark}
The same arguments prove variants of Theorems~\ref{main categorical theorem} and \ref{main theorem on monoidal localizations}, as well as of Corollary~\ref{main theorem on monoidal localizations2}, in which coslice categories are replaced by slice categories. In these variants, the assumptions that $\mathsf{V}$ and $\mathsf{W}$ be $\mu$-compact are not needed. The proofs are the same, except that Proposition~\ref{lemma on accessible coslices} is replaced by \cite[Tags~\href{https://kerodon.net/tag/06HU}{06HU} and \href{https://kerodon.net/tag/06LJ}{06LJ}]{kerodon}.
\end{remark}

\section{Condensed mathematics}
\begin{definition}
    Let $S$ be a profinite set. We define:
    \begin{itemize}
        \item The \emph{size} $\kappa(S)$ as the cardinality of $S$.
        \item The \emph{weight} $\lambda(S)$ as the cardinality of the Boolean ring $\operatorname{Cont}(S,\mathbb{F}_2)$.
    \end{itemize}
    For an uncountable cardinal $\lambda$, we say that $S$ is $\lambda$\emph{-light} if $\lambda(S)< \lambda$. The category of $\lambda$-light profinite sets will be denoted as $\mathsf{Prof}_\lambda$. In particular, for $\lambda=\aleph_1$ (the first uncountable cardinal), we say that $S$ is \emph{light} if $\lambda(S)$ is countable.
\end{definition}

We note that, if $\kappa(S)$ is infinite, $\lambda(S)$ coincides with the cardinality of the smallest index set $I$ for which there exists a presentation
\[S=\lim_{i\in I}S_i,\]
with each $S_i$ finite and discrete. Moreover, if $\lambda$ is an uncountable strong limit cardinal, a profinite set $S$ is $\lambda$-light if and only if $\kappa(S)<\lambda$.

\begin{definition}
    We define a Grothendieck topology on $\mathsf{Prof}_\lambda$ by saying that $\{S_i\to S\}_{i\in I}$ is a covering if $I$ is finite and $\bigsqcup_{i\in I}S_i\to S$ is surjective. For a presentable category $\mathsf{V}$, we denote by $\mathsf{Cond}_\lambda(\mathsf{V})$ the category of $\mathsf{V}$-valued hypersheaves on $\mathsf{Prof}_\lambda$.
\end{definition}

According to \cite[Prop.~A.5.7.2]{lurie2018spectral}, an object of $\mathsf{Cond}_\lambda(\mathsf{V})$ is the same as a functor $X\colon \mathsf{Prof}_\lambda^{\mathrm{op}}\to \mathsf{V}$ satisfying the following conditions:
\begin{enumerate}[label=(\roman*)]
    \item For a collection $\{S_i\}_{i\in I}$ of $\lambda$-light profinite sets indexed by a finite set $I$, the natural map
    \[X\mleft(\coprod_{i\in I}S_i\mright)\to \prod_{i\in I}X(S_i)\]
    is an equivalence;
    \item For a hypercover $S_\ast\to S$ in $\mathsf{Prof}_\lambda$, the natural map
    \[X(S)\to \lim_{n\in \Delta}X(S_n)\]
    is an equivalence.
\end{enumerate}
If $\mathsf{V}$ is an $n$-category for some finite $n$, it suffices to check the second condition above on \v{C}ech nerves \cite[Lem.~6.5.2.9]{lurie2009HTT}. We also note that the category $\mathsf{Cond}_\lambda(\mathsf{V})$ is presentable and equivalent to the tensor product $\mathsf{Cond}_\lambda(\mathsf{Ani})\otimes \mathsf{V}$ \cite[Lem.~A.4.22]{heyer20246functorformalismssmoothrepresentations}.

\begin{lemma}\label{compact condensed anima}
    Let $S$ be a $\lambda$-light profinite set. The representable presheaf $\underline{S}\colon \mathsf{Prof}_\lambda^{\mathrm{op}}\to \mathsf{Set}\subset \mathsf{Ani}$, defined as $\underline{S}(T)=\operatorname{Cont}(T,S)$ is an $\aleph_1$-compact object of $\mathsf{Cond}_\lambda(\mathsf{Ani})$.
\end{lemma}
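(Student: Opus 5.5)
The plan is to show directly that $\operatorname{Map}(\underline{S},-)\simeq\mathrm{ev}_S$ commutes with $\aleph_1$-filtered colimits in $\mathsf{Cond}_\lambda(\mathsf{Ani})$. The first point to settle is that $\underline{S}$ actually lies in $\mathsf{Cond}_\lambda(\mathsf{Ani})$. It is a sheaf of sets for the finite-surjection topology, because surjections of compact Hausdorff spaces are quotient maps. Since it is $0$-truncated, descent along \v{C}ech nerves suffices (by the remark citing \cite[Lem.~6.5.2.9]{lurie2009HTT}), so it is a hypersheaf. By the Yoneda lemma, $\operatorname{Map}(\underline{S},X)\simeq X(S)$ for every hypersheaf $X$. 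The claim therefore reduces to showing that evaluation at $S$ preserves $\aleph_1$-filtered colimits of hypersheaves.

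The key step is to show that the full subcategory of hypersheaves is closed under $\aleph_1$-filtered colimits inside $\mathsf{Fun}(\mathsf{Prof}_\lambda^{\mathrm{op}},\mathsf{Ani})$. Once this is known, colimits in $\mathsf{Cond}_\lambda(\mathsf{Ani})$ of that shape are computed pointwise, and evaluation at $S$ commutes with them. Using the description of hypersheaves by conditions (i) and (ii), I would argue as follows.

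\begin{enumerate}[label=(\alph*)]
\item Condition (i) involves only finite products. These commute with filtered colimits in $\mathsf{Ani}$, so (i) is preserved.
\item Condition (ii) involves totalizations $\lim_{\Delta}$, which are countable limits but not finite ones. A priori they do not commute with $\aleph_1$-filtered colimits in $\mathsf{Ani}$, because $\Delta$ is countable but not finite as a category. A countable \emph{simplicial set} model, however, makes $\lim_\Delta$ an $\aleph_1$-small limit. By \cite[Tag~\href{https://kerodon.net/tag/064W}{064W}]{kerodon}-type results, or rather the dual fact that $\aleph_1$-filtered colimits in $\mathsf{Ani}$ commute with $\aleph_1$-small limits, the limit $\lim_{n\in\Delta}$ commutes with $\aleph_1$-filtered colimits.
\end{enumerate}

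Hence (ii) is preserved as well, and the pointwise colimit is again a hypersheaf.

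The step I expect to be the main obstacle is the one in (b): making precise that $\lim_\Delta$ is a $\kappa$-small limit for $\kappa=\aleph_1$. Concretely, one needs that $N(\Delta)$ is a countable simplicial set, so that it is $\aleph_1$-small, and then the commutation of $\aleph_1$-small limits with $\aleph_1$-filtered colimits in $\mathsf{Ani}$. A subtler point is that the hypercovers $S_\ast\to S$ range over a possibly large family. This causes no difficulty, since the commutation is checked one hypercover at a time. No bound on $\lambda$ enters, which is why the conclusion holds for every uncountable $\lambda$.
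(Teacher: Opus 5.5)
Your proposal is correct and follows the paper's proof essentially step for step. The paper also checks that $\underline{S}$ is a hypersheaf using finite coproducts and $0$-truncatedness, reducing to the equalizer condition for a single surjection $T'\to T$; it then shows that the pointwise $\aleph_1$-filtered colimit of hypersheaves is again a hypersheaf, because $\aleph_1$-filtered colimits commute with countable limits such as $\lim_{\Delta}$ in $\mathsf{Ani}$ (the paper cites \cite[Tag~05Y0]{kerodon} for this, rather than the Tag~064W-type result you first mention).
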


\begin{proof}
Since the functor $\operatorname{Cont}(-,S)$ takes finite coproducts to products, it is a hypersheaf as soon as the map
\[\operatorname{Cont}(T,S) \to \operatorname{eq}\mleft(\operatorname{Cont}(T^\prime,S)\rightrightarrows \operatorname{Cont}(T^\prime\times_T T^\prime,S)\mright)\]
is an equivalence for every surjection $T^\prime\to T$ of $\lambda$-light profinite sets, which is clear.

Now let $X=\operatorname*{colim}_{i\in I} X_i$ be an $\aleph_1$-filtered colimit of condensed animae. To say that $\underline{S}$ is $\aleph_1$-compact means that the natural map
\[\operatorname*{colim}_{i\in I} X_i(S)\simeq \operatorname*{colim}_{i\in I}\operatorname{Map}\mleft(\underline{S},X_i\mright)\to \operatorname{Map}\mleft(\underline{S},X\mright)\simeq X(S)\]
is an equivalence. Equivalently, one must show that the functor
\[S \mapsto \operatorname*{colim}_{i\in I} X_i(S)\]
satisfies hyperdescent. This follows from the fact that each $X_i$ is a hypersheaf and that $\aleph_1$-filtered colimits commute with countable limits in $\mathsf{Ani}$ \cite[Tag \href{https://kerodon.net/tag/05Y0}{05Y0}]{kerodon}.
\end{proof}

Let $G\colon \mathsf{W}\to \mathsf{V}$ be a morphism in $\mathsf{Pr}^\mathsf{R}$ and let $Y\in \mathsf{Cond}_\lambda(\mathsf{W})$. Since $G$ preserves small limits, the composition $G\circ Y$ still satisfies hyperdescent. Thus, we obtain an induced functor
\begin{align*}
    \mathsf{Cond}_\lambda(\mathsf{W}) &\to \mathsf{Cond}_\lambda(\mathsf{V}) \\
    \mleft(\mathsf{Prof}_\lambda^\mathrm{op}\xrightarrow{\ Y\ } \mathsf{W}\mright)&\mapsto \mleft(\mathsf{Prof}_\lambda^\mathrm{op}\xrightarrow{\ Y\ } \mathsf{W}\xrightarrow{\ G\ }\mathsf{V}\mright).
\end{align*}
Under the natural equivalence $\mathsf{Cond}_\lambda(-)\simeq \mathsf{Cond}_\lambda(\mathsf{Ani})\otimes -$, the induced functor is nothing but $\operatorname{id}\otimes \,G$. Similarly, given a morphism $F\colon \mathsf{V}\to \mathsf{W}$ in $\mathsf{Pr}^\mathsf{L}$, we also obtain an induced functor
\[\mathsf{Cond}_\lambda(\mathsf{V})\simeq \mathsf{Cond}_\lambda(\mathsf{Ani})\otimes \mathsf{V}\xrightarrow{\ \operatorname{id}\otimes F\ }\mathsf{Cond}_\lambda(\mathsf{Ani})\otimes \mathsf{W}\simeq \mathsf{Cond}_\lambda(\mathsf{W})\]
which maps a $\mathsf{V}$-valued hypersheaf $X$ to the hypersheafification of $F\circ X$.

\begin{proposition}\label{Cond preserves presentability}
    Let $\kappa\geq \aleph_1$ be a regular cardinal. Then the assignment $\mathsf{V}\mapsto \mathsf{Cond}_\lambda(\mathsf{V})$ defines an endofunctor of each of the categories 
    \[\mathsf{Pr}^\mathsf{R}_{\kappa}, \quad \mathsf{Pr}^\mathsf{R}_{\kappa,\mathrm{st}}, \quad \mathsf{Pr}^\mathsf{L}_{\kappa}, \quad\mathsf{Pr}^\mathsf{L}_{\kappa,\mathrm{st}}.\]
    Moreover, it induces an endofunctor on the categories of commutative algebra objects in $\mathsf{Pr}^\mathsf{L}_{\kappa}$ and $\mathsf{Pr}^\mathsf{L}_{\kappa,\mathrm{st}}$.
\end{proposition}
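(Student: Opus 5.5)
The plan is to realize $\mathsf{Cond}_\lambda(-)$ as the tensor product $\mathsf{Cond}_\lambda(\mathsf{Ani})\otimes -$. The whole statement then reduces to one input: $\mathsf{Cond}_\lambda(\mathsf{Ani})$, with its cartesian symmetric monoidal structure, is a commutative algebra object of $\mathsf{Pr}^\mathsf{L}_{\aleph_1}$, and hence of $\mathsf{Pr}^\mathsf{L}_\kappa$ for every regular $\kappa\geq\aleph_1$. Granting this, the endofunctor on $\mathsf{Pr}^\mathsf{L}_\kappa$ is tensoring with an object of $\mathsf{Pr}^\mathsf{L}_\kappa$. By \cite[Prop.~2.3]{aoki2025higherpresentablecategorieslimits}, $\mathsf{Pr}^\mathsf{L}_\kappa$ is a commutative algebra in $\mathsf{Pr}^\mathsf{L}_\kappa$, so its tensor product is well defined on it (compatible with the Lurie tensor product). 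Tensoring with a fixed commutative algebra $\mathsf{E}$ then promotes to a functor $\mathsf{CAlg}(\mathsf{Pr}^\mathsf{L}_\kappa)\to\mathsf{CAlg}(\mathsf{Pr}^\mathsf{L}_\kappa)$, namely $\mathsf{W}\mapsto\mathsf{E}\otimes\mathsf{W}$; this is the coproduct with $\mathsf{E}$ in $\mathsf{CAlg}$.

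\textbf{Step 1: the generating input.} By Lemma~\ref{compact condensed anima}, the representables $\underline{S}$ for $S\in\mathsf{Prof}_\lambda$ are $\aleph_1$-compact. They generate $\mathsf{Cond}_\lambda(\mathsf{Ani})$ under colimits, since it is a localization of presheaves on $\mathsf{Prof}_\lambda$ and $\mathsf{Prof}_\lambda$ is essentially small. Their $\aleph_1$-small colimits therefore form a small set of $\aleph_1$-compact generators, closed under $\aleph_1$-filtered colimits, and $\mathsf{Cond}_\lambda(\mathsf{Ani})$ is $\aleph_1$-presentable. The unit is $\underline{\ast}$, which is compact. Products satisfy $\underline{S}\times\underline{T}=\underline{S\times T}$, and $\mathsf{Prof}_\lambda$ is closed under finite products. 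Since finite products commute with colimits in a topos, $\aleph_1$-compact objects are closed under products: write them as retracts of $\aleph_1$-small colimits of representables. The product preserves colimits in each variable because the category is a topos. Finally, $\mathsf{Pr}^\mathsf{L}_{\aleph_1}$-algebras remain $\mathsf{Pr}^\mathsf{L}_\kappa$-algebras, since $\aleph_1$-compact objects are $\kappa$-compact and $\kappa$-presentability follows from $\aleph_1$-presentability for $\kappa\geq\aleph_1$. I would cite the standard base-change result for this: $\mathsf{C}\mapsto\mathsf{C}$ is a symmetric monoidal functor $\mathsf{Pr}^\mathsf{L}_{\aleph_1}\to\mathsf{Pr}^\mathsf{L}_\kappa$.

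\textbf{Step 2: the stable and $\mathsf{Pr}^\mathsf{R}$ cases.} For the stable variants, observe that $\mathsf{E}\otimes\mathsf{V}$ is stable whenever $\mathsf{V}$ is stable, because $\mathsf{V}\simeq\mathsf{V}\otimes\mathsf{Sp}$. Also, $\mathsf{Pr}^\mathsf{L}_{\kappa,\mathrm{st}}$ is a full subcategory closed under the relevant operations, and its commutative algebras are stable commutative algebras in $\mathsf{Pr}^\mathsf{L}_\kappa$. For $\mathsf{Pr}^\mathsf{R}_\kappa$, use the equivalence $(\mathsf{Pr}^\mathsf{L}_\kappa)^\mathrm{op}\simeq\mathsf{Pr}^\mathsf{R}_\kappa$ given by passing to adjoints. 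On objects the two functors agree. A morphism $G$ in $\mathsf{Pr}^\mathsf{R}_\kappa$ with left adjoint $F$ is sent to the right adjoint of $\operatorname{id}\otimes F$. This right adjoint is $\operatorname{id}\otimes G$, which by the discussion preceding the proposition is postcomposition $Y\mapsto G\circ Y$, so it matches the concrete description. Adjoints of morphisms in $\mathsf{Pr}^\mathsf{L}_\kappa$ preserve $\kappa$-filtered colimits, so this lands in $\mathsf{Pr}^\mathsf{R}_\kappa$.

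\textbf{Main obstacle.} The delicate point is the identification $\mathsf{Cond}_\lambda(\mathsf{V})\simeq\mathsf{Cond}_\lambda(\mathsf{Ani})\otimes\mathsf{V}$ for hypersheaves, taken from \cite[Lem.~A.4.22]{heyer20246functorformalismssmoothrepresentations}, together with its naturality in $\mathsf{V}$. If naturality needs an argument, I would check it on the $\mathsf{Pr}^\mathsf{R}$ side, where the functor is visibly postcomposition. The other potential gap is the $\aleph_1$-compactness of the generators, but Lemma~\ref{compact condensed anima} already supplies it.
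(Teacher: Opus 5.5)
Your proposal is correct and follows essentially the paper's route. Both arguments reduce everything to showing that $\mathsf{Cond}_\lambda(\mathsf{Ani})$, with its cartesian structure, lies in $\mathsf{CAlg}(\mathsf{Pr}^\mathsf{L}_\kappa)$ (using Lemma~\ref{compact condensed anima}, universality of colimits in the topos, and compact objects being retracts of small colimits of representables), and then use that tensoring with it is the coproduct in $\mathsf{CAlg}(\mathsf{Pr}^\mathsf{L}_\kappa)$. The remaining differences are cosmetic: the paper works directly at $\kappa$, via closure of hypersheaves under $\kappa$-filtered colimits in presheaves \cite[Cor.~5.5.7.3]{lurie2009HTT}, instead of your detour through $\aleph_1$ and a generators argument (where ``closed under $\aleph_1$-filtered colimits'' should read ``generating under $\aleph_1$-filtered colimits''), and it checks stability pointwise and handles the stable algebras through $\mathsf{Cond}_\lambda(\mathsf{Sp})$ rather than via $\mathsf{V}\simeq\mathsf{V}\otimes\mathsf{Sp}$.
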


\begin{proof}
We first show that $\mathsf{Cond}_\lambda(-)$ preserves $\kappa$-presentability. Since the tensor product of presentable categories does so \cite[Lem.~5.3.2.11]{higheralgebra}, it suffices to prove that $\mathsf{Cond}_\lambda(\mathsf{Ani})$ is $\kappa$-presentable. To this end, by \cite[Cor.~5.5.7.3]{lurie2009HTT}, it is enough to show that $\mathsf{Cond}_\lambda(\mathsf{Ani}) = \mathsf{HSh}(\mathsf{Prof}_\lambda,\mathsf{Ani})$ is closed under $\kappa$-filtered colimits in $\mathsf{PSh}(\mathsf{Prof}_\lambda,\mathsf{Ani})$, which is precisely the content of Lemma~\ref{compact condensed anima}. 

Next, since limits and colimits in functor categories are computed pointwise, the category $\mathsf{PSh}(\mathsf{Prof}_\lambda,\mathsf{V})$ is stable whenever $\mathsf{V}$ is. Finally, the full subcategory $\mathsf{Cond}_\lambda(\mathsf{V}) = \mathsf{HSh}(\mathsf{Prof}_\lambda,\mathsf{V})$ is closed under fibers and shifts, and is therefore stable as well.

Since $\mathsf{Cond}_{\lambda}(\mathsf{Ani})$ is a topos, small colimits in $\mathsf{Cond}_{\lambda}(\mathsf{Ani})$ are universal. Therefore, to show that its cartesian symmetric monoidal structure defines an object of $\mathsf{CAlg}(\mathsf{Pr}^{\mathsf{L}}_{\kappa})$, it suffices to verify that the tensor unit $\ast$ is $\kappa$-compact and that the full subcategory of $\kappa$-compact objects is closed under finite products.

The tensor unit is representable, and hence $\kappa$-compact by Lemma~\ref{compact condensed anima}. Moreover, by \cite[Prop.~5.3.4.17 and Cor.~5.5.7.3]{lurie2009HTT}, every $\kappa$-compact object of $\mathsf{Cond}_{\lambda}(\mathsf{Ani})$ is a retract of a $\kappa$-small colimit of representables. It follows again from Lemma~\ref{compact condensed anima} that $\kappa$-compact objects are stable under finite products.

We now show that $\mathsf{Cond}_\lambda(\mathsf{V})$ inherits a $\kappa$-presentably symmetric monoidal structure from $\mathsf{V}$. By \cite[Cor.~3.2.4.8]{higheralgebra}, the category $\mathsf{CAlg}(\mathsf{Pr}^\mathsf{L}_\kappa)$ admits finite coproducts, and these are compatible with the tensor product in $\mathsf{Pr}^\mathsf{L}_\kappa$ in the sense that the diagram
\[\begin{tikzcd}[ampersand replacement=\&]
	{\mathsf{CAlg}(\mathsf{Pr}^\mathsf{L}_\kappa)} \&\&\& {\mathsf{CAlg}(\mathsf{Pr}^\mathsf{L}_\kappa)} \\
	{\mathsf{Pr}^\mathsf{L}_\kappa} \&\&\& {\mathsf{Pr}^\mathsf{L}_\kappa,}
	\arrow["{\mathsf{Cond}_\lambda(\mathsf{Ani})\sqcup-}", from=1-1, to=1-4]
	\arrow[from=1-1, to=2-1]
	\arrow[from=1-4, to=2-4]
	\arrow["{\mathsf{Cond}_\lambda(\mathsf{Ani})\otimes-}", from=2-1, to=2-4]
\end{tikzcd}\]
commutes, where the vertical maps are the forgetful functors. It follows that tensoring with $\mathsf{Cond}_\lambda(\mathsf{Ani})$ carries commutative algebra objects to commutative algebra objects, giving the desired $\kappa$-presentably symmetric monoidal structure on $\mathsf{Cond}_\lambda(\mathsf{V})$.

For the stable case, recall that $-\otimes\mathsf{Sp}\colon\mathsf{Pr}^\mathsf{L}_\kappa\to\mathsf{Pr}^\mathsf{L}_{\kappa,\mathrm{st}}$ is symmetric monoidal. Hence it endows
\[\mathsf{Cond}_{\lambda}(\mathsf{Sp})\simeq \mathsf{Cond}_{\lambda}(\mathsf{Ani})\otimes \mathsf{Sp}.\]
with a $\kappa$-presentably symmetric monoidal structure. Applying the same argument as above, with $\mathsf{Cond}_{\lambda}(\mathsf{Ani})$ replaced by $\mathsf{Cond}_{\lambda}(\mathsf{Sp})$, gives the corresponding result in the stable setting.
\end{proof}

Let $\mathsf{Ab}$ and $\mathsf{Ring}$ denote the categories of (animated) abelian groups and commutative rings, respectively. Equivalently, these are the categories of finite-product-preserving functors
\[\mathsf{Lat}^\mathrm{op}\to \mathsf{Ani} \qquad\text{and}\qquad \mathsf{Poly}^\mathrm{op}\to \mathsf{Ani},\]
where $\mathsf{Lat}$ is the category of finitely generated free abelian groups $\mathbb{Z}^n$, and $\mathsf{Poly}$ is the category of finitely generated polynomial rings $\mathbb{Z}[x_1,\dots,x_n]$, for $n\geq 0$.

By the Dold--Kan correspondence, the category $\mathsf{Ab}$ is equivalent to the connective part $\mathsf{D}(\mathbb{Z})_{\geq 0}$ of the derived category $\mathsf{D}(\mathbb{Z})$. Furthermore, it may be identified with
\[\mathsf{Mod}_{\mathsf{H}\mathbb{Z}}(\mathsf{Sp}^{\mathrm{cn}}),\]
the category of $\mathsf{H}\mathbb{Z}$-modules in connective spectra, where $\mathsf{H}\mathbb{Z}$ denotes the Eilenberg--MacLane spectrum of $\mathbb{Z}$.

Passing to condensed objects, \cite[Thm.~1.2.(2)]{standardtstructures} endows $\mathsf{Cond}_{\lambda}(\mathsf{D}(\mathbb{Z}))$ with a $t$\nobreakdash-structure whose connective and coconnective parts are given by
\[\mathsf{Cond}_{\lambda}(\mathsf{D}(\mathbb{Z}))_{\geq 0}\simeq \mathsf{Cond}_{\lambda}(\mathsf{D}(\mathbb{Z})_{\geq 0}) \quad \text{and}\quad \mathsf{Cond}_{\lambda}(\mathsf{D}(\mathbb{Z}))_{\leq 0}\simeq \mathsf{Cond}_{\lambda}(\mathsf{D}(\mathbb{Z})_{\leq 0}),\]
respectively. Consequently, $\mathsf{Cond}_{\lambda}(\mathsf{Ab})$ identifies with the full subcategory of connective objects in $\mathsf{Cond}_{\lambda}(\mathsf{D}(\mathbb{Z}))$. This $t$-structure is right complete and compatible with both filtered colimits and the symmetric monoidal structure \cite[Prop.~3.8]{standardtstructures}.

\begin{proposition}\label{CondAb is mu-compact}
    Let $\mu=((2^{<\lambda})^{\aleph_0})^+$. Then $\mathsf{Cond}_\lambda(\mathsf{Ab})$ is a $\mu$-compact object of $\mathsf{CAlg}(\mathsf{Pr}^\mathsf{L}_{\aleph_1})$.
\end{proposition}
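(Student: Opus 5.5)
The plan is to write $\mathsf{Cond}_\lambda(\mathsf{Ab})$ as a $\mu$-small colimit in $\mathsf{CAlg}(\mathsf{Pr}^\mathsf{L}_{\aleph_1})$ of objects that are visibly $\mu$-compact. By Proposition~\ref{Cond preserves presentability} and the compatibility of coproducts in $\mathsf{CAlg}(\mathsf{Pr}^\mathsf{L}_{\aleph_1})$ with the Lurie tensor product, we have
\[\mathsf{Cond}_\lambda(\mathsf{Ab})\simeq\mathsf{Cond}_\lambda(\mathsf{Ani})\otimes\mathsf{Ab}\simeq\mathsf{Cond}_\lambda(\mathsf{Ani})\sqcup\mathsf{Ab}\]
in $\mathsf{CAlg}(\mathsf{Pr}^\mathsf{L}_{\aleph_1})$. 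Since $\mu$-compact objects are closed under finite colimits, it suffices to treat the two factors separately.

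\textbf{The factor $\mathsf{Ab}$.} Write $\mathsf{Ab}\simeq\mathsf{Mod}_{\mathsf{H}\mathbb{Z}}(\mathsf{Sp}^{\mathrm{cn}})$. Then $\mathsf{Sp}^{\mathrm{cn}}$ is obtained from $\mathsf{Ani}$ by a countable (hence $\mu$-small) process of stabilization and truncation. Alternatively, and more simply, $\mathsf{Ab}=\mathsf{P}_\Sigma(\mathsf{Lat})$ with its tensor product is the localization of the free presentably symmetric monoidal category on the countable symmetric monoidal category $\mathsf{Lat}$, at the countably many maps expressing preservation of finite products. Either description exhibits $\mathsf{Ab}$ as a countable colimit of free objects, so it is even $\aleph_1$-compact in $\mathsf{CAlg}(\mathsf{Pr}^\mathsf{L}_{\aleph_1})$.

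\textbf{The factor $\mathsf{Cond}_\lambda(\mathsf{Ani})$.} Here I would use the left adjoint
\[\mathsf{CMon}(\mathsf{Cat})\to\mathsf{CAlg}(\mathsf{Pr}^\mathsf{L}_{\aleph_1}),\qquad \mathsf{E}\mapsto\mathsf{P}(\mathsf{E})\ \text{with Day convolution}.\]
Its right adjoint is $\mathsf{C}\mapsto\mathsf{C}^{\aleph_1}$, which preserves $\mu$-filtered colimits by the same references used in Step~I of Proposition~\ref{key proposition 2}, so the left adjoint preserves $\mu$-compact objects. Apply this to $\mathsf{E}=\mathsf{Prof}_\lambda$ with its cartesian product, noting that Day convolution of the product gives the localization-compatible structure.

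For the cardinality count: a $\lambda$-light profinite set is determined by a Boolean ring of size $<\lambda$, so there are at most $2^{<\lambda}$ isomorphism classes. Each $\operatorname{Cont}(S,T)$ is a set of Boolean homomorphisms between rings of size $<\lambda$, hence of size at most $2^{<\lambda}<\mu$. Thus $\mathsf{Prof}_\lambda$ is essentially $\mu$-small, and is therefore $\mu$-compact in $\mathsf{CMon}(\mathsf{Cat})$; this needs a small check that $\mathsf{CMon}(\mathsf{Cat})\to\mathsf{Cat}$ reflects this, via the monadic arguments of Lemma~\ref{compactness bound}.

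Next, $\mathsf{Cond}_\lambda(\mathsf{Ani})$ is the localization of $\mathsf{P}(\mathsf{Prof}_\lambda)$ at the maps
\[\operatorname{colim}_{\Delta^{\mathrm{op}}}\yo(S_\bullet)\to\yo(S)\ \text{for hypercovers } S_\bullet\to S,\qquad\coprod_{i\in I}\yo(S_i)\to\yo\Big(\coprod_{i\in I} S_i\Big)\ \text{for finite } I.\]
A hypercover is a countable diagram in $\mathsf{Prof}_\lambda$, so up to equivalence there are at most $(2^{<\lambda})^{\aleph_0}<\mu$ of them; this is exactly where the choice $\mu=((2^{<\lambda})^{\aleph_0})^+$ enters. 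Since colimits in the topos are universal, this localization is compatible with products, and so it is a symmetric monoidal localization. In $\mathsf{CAlg}(\mathsf{Pr}^\mathsf{L}_{\aleph_1})$ it is therefore the pushout of
\[\mathsf{P}(\mathsf{Prof}_\lambda)\leftarrow\operatorname{Free}\Big(\coprod_{\sigma}\mathsf{P}(\Delta^1)\Big)\to\operatorname{Free}\Big(\coprod_{\sigma}\mathsf{P}(\ast)\Big),\]
where $\sigma$ ranges over the $\mu$-small set of maps above. By Lemma~\ref{compact condensed anima}, the sources and targets of these maps are $\aleph_1$-compact, so the left-pointing map lies in $\mathsf{CAlg}(\mathsf{Pr}^\mathsf{L}_{\aleph_1})$. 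The free objects here are $\mu$-compact, as $\mu$-small coproducts of $\aleph_1$-compact objects of the kind handled in Lemma~\ref{Q is nice}.

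\textbf{Expected main obstacle.} I expect the hardest step to be justifying this last identification. One must check that the hypersheaf localization is exactly the pushout inverting this $\mu$-small set of maps inside $\mathsf{CAlg}(\mathsf{Pr}^\mathsf{L}_{\aleph_1})$: the tensor-ideal generated by the maps must coincide with the localization at the maps themselves, which should follow from universality of colimits. One must also check that the pushout computed in $\mathsf{CAlg}(\mathsf{Pr}^\mathsf{L}_{\aleph_1})$ agrees with the one in $\mathsf{Pr}^\mathsf{L}$, which holds because all maps preserve $\aleph_1$-compact objects. I would first try to verify this on underlying categories via \cite[Prop.~5.5.4.15]{lurie2009HTT}, and then lift the identification to commutative algebras.
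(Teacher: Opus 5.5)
Your argument works, but it takes a different and heavier route than the paper. You stay inside $\mathsf{CAlg}(\mathsf{Pr}^\mathsf{L}_{\aleph_1})$ throughout. You split $\mathsf{Cond}_\lambda(\mathsf{Ab})\simeq\mathsf{Cond}_\lambda(\mathsf{Ani})\sqcup\mathsf{Ab}$, which is exactly how the paper defines the monoidal structure, so this step is free. You then realize each factor as a monoidal localization of a Day-convolution presheaf category, and control compactness through the free functor from $\mathsf{CMon}(\mathsf{Cat})$.

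The paper never touches the tensor product. It presents $\mathsf{Cond}_\lambda(\mathsf{Ab})$ in one step as the $W$-local objects of $\mathsf{P}(\overline{\mathsf{Prof}}_\lambda\times\mathsf{Lat})$. Here $W$ consists of the product maps in the $\mathsf{Lat}$-variable together with the finite-coproduct and hypercover maps in the $\mathsf{Prof}$-variable, with the same $(2^{<\lambda})^{\aleph_0}$ count you give. It gets $\mu$-compactness in $\mathsf{Pr}^\mathsf{L}_{\aleph_1}$ from Proposition~\ref{compactness of local objects}. It then concludes because $\mathsf{CAlg}(\mathsf{Pr}^\mathsf{L}_{\aleph_1})\to\mathsf{Pr}^\mathsf{L}_{\aleph_1}$ \emph{reflects} $\mu$-compact objects (Proposition~\ref{forgetful functor preserves compacts} with $\mu$ in place of $\kappa$).

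That reflection step is what dissolves your ``main obstacle''. Once compactness can be tested on underlying objects, a localization at a set of maps is an honest pushout in $\mathsf{Pr}^\mathsf{L}_{\aleph_1}$. One then needs neither the universal property of Day convolution, nor compatibility of hypersheafification with products, nor an identification of pushouts in $\mathsf{CAlg}$.

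Your obstacle is still surmountable by standard means:
\begin{itemize}
\item Hypersheafification is left exact, hence compatible with the cartesian structure.
\item \cite[Prop.~4.1.7.4]{higheralgebra} identifies symmetric monoidal left adjoints out of the localization with those out of $\mathsf{P}(\mathsf{Prof}_\lambda)$ that invert your maps, which is what your pushout corepresents.
\item Preservation of $\aleph_1$-compact objects passes through the localization, because its $\aleph_1$-compact objects are retracts of images of $\aleph_1$-compact presheaves.
\end{itemize}
Notice, however, that the ``small check'' you need for $\mathsf{CMon}(\mathsf{Cat})\to\mathsf{Cat}$ is itself Proposition~\ref{forgetful functor preserves compacts}, applied to $\mathsf{Cat}$ with its cartesian structure. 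It is not Lemma~\ref{compactness bound}, which proves preservation rather than reflection. So you already rely on the very principle that, applied one level up, reduces the paper's proof to a few lines.

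What your route buys is modularity. It isolates $\mathsf{Cond}_\lambda(\mathsf{Ani})$, so the same argument shows $\mathsf{Cond}_\lambda(\mathsf{V})$ is $\mu$-compact for every $\mu$-compact $\mathsf{V}$. The cost is the extra monoidal bookkeeping.

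One small correction: the $\aleph_1$-compactness of the sources and targets of your maps must be checked in $\mathsf{P}(\mathsf{Prof}_\lambda)$, where they are countable colimits of representables. Lemma~\ref{compact condensed anima} does not apply, since it concerns $\mathsf{Cond}_\lambda(\mathsf{Ani})$.
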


\begin{proof}
We begin by recalling that, by Stone duality, the category $\mathsf{Prof}_\lambda$ is anti-equivalent to the category $\mathsf{Bool}_\lambda$ of $\lambda$-small Boolean rings. It is a classical theorem that, for each infinite cardinal $\rho$, there are exactly $2^\rho$ isomorphism classes of Boolean rings of cardinality $\rho$ \cite{Monk1989}. Hence
\[\#\pi_0(\mathsf{Bool}_\lambda^\simeq)=\sup_{ \rho<\lambda}2^\rho\equalscolon 2^{<\lambda}.\]
Choose a skeleton $\overline{\mathsf{Bool}}_\lambda$ of $\mathsf{Bool}_\lambda$ and let $A$, $B$ be objects of it. We have that
\[\#\operatorname{Map}_{\mathsf{B}_\lambda}(A,B)\leq \#\operatorname{Map}_{\mathsf{Set}}(A,B)\leq 2^{\max(\aleph_0,\# A,\#B)}\leq 2^{<\lambda}.\]
It follows that the full subcategory $\overline{\mathsf{Prof}}_\lambda\subseteq \mathsf{Prof}_\lambda$ spanned by the profinite sets $S$ such that $\operatorname{Cont}(S,\mathbb{F}_2)\in \overline{\mathsf{Bool}}_\lambda$ is $(2^{<\lambda})^+$-small.

Let $\mathsf{C}$ be the $(2^{<\lambda})^+$-small category $\overline{\mathsf{Prof}}_\lambda\times \mathsf{Lat}$ and, for each object $(S,\mathbb{Z}^n)$ of $\mathsf{C}$, denote by $\yo(S,\mathbb{Z}^n)$ the corresponding representable presheaf in $\mathsf{P}(\mathsf{C})$. Let $W$ be the collection of morphisms in $\mathsf{P}(\mathsf{C})$ consisting of the maps
\begin{align*}
   (W_1)&&  \coprod_{j=1}^n\yo(S,\mathbb{Z})&\to\yo(S,\mathbb{Z}^n), && S\in\overline{\mathsf{Prof}}_\lambda,\ n\geq0;\\
   (W_2)&&  \coprod_{j=1}^m\yo(S_j,\mathbb{Z})&\to\yo\mleft(\coprod_{j=1}^m S_j,\mathbb{Z}\mright), && S_j\in\overline{\mathsf{Prof}}_\lambda,\ m\geq0;\\
    (W_3)&& \operatornamewithlimits{colim}_{[n]\in\Delta^\mathrm{op}}\yo(S_n,\mathbb{Z})&\to\yo(S,\mathbb{Z}), && S_\ast\to S\text{ a hypercover in }\overline{\mathsf{Prof}}_\lambda,
\end{align*}
and let $\mathsf{P}_0\subseteq\mathsf{P}(\mathsf{C})$ be the full subcategory spanned by the $W$-local objects.

Now let $X_0\colon \overline{\mathsf{Prof}}_\lambda^\mathrm{op}\times \mathsf{Lat}^\mathsf{op}\to \mathsf{Ani}$ be a presheaf, and let
\[X\colon \overline{\mathsf{Prof}}_\lambda^\mathrm{op}\to \mathsf{Fun}(\mathsf{Lat}^\mathsf{op},\mathsf{Ani})\]
be the corresponding curried functor. The presheaf $X_0$ is local with respect to the maps in $W_1$ precisely if $X$ takes values in $\mathsf{Ab}\subseteq \mathsf{Fun}(\mathsf{Lat}^\mathsf{op},\mathsf{Ani})$. Similarly, locality with respect to $W_2$ and $W_3$ is equivalent to the condition that $X$ is a hypersheaf. We therefore obtain an equivalence $\mathsf{P}_0\simeq \mathsf{Cond}_\lambda(\mathsf{Ab})$.

By the discussion above, the collections $W_1$ and $W_2$ are visibly $(2^{<\lambda})^+$-small. For $W_3$, note that $\Delta_+$ is countable. Hence the collection of augmented simplicial objects in $\overline{\mathsf{Prof}}_\lambda$ has cardinality at most
\[(2^{<\lambda})^{\aleph_0}<\mu.\]
In particular, $W_3$ is $\mu$-small, and therefore so is $W$. Since the source and target of every morphism in $W$ are $\aleph_1$-compact, the result follows from Propositions~\ref{forgetful functor preserves compacts} and \ref{compactness of local objects}.
\end{proof}

Contrary to what one might expect, the category $\mathsf{Ring}$ of (animated) commutative rings is not equivalent to
\[\mathsf{CAlg}(\mathsf{Ab})\simeq \mathsf{CAlg}\mleft(\mathsf{Mod}_{\mathsf{H}\mathbb{Z}}(\mathsf{Sp}^{\mathrm{cn}})\mright)\simeq\mathsf{CAlg}(\mathsf{Sp}^{\mathrm{cn}})_{\mathsf{H}\mathbb{Z}/}.\]
The universal property of animation \cite[Prop.~5.5.8.15]{lurie2009HTT} provides a sifted-colimit-preserving extension
\[\Theta\colon \mathsf{Ring}\to\mathsf{CAlg}(\mathsf{Sp}^{\mathrm{cn}})_{\mathsf{H}\mathbb{Z}/}\]
of the Eilenberg--MacLane functor $\mathsf{H}\colon \mathsf{Poly}\to\mathsf{CAlg}(\mathsf{Sp}^{\mathrm{cn}})_{\mathsf{H}\mathbb{Z}/}$. While $\Theta$ is not an equivalence, the corresponding functor obtained after replacing $\mathbb{Z}$ by $\mathbb{Q}$ is an equivalence.

That being said, $\Theta$ will be useful in our study of $\mathsf{Ring}$. Indeed, by \cite[Prop.~25.1.2.2]{lurie2018spectral}, the functor $\Theta$ is conservative and preserves all small limits and colimits. The following lemma establishes another important property of $\Theta$.

\begin{lemma}\label{Theta preserves kappa-compact objects}
    The functor $\Theta\colon \mathsf{Ring}\to \mathsf{CAlg}(\mathsf{Sp}^\mathrm{cn})_{\mathsf{H}\mathbb{Z}/}$ preserves $\aleph_1$-compact objects. Consequently, $\Theta$ is a map in $\mathsf{Pr}^\mathsf{L}_{\aleph_1}$.
\end{lemma}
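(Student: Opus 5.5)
We will show that $\Theta$ sends a generating class of $\aleph_1$-compact objects to $\aleph_1$-compact objects, and then extend to all $\aleph_1$-compact objects using that $\Theta$ preserves small colimits. The category $\mathsf{Ring}=\mathsf{P}_\Sigma(\mathsf{Poly})$ is compactly generated by the polynomial rings $\mathbb{Z}[x_1,\dots,x_n]$. By \cite[Prop.~5.3.4.17 and Cor.~5.5.7.3]{lurie2009HTT}, every $\aleph_1$-compact object of $\mathsf{Ring}$ is therefore a retract of a countable colimit of objects in $\mathsf{Poly}$.

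The main step is to check that $\Theta(\mathbb{Z}[x_1,\dots,x_n])=\mathsf{H}\mathbb{Z}[x_1,\dots,x_n]$ is $\aleph_1$-compact, and in fact compact, in $\mathsf{CAlg}(\mathsf{Sp}^\mathrm{cn})_{\mathsf{H}\mathbb{Z}/}$. Our first approach is to compare it with the free $\mathbb{E}_\infty$-$\mathsf{H}\mathbb{Z}$-algebra $\operatorname{Sym}^*_{\mathsf{H}\mathbb{Z}}(\mathsf{H}\mathbb{Z}^{\oplus n})$. That free algebra is compact, because the forgetful functor to $\mathsf{Mod}_{\mathsf{H}\mathbb{Z}}(\mathsf{Sp}^\mathrm{cn})$ preserves filtered colimits \cite[Cor.~3.2.3.2]{higheralgebra}. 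However, the polynomial ring differs from it: the homotopy of the free algebra contains Dyer--Lashof classes. We therefore instead use the countability of the homotopy groups.

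Concretely, call an object $A$ of $\mathsf{CAlg}(\mathsf{Sp}^\mathrm{cn})_{\mathsf{H}\mathbb{Z}/}$ \emph{countable} if every $\pi_i A$ is countable. We claim that countable objects are $\aleph_1$-compact. To prove this, we build $A$ by a countable cell attachment. Start with a free algebra on countably many generators that surjects onto $\pi_0$. Then kill kernels and add generators inductively in each degree. Because free algebras on countably many cells have countable homotopy (the homotopy of $\operatorname{Sym}^k$ of a countable module over $\mathsf{H}\mathbb{Z}$ is countable), each stage involves only countably many cells. The resulting sequential colimit is equivalent to $A$ by a Postnikov/connectivity check. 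A countable colimit of compact objects is $\aleph_1$-compact \cite[Tag~\href{https://kerodon.net/tag/064W}{064W}]{kerodon}. Since $\pi_*\mathsf{H}\mathbb{Z}[x_1,\dots,x_n]$ is countable, $\Theta$ sends generators to $\aleph_1$-compact objects.

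This cell-attachment argument is the expected main obstacle, mainly the bookkeeping that countably many cells suffice at each stage. An alternative we would try if it becomes messy is to use the right adjoint of $\Theta$ directly. Since $\Theta$ preserves limits and is accessible, it has a left adjoint $\Lambda$; but we actually need that $\Theta$'s right adjoint preserves $\aleph_1$-filtered colimits. Verifying this on mapping spaces out of generators would require the same countability input, so both routes reduce to the same estimate.

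With generators handled, we finish as follows. Since $\Theta$ preserves small colimits \cite[Prop.~25.1.2.2]{lurie2018spectral}, it sends a countable colimit of polynomial rings to a countable colimit of $\aleph_1$-compact objects, which is $\aleph_1$-compact. It also preserves retracts, and retracts of $\aleph_1$-compact objects are $\aleph_1$-compact \cite[Tag~\href{https://kerodon.net/tag/064Z}{064Z}]{kerodon}. Finally, both categories are $\aleph_1$-presentable: $\mathsf{CAlg}(\mathsf{Sp}^\mathrm{cn})_{\mathsf{H}\mathbb{Z}/}$ is, by the argument in Lemma~\ref{commutative algebras in the functor category} together with Proposition~\ref{lemma on coslices}. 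Hence $\Theta$ is a morphism in $\mathsf{Pr}^\mathsf{L}_{\aleph_1}$.
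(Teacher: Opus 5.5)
Your argument is correct, but it takes a genuinely different route from the paper's.

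\textbf{The paper's route.} The paper never uses cell structures. It cites Lurie's Hilbert basis theorem \cite[Prop.~7.2.4.31]{higheralgebra}: $\mathsf{H}\mathbb{Z}[x_1,\dots,x_n]$ is almost of finite presentation, i.e.\ compact in every $\tau_{\leq m}\mathsf{CAlg}(\mathsf{Sp}^{\mathrm{cn}})_{\mathsf{H}\mathbb{Z}/}$. It then gets $\aleph_1$-compactness from three facts:
\begin{enumerate}
\item Postnikov convergence;
\item truncations commute with filtered colimits;
\item $\aleph_1$-filtered colimits commute with countable limits in $\mathsf{Ani}$.
\end{enumerate}
It handles all $\aleph_1$-compact objects at once by showing that the right adjoint $\Theta^R$ preserves $\aleph_1$-filtered colimits, since sifted colimits in $\mathsf{Ring}$ are computed pointwise on $\mathsf{Poly}$. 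Your extension step (retracts of countable colimits of polynomial rings, plus colimit preservation of $\Theta$) is the equivalent left-adjoint formulation. So, contrary to your remark, the right-adjoint route does not need your countability estimate; it replaces it with the almost-finite-presentation input.

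\textbf{What your route buys and costs.} You prove a more general and more elementary criterion: every connective $\mathbb{E}_\infty$-$\mathsf{H}\mathbb{Z}$-algebra with countable homotopy groups is $\aleph_1$-compact. This needs no Noetherian or finite-generation input. The price is bookkeeping you should make explicit:
\begin{enumerate}
\item countability of $\pi_*$ for free algebras, via the homology of $\Sigma_k$ with countable coefficients;
\item countability of $\pi_*$ for the pushouts that attach cells, via the bar or K\"unneth spectral sequence;
\item each stage of your cell complex attaches countably many cells, so it is only $\aleph_1$-compact, not compact. Argue inductively that each stage is a pushout of $\aleph_1$-compact objects (free algebras on countable sums of shifts of $\mathsf{H}\mathbb{Z}$), and only then take the sequential colimit.
\end{enumerate}

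\textbf{Drop the ``in fact compact'' aside.} For $n\geq 1$ the claim that $\mathsf{H}\mathbb{Z}[x_1,\dots,x_n]$ is compact is false: the ring is almost of finite presentation but not of finite presentation. Its $\mathbb{E}_\infty$-cotangent complex is not perfect. For instance, the \'etale localization $\mathbb{Z}[x^{\pm1}]\simeq\mathsf{H}\mathbb{Z}\otimes\mathbb{S}[\mathbb{Z}]$ has cotangent complex $\mathsf{H}\mathbb{Z}[x^{\pm1}]\otimes_{\mathbb{S}}\mathsf{H}\mathbb{Z}$, whose homotopy is unbounded. This is exactly why the lemma is only an $\aleph_1$-statement. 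Since you never use the claim, your proof is unaffected.
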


\begin{proof}
    According to the Hilbert Basis Theorem \cite[Prop.~7.2.4.31]{higheralgebra}, the polynomial ring $\mathsf{H}\mathbb{Z}[x_1,\dots,x_n]$ is almost of finite presentation as an $\mathbb{E}_\infty$-algebra over $\mathsf{H}\mathbb{Z}$. By definition, this means that $\mathsf{H}\mathbb{Z}[x_1,\dots,x_n]$ is a compact object of $\tau_{\leq m}\mathsf{CAlg}(\mathsf{Sp}^\mathrm{cn})_{\mathsf{H}\mathbb{Z}/}$ for all $m\geq 0$. We claim that this implies that $\mathsf{H}\mathbb{Z}[x_1,\dots,x_n]$ is $\aleph_1$-compact for all $n\geq 0$.

    Let $(B_i)_{i\in I}$ be an $\aleph_1$-filtered diagram in $\mathsf{CAlg}(\mathsf{Sp}^\mathrm{cn})_{\mathsf{H}\mathbb{Z}/}$. Since Postnikov towers converge in this category \cite[Prop.~7.1.3.19]{higheralgebra}, we have that
    \begin{align*}
        \operatorname{Map}\mleft(\mathsf{H}\mathbb{Z}[x_1,\dots,x_n],\operatornamewithlimits{colim}_{i\in I}B_i\mright) &\simeq \lim_{m\geq 0}\operatorname{Map}\mleft(\mathsf{H}\mathbb{Z}[x_1,\dots,x_n],\tau_{\leq m}\operatornamewithlimits{colim}_{i\in I}B_i\mright) \\
        &\simeq \lim_{m\geq 0}\operatorname{Map}\mleft(\mathsf{H}\mathbb{Z}[x_1,\dots,x_n],\operatornamewithlimits{colim}_{i\in I}\tau_{\leq m}B_i\mright) \\
        &\simeq \lim_{m\geq 0}\operatornamewithlimits{colim}_{i\in I}\operatorname{Map}(\mathsf{H}\mathbb{Z}[x_1,\dots,x_n],\tau_{\leq m}B_i) \\
        &\simeq \operatornamewithlimits{colim}_{i\in I}\lim_{m\geq 0}\operatorname{Map}(\mathsf{H}\mathbb{Z}[x_1,\dots,x_n],\tau_{\leq m}B_i) \\
        &\simeq \operatornamewithlimits{colim}_{i\in I}\operatorname{Map}(\mathsf{H}\mathbb{Z}[x_1,\dots,x_n],B_i).
    \end{align*}
    Here, we used on the fourth line that $\aleph_1$-filtered colimits commute with countable limits in $\mathsf{Ani}$. In other words, $\mathsf{H}\mathbb{Z}[x_1,\dots,x_n]$ is $\aleph_1$-compact for all $n\geq 0$.

    To conclude that $\Theta$ preserves $\aleph_1$-compact objects, we will prove that its right adjoint $\Theta^R$ preserves $\aleph_1$-filtered colimits. Given an $\mathbb{E}_\infty$-algebra $B$ over $\mathsf{H}\mathbb{Z}$, the adjunction implies that $\Theta^R(B)$ is the functor
    \begin{align*}
        \mathsf{Poly}^\mathrm{op} &\to \mathsf{Ani}\\
        \mathbb{Z}[x_1,\dots,x_n] &\mapsto \operatorname{Map}(\mathsf{H}\mathbb{Z}[x_1,\dots,x_n],B).
    \end{align*}
    Since sifted colimits (in particular, $\aleph_1$-filtered colimits) in $\mathsf{Ring}$ are computed pointwise, the $\aleph_1$-compactness of $\mathsf{H}\mathbb{Z}[x_1,\dots,x_n]$ implies that $\Theta^R$ preserves $\aleph_1$-filtered colimits.
\end{proof}

For an object $\mathsf{W}$ of $\mathsf{CAlg}(\mathsf{Pr}^\mathsf{L}_\kappa)$, the natural equivalence
\[\mathsf{Fun}(\mathsf{Prof}_\lambda^\mathrm{op},\mathsf{CAlg}(\mathsf{W}))\simeq \mathsf{CAlg}(\mathsf{Fun}(\mathsf{Prof}_\lambda^\mathrm{op},\mathsf{W}))\]
is compatible with hypersheafification and therefore induces an equivalence
\[\mathsf{Cond}_\lambda(\mathsf{CAlg}(\mathsf{W}))\simeq\mathsf{CAlg}(\mathsf{Cond}_\lambda(\mathsf{W}));\]
see \cite[Prop.~2.1.6]{montagnani2025axiomaticapproachanalytic1affineness} for further details.
In particular, applying this to $\mathsf{W}=\mathsf{Ab}$ allows us, for a $\lambda$-light condensed ring $A^\triangleright$, to regard $\mathsf{Cond}_\lambda(\Theta)(A^\triangleright)$ as an object of
\[\mathsf{CAlg}(\mathsf{Cond}_\lambda(\mathsf{Ab}))\subseteq \mathsf{CAlg}(\mathsf{Cond}_\lambda(\mathsf{D}(\mathbb{Z}))).\] 
We denote this object by $A^\triangleright_\circ$.

\begin{definition}\label{derived category of a condensed ring}
Let $A^\triangleright$ be a $\lambda$-light condensed ring. Its \emph{derived category} is
\[\mathsf{D}(A^\triangleright)\colonequals\mathsf{Mod}_{A^\triangleright_\circ}\mleft(\mathsf{Cond}_\lambda(\mathsf{D}(\mathbb{Z}))\mright);\]
the category of $A^\triangleright_\circ$-modules in $\mathsf{Cond}_\lambda(\mathsf{D}(\mathbb{Z}))$. The \emph{connective derived category} of $A^\triangleright$ is the full subcategory $\mathsf{D}(A^{\triangleright})_{\geq0}$ spanned by those $A^{\triangleright}_{\circ}$-modules whose underlying object of $\mathsf{Cond}_{\lambda}(\mathsf{D}(\mathbb{Z}))$ lies in $\mathsf{Cond}_{\lambda}(\mathsf{Ab})$. 
\end{definition}

The following lemma shows that our nomenclature is well-chosen, and verifies some additional properties of these categories.

\begin{lemma}\label{t-structures on derived categories}
Let $A^\triangleright$ be a $\lambda$-light condensed ring. Then the following statements hold.
\begin{enumerate}
    \item The category $\mathsf{D}(A^\triangleright)$ admits a right complete $t$\nobreakdash-structure whose connective part is $\mathsf{D}(A^\triangleright)_{\geq0}$. This $t$\nobreakdash-structure is compatible with filtered colimits and the symmetric monoidal structure.
    \item There is a natural equivalence
    \[\mathsf{D}(A^\triangleright)_{\geq0}\simeq\mathsf{Mod}_{A^\triangleright_\circ}\mleft(\mathsf{Cond}_\lambda(\mathsf{Ab})\mright),\]
    and $\mathsf{D}(A^\triangleright)$ is the stabilization of $\mathsf{D}(A^\triangleright)_{\geq0}$. Moreover, $\mathsf{D}(A^\triangleright)_{\geq0}$ and $\mathsf{D}(A^\triangleright)$ are commutative algebra objects in the categories of $\mathsf{Cond}_\lambda(\mathsf{Ab})$-linear and $\mathsf{Cond}_\lambda(\mathsf{D}(\mathbb{Z}))$-linear $\aleph_1$-presentable categories, respectively.
    \item Every morphism $\varphi\colon A^\triangleright\to B^\triangleright$ of $\lambda$-light condensed rings induces an adjunction
    \[\varphi^*\colon\mathsf{D}(A^\triangleright)\rightleftarrows\mathsf{D}(B^\triangleright):\hspace{-0.2em}\varphi_*,\]
    where $\varphi^*$ is extension-of-scalars and $\varphi_*$ is restriction-of-scalars. These functors restrict to an adjunction between the connective derived categories.
\end{enumerate}
\end{lemma}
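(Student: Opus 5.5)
The plan is to reduce all three statements to standard facts about modules over a commutative algebra in a presentably symmetric monoidal category equipped with a compatible $t$-structure. We apply these facts to the algebra $A^\triangleright_\circ\in\mathsf{CAlg}(\mathsf{Cond}_\lambda(\mathsf{Ab}))$ inside $\mathsf{Cond}_\lambda(\mathsf{D}(\mathbb{Z}))$. The input is the $t$-structure recalled above from \cite[Thm.~1.2.(2), Prop.~3.8]{standardtstructures}: it is right complete, compatible with filtered colimits and with the symmetric monoidal structure, and has connective part $\mathsf{Cond}_\lambda(\mathsf{Ab})$.

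For (1), we define $\mathsf{D}(A^\triangleright)_{\geq0}$ as in Definition~\ref{derived category of a condensed ring} and $\mathsf{D}(A^\triangleright)_{\leq0}$ analogously via the underlying object. Since $A^\triangleright_\circ$ is connective and the $t$-structure is compatible with the tensor product, $A^\triangleright_\circ\otimes-$ preserves connective objects. The free--forgetful adjunction then lets us apply \cite[Prop.~7.1.1.13]{higheralgebra}, or the argument behind it: $\mathsf{D}(A^\triangleright)_{\geq0}$ is presentable and closed under colimits and extensions, so it is the connective part of an accessible $t$-structure. Because the forgetful functor is conservative, $t$-exact, and preserves limits and colimits, the following properties all transfer from $\mathsf{Cond}_\lambda(\mathsf{D}(\mathbb{Z}))$:
\begin{itemize}
\item right completeness, which reduces to $\bigcap_n\mathsf{D}_{\geq n}=0$ together with closure of $\mathsf{D}_{\leq0}$ under filtered colimits;
\item compatibility with filtered colimits;
\item the fact that relative tensor products of connective modules are connective, since they are geometric realizations of bar constructions with connective terms.
\end{itemize}

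For (2), the equivalence $\mathsf{D}(A^\triangleright)_{\geq0}\simeq\mathsf{Mod}_{A^\triangleright_\circ}(\mathsf{Cond}_\lambda(\mathsf{Ab}))$ holds because $\mathsf{Cond}_\lambda(\mathsf{Ab})\subseteq\mathsf{Cond}_\lambda(\mathsf{D}(\mathbb{Z}))$ is a symmetric monoidal full subcategory closed under colimits, so modules over a connective algebra in it are exactly the modules whose underlying object is connective. For stabilization, right completeness gives $\mathsf{D}(A^\triangleright)\simeq\operatorname{colim}(\mathsf{D}_{\geq0}\xrightarrow{\Sigma}\mathsf{D}_{\geq0}\xrightarrow{\Sigma}\cdots)$ in $\mathsf{Pr}^\mathsf{L}$, which is $\mathsf{Sp}(\mathsf{D}_{\geq0})$ by \cite[Prop.~1.4.4.11]{higheralgebra}. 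Equivalently, $\mathsf{Mod}_{A^\triangleright_\circ}(\mathsf{Cond}_\lambda(\mathsf{Ab}))\otimes\mathsf{Sp}\simeq\mathsf{Mod}_{A^\triangleright_\circ}(\mathsf{Cond}_\lambda(\mathsf{Ab})\otimes\mathsf{Sp})$, using $\mathsf{Cond}_\lambda(\mathsf{Ab})\otimes\mathsf{Sp}\simeq\mathsf{Cond}_\lambda(\mathsf{D}(\mathbb{Z}))$.

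For the algebra statement:
\begin{itemize}
\item $\mathsf{Mod}_R(\mathsf{C})$ for $R\in\mathsf{CAlg}(\mathsf{C})$ is a commutative algebra in $\mathsf{Mod}_{\mathsf{C}}(\mathsf{Pr}^\mathsf{L})$, with the relative tensor product.
\item By Proposition~\ref{Cond preserves presentability}, $\mathsf{Cond}_\lambda(\mathsf{Ab})$ and $\mathsf{Cond}_\lambda(\mathsf{D}(\mathbb{Z}))$ lie in $\mathsf{CAlg}(\mathsf{Pr}^\mathsf{L}_{\aleph_1})$.
\item The free modules $A^\triangleright_\circ\otimes X$ on $\aleph_1$-compact generators $X$ are $\aleph_1$-compact, because the forgetful functor preserves all colimits. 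They generate, and they are closed under $\otimes_{A^\triangleright_\circ}$, since $(A\otimes X)\otimes_A(A\otimes Y)\simeq A\otimes X\otimes Y$. The unit $A^\triangleright_\circ$ is free on the unit.
\item Closure of all $\aleph_1$-compact objects under tensor products follows because they are retracts of countable colimits of free modules.
\end{itemize}

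For (3), a morphism $\varphi$ induces a map $\varphi_\circ\colon A^\triangleright_\circ\to B^\triangleright_\circ$ by functoriality of $\mathsf{Cond}_\lambda(\Theta)$. Base change $\varphi^*=B^\triangleright_\circ\otimes_{A^\triangleright_\circ}-$ is left adjoint to restriction $\varphi_*$. Restriction visibly preserves connectivity, because connectivity is detected on underlying objects. Base change preserves connectivity because it is a relative tensor product of connective objects, connective by the argument in (1). Both therefore restrict to the connective parts, and the adjunction restricts with them.

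I expect the main obstacle to be the compact-generation bookkeeping in (2), namely that the free modules really generate and that the tensor product preserves $\aleph_1$-compactness. This is where one must use that the forgetful functor preserves colimits and is conservative.
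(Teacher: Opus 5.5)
Your proposal is correct in outline and follows the paper's route. The paper takes the induced $t$-structure and the identification of its connective part from \cite[Prop.~A.15]{CartierCyclotomic}, transfers right completeness and compatibility with filtered colimits along the conservative, $t$-exact, colimit-preserving forgetful functor, and cites \cite[Rem.~C.3.1.5]{lurie2018spectral}, \cite[Lem.~2.2]{aoki2025higherpresentablecategorieslimits} (with Proposition~\ref{Cond preserves presentability}) and \cite[Prop.~4.6.2.17]{higheralgebra} for stabilization, the algebra structures and the adjunctions. You unpack these citations by hand (bar resolutions, free modules on $\aleph_1$-compact generators, connectivity of base change), which is fine.

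One justification needs fixing. The criterion you give for right completeness is the wrong one. The condition $\bigcap_n\mathsf{D}_{\geq n}=0$ is left separatedness, and together with closure of $\mathsf{D}_{\leq0}$ under filtered colimits it does not imply right completeness. For example, the $t$-structure with $\mathsf{D}_{\geq0}=0$ and $\mathsf{D}_{\leq0}=\mathsf{D}$ satisfies both conditions but is right complete only if $\mathsf{D}=0$. The correct criterion, dual to the characterization of left completeness in \cite[\S1.2.1]{higheralgebra}, is $\bigcap_n\mathsf{D}_{\leq -n}=0$, given that $\mathsf{D}_{\leq0}$ is closed under countable coproducts (which your filtered-colimit compatibility provides). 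This version does transfer. An object of $\bigcap_n\mathsf{D}(A^\triangleright)_{\leq -n}$ has underlying object in $\bigcap_n\mathsf{Cond}_\lambda(\mathsf{D}(\mathbb{Z}))_{\leq -n}$. That intersection vanishes by right completeness of $\mathsf{Cond}_\lambda(\mathsf{D}(\mathbb{Z}))$, and conservativity finishes the argument. Equivalently, as in the paper, you can check $\operatorname{colim}_n\tau_{\geq -n}M\simeq M$ after forgetting, since the forgetful functor commutes with truncations and colimits.

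A smaller point: \cite[Prop.~1.4.4.11]{higheralgebra} is the criterion producing a $t$-structure from a presentable full subcategory closed under colimits and extensions, i.e.\ the input you need in (1). The identification of $\operatorname{colim}(\mathsf{D}_{\geq0}\xrightarrow{\Sigma}\mathsf{D}_{\geq0}\xrightarrow{\Sigma}\cdots)$ in $\mathsf{Pr}^{\mathsf{L}}$ with $\mathsf{Sp}(\mathsf{D}_{\geq0})$ comes instead from describing stabilization as the limit along $\Omega$, or from \cite[Rem.~C.3.1.5]{lurie2018spectral} as in the paper.
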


\begin{proof}
    As noted above, the $t$-structure on $\mathsf{Cond}_\lambda(\mathsf{D}(\mathbb{Z}))$ is right complete and compatible with both filtered colimits and the symmetric monoidal structure. By \cite[Prop.~A.15]{CartierCyclotomic}, the category $\mathsf{D}(A^\triangleright)$ inherits a $t$-structure compatible with the symmetric monoidal structure from $\mathsf{Cond}_\lambda(\mathsf{D}(\mathbb{Z}))$, with connective part
\[\mathsf{D}(A^\triangleright)_{\geq0}\simeq \mathsf{Mod}_{A^\triangleright_\circ}\mleft(\mathsf{Cond}_\lambda(\mathsf{Ab})\mright).\]
Since the forgetful functor $\mathsf{D}(A^\triangleright)\to\mathsf{Cond}_\lambda(\mathsf{D}(\mathbb{Z}))$ is conservative, $t$-exact, and preserves small colimits, the right completeness of this $t$-structure and its compatibility with filtered colimits follow from the corresponding properties of $\mathsf{Cond}_\lambda(\mathsf{D}(\mathbb{Z}))$.
    
The fact that $\mathsf{D}(A^\triangleright)$ is the stabilization of $\mathsf{D}(A^\triangleright)_{\geq0}$ is explained in \cite[Rem.~C.3.1.5]{lurie2018spectral}. Moreover, by Proposition~\ref{Cond preserves presentability} and \cite[Lem.~2.2]{aoki2025higherpresentablecategorieslimits}, the categories $\mathsf{D}(A^\triangleright)_{\geq0}$ and $\mathsf{D}(A^\triangleright)$ are naturally objects of
\[\mathsf{CAlg}(\mathsf{Pr}^\mathsf{L}_{\aleph_1})_{\mathsf{W}/},\]
for $\mathsf{W}\colonequals \mathsf{Cond}_\lambda(\mathsf{Ab})$ and $\mathsf{W}\colonequals \mathsf{Cond}_\lambda(\mathsf{D}(\mathbb{Z}))$, respectively.

The third item follows directly from \cite[Prop.~4.6.2.17]{higheralgebra}, applied first with $\mathsf{C}=\mathsf{M}=\mathsf{Cond}_\lambda(\mathsf{D}(\mathbb{Z}))$ and then with $\mathsf{C}=\mathsf{M}=\mathsf{Cond}_\lambda(\mathsf{Ab})$.
\end{proof}

\begin{proposition}\label{D is a map in Pr_k}
   The assignments $A^\triangleright\mapsto \mathsf{D}(A^\triangleright)_{\geq0}$ and $A^\triangleright\mapsto\mathsf{D}(A^\triangleright)$ define functors
    \[\mathsf{Cond}_\lambda(\mathsf{Ring})\to \mathsf{CAlg}(\mathsf{Pr}^\mathsf{L}_{\aleph_1})_{\mathsf{W}/},\]
    for $\mathsf{W}\colonequals \mathsf{Cond}_\lambda(\mathsf{Ab})$ and $\mathsf{W}\colonequals \mathsf{Cond}_\lambda(\mathsf{D}(\mathbb{Z}))$, respectively, that lie in $\mathsf{Pr}^\mathsf{L}_{\aleph_1}$.
\end{proposition}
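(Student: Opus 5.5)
We factor $A^\triangleright\mapsto\mathsf{D}(A^\triangleright)_{\geq0}$ as a composite of functors that are each known to lie in $\mathsf{Pr}^\mathsf{L}_{\aleph_1}$. Set $\mathsf{W}=\mathsf{Cond}_\lambda(\mathsf{Ab})$ and write $\mathsf{W}_{\mathrm{st}}=\mathsf{Cond}_\lambda(\mathsf{D}(\mathbb{Z}))$.

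\textbf{Step 1: the first map.} Apply $\mathsf{Cond}_\lambda(-)$ to $\Theta$. By Lemma~\ref{Theta preserves kappa-compact objects}, $\Theta$ lies in $\mathsf{Pr}^\mathsf{L}_{\aleph_1}$, so Proposition~\ref{Cond preserves presentability} shows that
\[\mathsf{Cond}_\lambda(\Theta)\colon \mathsf{Cond}_\lambda(\mathsf{Ring})\to\mathsf{Cond}_\lambda(\mathsf{CAlg}(\mathsf{Sp}^{\mathrm{cn}})_{\mathsf{H}\mathbb{Z}/})\simeq\mathsf{CAlg}(\mathsf{W})\]
is a morphism in $\mathsf{Pr}^\mathsf{L}_{\aleph_1}$. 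The equivalence on the right is the identification $\mathsf{Cond}_\lambda(\mathsf{CAlg}(-))\simeq\mathsf{CAlg}(\mathsf{Cond}_\lambda(-))$. Composing with the symmetric monoidal Dold--Kan inclusion $\mathsf{CAlg}(\mathsf{W})\to\mathsf{CAlg}(\mathsf{W}_{\mathrm{st}})$ handles the stable case. That inclusion preserves colimits, because $\mathsf{W}\subseteq\mathsf{W}_{\mathrm{st}}$ is closed under colimits and tensor products. It also preserves $\aleph_1$-compact objects: its right adjoint is $\tau_{\geq0}$ on algebras, which commutes with filtered colimits since the $t$-structure is compatible with them. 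So the problem reduces to showing that, for a $\kappa$-presentably symmetric monoidal $\mathsf{W}$ (here $\kappa=\aleph_1$), the module functor
\[\mathsf{Mod}_{(-)}(\mathsf{W})\colon\mathsf{CAlg}(\mathsf{W})\to\mathsf{CAlg}(\mathsf{Pr}^\mathsf{L}_\kappa)_{\mathsf{W}/}\]
lies in $\mathsf{Pr}^\mathsf{L}_\kappa$. Lemma~\ref{t-structures on derived categories}(2) supplies the identification $\mathsf{D}(A^\triangleright)_{\geq0}\simeq\mathsf{Mod}_{A^\triangleright_\circ}(\mathsf{W})$.

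\textbf{Step 2: the module functor is well defined and preserves colimits.} It is well defined: $\mathsf{Mod}_A(\mathsf{W})$ is $\kappa$-presentable, generated by the free modules $A\otimes w$ with $w\in\mathsf{W}^\kappa$, and $\mathsf{W}\to\mathsf{Mod}_A(\mathsf{W})$ preserves $\kappa$-compacts because its right adjoint preserves colimits. Functoriality comes from \cite[Prop.~4.6.2.17]{higheralgebra} and Lemma~\ref{t-structures on derived categories}(3). For colimits, I would use that $\mathsf{Mod}_{(-)}(\mathsf{W})\colon\mathsf{CAlg}(\mathsf{W})\to\mathsf{CAlg}(\mathsf{Pr}^\mathsf{L})_{\mathsf{W}/}$ is left adjoint to the functor $\mathsf{C}\mapsto$ (the unit of $\mathsf{C}$, viewed as an algebra in $\mathsf{W}$ through the right adjoint of $\mathsf{W}\to\mathsf{C}$) \cite[Thm.~4.8.5.11]{higheralgebra}.

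There is a subtlety here: the right adjoint $\mathsf{C}\to\mathsf{W}$ of a morphism in $\mathsf{CAlg}(\mathsf{Pr}^\mathsf{L}_\kappa)_{\mathsf{W}/}$ is lax monoidal. It preserves $\kappa$-filtered colimits because the left adjoint preserves $\kappa$-compacts, so the adjunction restricts to one between $\mathsf{CAlg}(\mathsf{W})$ and $\mathsf{CAlg}(\mathsf{Pr}^\mathsf{L}_\kappa)_{\mathsf{W}/}$. One also has to check that colimits in $\mathsf{CAlg}(\mathsf{Pr}^\mathsf{L}_\kappa)_{\mathsf{W}/}$ agree with those computed in $\mathsf{CAlg}(\mathsf{Pr}^\mathsf{L})_{\mathsf{W}/}$. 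Alternatively, one can argue directly: relative tensor products give $\mathsf{Mod}_{A\otimes B}\simeq\mathsf{Mod}_A\otimes_{\mathsf{W}}\mathsf{Mod}_B$, which handles pushouts, and filtered colimits are treated separately.

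\textbf{Step 3: compact objects, the main obstacle.} It remains to show that the functor carries $\kappa$-compact algebras to $\kappa$-compact objects of $\mathsf{CAlg}(\mathsf{Pr}^\mathsf{L}_\kappa)_{\mathsf{W}/}$. I would deduce this from the adjunction of Step 2: the left adjoint preserves $\kappa$-compacts because the right adjoint, namely taking the unit and applying $G$, preserves $\kappa$-filtered colimits. The unit of a $\kappa$-filtered colimit in $\mathsf{CAlg}(\mathsf{Pr}^\mathsf{L}_\kappa)$ is the image of the units under the compatible structure maps. Writing $G_i\colon\mathsf{C}_i\to\mathsf{W}$ for the right adjoints of the structure maps, I would show
\[G(\mathbf{1}_{\mathsf{C}})\simeq\operatornamewithlimits{colim}_i G_i(\mathbf{1}_{\mathsf{C}_i}).\]
For this, compute mapping spaces from $w\in\mathsf{W}^\kappa$ into $\mathsf{C}=\operatornamewithlimits{colim}\mathsf{C}_i$. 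Since $\mathsf{C}^\kappa=\operatornamewithlimits{colim}\mathsf{C}_i^\kappa$ in $\mathsf{Cat}$, mapping spaces in the colimit are filtered colimits of mapping spaces. This is the step I expect to be the hardest.
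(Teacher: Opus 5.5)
Your proof is correct, but it is organized differently from the paper's in two places.

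\textbf{Connective case.} The paper uses exactly your factorization: $\mathsf{Cond}_\lambda(\Theta)$ followed by $\mathsf{Mod}_{(-)}(\mathsf{W})$. However, it disposes of the module functor by citing \cite[Lem.~2.11]{aoki2025higherpresentablecategorieslimits}. Your Steps~2--3 amount to a proof of that lemma via the adjunction $\mathsf{Mod}_{(-)}(\mathsf{W})\dashv(\mathsf{C}\mapsto G(\mathbf{1}_{\mathsf{C}}))$.

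The step you flag as hardest goes through as you sketch it:
\begin{itemize}
\item $F(w)$ and $\mathbf{1}_{\mathsf{C}}$ both lie in $\mathsf{C}^\kappa\simeq\operatorname{colim}_i\mathsf{C}_i^\kappa$.
\item Mapping animae in a filtered colimit in $\mathsf{Cat}$ are filtered colimits of mapping animae.
\item $\kappa$-compactness of $w$ moves the colimit inside $\operatorname{Map}_{\mathsf{W}}(w,-)$.
\item $\mathsf{W}^\kappa$ detects equivalences, and filtered colimits in $\mathsf{CAlg}(\mathsf{W})$ are computed on underlying objects.
\end{itemize}

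\textbf{Stable case.} The paper postcomposes the connective functor with $\mathsf{Sp}\sqcup-$. It uses $\mathsf{D}(A^\triangleright)\simeq\mathsf{D}(A^\triangleright)_{\geq0}\otimes\mathsf{Sp}$ from Lemma~\ref{t-structures on derived categories}(2), and the fact that $\mathsf{Sp}\sqcup-$ has a right adjoint (restriction along $\mathsf{W}\to\mathsf{Sp}\sqcup\mathsf{W}$) that preserves filtered colimits.

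You instead work directly from the definition $\mathsf{D}(A^\triangleright)=\mathsf{Mod}_{A^\triangleright_\circ}(\mathsf{W}_{\mathrm{st}})$. You insert the inclusion $\mathsf{CAlg}(\mathsf{W})\to\mathsf{CAlg}(\mathsf{W}_{\mathrm{st}})$ and check it lies in $\mathsf{Pr}^{\mathsf{L}}_{\aleph_1}$ using the compatibility of the $t$-structure with filtered colimits and tensor products.

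What each route buys:
\begin{itemize}
\item Yours avoids the stabilization identification and the identification $\mathsf{Sp}\sqcup\mathsf{W}\simeq\mathsf{W}_{\mathrm{st}}$. In exchange, it needs the $t$-structure input and the module-functor statement a second time, for $\mathsf{W}_{\mathrm{st}}$.
\item The paper's is shorter given the cited lemma. The same $\mathsf{Sp}\sqcup-$ argument is also reused later, in Step~I of Lemma~\ref{preanalytic rings are determined by connective halves}, to handle $(-\otimes_{A^\triangleright}A)\otimes\mathsf{Sp}$.
\end{itemize}

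\textbf{A justification to repair in Step~2.} That $G$ preserves $\kappa$-filtered colimits does not imply the adjunction restricts to $\mathsf{CAlg}(\mathsf{Pr}^{\mathsf{L}}_\kappa)_{\mathsf{W}/}$. What you need is that every morphism $\mathsf{Mod}_A(\mathsf{W})\to\mathsf{C}$ in $\mathsf{CAlg}(\mathsf{Pr}^{\mathsf{L}})_{\mathsf{W}/}$ automatically preserves $\kappa$-compact objects. This holds because $\mathsf{Mod}_A(\mathsf{W})^\kappa$ is generated under $\kappa$-small colimits and retracts by the free modules $A\otimes w$ with $w\in\mathsf{W}^\kappa$, and these are sent to $F(w)\in\mathsf{C}^\kappa$.

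Hence the relevant mapping animae in the two coslice categories agree and the adjunction restricts. Then $\mathsf{Mod}_{(-)}(\mathsf{W})$ preserves colimits in $\mathsf{CAlg}(\mathsf{Pr}^{\mathsf{L}}_\kappa)_{\mathsf{W}/}$ simply because it is a left adjoint there. No separate comparison of colimits in $\mathsf{CAlg}(\mathsf{Pr}^{\mathsf{L}}_\kappa)_{\mathsf{W}/}$ and $\mathsf{CAlg}(\mathsf{Pr}^{\mathsf{L}})_{\mathsf{W}/}$ is needed.

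For well-definedness, also record that $\mathsf{Mod}_A(\mathsf{W})^\kappa$ is closed under $\otimes_A$. This follows from $(A\otimes w)\otimes_A(A\otimes w')\simeq A\otimes(w\otimes w')$.
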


\begin{proof}
We begin by considering $\mathsf{W}=\mathsf{Cond}_\lambda(\mathsf{Ab})$ and the connective derived category. The functor given on objects by $A^\triangleright\mapsto\mathsf{D}(A^{\triangleright})_{\geq0}$ is the composite
    \[\mathsf{Cond}_\lambda(\mathsf{Ring}) \xrightarrow{\ \mathsf{Cond}_\lambda(\Theta)\ }\mathsf{Cond}_\lambda(\mathsf{CAlg}(\mathsf{Ab}))\simeq \mathsf{CAlg}(\mathsf{W})\xrightarrow{\ \mathsf{Mod}_{(-)}(\mathsf{W}) \ }\mathsf{CAlg}(\mathsf{Pr}^{\mathsf{L}}_{\aleph_1})_{\mathsf{W}/},\]
    where the middle equivalence was explained just above Definition~\ref{derived category of a condensed ring}. Proposition~\ref{Cond preserves presentability} and Lemma~\ref{Theta preserves kappa-compact objects} imply that $\mathsf{Cond}_{\lambda}(\Theta)$ is a morphism in $\mathsf{Pr}^{\mathsf{L}}_{\aleph_1}$, and the same holds for $\mathsf{Mod}_{(-)}(\mathsf{W})$ by \cite[Lem.~2.11]{aoki2025higherpresentablecategorieslimits}.

    According to Lemma~\ref{t-structures on derived categories}, the functor given on objects by $A^\triangleright\mapsto\mathsf{D}(A^{\triangleright})$ is the composition of the functor above with
    \[\mathsf{Sp}\sqcup - \colon \mathsf{CAlg}(\mathsf{Pr}^{\mathsf{L}}_{\aleph_1})_{\mathsf{W}/}\to \mathsf{CAlg}(\mathsf{Pr}^{\mathsf{L}}_{\aleph_1})_{\mathsf{Sp}\sqcup\mathsf{W}/}.\]
    The latter has a right adjoint, given by restriction of scalars along the natural inclusion
    \[\mathsf{W}\to \mathsf{Sp}\sqcup \mathsf{W},\]
    where the coproduct is computed in $\mathsf{CAlg}(\mathsf{Pr}^{\mathsf{L}}_{\aleph_1})$. (See also the proof of Proposition~\ref{Cond preserves presentability}.)

    Since this right adjoint is given by precomposition, it leaves the underlying object of $\mathsf{CAlg}(\mathsf{Pr}^{\mathsf{L}}_{\aleph_1})$ unchanged. Moreover, filtered colimits in coslice categories are computed on underlying objects \cite[Tags~\href{https://kerodon.net/tag/0236}{0236}, \href{https://kerodon.net/tag/02KT}{02KT}, and \href{https://kerodon.net/tag/02PH}{02PH}]{kerodon}. It therefore preserves filtered colimits. Consequently, $\mathsf{Sp}\sqcup -$ is a morphism in $\mathsf{Pr}^{\mathsf{L}}_{\aleph_1}$, which completes the proof.
\end{proof}

\section{Analytic rings}
Before stating the definition of pre-analytic rings, we first make the following construction. Let $\mathsf{FF}$ denote the full subcategory of $\mathsf{Fun}(\Delta^1,\widehat{\mathsf{Cat}})$ consisting of fully faithful functors, and consider the pullback diagram
\[\begin{tikzcd}[ampersand replacement=\&]
	{\mathsf{P}_{\lambda}} \&\& {\mathsf{FF}^\mathrm{op}} \\
	{\mathsf{Cond}_{\lambda}(\mathsf{Ring})} \&\& {\widehat{\mathsf{Cat}}^\mathrm{op},}
	\arrow[from=1-1, to=1-3]
	\arrow[from=1-1, to=2-1]
	\arrow["t", from=1-3, to=2-3]
	\arrow["{\mathsf{D}(-)}", from=2-1, to=2-3]
\end{tikzcd}\]
where $t \colon \mathsf{FF} \to \widehat{\mathsf{Cat}}$ sends a functor to its target.

Since $t$ is an isofibration, \cite[Tag \href{https://kerodon.net/tag/033T}{033T}]{kerodon} provides an explicit description of $\mathsf{P}_\lambda$. Its objects are pairs $A=(A^\triangleright, \mathsf{D}(A))$, where $A^\triangleright$ is a $\lambda$-light condensed ring and $\mathsf{D}(A)$ is a full subcategory of $\mathsf{D}(A^\triangleright)$. A morphism $A \to B$ in $\mathsf{P}_\lambda$ consists of a morphism of $\lambda$-light condensed rings $\varphi\colon A^\triangleright \to B^\triangleright$ such that the associated restriction-of-scalars functor
\[\varphi_*\colon \mathsf{D}(B^\triangleright) \to \mathsf{D}(A^\triangleright)\]
carries $\mathsf{D}(B)$ into $\mathsf{D}(A)$. 

\begin{definition}\label{def panring}
A \emph{$\lambda$-light pre-analytic ring} $A$ is a pair $(A^\triangleright,\mathsf{D}(A))$ consisting of a $\lambda$-light condensed ring $A^\triangleright$ and a full subcategory $\mathsf{D}(A)$ of $\mathsf{D}(A^\triangleright)$ such that:
\begin{enumerate}[label=(\roman*)]
    \item The full subcategory $\mathsf{D}(A)\subset \mathsf{D}(A^\triangleright)$ is closed under small limits and colimits (hence the inclusion $\iota_A\colon \mathsf{D}(A)\hookrightarrow\mathsf{D}(A^\triangleright)$ admits a left adjoint, denoted $-\otimes_{A^\triangleright}A$);
    \item For every $M \in \mathsf{D}(A^\triangleright)$ and $N \in \mathsf{D}(A)$, the object $\underline{\operatorname{Hom}}(M,N)$ lies in $\mathsf{D}(A)$;
    \item The composition $L_A\colonequals\iota_A \circ \bigl(-\otimes_{A^\triangleright}A\bigr)$ preserves connective objects.
\end{enumerate}
We define $\mathsf{PAnRing}_{\lambda}$ to be the full subcategory of $\mathsf{P}_{\lambda}$ spanned by the $\lambda$-light pre-analytic rings.
\end{definition}

\begin{theorem}\label{PAnRing is presentable}
    The category $\mathsf{PAnRing}_\lambda$ is $\mu$-presentable, for $\mu=((2^{<\lambda})^{\aleph_0})^+$.
\end{theorem}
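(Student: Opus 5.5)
The plan follows the strategy sketched in the introduction: realize $\mathsf{PAnRing}_\lambda$ as a pullback
\[\mathsf{PAnRing}_\lambda\simeq \mathsf{Cond}_\lambda(\mathsf{Ring})\times_{\mathsf{CAlg}(\mathsf{Pr}^\mathsf{L}_{\aleph_1})_{\mathsf{W}/}}\overline{\mathsf{Loc}}^\otimes_{\aleph_1}(\mathsf{W}),\qquad \mathsf{W}=\mathsf{Cond}_\lambda(\mathsf{Ab}),\]
where the left leg is $A^\triangleright\mapsto\mathsf{D}(A^\triangleright)_{\geq0}$ from Proposition~\ref{D is a map in Pr_k}, and the right leg is the source functor $s$. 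Once this square is shown to be cartesian, accessibility follows by the standard stability of $\mathsf{Acc}_\mu$ under such pullbacks, as in \cite[Tags~\href{https://kerodon.net/tag/06LM}{06LM} and \href{https://kerodon.net/tag/06AH}{06AH}]{kerodon}. For this we need three inputs.
\begin{enumerate}
\item $s$ lies in $\mathsf{Acc}_\mu$ and preserves $\aleph_1$-filtered colimits. This is Corollary~\ref{main theorem on monoidal localizations2} with $\kappa=\aleph_1$.
\item $\mathsf{W}$ is $\mu$-compact in $\mathsf{CAlg}(\mathsf{Pr}^\mathsf{L}_{\aleph_1})$. This is Proposition~\ref{CondAb is mu-compact}.
\item $\operatorname{ecf}(\mu)\geq\aleph_1$, i.e.\ $\mu_0^{\aleph_0}<\mu$ for $\mu_0<\mu$ (Remark~\ref{exponential cofinality}). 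Since $\mu=\nu^+$ with $\nu=(2^{<\lambda})^{\aleph_0}$ and $\nu^{\aleph_0}=\nu$, this holds.
\end{enumerate}
The functor $\mathsf{D}(-)_{\geq0}$ lies in $\mathsf{Pr}^\mathsf{L}_{\aleph_1}$, hence in $\mathsf{Acc}_\mu$, provided $\mathsf{Cond}_\lambda(\mathsf{Ring})$ is $\aleph_1$-presentable; this follows from Proposition~\ref{Cond preserves presentability}. So $\mathsf{PAnRing}_\lambda$ is $\mu$-accessible. For presentability, I then show that $\mathsf{PAnRing}_\lambda$ has small colimits. Concretely, a colimit of pre-analytic rings has underlying ring the colimit of the $A_i^\triangleright$, and its complete modules are those $M$ whose restrictions lie in every $\mathsf{D}(A_i)$. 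Alternatively, colimits exist because all functors in the square preserve colimits, at least the $\aleph_1$-filtered ones, and pushouts/coproducts can be checked similarly; I expect to argue directly with the explicit description.

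The heart of the proof, and the main obstacle, is the cartesianness. Given a pre-analytic ring $A$, condition (iii) shows that $L_A$ is $t$-exact on the right. Then $\mathsf{D}(A)$ inherits a $t$-structure with $\mathsf{D}(A)_{\geq0}=\mathsf{D}(A)\cap\mathsf{D}(A^\triangleright)_{\geq0}$, and $\mathsf{D}(A^\triangleright)_{\geq0}\to\mathsf{D}(A)_{\geq0}$ is a Bousfield localization whose right adjoint preserves colimits; the latter holds since $\iota_A$ does and connective objects are closed under colimits. Condition (ii) is exactly what makes the localization compatible with the tensor product, in the sense that the kernel is a $\otimes$-ideal, so the localization is symmetric monoidal and $\mathsf{W}$-linear. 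Presentability of $\mathsf{D}(A)_{\geq0}$ and the $\aleph_1$-compactness requirements come from Theorem~\ref{walking localization}-type arguments, or are checked directly since $\iota_A$ preserves colimits.

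Conversely, given a symmetric monoidal localization $\mathsf{D}(A^\triangleright)_{\geq0}\to\mathsf{E}$ with colimit-preserving right adjoint, I stabilize it to obtain a localization of $\mathsf{D}(A^\triangleright)$, using Lemma~\ref{t-structures on derived categories}(2). Its image $\mathsf{D}(A)$ is closed under colimits. Closure under limits requires care: I would show that the stabilized subcategory consists of $M$ all of whose truncations $\tau_{\geq n}M$ lie in the shifted image, and then use right completeness. Condition (ii) comes from monoidality: a map that becomes an equivalence after localization remains so after tensoring. Condition (iii) holds by construction.

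For morphisms, one checks that $\varphi_*$ preserving $\mathsf{D}(B)$ is equivalent to the square of localizations commuting. Indeed, extension of scalars preserves local equivalences if and only if restriction preserves local objects, and this can be tested on connective parts. Full faithfulness of $\mathsf{P}_\lambda\to\mathsf{Cond}_\lambda(\mathsf{Ring})$ fibers matches the fact that a morphism in $\overline{\mathsf{Loc}}^\otimes$ over a fixed source is unique when it exists. I expect the limit-closure and the reconstruction of $\mathsf{D}(A)$ from its connective part to take the most work.
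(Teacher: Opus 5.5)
Your proposal is correct and follows the paper's skeleton. You use the same cartesian square with $\mathsf{W}=\mathsf{Cond}_\lambda(\mathsf{Ab})$, the same inputs (Proposition~\ref{D is a map in Pr_k}, and Proposition~\ref{CondAb is mu-compact} feeding Corollary~\ref{main theorem on monoidal localizations2}), and the same fiber-product argument for $\mu$-accessibility. Your check that $\nu^{\aleph_0}=\nu$ gives $\operatorname{ecf}(\mu)\geq\aleph_1$ verifies a hypothesis of that corollary which the paper leaves implicit.

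The real difference is how you prove the square is cartesian. The paper passes through $\mathsf{PAnRing}^{\mathrm{cn}}_\lambda$ in two steps:
\begin{itemize}
\item Lemma~\ref{preanalytic rings are determined by connective halves} shows that stabilization $\Psi_\lambda$ is an equivalence. It realizes both forgetful functors to $\mathsf{Cond}_\lambda(\mathsf{Ring})$ as cocartesian fibrations, checks that $\Psi_\lambda$ preserves induced structures, and deduces the fiberwise comparison from \cite[Thm.~4.4.1]{camargo2026notessolidgeometry}.
\item Lemma~\ref{PAnRing as a pullback} then matches $\mathsf{PAnRing}^{\mathrm{cn}}_\lambda$ with the pullback, using the empty-or-contractible fibers of Lemma~\ref{contractability of fibers}.
\end{itemize}

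You argue directly instead. Mapping spaces in $\mathsf{P}_\lambda$ are unions of components of $\operatorname{Map}(A^\triangleright,B^\triangleright)$. So full faithfulness reduces to showing that $\varphi_*$ preserves $\mathsf{D}(-)$ if and only if it preserves $\mathsf{D}(-)_{\geq0}$. This follows from $t$-exactness of $\varphi_*$, stability of $\mathsf{D}(A)$ under $\tau_{\geq n}$ (from (iii)), and $M\simeq\operatorname{colim}_n\tau_{\geq -n}M$. The same right-completeness argument gives $\mathsf{D}(A)\simeq\operatorname{Sp}(\mathsf{D}(A)_{\geq0})$, so you prove by hand the fiberwise comparison the paper imports. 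This is more self-contained and avoids the cocartesian-fibration machinery.

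You must still produce an actual comparison functor, not just a correspondence on objects and morphisms. The natural direction is from the pullback to $\mathsf{P}_\lambda$: pass to right adjoints, then stabilize, as the paper does with $\Phi_\lambda$ and $\Psi_\lambda$. Going the other way would require choosing monoidal structures functorially.

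Two smaller points.
\begin{itemize}
\item Closure of the stabilized subcategory under limits needs no truncation argument. Stabilization is a $2$-functor \cite[Rem.~6.1.7]{CLL}, so $i\otimes\mathsf{Sp}$ is right adjoint to $L\otimes\mathsf{Sp}$, and it also preserves colimits.
\item Your ``alternative'' route to colimits does not work. The functor $s$ is only known to preserve $\aleph_1$-filtered colimits, and $\overline{\mathsf{Loc}}^\otimes_{\aleph_1}(\mathsf{W})$ is only known to be accessible, so the pullback says nothing about pushouts or coproducts.
\end{itemize}

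Your explicit description of colimits is the right one: take the colimit of the underlying rings, and declare complete those modules whose restrictions to each $A_i^\triangleright$ are complete. Note that $U$ has the right adjoint $A^\triangleright\mapsto(A^\triangleright,0)$, so it preserves colimits. The remaining work is checking (ii) and (iii) for this structure, most easily on connective parts. The paper outsources exactly this to \cite[Lem.~4.1.10, Prop.~4.1.11]{camargo2026notessolidgeometry}.
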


The proof of this theorem requires us to reinterpret each of the three conditions in Definition~\ref{def panring}. We begin with the third, which leads us to consider a variant $\mathsf{P}_\lambda^\mathrm{cn}$ of $\mathsf{P}_\lambda$, obtained by replacing the functor $\mathsf{D}(-)$ in its definition with
\[\mathsf{D}(-)_{\geq 0}\colon \mathsf{Cond}_\lambda(\mathsf{Ring})\to \widehat{\mathsf{Cat}}^\mathrm{op}.\]
In the same spirit, we introduce the following connective analogue of Definition~\ref{def panring}.

\begin{definition}\label{def connective panring}
We denote by $\mathsf{PAnRing}^\mathrm{cn}_\lambda$ the full category of $\mathsf{P}_\lambda^\mathrm{cn}$ spanned by pairs $A=(A^{\triangleright},\mathsf{D}(A)_{\geq0})$, consisting of a $\lambda$-light condensed ring $A^{\triangleright}$ and a full subcategory $\mathsf{D}(A)_{\geq0}$ of $\mathsf{D}(A^{\triangleright})_{\geq0}$ such that: 
    \begin{enumerate}[label=(\roman*)]
        \item The full subcategory $\mathsf{D}(A)_{\geq0}\subset\mathsf{D}(A^{\triangleright})_{\geq0}$ is closed under small limits and colimits (hence the inclusion admits a left adjoint, which we still denote by $-\otimes_{A^{\triangleright}}A$);
        \item For every $M \in \mathsf{D}(A^\triangleright)_{\geq0}$ and $N \in \mathsf{D}(A)_{\geq0}$, the object $\tau_{\geq 0}\underline{\operatorname{Hom}}(M,N)$ lies in $\mathsf{D}(A)_{\geq0}$.
    \end{enumerate}
\end{definition}

There is a natural comparison between $\mathsf{PAnRing}_\lambda$ and $\mathsf{PAnRing}_\lambda^{\mathrm{cn}}$. To construct it, let $\mathsf{FF}^{\mathsf{R}}$ denote the full subcategory of $\mathsf{Fun}(\Delta^1,\mathsf{Pr}^{\mathsf{R}})$ spanned by the fully faithful functors. The reflection theorem implies that the composite $\mathsf{PAnRing}^\mathrm{cn}_\lambda\hookrightarrow \mathsf{P}^\mathrm{cn}_\lambda\to \mathsf{FF}^\mathrm{op}$ factors as
\[\begin{tikzcd}[ampersand replacement=\&]
	{\mathsf{PAnRing}^\mathrm{cn}_\lambda} \&\& {\mathsf{FF}^{\mathsf{R},\mathrm{op}}} \\
	{\mathsf{P}^\mathrm{cn}_\lambda} \&\& {\mathsf{FF}^\mathrm{op}.}
	\arrow[dashed, from=1-1, to=1-3]
	\arrow[hook, from=1-1, to=2-1]
	\arrow[hook, from=1-3, to=2-3]
	\arrow[from=2-1, to=2-3]
\end{tikzcd}\]

Now let $A^\triangleright$ be a $\lambda$-light condensed ring. By Lemma~\ref{t-structures on derived categories}, there is an equivalence of categories $\mathsf{D}(A^\triangleright)\simeq \mathsf{D}(A^\triangleright)_{\geq 0}\otimes \mathsf{Sp}$. It follows from the universal property of the fiber product that these equivalences assemble into the dashed functor $\Psi_\lambda$ in the diagram below:
\[\begin{tikzcd}[ampersand replacement=\&]
	{\mathsf{PAnRing}_\lambda^\mathrm{cn}} \& {\mathsf{FF}^{\mathsf{R},\mathrm{op}}} \&\& {\mathsf{FF}^{\mathsf{R},\mathrm{op}}} \\
	{\mathsf{P}_\lambda^\mathrm{cn}} \& {\mathsf{P}_\lambda} \&\& {\mathsf{FF}^\mathrm{op},} \\
	{\mathsf{Cond}_\lambda(\mathsf{Ring})} \& {\mathsf{Cond}_\lambda(\mathsf{Ring})} \&\& {\widehat{\mathsf{Cat}}^\mathrm{op}.}
	\arrow[from=1-1, to=1-2]
	\arrow[hook, from=1-1, to=2-1]
	\arrow["{\Psi_\lambda}"', dashed, from=1-1, to=2-2]
	\arrow["{-\otimes\mathsf{Sp}}", from=1-2, to=1-4]
	\arrow[hook, from=1-4, to=2-4]
	\arrow[from=2-1, to=3-1]
	\arrow[from=2-2, to=2-4]
	\arrow[from=2-2, to=3-2]
	\arrow["t", from=2-4, to=3-4]
	\arrow[equals, from=3-1, to=3-2]
	\arrow["{\mathsf{D}(-)}", from=3-2, to=3-4]
\end{tikzcd}\]

\begin{lemma}\label{preanalytic rings are determined by connective halves}
The functor $\Psi_\lambda\colon \mathsf{PAnRing}_\lambda^{\mathrm{cn}}\to \mathsf{P}_\lambda$ is fully faithful, with essential image $\mathsf{PAnRing}_\lambda$. In particular, it induces an equivalence $\mathsf{PAnRing}_\lambda^{\mathrm{cn}}\simeq \mathsf{PAnRing}_\lambda$.
\end{lemma}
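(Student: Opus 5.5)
The plan is to make $\Psi_\lambda$ explicit on objects. It sends $(A^\triangleright,\mathsf{D}(A)_{\geq0})$ to $(A^\triangleright,\mathsf{D}(A))$, where $\mathsf{D}(A)\subseteq\mathsf{D}(A^\triangleright)$ is the essential image of the fully faithful right adjoint $\mathsf{D}(A)_{\geq0}\otimes\mathsf{Sp}\to\mathsf{D}(A^\triangleright)_{\geq0}\otimes\mathsf{Sp}\simeq\mathsf{D}(A^\triangleright)$. Concretely, I expect $\mathsf{D}(A)$ to consist of those $M$ with $\tau_{\geq -n}M[n]\in\mathsf{D}(A)_{\geq0}$ for all $n$, that is, of the colimits of $\Sigma^{-n}$ of connective complete objects.

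The first step is to show that $\Psi_\lambda(A)$ is pre-analytic.

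\begin{itemize}
\item For (i), the inclusion $\iota$ is $\iota_{\geq0}\otimes\mathsf{Sp}$. Since $\iota_{\geq0}$ preserves small colimits and limits in the prestable setting, the stabilized inclusion preserves both. Here I use that right adjoints between stabilizations are computed levelwise on spectrum objects, and that colimit preservation survives $-\otimes\mathsf{Sp}$, as in Lemma~\ref{loc bar symmetric monoidal}.
\item For (iii), the left adjoint is $L_{\geq0}\otimes\mathsf{Sp}$, which is right $t$-exact because it restricts to $L_{\geq0}$ on connective objects.
\item For (ii), write $M=\operatorname{colim}\Sigma^{-n}\tau_{\geq -n}M$, so that $\underline{\operatorname{Hom}}(M,N)=\lim\Sigma^{n}\underline{\operatorname{Hom}}(\tau_{\geq-n}M,N)$. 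Since $\mathsf{D}(A)$ is closed under limits and shifts, it suffices to treat connective $M$. For the same reason, writing $N$ via its Postnikov-type description, it suffices to treat $N=\Sigma^{-k}N_0$ with $N_0$ complete connective. Then $\underline{\operatorname{Hom}}(M,N_0)$ is complete because all its connective covers $\tau_{\geq -j}$, obtained as $\tau_{\geq0}\underline{\operatorname{Hom}}(\Sigma^{-j}\cdots)$-type terms, lie in $\mathsf{D}(A)_{\geq0}$ by condition (ii) of Definition~\ref{def connective panring}.
\end{itemize}

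Conversely, given $A\in\mathsf{PAnRing}_\lambda$, condition (iii) makes the localization $t$-exact-compatible. This gives $\mathsf{D}(A)_{\geq0}\colonequals\mathsf{D}(A)\cap\mathsf{D}(A^\triangleright)_{\geq0}$, which is closed under limits and colimits in the connective part: colimits clearly, and limits via $\tau_{\geq0}$ applied to the limit, which stays in $\mathsf{D}(A)$ because $\mathsf{D}(A)$ is closed under $\tau_{\geq0}$. That last closure is the key use of (iii): $L_A\tau_{\geq0}N\to N$ factors, and $\tau_{\geq0}N$ is a retract of $L_A\tau_{\geq0}N$ using the $t$-structure. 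Condition (ii) of Definition~\ref{def connective panring} then follows directly from (ii) plus this closure under $\tau_{\geq0}$. I would then check that $\Psi_\lambda$ of this connective datum recovers $\mathsf{D}(A)$. This holds because $\mathsf{D}(A)$ inherits a right complete $t$-structure whose connective part is $\mathsf{D}(A)_{\geq0}$, so $\mathsf{D}(A)$ is its stabilization; right completeness comes from $\mathsf{D}(A^\triangleright)$ together with colimit-closure. This gives essential surjectivity onto $\mathsf{PAnRing}_\lambda$. Round-tripping the other way, $\mathsf{D}(A)\cap\mathsf{D}(A^\triangleright)_{\geq0}$ recovers $\mathsf{D}(A)_{\geq0}$ by $t$-exactness of the stabilized inclusion.

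Full faithfulness is a property check, since both $\mathsf{P}^\mathrm{cn}_\lambda$ and $\mathsf{P}_\lambda$ have mapping spaces given by subspaces of $\operatorname{Map}(A^\triangleright,B^\triangleright)$ cut out by a condition. I must show that $\varphi_*$ carries $\mathsf{D}(B)_{\geq0}$ into $\mathsf{D}(A)_{\geq0}$ if and only if it carries $\mathsf{D}(B)$ into $\mathsf{D}(A)$. This follows because $\varphi_*$ is $t$-exact, commutes with colimits and shifts, and because $\mathsf{D}(B)$ is generated under colimits and desuspensions by $\mathsf{D}(B)_{\geq0}$, while $\mathsf{D}(B)_{\geq0}=\mathsf{D}(B)\cap\mathsf{D}(B^\triangleright)_{\geq0}$. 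The main obstacle I expect is the verification of (ii) for the stabilized subcategory, namely the careful passage between $\tau_{\geq0}\underline{\operatorname{Hom}}$ and the full internal Hom through Postnikov and colimit decompositions.
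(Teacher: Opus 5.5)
Your route is genuinely different from the paper's, and it works apart from one step.

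\textbf{Comparison with the paper.} The paper handles (ii) structurally. Proposition~\ref{enhanced reflection theorem} makes the connective completion symmetric monoidal, this survives $-\otimes\mathsf{Sp}$, and \cite[Lem.~7.5]{reflectionprinciple} converts compatibility with $\otimes$ back into closure under internal Homs; (iii) is quoted from SAG. For full faithfulness and the essential image, the paper shows that the forgetful functors to $\mathsf{Cond}_\lambda(\mathsf{Ring})$ are cocartesian fibrations, using induced pre-analytic structures from \cite{camargo2026notessolidgeometry}. It then checks that $\Psi_\lambda$ preserves cocartesian edges and cites \cite[Thm.~4.4.1]{camargo2026notessolidgeometry} for the equivalence on fibers.

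You instead use that the mapping animae of $\mathsf{P}^{\mathrm{cn}}_\lambda$ and $\mathsf{P}_\lambda$ are unions of components of $\operatorname{Map}(A^\triangleright,B^\triangleright)$, and that $\Psi_\lambda$ lies over this. Full faithfulness thus becomes a condition on each $\varphi$, which you check via $t$-exactness of $\varphi_*$ and right completeness. You also write down the inverse on objects, and your retract argument correctly shows that (iii) makes $\mathsf{D}(A)$ closed under $\tau_{\geq0}$. This is more elementary and self-contained: it essentially reproves the cited fiberwise statement and makes the cocartesian-fibration step unnecessary. The paper's argument is shorter given the literature and avoids hand computations with internal Homs.

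\textbf{The gap.} The reduction in (ii) to bounded-below $N=\Sigma^{-k}N_0$ does not work as written. Neither Postnikov-type decomposition gives it. The presentation $N\simeq\operatorname{colim}_k\tau_{\geq-k}N$ is a colimit, which $\underline{\operatorname{Hom}}(M,-)$ does not preserve. A limit presentation $N\simeq\lim_k\tau_{\leq k}N$ has bounded-above rather than bounded-below terms.

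No reduction in $N$ is needed, though. For connective $M$ and arbitrary $N$ one has
\[\tau_{\geq0}\underline{\operatorname{Hom}}(M,N)\simeq\tau_{\geq0}\underline{\operatorname{Hom}}(M,\tau_{\geq0}N).\]
Indeed, for connective $C$, applying $\operatorname{Map}(C,-)$ to both sides gives $\operatorname{Map}(C\otimes M,N)\simeq\operatorname{Map}(C\otimes M,\tau_{\geq0}N)$, because $C\otimes M$ is connective by Lemma~\ref{t-structures on derived categories}(1).

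Apply this to $\Sigma^jN$ for every $j\in\mathbb{Z}$. By your description of $\Psi_\lambda$, $\tau_{\geq0}\Sigma^jN$ lies in $\mathsf{D}(A)_{\geq0}$. Condition (ii) of Definition~\ref{def connective panring} then gives $\tau_{\geq0}\Sigma^j\underline{\operatorname{Hom}}(M,N)\in\mathsf{D}(A)_{\geq0}$ for all $j$. That is, $\underline{\operatorname{Hom}}(M,N)$ lies in the stabilized subcategory. Your reduction in $M$ then finishes the argument; the correct formula there is $\underline{\operatorname{Hom}}(M,N)\simeq\lim_n\Sigma^n\underline{\operatorname{Hom}}(\Sigma^n\tau_{\geq-n}M,N)$.
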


\begin{proof}
We first outline the argument, before giving the details of each step. After observing that the essential image of $\Psi_\lambda$ is contained in $\mathsf{PAnRing}_\lambda$, we obtain a commutative diagram
\[\begin{tikzcd}[ampersand replacement=\&,column sep = tiny]
	{\mathsf{PAnRing}_\lambda^\mathrm{cn}} \&\& {\mathsf{PAnRing}_\lambda} \\
	\& {\mathsf{Cond}_\lambda(\mathsf{Ring}),}
	\arrow["{\Psi_\lambda}", from=1-1, to=1-3]
	\arrow["{U^\mathrm{cn}}"', from=1-1, to=2-2]
	\arrow["U", from=1-3, to=2-2]
\end{tikzcd}\]
where $U^\mathrm{cn}$ and $U$ forget the categories of complete modules. We then show that both $U^\mathrm{cn}$ and $U$ are cocartesian fibrations and that $\Psi_\lambda$ preserves cocartesian maps. According to \cite[Tag~\href{https://kerodon.net/tag/028B}{028B}]{kerodon}, it therefore suffices to show that $\Psi_\lambda$ induces an equivalence on each fiber. On fibers, this reduces to the classical fact that pre-analytic ring structures can be encoded in terms of abelian categories.

\textsc{Step I: The essential image of $\Psi_\lambda$ is contained in $\mathsf{PAnRing}_\lambda$.} Let $(A^\triangleright,\mathsf{D}(A)_{\geq 0})$ be an object of $\mathsf{PAnRing}_\lambda^\mathrm{cn}$. Since stabilization is a 2-functor \cite[Rem.~6.1.7]{CLL}, we obtain an adjunction
\[\begin{tikzcd}[ampersand replacement=\&]
	{\mathsf{D}(A^\triangleright)_{\geq 0}\otimes \mathsf{Sp}} \&\& {\mathsf{D}(A)_{\geq 0}\otimes \mathsf{Sp}.}
	\arrow[""{name=0, anchor=center, inner sep=0}, "{(- \otimes_{A^\triangleright}A)\otimes \mathsf{Sp}}", bend left, from=1-1, to=1-3]
	\arrow[""{name=1, anchor=center, inner sep=0}, bend left, hook', from=1-3, to=1-1]
	\arrow["\dashv"{anchor=center, rotate=-90}, draw=none, from=0, to=1]
\end{tikzcd}\]
In particular, the full subcategory $\mathsf{D}(A)_{\geq 0}\otimes \mathsf{Sp}\subseteq \mathsf{D}(A^\triangleright)_{\geq 0}\otimes \mathsf{Sp}\simeq \mathsf{D}(A^\triangleright)$ is stable under small limits. On the other hand, the inclusion $\mathsf{D}(A)_{\geq 0}\hookrightarrow \mathsf{D}(A^\triangleright)_{\geq 0}$ preserves small colimits and therefore admits a right adjoint. Stabilizing this adjunction shows that $\mathsf{D}(A)_{\geq 0}\otimes \mathsf{Sp}\subseteq \mathsf{D}(A^\triangleright)$ is also closed under small colimits.

By Proposition~\ref{enhanced reflection theorem}, the completion functor
\[-\otimes_{A^\triangleright}A\colon \mathsf{D}(A^\triangleright)_{\geq 0}\to \mathsf{D}(A)_{\geq 0}\]
endows $\mathsf{D}(A)_{\geq 0}$ with a symmetric monoidal structure for which this functor is symmetric monoidal. Moreover, Proposition~\ref{enhanced reflection theorem} shows that it defines a morphism in $\mathsf{CAlg}(\mathsf{Pr}^\mathsf{L}_{\aleph_1})$. Applying the argument of Proposition~\ref{D is a map in Pr_k}, we obtain the corresponding statement for
\[(-\otimes_{A^\triangleright}A)\otimes\mathsf{Sp}.\]
In particular, this functor is \emph{compatible with the symmetric monoidal structure} in the sense of \cite[Def.~2.2.1.6]{higheralgebra}. Condition~(ii) of Definition~\ref{def panring} now follows from \cite[Lem.~7.5]{reflectionprinciple}.

Finally, condition~(iii) of Definition~\ref{def panring} follows by applying \cite[Prop.~C.3.1.1]{lurie2018spectral} to the inclusion $\mathsf{D}(A)_{\geq 0}\hookrightarrow\mathsf{D}(A^\triangleright)_{\geq 0}$ and to the completion functor $-\otimes_{A^\triangleright}A$. 

\textsc{Step II: $U^\mathrm{cn}$ and $U$ are cocartesian fibrations.} We treat $U$, as the same argument applies to $U^\mathrm{cn}$. Let $A=(A^\triangleright,\mathsf{D}(A))$ be a $\lambda$-light pre-analytic ring, and let $\varphi\colon A^\triangleright\to B^\triangleright$ be a morphism of $\lambda$-light condensed rings. Write $B^\triangleright_{/A}$ for the induced pre-analytic ring structure on $B^\triangleright$ \cite[Lem.~4.1.9]{camargo2026notessolidgeometry}. Thus, we have a pushout square of $\lambda$-light pre-analytic rings
\[\begin{tikzcd}[ampersand replacement=\&]
	{A^\triangleright} \& A \\
	{B^\triangleright} \& {B^\triangleright_{/A}.}
	\arrow[from=1-1, to=1-2]
	\arrow["{\varphi}"', from=1-1, to=2-1]
	\arrow["\overline{\varphi}", from=1-2, to=2-2]
	\arrow[from=2-1, to=2-2]
\end{tikzcd}\]
The functor $U$ maps $\overline{\varphi}\colon A\to B^\triangleright_{/A}$ to $\varphi$, so it suffices to prove that $\overline{\varphi}$ is a $U$-cocartesian morphism.

For any $\lambda$-light pre-analytic ring $C$, the universal property of the pushout gives a cartesian square
\[\begin{tikzcd}[ampersand replacement=\&]
	{\operatorname{Map}_{\mathsf{PAnRing}_\lambda}(B^\triangleright_{/A},C)} \& {\operatorname{Map}_{\mathsf{PAnRing}_\lambda}(A,C)} \\
	{\operatorname{Map}_{\mathsf{PAnRing}_\lambda}(B^\triangleright,C)} \& {\operatorname{Map}_{\mathsf{PAnRing}_\lambda}(A^\triangleright,C).}
	\arrow[from=1-1, to=1-2]
	\arrow[from=1-1, to=2-1]
	\arrow[from=1-2, to=2-2]
	\arrow[from=2-1, to=2-2]
\end{tikzcd}\]
By \cite[Lem.~4.1.5]{camargo2026notessolidgeometry}, we have
\[\operatorname{Map}_{\mathsf{PAnRing}_\lambda}(A^\triangleright,C)\simeq \operatorname{Map}_{\mathsf{Cond}_\lambda(\mathsf{Ring})}(A^\triangleright,C^\triangleright),\]
and similarly for $B^\triangleright$. Hence, the square above identifies with the cartesian square
\[\begin{tikzcd}[ampersand replacement=\&]
	{\operatorname{Map}_{\mathsf{PAnRing}_\lambda}(B^\triangleright_{/A},C)} \& {\operatorname{Map}_{\mathsf{PAnRing}_\lambda}(A,C)} \\
	{\operatorname{Map}_{\mathsf{Cond}_\lambda(\mathsf{Ring})}(B^\triangleright,C)} \& {\operatorname{Map}_{\mathsf{Cond}_\lambda(\mathsf{Ring})}(A^\triangleright,C),}
	\arrow[from=1-1, to=1-2]
	\arrow[from=1-1, to=2-1]
	\arrow[from=1-2, to=2-2]
	\arrow[from=2-1, to=2-2]
\end{tikzcd}\]
which proves that $\overline{\varphi}$ is $U$-cocartesian and completes the proof.

\textsc{Step III: The functor $\Psi_\lambda$ preserves cocartesian morphisms.} Running the preceding argument in reverse, we see that a morphism $A\to B$ in $\mathsf{PAnRing}_\lambda$ (resp.~in $\mathsf{PAnRing}^\mathrm{cn}_\lambda$) is $U$-cocartesian (resp.~$U^\mathrm{cn}$-cocartesian) if and only if the diagram
\[\begin{tikzcd}[ampersand replacement=\&]
	{A^\triangleright} \& {B^\triangleright} \\
	A \& B
	\arrow[from=1-1, to=1-2]
	\arrow[from=1-1, to=2-1]
	\arrow[from=1-2, to=2-2]
	\arrow[from=2-1, to=2-2]
\end{tikzcd}\]
is a pushout, or equivalently, if and only if $B$ carries the pre-analytic ring structure on $B^\triangleright$ induced from $A$.

It therefore remains to show that $\Psi_\lambda$ preserves induced pre-analytic ring structures. Let $A=(A^\triangleright,\mathsf{D}(A)_{\geq 0})$ be an object of $\mathsf{PAnRing}^\mathrm{cn}_\lambda$ and let $\varphi\colon A^\triangleright\to B^\triangleright$ be a morphism of $\lambda$-light condensed rings. By \cite[Lem.~4.1.9]{camargo2026notessolidgeometry}, the pushout $B^\triangleright_{A/}$ defining the induced pre-analytic ring structure exists, and its connective derived category admits the explicit description
\[\mathsf{D}(B^\triangleright_{A/})_{\geq 0}\simeq \{ M\in \mathsf{D}(B^\triangleright)_{\geq 0} \mid \varphi_*M\in \mathsf{D}(A)_{\geq 0}\}.\]
Thus, it suffices to verify that the stabilization $\mathsf{D}(B^\triangleright_{A/})_{\geq 0}\otimes\mathsf{Sp}$ admits the analogous description in terms of $\mathsf{D}(B^\triangleright)\simeq \mathsf{D}(B^\triangleright)_{\geq0}\otimes \mathsf{Sp}$ and $\mathsf{D}(A)_{\geq0}\otimes \mathsf{Sp}$.

By \cite[Prop.~4.1.8]{camargo2026notessolidgeometry}, an object $M\in\mathsf{D}(B^\triangleright)\simeq\mathsf{D}(B^\triangleright)_{\geq 0}\otimes\mathsf{Sp}$ belongs to $\mathsf{D}(B^\triangleright_{A/})_{\geq 0}\otimes\mathsf{Sp}$ if and only if $\pi_i(M)$ does for every $i\in\mathbb{Z}$. Since $\pi_i(M)$ is static, this is equivalent to requiring that $\pi_i(M)$ belong to $\mathsf{D}(B^\triangleright_{A/})_{\geq 0}$ for every $i\in\mathbb{Z}$. By the description above, the latter condition amounts to
\[\varphi_*\pi_i(M)\in\mathsf{D}(A)_{\geq 0}\qquad\text{for all }i\in\mathbb{Z}.\]
Since restriction of scalars is $t$-exact, we have $\varphi_*\pi_i(M)\simeq\pi_i(\varphi_*M)$. A second application of \cite[Prop.~4.1.8]{camargo2026notessolidgeometry} therefore shows that the condition above is equivalent to
\[\varphi_*M\in\mathsf{D}(A)_{\geq 0}\otimes\mathsf{Sp}.\]
This gives the desired description of the induced pre-analytic ring structure.

\textsc{Step IV: $\Psi_\lambda$ is an equivalence on fibers.} We have now established the hypotheses of \cite[Tag~\href{https://kerodon.net/tag/028B}{028B}]{kerodon}, so it remains to show that $\Psi_\lambda$ induces an equivalence on each fiber. For a $\lambda$-light condensed ring $A^\triangleright$, the fiber of $U$ over $A^\triangleright$ is the partially ordered set of pre-analytic ring structures on $A^\triangleright$, ordered by reverse inclusion, and similarly for $U^\mathrm{cn}$. The required equivalence is then a direct consequence of \cite[Thm.~4.4.1]{camargo2026notessolidgeometry}.
\end{proof}

Since presentability is not preserved under passage to opposite categories, it will be convenient to reformulate the definition of $\mathsf{PAnRing}_\lambda^\mathrm{cn}$ in terms of the covariant functor
\[\mathsf{D}(-)_{\geq 0}\colon \mathsf{Cond}_\lambda(\mathsf{Ring})\to \widehat{\mathsf{Cat}},\]
rather than its contravariant counterpart.

\begin{lemma}\label{contractability of fibers}
Let $\varphi\colon A^\triangleright\to B^\triangleright$ be a morphism of $\lambda$-light condensed rings, and consider full subcategories $\mathsf{D}(A)_{\geq0}\subseteq \mathsf{D}(A^\triangleright)_{\geq0}$ and $\mathsf{D}(B)_{\geq0}\subseteq \mathsf{D}(B^\triangleright)_{\geq0}$ closed under small limits and colimits. We retain the notation of Definition~\ref{def connective panring}, but do not assume condition (ii) therein. The following conditions are equivalent:
    \begin{enumerate}[label=(\alph*)]
        \item The restriction-of-scalars $\varphi_*\colon\mathsf{D}(B^\triangleright)_{\geq0} \to\mathsf{D}(A^\triangleright)_{\geq0}$ sends $\mathsf{D}(B)_{\geq0}$ into $\mathsf{D}(A)_{\geq0}$;
        \item There exists a functor $\overline{\varphi}^*\colon \mathsf{D}(A)_{\geq0}\to \mathsf{D}(B)_{\geq0}$ and a natural isomorphism
        \[\alpha_\varphi\colon \bigl(-\otimes_{B^\triangleright}B\bigr)\circ\varphi^*\to \overline{\varphi}^*\circ \bigl(-\otimes_{A^\triangleright}A\bigr)\]
        witnessing the commutativity of the diagram
\[\begin{tikzcd}[ampersand replacement=\&]
	{\mathsf{D}(A^\triangleright)_{\geq0}} \&\& {\mathsf{D}(A)_{\geq0}} \\
	{\mathsf{D}(B^\triangleright)_{\geq0}} \&\& {\mathsf{D}(B)_{\geq0}.}
	\arrow["{-\otimes_{A^\triangleright}A}", from=1-1, to=1-3]
	\arrow["{\varphi^*}"', from=1-1, to=2-1]
	\arrow["{\overline{\varphi}^*}", from=1-3, to=2-3]
	\arrow["{-\otimes_{B^\triangleright}B}", from=2-1, to=2-3]
\end{tikzcd}\]
    \end{enumerate}
The anima of pairs $(\overline{\varphi}^*,\alpha_\varphi)$ as in (b) is contractible when these conditions hold, and empty otherwise.
\end{lemma}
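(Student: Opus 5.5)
The plan is to reduce both directions, and the contractibility claim, to the universal property of Bousfield localizations. Write $L_A=-\otimes_{A^\triangleright}A\colon \mathsf{D}(A^\triangleright)_{\geq0}\to\mathsf{D}(A)_{\geq0}$ with fully faithful right adjoint $\iota_A$, and similarly $L_B,\iota_B$. The key input is that $L_A$ exhibits $\mathsf{D}(A)_{\geq0}$ as the localization of $\mathsf{D}(A^\triangleright)_{\geq0}$ at the class $S_A$ of $L_A$-equivalences. Consequently, for any category $\mathsf{E}$, precomposition with $L_A$ gives a fully faithful functor
\[L_A^*\colon\mathsf{Fun}(\mathsf{D}(A)_{\geq0},\mathsf{E})\hookrightarrow\mathsf{Fun}(\mathsf{D}(A^\triangleright)_{\geq0},\mathsf{E}),\]
whose essential image consists of the functors inverting $S_A$. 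Take $\mathsf{E}=\mathsf{D}(B)_{\geq0}$ and $H\colonequals L_B\circ\varphi^*$. Then the anima of pairs $(\overline{\varphi}^*,\alpha_\varphi)$ in (b) is the fiber of $(L_A^*)^\simeq$ over $H$, up to the harmless reversal of the direction of $\alpha_\varphi$, which is invertible. Since $L_A^*$ is fully faithful, this fiber is contractible if $H$ inverts $S_A$ and empty otherwise. So the whole lemma reduces to showing that (a) holds if and only if $L_B\circ\varphi^*$ inverts every $L_A$-equivalence.

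For this equivalence we use adjunctions. A map $f\colon M\to M'$ in $\mathsf{D}(A^\triangleright)_{\geq0}$ is sent to an equivalence by $L_B\varphi^*$ if and only if $\operatorname{Map}(L_B\varphi^*M',N)\to\operatorname{Map}(L_B\varphi^*M,N)$ is an equivalence for every $N\in\mathsf{D}(B)_{\geq0}$. By the adjunctions $L_B\dashv\iota_B$ and $\varphi^*\dashv\varphi_*$ (Lemma~\ref{t-structures on derived categories}(3), restricted to connective parts), this map identifies with
\[\operatorname{Map}(M',\varphi_*\iota_BN)\to\operatorname{Map}(M,\varphi_*\iota_BN).\]

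Assume (a). Then $\varphi_*\iota_BN$ lies in $\mathsf{D}(A)_{\geq0}$, so it is $S_A$-local, and the displayed map is an equivalence for every $f\in S_A$. Hence $L_B\varphi^*$ inverts $S_A$, which gives (b) together with its contractibility.

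Conversely, assume $L_B\varphi^*$ inverts $S_A$. Then every object of the form $\varphi_*\iota_BN$ is $S_A$-local. The $S_A$-local objects are exactly the essential image of $\iota_A$, namely $\mathsf{D}(A)_{\geq0}$, because $\mathsf{D}(A)_{\geq0}$ is closed under limits and colimits, so $L_A$ is a genuine Bousfield localization. This gives (a). To pass from (b) to this hypothesis, note that any $(\overline{\varphi}^*,\alpha_\varphi)$ as in (b) forces $L_B\varphi^*\simeq\overline{\varphi}^*L_A$, and the right-hand side visibly inverts $S_A$.

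The step needing the most care is the identification of the anima of pairs with a fiber of $(L_A^*)^\simeq$. This must be stated for arbitrary functors $\overline{\varphi}^*$, not only colimit-preserving ones. It holds because localization at $S_A$ is a universal property in $\widehat{\mathsf{Cat}}$ (in the sense of the Dwyer–Kan localization of a reflective subcategory), and the rest of the argument is formal.
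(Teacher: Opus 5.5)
Your proof is correct. Your contractibility step matches the paper's: precomposition with $-\otimes_{A^\triangleright}A$ is fully faithful on functor categories, hence on cores, so the anima of pairs is a fiber of a monomorphism of animae.

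Where you differ is the equivalence (a)$\Leftrightarrow$(b). The paper argues via adjoints.
\begin{itemize}
\item For (a)$\Rightarrow$(b), it sets $\overline{\varphi}^*=(-\otimes_{B^\triangleright}B)\circ\varphi^*\circ\iota_A$. A chain of adjunction equivalences then shows that both composites around the square are left adjoint to $\varphi_*\circ\iota_B\simeq\iota_A\circ\overline{\varphi}_*$, hence isomorphic.
\item For (b)$\Rightarrow$(a), it precomposes $\alpha_\varphi$ with $\iota_A$ to see that $\overline{\varphi}^*$ preserves small colimits. This is where closure of $\mathsf{D}(A)_{\geq0}$ under colimits is used. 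It then takes a right adjoint $\overline{\varphi}_*$ and passes to right adjoints in $\alpha_\varphi$.
\end{itemize}

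You instead use the sharper form of the universal property of your $L_A=-\otimes_{A^\triangleright}A$: the essential image of precomposition consists of the functors inverting $L_A$-equivalences. Via Yoneda and the identification of $S_A$-local objects with the essential image of $\iota_A$, this reduces (a) to the statement that every $\varphi_*\iota_BN$ is $S_A$-local.

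Your route is more economical and slightly more general. It needs only that the two subcategories be reflective, with no colimit-closure or adjoint-functor-theorem input. It also yields (a)$\Leftrightarrow$(b) and the empty-or-contractible dichotomy in one stroke, since nonemptiness of the fiber is literally (b).

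The paper's route, in return, produces the explicit formula for $\overline{\varphi}^*$ and exhibits it as left adjoint to the restriction $\overline{\varphi}_*$ of $\varphi_*$, so in particular colimit-preserving. That extra information is what one wants when the lemma is applied in Lemma~\ref{PAnRing as a pullback}, where the induced functor on complete modules must be a left adjoint. You can recover the same formula by precomposing your isomorphism with $\iota_A$.
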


\begin{proof}
Assume (a), so that $\varphi_*$ restricts to a functor $\overline{\varphi}_*\colon \mathsf{D}(B)_{\geq0}\to\mathsf{D}(A)_{\geq0}$, and define $\overline{\varphi}^{*}\colonequals
\bigl(-\otimes_{B^\triangleright}B\bigr)\circ\varphi^{*}\circ\iota_A$. For $M\in\mathsf{D}(A)_{\geq0}$ and $N\in\mathsf{D}(B)_{\geq0}$, we have that
\begin{align*}
        \operatorname{Map}_{\mathsf{D}(B)_{\geq0}}(\overline{\varphi}^{*}M,N) &\simeq \operatorname{Map}_{\mathsf{D}(B^{\triangleright})_{\geq0}}(\varphi^{*}\iota_{A}M,\iota_BN) && B\otimes_{B^{\triangleright}}-\dashv \iota_{B} \\
        &\simeq\operatorname{Map}_{\mathsf{D}(A^{\triangleright})_{\geq0}}(\iota_{A}M,\varphi_{*}\iota_B N) && \varphi^{*}\dashv \varphi_{*} \\
        &\simeq\operatorname{Map}_{\mathsf{D}(A^{\triangleright})_{\geq0}}(\iota_{A}M,\iota_A\overline{\varphi}_{*} N) && \varphi_{*}\circ \iota_B\simeq \iota_A\circ \overline{\varphi}_{*} \\
        &\simeq \operatorname{Map}_{\mathsf{D}(A)_{\geq0}}(M,\overline{\varphi}_{*}N) && \iota_A\text{ is fully faithful}.
    \end{align*}
As a result, the functors $\bigl(-\otimes_{B^\triangleright}B\bigr)\circ\varphi^*$ and $\overline{\varphi}^*\circ \bigl(-\otimes_{A^\triangleright}A\bigr)$ are both left adjoint to
\[\varphi_{*}\circ \iota_B\simeq \iota_A\circ \overline{\varphi}_{*},\]
and so they are naturally isomorphic.

Conversely, assume (b). Precomposing $\alpha_\varphi$ with $\iota_A$ and using that $\iota_A$ is fully faithful, we obtain a natural equivalence
\[\overline{\varphi}^*\simeq\overline{\varphi}^*\circ \bigl(-\otimes_{A^\triangleright}A\bigr)\circ\iota_A\simeq\bigl(-\otimes_{B^\triangleright}B\bigr)\circ\varphi^*\circ\iota_A.\]
Since $\iota_A$ preserves small colimits and the other two functors on the right are left adjoints, $\overline{\varphi}^*$ preserves small colimits; hence admits a right adjoint $\overline{\varphi}_*$. Taking right adjoints in $\alpha_\varphi$ yields a natural equivalence
\[\varphi_*\circ\iota_B\simeq\iota_A\circ\overline{\varphi}_*,\]
showing that $\varphi_*$ sends $\mathsf{D}(B)_{\geq0}$ into $\mathsf{D}(A)_{\geq0}$, as required.

Finally, since the completion functor $-\otimes_{A^\triangleright}A\colon \mathsf{D}(A^\triangleright)_{\geq 0}\to \mathsf{D}(A)_{\geq 0}$ is a localization, it induces a fully faithful functor
\[\mathsf{Fun}\mleft(\mathsf{D}(A)_{\geq 0},\mathsf{D}(B)_{\geq 0}\mright)\xhookrightarrow{\hspace{0.8em} -\circ \bigl(-\otimes_{A^\triangleright}A\bigr)\hspace{0.8em} }\mathsf{Fun}\mleft(\mathsf{D}(A^\triangleright)_{\geq 0},\mathsf{D}(B)_{\geq 0}\mright).\]
The induced functor on cores is still fully faithful \cite[Tag~\href{https://kerodon.net/tag/01JP}{01JP}]{kerodon}, and the anima of pairs $(\overline{\varphi}^*,\alpha_\varphi)$ is its fiber over $\bigl(-\otimes_{B^\triangleright}B\bigr)\circ\varphi^*$. It follows that this anima is either empty or contractible \cite[Tags~\href{https://kerodon.net/tag/06SF}{06SF}
and \href{https://kerodon.net/tag/056B}{056B}]{kerodon}; the latter happening precisely when the equivalent conditions are satisfied.
\end{proof}

The following lemma makes precise, with the size bounds needed here, the connective pullback description outlined in \cite[\S2.1.1.9]{Kipp}.

\begin{lemma}\label{PAnRing as a pullback}
Set $\mathsf{W}\colonequals\mathsf{Cond}_\lambda(\mathsf{Ab})$. Then $\mathsf{PAnRing}^\mathrm{cn}_\lambda$ is naturally equivalent to the fiber product of the diagram
\[\mathsf{Cond}_\lambda(\mathsf{Ring})\xrightarrow{\ \mathsf{D}(-)_{\geq 0}\ }\mathsf{CAlg}(\mathsf{Pr}^\mathsf{L}_{\aleph_1})_{\mathsf{W}/}\xleftarrow{\ s\ }\overline{\mathsf{Loc}}_{\aleph_1}^\otimes(\mathsf{W}),\]
where $s$ is the functor sending each symmetric monoidal localization to its source.
\end{lemma}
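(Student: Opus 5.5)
The plan is to compare both sides as categories over $\mathsf{Cond}_\lambda(\mathsf{Ring})$ and to match objects and mapping animae separately. Throughout, write $\mathsf{F}$ for the fiber product in the statement.

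\emph{Objects.} An object of $\mathsf{F}$ is a condensed ring $A^\triangleright$ together with a morphism $L\colon \mathsf{D}(A^\triangleright)_{\geq0}\to \mathsf{E}$ in $\mathsf{CAlg}(\mathsf{Pr}^\mathsf{L}_{\aleph_1})_{\mathsf{W}/}$ that is a Bousfield localization and whose fully faithful right adjoint preserves small colimits. Replacing $\mathsf{E}$ by the essential image of that right adjoint identifies such data with a full subcategory $\mathsf{D}(A)_{\geq0}\subseteq\mathsf{D}(A^\triangleright)_{\geq0}$ that is closed under limits and colimits, which is condition (i) of Definition~\ref{def connective panring}, and that carries a symmetric monoidal structure making the localization symmetric monoidal. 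By the reflection-principle characterization (\cite[Lem.~7.5]{reflectionprinciple}, in its connective form as in Proposition~\ref{enhanced reflection theorem}), the existence of such a compatible symmetric monoidal structure is equivalent to condition (ii): $\tau_{\geq0}\underline{\operatorname{Hom}}(M,N)$ is complete whenever $N$ is. In that case the structure is unique, since the space of such structures is contractible: localization is a property of commutative algebras in $\mathsf{Pr}^\mathsf{L}$ \cite[Rem.~2.2.1.5]{higheralgebra}. We also have to check that $\mathsf{D}(A)_{\geq0}$ is $\aleph_1$-presentable with a $\kappa$-compact unit and that the localization preserves $\aleph_1$-compact objects. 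Both follow because the right adjoint preserves all colimits, in particular $\aleph_1$-filtered ones. This gives a bijection on equivalence classes of objects between $\mathsf{F}$ and $\mathsf{PAnRing}^\mathrm{cn}_\lambda$.

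\emph{Morphisms.} To make the comparison functorial, I would build the functor $\mathsf{F}\to\mathsf{P}^\mathrm{cn}_\lambda$ by passing to right adjoints, so that the localization is sent to its fully faithful right adjoint, landing in $\mathsf{FF}^{\mathsf{R}}$. I would then show that this functor is fully faithful, with essential image $\mathsf{PAnRing}^\mathrm{cn}_\lambda$. Both categories lie over $\mathsf{Cond}_\lambda(\mathsf{Ring})$, so it suffices to compare fibers of mapping animae over a fixed $\varphi\colon A^\triangleright\to B^\triangleright$. In $\mathsf{F}$, this fiber is the anima of morphisms in $\mathsf{Fun}(\Delta^1,\mathsf{CAlg}(\mathsf{Pr}^\mathsf{L}_{\aleph_1})_{\mathsf{W}/})$ lying over $\varphi^*$. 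Concretely, it consists of a symmetric monoidal functor $\overline{\varphi}^*$ together with a commuting square $\alpha_\varphi$. Lemma~\ref{contractability of fibers} shows that the anima of pairs $(\overline{\varphi}^*,\alpha_\varphi)$ at the level of plain functors is contractible if $\varphi_*$ preserves completeness and empty otherwise. In $\mathsf{P}^\mathrm{cn}_\lambda$, the same fiber is contractible or empty under exactly the same condition.

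\emph{Monoidal and presentability upgrade.} It remains to check that the symmetric monoidal structure and the membership of $\overline{\varphi}^*$ in $\mathsf{Pr}^\mathsf{L}_{\aleph_1}$ add no extra data. For the monoidal part, I would again use that $-\otimes_{A^\triangleright}A$ is a localization in $\mathsf{CAlg}(\mathsf{Pr}^\mathsf{L})$. Consequently, symmetric monoidal functors out of $\mathsf{D}(A)_{\geq0}$ embed fully faithfully, via precomposition, into symmetric monoidal functors out of $\mathsf{D}(A^\triangleright)_{\geq0}$, and so the fiber remains $(-1)$-truncated. Preservation of colimits and of $\aleph_1$-compact objects holds automatically, because $\overline{\varphi}^*$ is a composite of left adjoints that preserve compact objects. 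The compatibility with $\mathsf{W}$ is handled the same way.

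I expect this step to be the main obstacle: making precise that the mapping fibers in the $\infty$-categorical fiber product over $\mathsf{CAlg}(\mathsf{Pr}^\mathsf{L}_{\aleph_1})_{\mathsf{W}/}$ agree with the anima in Lemma~\ref{contractability of fibers} once the monoidal structure is included. I would attempt this by working with a fiberwise criterion for full faithfulness and by identifying the relevant mapping anima as a fiber of a fully faithful restriction functor on cores.
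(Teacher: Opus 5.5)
Your proposal is correct and follows essentially the paper's proof. Objects are matched via the isofibration $s$, \cite[Lem.~7.5]{reflectionprinciple} and Proposition~\ref{enhanced reflection theorem}, and full faithfulness over $\mathsf{Cond}_\lambda(\mathsf{Ring})$ is reduced by the pasting law to Lemma~\ref{contractability of fibers} on the fibers over $\varphi$; your explicit monoidal upgrade (precomposition with the symmetric monoidal localization $-\otimes_{A^\triangleright}A$ is fully faithful onto the symmetric monoidal functors inverting its local equivalences) spells out a step the paper passes over by citing that lemma directly.

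One small fix: $\iota_A$ need not preserve $\aleph_1$-compact objects, so $\overline{\varphi}^*$ is not obviously ``a composite of compact-preserving left adjoints.'' Instead, argue that $\overline{\varphi}^*(X\otimes_{A^\triangleright}A)\simeq\varphi^*X\otimes_{B^\triangleright}B$, and every $\aleph_1$-compact complete module is a retract of such an object with $X$ $\aleph_1$-compact. Alternatively, note that the right adjoint of $\overline{\varphi}^*$, being the restriction of $\varphi_*$, preserves colimits.
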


\begin{proof}
Let $\mathsf{Q}^\mathrm{cn}_\lambda$ denote the fiber product in the statement. Since $\overline{\mathsf{Loc}}^{\otimes}_{\aleph_1}(\mathsf{W})$ is a replete full subcategory of $\mathsf{Fun}(\Delta^{1},\mathsf{CAlg}(\mathsf{Pr}^\mathsf{L}_{\aleph_1})_{\mathsf{W}/})$, the functor
\[s\colon \overline{\mathsf{Loc}}_{\aleph_1}^\otimes(\mathsf{W})\to \mathsf{CAlg}(\mathsf{Pr}^\mathsf{L}_{\aleph_1})_{\mathsf{W}/}\]
is an isofibration by \cite[Tags~\href{https://kerodon.net/tag/01EX}{01EX} and \href{https://kerodon.net/tag/01F3}{01F3}]{kerodon}. Consequently, as at the outset of this section, \cite[Tag~\href{https://kerodon.net/tag/033T}{033T}]{kerodon} identifies the objects of $\mathsf{Q}_{\lambda}^{\mathrm{cn}}$ with pairs $(A^{\triangleright},-\otimes_{A^{\triangleright}}A)$, consisting of a $\lambda$-light condensed ring $A^{\triangleright}$ and a symmetric monoidal localization
\[-\otimes_{A^{\triangleright}}A\colon \mathsf{D}(A^{\triangleright})_{\geq 0}\to\mathsf{D}(A)_{\geq 0}\]
belonging to $\overline{\mathsf{Loc}}^{\otimes}_{\aleph_{1}}(\mathsf{W})$. In particular, it folows from Definition~\ref{def connective panring} -- rephrasing condition (ii) therein via \cite[Lem.~7.5]{reflectionprinciple} -- that $(A^\triangleright,\mathsf{D}(A)_{\geq 0})$ is an object of $\mathsf{PAnRing}^\mathrm{cn}_\lambda$.

The universal property of fiber products yields a functor $\mathsf{Q}^\mathrm{cn}_\lambda\to\mathsf{P}^\mathrm{cn}_\lambda$, induced by the canonical projection $\mathsf{Q}^\mathrm{cn}_\lambda\to\mathsf{Cond}_\lambda(\mathsf{Ring})$ and the composite
\[\overline{\mathsf{Loc}}^{\otimes}_{\kappa}(\mathsf{W})\to\mathsf{Fun}(\Delta^{1},\mathsf{Pr}^{\mathsf{L}}_{\kappa})\simeq\mathsf{Fun}(\Delta^{1},\mathsf{Pr}^{\mathsf{R}}_{\kappa})^{\mathrm{op}}\to\mathsf{Fun}(\Delta^{1},\widehat{\mathsf{Cat}})^{\mathrm{op}},\]
whose essential image is contained in $\mathsf{FF}^{\mathrm{op}}$. As above, the resulting functor takes values in $\mathsf{PAnRing}^\mathrm{cn}_\lambda$ and thus induces a functor
\[\Phi_\lambda\colon\mathsf{Q}^\mathrm{cn}_\lambda\to\mathsf{PAnRing}^\mathrm{cn}_\lambda,\]
which we claim is an equivalence.

To prove that $\Phi_\lambda$ is essentially surjective, let $(A^\triangleright,\mathsf{D}(A)_{\geq 0})$ be an object of $\mathsf{PAnRing}^\mathrm{cn}_\lambda$. As explained in the first step of the proof of Lemma~\ref{preanalytic rings are determined by connective halves}, the completion functor $-\otimes_{A^\triangleright}A$ equips $\mathsf{D}(A)_{\geq 0}$ with a symmetric monoidal structure making
\[-\otimes_{A^\triangleright}A\colon\mathsf{D}(A^\triangleright)_{\geq 0}\to\mathsf{D}(A)_{\geq 0}\]
a morphism in $\mathsf{CAlg}(\mathsf{Pr}^\mathsf{L}_{\aleph_1})$. Equipping $\mathsf{D}(A)_{\geq 0}$ with the induced symmetric monoidal functor from $\mathsf{W}$, we obtain an object $(A^\triangleright,-\otimes_{A^\triangleright}A)$ of $\mathsf{Q}^\mathrm{cn}_\lambda$ satisfying
\[\Phi_\lambda(A^\triangleright,-\otimes_{A^\triangleright}A)\simeq (A^\triangleright,\mathsf{D}(A)_{\geq 0}).\]

To prove that $\Phi_\lambda$ is fully faithful, observe that it is a functor over $\mathsf{Cond}_\lambda(\mathsf{Ring})$. It therefore suffices to show that, for every morphism $\varphi\colon A^\triangleright\to B^\triangleright$ of $\lambda$-light condensed rings, the induced map on fibers over $\varphi$ of
\[\operatorname{Map}_{\mathsf{Q}_{\lambda}^{\mathrm{cn}}}\mleft(\mleft(A^{\triangleright},-\otimes_{A^{\triangleright}}A\mright),\mleft(B^{\triangleright},-\otimes_{B^{\triangleright}}B\mright)\mright)\to\operatorname{Map}_{\mathsf{PAnRing}_\lambda^\mathrm{cn}}\mleft(\mleft(A^\triangleright,\mathsf{D}(A)_{\geq 0}\mright),\mleft(B^\triangleright,\mathsf{D}(B)_{\geq 0}\mright)\mright)\]
is an equivalence. By definition of $\mathsf{PAnRing}_\lambda^\mathrm{cn}$, the fiber on the right is contractible if $\varphi_*$ maps $\mathsf{D}(B)_{\geq 0}$ into $\mathsf{D}(A)_{\geq 0}$, and empty otherwise. Thus, it remains to establish the same description for the fiber on the left.

To establish the same description for the fiber on the left, consider the following cartesian square of mapping animae, induced by the defining fiber product of $\mathsf{Q}^\mathrm{cn}_\lambda$:
\[\begin{tikzcd}[ampersand replacement=\&]
	{\operatorname{Map}_{\mathsf{Q}_{\lambda}^{\mathrm{cn}}}\mleft(\mleft(A^{\triangleright},-\otimes_{A^{\triangleright}}A\mright),\mleft(B^{\triangleright},-\otimes_{B^{\triangleright}}B\mright)\mright)} \& {\operatorname{Map}_{\overline{\mathsf{Loc}}_{\aleph_{1}}^{\otimes}(\mathsf{W})}\mleft(-\otimes_{A^{\triangleright}}A,-\otimes_{B^{\triangleright}}B\mright)} \\
	{\operatorname{Map}_{\mathsf{Cond}_{\lambda}(\mathsf{Ring)}}(A^{\triangleright},B^{\triangleright})} \& {\operatorname{Map}_{\mathsf{CAlg}(\mathsf{Pr}^{\mathsf{L}}_{\aleph_{1}})_{\mathsf{W}/}}\mleft(\mathsf{D}(A^{\triangleright})_{\geq 0},\mathsf{D}(B^{\triangleright})_{\geq 0}\mright).}
	\arrow[from=1-1, to=1-2]
	\arrow["U"', from=1-1, to=2-1]
	\arrow["s", from=1-2, to=2-2]
	\arrow[from=2-1, to=2-2]
\end{tikzcd}\]
The pasting law gives an equivalence between the fiber of $U$ over $\varphi$ and the fiber of $s$ over $\varphi^*$. By Lemma~\ref{contractability of fibers}, the latter is contractible if $\varphi_*$ maps $\mathsf{D}(B)_{\geq 0}$ into $\mathsf{D}(A)_{\geq 0}$, and empty otherwise. This establishes the desired description and completes the proof.
\end{proof}

\begin{proof}[Proof of Theorem~\ref{PAnRing is presentable}]
Lemmas~\ref{preanalytic rings are determined by connective halves} and \ref{PAnRing as a pullback} identify $\mathsf{PAnRing}_\lambda$ with the pullback of the diagram
\[\mathsf{Cond}_\lambda(\mathsf{Ring})\xrightarrow{\ \mathsf{D}(-)_{\geq 0}\ }\mathsf{CAlg}(\mathsf{Pr}^\mathsf{L}_{\aleph_1})_{\mathsf{W}/}\xleftarrow{\ s\ }\overline{\mathsf{Loc}}^\otimes_{\aleph_1}(\mathsf{W}),\]
where $\mathsf{W}=\mathsf{Cond}_\lambda(\mathsf{Ab})$. By Proposition~\ref{D is a map in Pr_k}, the morphism on the left lies in $\mathsf{Pr}^\mathsf{L}_{\aleph_1}$. On the other hand, Proposition~\ref{CondAb is mu-compact} allows us to apply Corollary~\ref{main theorem on monoidal localizations2}, which shows that the morphism on the right lies in $\mathsf{Acc}_\mu$ and preserves $\aleph_1$-filtered colimits.

The argument of \cite[Tag~\href{https://kerodon.net/tag/06LM}{06LM}]{kerodon} therefore shows that $\mathsf{PAnRing}_\lambda$ is $\mu$-accessible. Finally, \cite[Lem.~4.1.10 and Prop.~4.1.11]{camargo2026notessolidgeometry} proves that $\mathsf{PAnRing}_{\aleph_1}$ admits small colimits. The same arguments apply verbatim for every uncountable cardinal $\lambda$, and hence $\mathsf{PAnRing}_\lambda$ admits small colimits in general.
\end{proof}

A convenient feature of presentable categories is that they are automatically complete \cite[Tag~\href{https://kerodon.net/tag/06PU}{06PU}]{kerodon}. We therefore obtain the following corollary, for which we are not aware of a direct proof.

\begin{corollary}\label{PAnRing is complete}
   The category $\mathsf{PAnRing}_\lambda$ is complete. 
\end{corollary}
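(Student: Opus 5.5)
The corollary is an immediate consequence of Theorem~\ref{PAnRing is presentable}. That theorem shows $\mathsf{PAnRing}_\lambda$ is $\mu$-presentable for $\mu=((2^{<\lambda})^{\aleph_0})^+$, and in particular presentable. I would then invoke the general fact that every presentable category admits all small limits \cite[Tag~\href{https://kerodon.net/tag/06PU}{06PU}]{kerodon}.

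For the reader's benefit, I would also recall briefly why this holds, to make clear that no extra hypothesis on $\mathsf{PAnRing}_\lambda$ is required. A $\mu$-presentable category $\mathsf{C}$ is equivalent to $\operatorname{Ind}_\mu(\mathsf{C}^\mu)$. This category embeds fully faithfully into $\mathsf{P}(\mathsf{C}^\mu)$ as the full subcategory of presheaves sending $\mu$-small colimits to limits, and that subcategory is closed under small limits in $\mathsf{P}(\mathsf{C}^\mu)$. Since $\mathsf{P}(\mathsf{C}^\mu)$ has all small limits, computed pointwise in $\mathsf{Ani}$, so does $\mathsf{C}$.

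There is essentially no obstacle. The only point to flag is that this argument gives existence of limits abstractly; it gives no description of them in terms of underlying condensed rings and subcategories of complete modules. In particular, the forgetful functor $\mathsf{PAnRing}_\lambda\to\mathsf{Cond}_\lambda(\mathsf{Ring})$ is not claimed to preserve limits. That is consistent with the remark preceding the statement that no direct proof is known.
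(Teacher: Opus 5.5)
Your proposal is correct and follows the paper's argument exactly: the corollary is deduced from the $\mu$-presentability in Theorem~\ref{PAnRing is presentable} together with the general fact that presentable categories are complete \cite[Tag~06PU]{kerodon}. Your sketch of why that fact holds (the subcategory $\operatorname{Ind}_\mu(\mathsf{C}^\mu)\subseteq\mathsf{P}(\mathsf{C}^\mu)$ is closed under small limits) is correct, but the paper does not need it.
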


In Clausen--Scholze's theory of analytic geometry, one typically works with \emph{analytic rings}, namely pre-analytic rings $A$ satisfying $A^\triangleright\in\mathsf{D}(A)$. In the light setting, where $\lambda=\aleph_1$, pre-analytic rings enjoy an additional important property: for every prime $p$, the Frobenius morphism
\[\phi_p\colon A^\triangleright\to A^\triangleright/p\]
underlies a morphism of pre-analytic rings $A\to A/p$, where $A/p$ carries the pre-analytic ring structure induced by $A$ \cite[Thm.~4.3.2]{camargo2026notessolidgeometry}. As it is unclear whether this property extends to arbitrary $\lambda$, we henceforth restrict to the light setting and suppress $\lambda$ from the notation.

\begin{definition}
Let $A$ be a light pre-analytic ring. We say that $A$ is an \emph{analytic ring} if the object $A^\triangleright$ of $\mathsf{D}(A^\triangleright)$ belongs to the full subcategory $\mathsf{D}(A)$. We denote by $\mathsf{AnRing}\subseteq\mathsf{PAnRing}$ the full subcategory spanned by the analytic rings.
\end{definition}

We are now in a position to state the main application of the machinery developed in Sections~\ref{sec2} and \ref{sec3}. As noted in the introduction, the following theorem was announced in \cite[Lecture~15]{ClausenScholzeAnalyticStacks} without proof and raised as an open point in \cite[\S1.2, Footnote~2]{camargo2026notessolidgeometry}.

\begin{theorem}\label{AnRing is presentable}
    The category $\mathsf{AnRing}$ is $\mu$-presentable, for $\mu=(2^{\aleph_0})^+$.
\end{theorem}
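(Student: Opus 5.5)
The plan is to deduce Theorem~\ref{AnRing is presentable} from Theorem~\ref{PAnRing is presentable}, specialized to $\lambda=\aleph_1$. Then $2^{<\aleph_1}=2^{\aleph_0}$ and $(2^{\aleph_0})^{\aleph_0}=2^{\aleph_0}$, so $\mu=((2^{<\lambda})^{\aleph_0})^+=(2^{\aleph_0})^+$ and $\mathsf{PAnRing}$ is $\mu$-presentable. Next I would show that $\mathsf{AnRing}\subseteq\mathsf{PAnRing}$ is a reflective subcategory whose inclusion preserves $\aleph_1$-filtered colimits, hence $\mu$-filtered colimits. A full subcategory of a $\mu$-presentable category that is reflective and closed under $\mu$-filtered colimits is $\mu$-presentable: it is an accessible localization, and its $\mu$-compact objects are the reflections of $\mu$-compact generators. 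For this last step I would cite \cite[Prop.~5.5.4.15 and Cor.~5.5.7.3]{lurie2009HTT}, or equivalently the reflection theorem of \cite{reflectionprinciple} together with the standard fact that the left adjoint preserves $\mu$-compact objects once the right adjoint preserves $\mu$-filtered colimits.

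For the reflection, given a pre-analytic ring $A$, I would form the completion $L_A(A^\triangleright)=A^\triangleright\otimes_{A^\triangleright}A$. By condition~(ii) and \cite[Lem.~7.5]{reflectionprinciple}, the localization is compatible with the symmetric monoidal structure, so $L_A(A^\triangleright)$ is the unit of $\mathsf{D}(A)$ and inherits a commutative algebra structure; by condition~(iii) it is connective. The candidate reflection $A^{\mathrm{an}}$ has underlying ring $L_A(A^\triangleright)$, regarded as a condensed ring. This requires lifting the $\mathbb{E}_\infty$-structure to an animated ring. I would take it from the pushout description in the light setting (\cite[\S4]{camargo2026notessolidgeometry}), which is exactly why the paper restricted to $\lambda=\aleph_1$ via the Frobenius property \cite[Thm.~4.3.2]{camargo2026notessolidgeometry}. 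The module category of $A^{\mathrm{an}}$ is the induced structure, namely the modules whose restriction lies in $\mathsf{D}(A)$. I would then check that $A\to A^{\mathrm{an}}$ is initial among maps to analytic rings: any map $A\to B$ with $B$ analytic factors uniquely, because $B^\triangleright\in\mathsf{D}(B)$ restricts into $\mathsf{D}(A)$, and so the ring map $A^\triangleright\to B^\triangleright$ factors through the completion. I expect this step to be the main obstacle. The cleanest route is probably to cite the existence of the analytic-ring reflection in \cite{camargo2026notessolidgeometry}, where it is built using the Frobenius argument, rather than redo it.

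Finally, closure under filtered colimits. Since a filtered colimit $A=\operatorname{colim}A_i$ of analytic rings exists in $\mathsf{PAnRing}$, I would describe it through the pullback of Lemma~\ref{PAnRing as a pullback}. There, $\mathsf{D}(-)_{\geq0}$ and the relevant functors of Corollary~\ref{main theorem on monoidal localizations2} preserve $\aleph_1$-filtered colimits. So the underlying ring is $\operatorname{colim}A_i^\triangleright$, and the localization on the target is the colimit of the $\mathsf{D}(A_i)_{\geq0}$ in $\mathsf{CAlg}(\mathsf{Pr}^{\mathsf{L}}_{\aleph_1})_{\mathsf{W}/}$. The unit $A^\triangleright$ lies in $\mathsf{D}(A)$ exactly when the map $A^\triangleright\to L_A(A^\triangleright)$ is an equivalence. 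Both sides are $\aleph_1$-filtered colimits of the corresponding maps for the $A_i$, using that the completions are symmetric monoidal, so they send units to units, and that everything commutes with these colimits. Since each of these maps is an equivalence, so is their colimit. This shows the inclusion preserves $\aleph_1$-filtered colimits, which completes the argument and also yields Corollary~\ref{AnRing is complete}.
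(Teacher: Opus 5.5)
Your proposal is correct and follows the paper's skeleton. You specialize Theorem~\ref{PAnRing is presentable} to $\lambda=\aleph_1$, and your cardinal arithmetic is right. You take the left adjoint of $\mathsf{AnRing}\hookrightarrow\mathsf{PAnRing}$ from \cite{camargo2026notessolidgeometry}, and you conclude with a localization criterion. Applying \cite[Cor.~5.5.7.3]{lurie2009HTT} to that reflection does the same job as Proposition~\ref{enhanced reflection theorem}, and it lets you skip the paper's step through Corollary~\ref{PAnRing is complete} to get closure under limits.

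The real difference is closure under filtered colimits. The paper simply cites \cite[Cor.~4.1.12]{camargo2026notessolidgeometry}, which gives closure under all sifted colimits. You instead prove closure under $\aleph_1$-filtered colimits directly: you compute them componentwise in the pullback of Lemma~\ref{PAnRing as a pullback}, using Proposition~\ref{D is a map in Pr_k} and Corollary~\ref{main theorem on monoidal localizations2}. This is weaker than sifted closure, but it is exactly what $\mu$-presentability needs, and it keeps the argument inside the paper's own machinery.

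One point should not be left as ``everything commutes with these colimits.'' Fix the following notation:
\begin{itemize}
\item $F\colon\mathsf{C}\to\mathsf{D}$ is the colimit of the completions $F_i\colon\mathsf{C}_i\to\mathsf{D}_i$, with right adjoints $G$ and $G_i$;
\item $\alpha_i,\beta_i$ are the insertions into the colimits, and $\alpha_{ij}$ are the transition functors.
\end{itemize}
The Beck--Chevalley map $\alpha_iG_i\to G\beta_i$ is not invertible in general, because base change need not preserve complete modules. So symmetric monoidality alone does not identify $GF(1_{\mathsf{C}})$ with a colimit of the $G_iF_i(1_{\mathsf{C}_i})$.

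What makes the step work is that the units are $\aleph_1$-compact and that $\mathsf{C}^{\aleph_1}\simeq\operatorname{colim}_i\mathsf{C}_i^{\aleph_1}$ in $\mathsf{Cat}$, and likewise for $\mathsf{D}$. Hence, for $X\in\mathsf{C}_i^{\aleph_1}$,
\[\operatorname{Map}_{\mathsf{C}}\bigl(\alpha_iX,GF(1_{\mathsf{C}})\bigr)\simeq\operatorname{Map}_{\mathsf{D}}\bigl(\beta_iF_iX,\beta_i1_{\mathsf{D}_i}\bigr)\simeq\operatornamewithlimits{colim}_{j\geq i}\operatorname{Map}_{\mathsf{C}_j}\bigl(\alpha_{ij}X,G_jF_j(1_{\mathsf{C}_j})\bigr),\]
and similarly $\operatorname{Map}_{\mathsf{C}}(\alpha_iX,1_{\mathsf{C}})\simeq\operatorname{colim}_{j\geq i}\operatorname{Map}_{\mathsf{C}_j}(\alpha_{ij}X,1_{\mathsf{C}_j})$. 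Since each $A_j$ is analytic, the unit $1_{\mathsf{C}}\to GF(1_{\mathsf{C}})$ induces equivalences on maps out of the objects $\alpha_iX$. These objects detect equivalences in $\mathsf{C}$, so the unit is an equivalence and your argument goes through.
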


\begin{proof}
According to \cite[Thm.~4.1.13 and Cor.~4.2.7]{camargo2026notessolidgeometry}, the natural inclusion
\[\mathsf{AnRing}\hookrightarrow\mathsf{PAnRing}\]
admits a left adjoint. It follows from Corollary~\ref{PAnRing is complete} and \cite[Tag~\href{https://kerodon.net/tag/04JX}{04JX}]{kerodon} that $\mathsf{AnRing}$ is complete and that the inclusion preserves and reflects small limits. By \cite[Cor.~4.1.12]{camargo2026notessolidgeometry}, we also have that the category $\mathsf{AnRing}$ is stable under sifted, hence filtered, colimits in $\mathsf{PAnRing}$. The desired result then follows from Theorem~\ref{PAnRing is presentable} and Proposition~\ref{enhanced reflection theorem}.
\end{proof}

For convenience, we record the following consequence of the preceding proof.

\begin{corollary}\label{AnRing is complete}
   The category $\mathsf{AnRing}$ is complete. Moreover, the inclusion
   \[\mathsf{AnRing}\hookrightarrow\mathsf{PAnRing}\] 
   preserves and reflects small limits. 
\end{corollary}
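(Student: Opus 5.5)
The statement to prove is Corollary~\ref{AnRing is complete}. It follows from the proof of Theorem~\ref{AnRing is presentable}, combined with Corollary~\ref{PAnRing is complete} applied in the light case $\lambda=\aleph_1$. The plan is to exploit that $\mathsf{AnRing}$ is a reflective subcategory of a complete category.

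\textbf{Step 1: the reflection.} By \cite[Thm.~4.1.13 and Cor.~4.2.7]{camargo2026notessolidgeometry}, the inclusion $\iota\colon\mathsf{AnRing}\hookrightarrow\mathsf{PAnRing}$ admits a left adjoint $L$, and $\iota$ is fully faithful.

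\textbf{Step 2: completeness.} By Corollary~\ref{PAnRing is complete}, $\mathsf{PAnRing}$ is complete. Let $p\colon\mathsf{I}\to\mathsf{AnRing}$ be a small diagram, and form $X=\lim \iota\circ p$ in $\mathsf{PAnRing}$. I will show that $X$ lies in the essential image of $\iota$, which I expect to be the only point needing an argument.

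Consider the unit $X\to\iota L X$. The composites $X\to\iota L X\to \iota L\iota p(i)\simeq \iota p(i)$ recover the limit projections, so they assemble into a retraction $\iota L X\to X$ of the unit. Using the universal property of $L$ together with full faithfulness of $\iota$, the other composite is the identity as well. Hence $X\simeq\iota L X$.

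Equivalently, one can cite \cite[Tag~\href{https://kerodon.net/tag/04JX}{04JX}]{kerodon}: a reflective subcategory is closed under all limits that exist in the ambient category. This yields at once that $\mathsf{AnRing}$ is complete and that $\iota$ preserves small limits. The identification of the limit, as an object of the full subcategory, is independent of the chosen diagram.

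\textbf{Step 3: reflection of limits.} Since $\iota$ is fully faithful and preserves small limits, it also reflects them. Suppose a cone over $p$ in $\mathsf{AnRing}$ maps under $\iota$ to a limit cone. The comparison map from its apex to the limit in $\mathsf{AnRing}$ then becomes an equivalence after applying $\iota$. Because $\iota$ is fully faithful, it is conservative, so the comparison map is already an equivalence in $\mathsf{AnRing}$.

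No step is a genuine obstacle; everything rests on the existence of the left adjoint from the cited reference and on Corollary~\ref{PAnRing is complete}. Alternatively, completeness of $\mathsf{AnRing}$ also follows directly from its presentability (Theorem~\ref{AnRing is presentable}) via \cite[Tag~\href{https://kerodon.net/tag/06PU}{06PU}]{kerodon}.
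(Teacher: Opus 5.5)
Your proposal is correct and follows essentially the same route as the paper. The paper also takes the left adjoint from \cite[Thm.~4.1.13 and Cor.~4.2.7]{camargo2026notessolidgeometry} and combines it with Corollary~\ref{PAnRing is complete} and \cite[Tag~04JX]{kerodon} to get completeness and preservation and reflection of limits; your retraction argument and conservativity argument simply unpack that Kerodon tag.
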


\appendix
\section{Auxiliary results on presentable and accessible categories}
In this appendix, we collect some folklore results on presentable and accessible categories that we were unable to find in precisely the form needed here. Rather than merely citing analogous statements, we explain how the relevant arguments adapt to our setting.

We begin with a criterion for a presentable category to be $\kappa$-compactly generated, and therefore $\kappa$-presentable. The case $\kappa=\aleph_0$ is due to Mathew \cite[Lem.~6.22]{mathew2016galois}; the proof below follows his argument for an arbitrary regular cardinal $\kappa$.

\begin{proposition}\label{Mathews lemma}
    Let $\kappa$ be a regular cardinal and let $\mathsf{C}$ be a presentable category. Let $\mathsf{C}_0\subseteq \mathsf{C}^\kappa$ be a full subcategory stable under $\kappa$-small colimits in $\mathsf{C}$. We say that $\mathsf{C}_0$ \emph{detects equivalences} if a map $f\colon X\to Y$ in $\mathsf{C}$ is an equivalence whenever
    \[\operatorname{Map}_\mathsf{C}(Z,X)\xrightarrow{\ f\circ -\ } \operatorname{Map}_\mathsf{C}(Z,Y)\]
    is an equivalence for every $Z\in \mathsf{C}_0$. Then the induced functor
    \[\mathsf{Ind}_\kappa(\mathsf{C}_0)\to \mathsf{C}\]
    is an equivalence if and only if $\mathsf{C}_0$ detects equivalences.
\end{proposition}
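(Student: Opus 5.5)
We prove the two implications separately. The induced functor is the unique $\kappa$-filtered-colimit-preserving extension $E\colon \mathsf{Ind}_\kappa(\mathsf{C}_0)\to\mathsf{C}$ of the inclusion $\mathsf{C}_0\hookrightarrow\mathsf{C}$.

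The forward direction is immediate. If $E$ is an equivalence, then every object of $\mathsf{C}$ is a $\kappa$-filtered colimit of objects of $\mathsf{C}_0$. Let $f\colon X\to Y$ induce equivalences on $\operatorname{Map}_\mathsf{C}(Z,-)$ for all $Z\in\mathsf{C}_0$. Write $X$ and $Y$ in $\mathsf{Ind}_\kappa(\mathsf{C}_0)$; there the corepresentables at objects of $\mathsf{C}_0$ are jointly conservative, since each object is a colimit of representables. Hence $f$ is an equivalence.

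For the converse, assume $\mathsf{C}_0$ detects equivalences. We proceed in three steps.

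First, $E$ is fully faithful. Since the objects of $\mathsf{C}_0$ are $\kappa$-compact in $\mathsf{C}$, this is \cite[Prop.~5.3.5.11]{lurie2009HTT}. One checks it on pairs of the form (object of $\mathsf{C}_0$, $\kappa$-filtered colimit), and both sides commute with the colimit in the second variable.

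Second, $E$ preserves small colimits. Because $\mathsf{C}_0$ is closed under $\kappa$-small colimits in $\mathsf{C}$, the category $\mathsf{Ind}_\kappa(\mathsf{C}_0)$ is presentable and $E$ preserves small colimits \cite[Prop.~5.5.1.9]{lurie2009HTT}. Consequently $E$ admits a right adjoint $R$, namely $R(Y)=\operatorname{Map}_\mathsf{C}(-,Y)|_{\mathsf{C}_0}$, viewed as a $\kappa$-left-exact presheaf.

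Third, we show $E$ is essentially surjective via the counit. For $Y\in\mathsf{C}$, consider $\varepsilon\colon ER(Y)\to Y$. For $Z\in\mathsf{C}_0$, full faithfulness of $E$ and the adjunction give
\[
\operatorname{Map}_\mathsf{C}(Z,ER(Y))\simeq\operatorname{Map}(Z,R(Y))\simeq\operatorname{Map}_\mathsf{C}(Z,Y),
\]
and this composite is precisely $\varepsilon\circ-$; this uses the triangle identity. Detection of equivalences then makes $\varepsilon$ an equivalence. Hence $E$ is essentially surjective, and therefore an equivalence.

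The main point needing care is the second step: confirming that the closure of $\mathsf{C}_0$ under $\kappa$-small colimits is exactly what makes $E$ cocontinuous, so that the right adjoint exists. The identification of $\varepsilon\circ-$ with the adjunction equivalence is routine.
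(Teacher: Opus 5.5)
Your proof is correct and follows essentially the same route as the paper: full faithfulness from HTT Prop.~5.3.5.11, the restricted Yoneda functor as right adjoint, and detection of equivalences (that is, conservativity of this right adjoint) forcing the counit to be an equivalence. The differences are cosmetic: you obtain the right adjoint from cocontinuity and the adjoint functor theorem rather than identifying it directly, you unwind the ``fully faithful left adjoint with conservative right adjoint'' criterion by hand, and you treat the forward implication separately.
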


\begin{proof}
Let $F\colon \mathsf{Ind}_\kappa(\mathsf{C}_0)\to \mathsf{C}$ be the induced functor, and identify $\mathsf{Ind}_\kappa(\mathsf{C}_0)$ with the full subcategory of $\mathsf{P}(\mathsf{C}_0)$ spanned by the functors $\mathsf{C}_0^\mathrm{op}\to \mathsf{Ani}$ that preserve $\kappa$-small limits \cite[Cor.~5.3.5.4]{lurie2009HTT}. Since $\mathsf{C}_0$ is stable under $\kappa$-small colimits in $\mathsf{C}$, the restricted Yoneda functor
\[\mathsf{C}\to \mathsf{P}(\mathsf{C}_0)\]
factors through $\mathsf{Ind}_\kappa(\mathsf{C}_0)$, and the resulting functor $\yo\colon\mathsf{C}\to\mathsf{Ind}_\kappa(\mathsf{C}_0)$ is right adjoint to $F$ by \cite[Tag~\href{https://kerodon.net/tag/03WN}{03WN}]{kerodon}.

The assumption that $\mathsf{C}_0$ detects equivalences is precisely the statement that $\yo$ is conservative. On the other hand, \cite[Prop.~5.3.5.11]{lurie2009HTT} implies that $F$ is fully faithful. It follows that $F$ is an equivalence, since a fully faithful left adjoint is an equivalence if and only if its right adjoint is conservative; this is the dual of \cite[Tag~\href{https://kerodon.net/tag/03UZ}{03UZ}]{kerodon}.
\end{proof}

We now apply this criterion to coslice categories. The next proposition extends \cite[Tags~\href{https://kerodon.net/tag/06HW}{06HW} and \href{https://kerodon.net/tag/06P6}{06P6}]{kerodon} by removing the compactness hypothesis on $S$. The proof again follows Mathew's argument in \cite[Lem.~6.22]{mathew2016galois}.

\begin{proposition}\label{lemma on coslices}
Let $\kappa$ be a regular cardinal, let $\mathsf{C}$ be a $\kappa$-presentable category, and let $S\in\mathsf{C}$. Then the coslice category $\mathsf{C}_{S/}$ is $\kappa$-presentable, and its subcategory of $\kappa$-compact objects is generated under $\kappa$-small colimits and retracts by the objects
\[(S\to S\sqcup X)\in \left(\mathsf{C}_{S/}\right)^\kappa,\]
for $X\in \mathsf{C}^\kappa$. Moreover, for every morphism $F\colon\mathsf{C}\to\mathsf{D}$ in $\mathsf{Pr}^{\mathsf{L}}_\kappa$, the induced functor
\[F_{S/}\colon \mathsf{C}_{S/}\to\mathsf{D}_{F(S)/}\]
is again a morphism in $\mathsf{Pr}^{\mathsf{L}}_\kappa$.
\end{proposition}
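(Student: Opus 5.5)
The plan is to verify the hypotheses of Proposition~\ref{Mathews lemma} for $\mathsf{C}_{S/}$ with $\mathsf{C}_0$ the closure of the objects $(S\to S\sqcup X)$, $X\in\mathsf{C}^\kappa$, under $\kappa$-small colimits and retracts in $\mathsf{C}_{S/}$.

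First, $\mathsf{C}_{S/}$ is presentable: it is accessible by \cite[Cor.~5.4.5.16]{lurie2009HTT} and admits small colimits, computed as pushouts along $S$ of colimits in $\mathsf{C}$. Next, the forgetful functor $U\colon\mathsf{C}_{S/}\to\mathsf{C}$ has left adjoint $X\mapsto(S\to S\sqcup X)$. Since $U$ preserves $\kappa$-filtered (indeed all weakly contractible) colimits, this left adjoint preserves $\kappa$-compact objects, so each $(S\to S\sqcup X)$ with $X\in\mathsf{C}^\kappa$ is $\kappa$-compact. As $\kappa$-compact objects are closed under $\kappa$-small colimits and retracts \cite[Tags~\href{https://kerodon.net/tag/064W}{064W}, \href{https://kerodon.net/tag/064Z}{064Z}]{kerodon}, we get $\mathsf{C}_0\subseteq(\mathsf{C}_{S/})^\kappa$, and $\mathsf{C}_0$ is essentially small because $\mathsf{C}^\kappa$ is.

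For detection of equivalences, let $f$ be a map in $\mathsf{C}_{S/}$ inducing equivalences on $\operatorname{Map}(Z,-)$ for all $Z\in\mathsf{C}_0$. Taking $Z=(S\to S\sqcup X)$ and using the adjunction, $\operatorname{Map}_{\mathsf{C}}(X,U f)$ is an equivalence for all $X\in\mathsf{C}^\kappa$. Since $\mathsf{C}^\kappa$ detects equivalences in $\mathsf{C}$, $Uf$ is an equivalence, and $U$ is conservative, so $f$ is one. Proposition~\ref{Mathews lemma} then gives $\mathsf{Ind}_\kappa(\mathsf{C}_0)\simeq\mathsf{C}_{S/}$. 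Hence $\mathsf{C}_{S/}$ is $\kappa$-presentable, and its $\kappa$-compact objects are the retracts of objects of $\mathsf{C}_0$, i.e.\ $\mathsf{C}_0$ itself, since it is idempotent complete by construction. This proves the generation claim.

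For functoriality, $F_{S/}$ preserves small colimits: $F$ preserves pushouts and colimits, and colimits in coslices are computed as above. By the description of compact objects, it suffices that $F_{S/}$ sends generators to $\kappa$-compact objects, because $F_{S/}$ preserves $\kappa$-small colimits and retracts. Indeed, $F_{S/}(S\to S\sqcup X)\simeq(F(S)\to F(S)\sqcup F(X))$ with $F(X)\in\mathsf{D}^\kappa$, which is $\kappa$-compact by the same adjunction argument in $\mathsf{D}$. The main obstacle is the careful bookkeeping of colimits in coslices (non-connected diagrams require pushing out over $S$). To handle it, I would describe the colimit of $K\to\mathsf{C}_{S/}$ as the colimit over $K^{\triangleleft}$ in $\mathsf{C}$ \cite[Prop.~1.2.13.8]{lurie2009HTT}, which $F$ visibly preserves.
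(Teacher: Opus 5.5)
Your proposal is correct and follows essentially the same route as the paper: the left adjoint $X\mapsto(S\to S\sqcup X)$ of the forgetful functor produces $\kappa$-compact objects that jointly detect equivalences, and Proposition~\ref{Mathews lemma} then identifies $\mathsf{C}_{S/}$ with $\mathsf{Ind}_\kappa(\mathsf{C}_0)$. The only differences are cosmetic. You close $\mathsf{C}_0$ under retracts from the start, and you check that $F_{S/}$ preserves colimits by computing colimits in $\mathsf{C}_{S/}$ as colimits over $K^{\triangleleft}$ in $\mathsf{C}$, whereas the paper simply notes that $F_{S/}$ admits a right adjoint.
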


\begin{proof}
As $\mathsf{C}$ is $\kappa$-compactly generated, the full subcategory $\mathsf{C}^\kappa$ generates $\mathsf{C}$ under $\kappa$-filtered colimits. Hence the functors
\[\operatorname{Map}_{\mathsf{C}}(X,-),\qquad X\in\mathsf{C}^\kappa,\]
are jointly conservative. For every $X\in\mathsf{C}^\kappa$ and every object $S\to Y$ of $\mathsf{C}_{S/}$, the universal property of the coproduct gives a natural equivalence
\[\operatorname{Map}_{\mathsf{C}_{S/}}(S\sqcup X,Y)\simeq\operatorname{Map}_{\mathsf{C}}(X,Y).\]

Since the forgetful functor $\mathsf{C}_{S/}\to\mathsf{C}$ preserves $\kappa$-filtered colimits, this equivalence implies that $S\to S\sqcup X$ is $\kappa$-compact in $\mathsf{C}_{S/}$ \cite[Tags~\href{https://kerodon.net/tag/0236}{0236}, \href{https://kerodon.net/tag/02KT}{02KT}, and \href{https://kerodon.net/tag/02PH}{02PH}]{kerodon}. The conservativity of the projection $\mathsf{C}_{S/}\to\mathsf{C}$ then gives that the functors
\[\operatorname{Map}_{\mathsf{C}_{S/}}(S\sqcup X,-),
\qquad X\in\mathsf{C}^\kappa,\]
are jointly conservative.

Denote by $\mathsf{C}_0$ the full subcategory of $\mathsf{C}_{S/}$ generated under $\kappa$-small colimits by the maps $S\to S\sqcup X$ with $X$ in $\mathsf{C}^\kappa$. Note that $\mathsf{C}_{S/}$ is presentable and $\mathsf{C}_0$ is contained in $(\mathsf{C}_{S/})^\kappa$, by \cite[Tags~\href{https://kerodon.net/tag/06P6}{06P6} and \href{https://kerodon.net/tag/064X}{064X}]{kerodon}. Proposition~\ref{Mathews lemma} then implies that the natural functor
\[\mathsf{Ind}_\kappa(\mathsf{C}_0)\to \mathsf{C}_{S/}\]
is an equivalence. It follows that $\mathsf{C}_{S/}$ is $\kappa$-presentable and the description of $(\mathsf{C}_{S/})^\kappa$ follows from the fact that $\mathsf{Ind}_\kappa(\mathsf{C}_0)^\kappa$ is the idempotent completion of $\mathsf{C}_0$ \cite[Lem.~5.4.2.4]{lurie2009HTT}.

Finally, let $F\colon\mathsf{C}\to\mathsf{D}$ be a morphism in $\mathsf{Pr}^{\mathsf{L}}_\kappa$. By the dual of \cite[Tag~\href{https://kerodon.net/tag/02KD}{02KD}]{kerodon}, the induced functor $F_{S/}$ admits a right adjoint. Moreover, $F_{S/}$ preserves $\kappa$-compact objects: on the generators described above, it is given by
\[\mleft[S\to S\sqcup X\mright]\mapsto\mleft[F(S)\to F(S)\sqcup F(X)\mright],\]
which is $\kappa$-compact since $F(X)$ is $\kappa$-compact.
\end{proof}

We next consider coslice categories in the accessible setting. Assuming that $S$ is $\kappa$-compact gives the following analogue, with a simpler description of the $\kappa$-compact objects.

\begin{proposition}\label{lemma on accessible coslices}
Let $\kappa$ be a regular cardinal, let $\mathsf{C}$ be a $\kappa$-accessible category, and let $S\in\mathsf{C}$ be $\kappa$-compact. Then the coslice category $\mathsf{C}_{S/}$ is $\kappa$-accessible, and an object $S\to X$ of $\mathsf{C}_{S/}$ is $\kappa$-compact if and only if $X$ is $\kappa$-compact in $\mathsf{C}$. Moreover, for every morphism $F\colon \mathsf{C}\to \mathsf{D}$ in $\mathsf{Acc}_\kappa$, the induced functor
\[F_{S/}\colon \mathsf{C}_{S/}\to \mathsf{D}_{F(S)/}\]
is again a morphism in $\mathsf{Acc}_\kappa$.
\end{proposition}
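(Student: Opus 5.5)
The plan is to reduce everything to statements about $\mathsf{C}$ itself, using that the forgetful functor $p\colon\mathsf{C}_{S/}\to\mathsf{C}$ creates $\kappa$-filtered colimits and that mapping animae in the coslice are fibers of mapping animae in $\mathsf{C}$. Concretely, for objects $(S\xrightarrow{a}X)$ and $(S\xrightarrow{b}Y)$ of $\mathsf{C}_{S/}$ there is a fiber sequence
\[\operatorname{Map}_{\mathsf{C}_{S/}}(X,Y)\to\operatorname{Map}_{\mathsf{C}}(X,Y)\xrightarrow{\ -\circ a\ }\operatorname{Map}_{\mathsf{C}}(S,Y),\]
taken over the point $b$. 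Moreover, $\kappa$-filtered colimits in $\mathsf{C}_{S/}$ exist and are computed in $\mathsf{C}$, by \cite[Tags~\href{https://kerodon.net/tag/0236}{0236}, \href{https://kerodon.net/tag/02KT}{02KT}, and \href{https://kerodon.net/tag/02PH}{02PH}]{kerodon}. The last point uses that $\kappa$-filtered categories are weakly contractible, so the cone point $S$ can be absorbed.

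\textbf{Compact objects.} Suppose $X$ is $\kappa$-compact in $\mathsf{C}$ and $Y\simeq\operatorname{colim}_i Y_i$ is a $\kappa$-filtered colimit in $\mathsf{C}_{S/}$. Since $S$ and $X$ are both $\kappa$-compact, the two right-hand terms of the fiber sequence commute with the colimit. Filtered colimits in $\mathsf{Ani}$ are left exact \cite[Tag~\href{https://kerodon.net/tag/05Y0}{05Y0}]{kerodon}, so fibers commute with them, compatibly with the basepoints $b_i$. Hence $(S\to X)$ is $\kappa$-compact in $\mathsf{C}_{S/}$. For the converse, I would first show accessibility and then argue via retracts. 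Any object $(S\to X)$ is a $\kappa$-filtered colimit in $\mathsf{C}_{S/}$ of objects $(S\to X_j)$ with $X_j\in\mathsf{C}^\kappa$ (see below). A $\kappa$-compact object is therefore a retract of some $(S\to X_j)$, and so $X$ is a retract of $X_j$, hence $\kappa$-compact in $\mathsf{C}$.

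\textbf{Accessibility.} This is the main step. Fix $(S\xrightarrow{a}X)$ and consider the category $\mathsf{E}=(\mathsf{C}^\kappa)_{S/\,/X}$ of factorizations $S\to X'\to X$ with $X'\in\mathsf{C}^\kappa$. This is the slice of $(\mathsf{C}^\kappa)_{S/}$ over $(S\to X)$, and it is essentially small. I would show that the projection $\mathsf{E}\to(\mathsf{C}^\kappa)_{/X}$ is cofinal. Its fiber over $X'\to X$ is the anima of factorizations of $a$ through $X'$, and the comma categories are coslices $(\mathsf{C}^\kappa)_{/X}$ under objects of the form $S\sqcup$-type data. More simply, since $S$ is $\kappa$-compact and $(\mathsf{C}^\kappa)_{/X}$ is $\kappa$-filtered with colimit $X$, the point $a$ lifts through a cofinal set of stages. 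Then $\mathsf{E}$ is $\kappa$-filtered: a $\kappa$-small diagram in $\mathsf{E}$ gives a diagram in $(\mathsf{C}^\kappa)_{/X}$ together with compatible maps from $S$. I would extend it to a cocone in $(\mathsf{C}^\kappa)_{/X}$, which is possible by $\kappa$-filteredness. The $\kappa$-small family of paths between the induced maps $S\to X''$ becomes trivial after passing further along, by compactness of $S$ and the fact that $\operatorname{Map}(S,X)\simeq\operatorname{colim}\operatorname{Map}(S,X'')$. Equivalently, and more cleanly, I would apply \cite[Tag~\href{https://kerodon.net/tag/06GA}{06GA}]{kerodon} to the fully faithful inclusion $(\mathsf{C}^\kappa)_{S/}\subseteq\mathsf{C}_{S/}$. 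Here one verifies that this inclusion lands in compact objects (already shown) and that every object is a $\kappa$-filtered colimit of such. The latter follows by writing $\mathsf{C}_{S/}$ as the comma category, using that $(\mathsf{C}^\kappa)_{S/}\simeq(\mathsf{C}^\kappa)_{S/}$ computes $\operatorname{Ind}_\kappa$ fiberwise over $\operatorname{Map}(S,-)$. The delicate part is the higher-coherent version of lifting the structure map $S\to X$ uniformly. I expect this to be the main obstacle, and I would handle it via the fiber-sequence description of mapping spaces together with left exactness of $\kappa$-filtered colimits.

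\textbf{Functoriality.} For $F\in\mathsf{Acc}_\kappa$, the functor $F_{S/}$ commutes with the forgetful functors. Since these create $\kappa$-filtered colimits and $F$ preserves them, so does $F_{S/}$. Moreover, $F(S)$ is $\kappa$-compact, and an object $(S\to X)$ with $X\in\mathsf{C}^\kappa$ maps to $(F(S)\to F(X))$ with $F(X)\in\mathsf{D}^\kappa$. By the characterization of compact objects in $\mathsf{D}_{F(S)/}$, this image is $\kappa$-compact.
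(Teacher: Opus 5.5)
\textbf{Verdict: there is a genuine gap, in the accessibility step.} Your functoriality paragraph and the ``if'' half of the compactness criterion are fine. They match what the paper does once it has accessibility and the description of $(\mathsf{C}_{S/})^\kappa$; the paper simply imports both from \cite[Tags~\href{https://kerodon.net/tag/06HW}{06HW} and \href{https://kerodon.net/tag/06LJ}{06LJ}]{kerodon}.

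Your ``only if'' direction also rests on the accessibility step. You need the category $\mathsf{E}$ of factorizations $S\to X'\to X$ of $a$, with $X'\in\mathsf{C}^\kappa$, to be $\kappa$-filtered with colimit $(S\xrightarrow{a}X)$ in $\mathsf{C}_{S/}$. Neither of your arguments establishes this.
\begin{itemize}
\item The argument that ``paths between the induced maps $S\to X''$ become trivial further along'' only handles homotopies between maps. $\kappa$-filteredness of an $\infty$-category requires that every $\kappa$-small diagram $K\to\mathsf{E}$ extend to $K^{\triangleright}$, and this involves coherences in all degrees. You yourself leave this coherent version open.
\item The route via Tag~06GA is circular. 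That result presupposes that $(\mathsf{C}^\kappa)_{S/}\hookrightarrow\mathsf{C}_{S/}$ exhibits $\mathsf{C}_{S/}$ as an $\operatorname{Ind}_\kappa$-completion, which is exactly the generation statement in question. The sentence about $(\mathsf{C}^\kappa)_{S/}$ computing $\operatorname{Ind}_\kappa$ fiberwise is a tautology.
\item The cofinality justification via ``$S\sqcup$-type data'' does not make sense, since a $\kappa$-accessible category need not have coproducts.
\end{itemize}

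\textbf{The missing observation.} Because $S$ is itself $\kappa$-compact, $(S,a)$ is an object of $\mathsf{B}=(\mathsf{C}^\kappa)_{/X}$. By associativity of joins, $\mathsf{E}\simeq\mathsf{B}_{(S,a)/}$, since both are the category of factorizations of $a$ through objects of $\mathsf{C}^\kappa$. Tag~06GA applies legitimately to $\mathsf{C}$ itself, so $\mathsf{B}$ is essentially small, $\kappa$-filtered, and has colimit $X$. The coherence problem then disappears:
\begin{itemize}
\item Coslices of a $\kappa$-filtered category are $\kappa$-filtered. A $\kappa$-small diagram $L\to\mathsf{B}_{(S,a)/}$ is a map $\Delta^0\star L\to\mathsf{B}$ out of a $\kappa$-small simplicial set. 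It extends to $(\Delta^0\star L)^{\triangleright}=\Delta^0\star L^{\triangleright}$ because $\mathsf{B}$ is $\kappa$-filtered, and such an extension is exactly a cocone in $\mathsf{B}_{(S,a)/}$.
\item The projection $\mathsf{B}_{(S,a)/}\to\mathsf{B}$ is right cofinal. Each $\mathsf{B}_{(S,a)/}\times_{\mathsf{B}}\mathsf{B}_{b/}$ is the coslice of $\mathsf{B}$ under a two-point diagram, hence $\kappa$-filtered and thus weakly contractible.
\end{itemize}
Consequently $\operatorname{colim}_{\mathsf{E}}X'\simeq\operatorname{colim}_{\mathsf{B}}X'\simeq X$ in $\mathsf{C}$. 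Since $\mathsf{C}_{S/}\to\mathsf{C}$ detects colimits of weakly contractible diagrams, the tautological cocone exhibits $(S\to X)$ as $\operatorname{colim}_{\mathsf{E}}$ in $\mathsf{C}_{S/}$.

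Combine this with your fiber-sequence argument, which shows the objects $(S\to X')$ are $\kappa$-compact, and with the essential smallness of $(\mathsf{C}^\kappa)_{S/}$. This proves that $\mathsf{C}_{S/}$ is $\kappa$-accessible, after which your retract argument and functoriality paragraph go through unchanged.
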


\begin{proof}
The first statement follows from \cite[Tags~\href{https://kerodon.net/tag/06HW}{06HW} and \href{https://kerodon.net/tag/06LJ}{06LJ}]{kerodon}. The characterization of the $\kappa$-compact objects of $\mathsf{C}_{S/}$ immediately implies that $F_{S/}$ preserves $\kappa$-compact objects. Finally, \cite[Tags~\href{https://kerodon.net/tag/0236}{0236}, \href{https://kerodon.net/tag/02KT}{02KT}, and \href{https://kerodon.net/tag/02PH}{02PH}]{kerodon} shows that $F_{S/}$ preserves $\kappa$-filtered colimits.
\end{proof}

We next record a simple consequence of the reflection theorem of Ragimov--Schlank \cite[Thm.~1.1]{reflectionprinciple}. Since we will use it repeatedly, it is convenient to state it explicitly.

\begin{proposition}\label{enhanced reflection theorem}
Let $\kappa$ be a regular cardinal, let $\mathsf{C}$ be a $\kappa$-presentable category, and let $\mathsf{C}_0\subseteq\mathsf{C}$ be a full subcategory closed under small limits and $\kappa$-filtered colimits in $\mathsf{C}$. Then $\mathsf{C}_0$ is $\kappa$-presentable, and the inclusion $\mathsf{C}_0\hookrightarrow\mathsf{C}$ admits a left adjoint
\[L\colon\mathsf{C}\to\mathsf{C}_0\]
which is a morphism in $\mathsf{Pr}^{\mathsf{L}}_\kappa$.

Suppose in addition that $\mathsf{C}$ is a commutative algebra object in $\mathsf{Pr}^{\mathsf{L}}_\kappa$ and that
\[\underline{\operatorname{Hom}}(X,A)\in\mathsf{C}_0\qquad\text{for all }X\in\mathsf{C}\text{ and }A\in\mathsf{C}_0,\]
where $\underline{\operatorname{Hom}}$ denotes the internal Hom in $\mathsf{C}$. Then $\mathsf{C}_0$ admits a unique symmetric monoidal structure for which $L$ is symmetric monoidal. With respect to this structure, $\mathsf{C}_0$ is a commutative algebra object in $\mathsf{Pr}^{\mathsf{L}}_\kappa$, and $L\colon\mathsf{C}\to\mathsf{C}_0$ is a morphism in $\mathsf{CAlg}(\mathsf{Pr}^{\mathsf{L}}_\kappa)$.
\end{proposition}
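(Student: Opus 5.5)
The plan has three steps: apply the reflection theorem, transport $\kappa$-presentability via Proposition~\ref{Mathews lemma}, and obtain the monoidal statement from the standard theory of localizations compatible with a symmetric monoidal structure.

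\emph{Existence of $L$ and $\kappa$-presentability.} Since $\mathsf{C}_0$ is closed under small limits and $\kappa$-filtered colimits in the presentable category $\mathsf{C}$, the reflection theorem \cite[Thm.~1.1]{reflectionprinciple} gives that $\mathsf{C}_0$ is presentable and that the inclusion $\iota\colon\mathsf{C}_0\hookrightarrow\mathsf{C}$ admits a left adjoint $L$. Because $\iota$ preserves $\kappa$-filtered colimits, its left adjoint $L$ preserves $\kappa$-compact objects. To show that $\mathsf{C}_0$ is $\kappa$-presentable, I would apply Proposition~\ref{Mathews lemma} as follows.
\begin{enumerate}
\item Take as candidate generators the closure $\mathsf{E}$ of $L(\mathsf{C}^\kappa)$ under $\kappa$-small colimits in $\mathsf{C}_0$. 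These colimits exist because $\mathsf{C}_0$ is presentable, and $\mathsf{E}$ lies in $\mathsf{C}_0^\kappa$ by \cite[Tag~\href{https://kerodon.net/tag/064X}{064X}]{kerodon}.
\item For $X\in\mathsf{C}^\kappa$ and $A\in\mathsf{C}_0$ we have $\operatorname{Map}(LX,A)\simeq\operatorname{Map}_{\mathsf{C}}(X,\iota A)$.
\item The functors $\operatorname{Map}_{\mathsf{C}}(X,-)$ with $X\in\mathsf{C}^\kappa$ are jointly conservative on $\mathsf{C}$, and $\iota$ is conservative (being fully faithful). Hence $\mathsf{E}$ detects equivalences.
\end{enumerate}
Proposition~\ref{Mathews lemma} then gives $\mathsf{Ind}_\kappa(\mathsf{E})\simeq\mathsf{C}_0$, so $\mathsf{C}_0$ is $\kappa$-presentable and $L$ is a morphism in $\mathsf{Pr}^{\mathsf{L}}_\kappa$.

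\emph{Symmetric monoidal structure.} The internal Hom hypothesis says exactly that $L$ is compatible with the symmetric monoidal structure in the sense of \cite[Def.~2.2.1.6]{higheralgebra}. Concretely, if $f$ is an $L$-equivalence, then so is $f\otimes Z$ for every $Z$: for $A\in\mathsf{C}_0$ we have $\operatorname{Map}(f\otimes Z,A)\simeq\operatorname{Map}(f,\underline{\operatorname{Hom}}(Z,A))$, and the right-hand side is an equivalence because $\underline{\operatorname{Hom}}(Z,A)\in\mathsf{C}_0$. By \cite[Prop.~2.2.1.9]{higheralgebra}, $\mathsf{C}_0$ then acquires a symmetric monoidal structure with $X\otimes_0Y=L(\iota X\otimes\iota Y)$ for which $L$ is symmetric monoidal, and such a structure is unique. (Alternatively one can cite \cite[Lem.~7.5]{reflectionprinciple}.)

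\emph{The structure lies in $\mathsf{CAlg}(\mathsf{Pr}^{\mathsf{L}}_\kappa)$.} Three properties remain to be checked.
\begin{enumerate}
\item \emph{Colimits in each variable.} Since $\iota\circ L$ is colimit-compatible up to $L$, we have $\operatornamewithlimits{colim}(X_i)\otimes_0 Y\simeq L(\operatornamewithlimits{colim}\iota X_i\otimes\iota Y)$, and the right side commutes with colimits because $\otimes$ does in $\mathsf{C}$ and $L$ is a left adjoint. So $\otimes_0$ preserves small colimits separately in each variable.
\item \emph{Unit.} The unit $L(\mathbf{1})$ is $\kappa$-compact, since $\mathbf{1}\in\mathsf{C}^\kappa$ and $L$ preserves $\kappa$-compact objects.
\item \emph{Closure under $\otimes_0$.} Every object of $\mathsf{C}_0^\kappa$ is a retract of an object of $\mathsf{E}$, and $\otimes_0$ preserves $\kappa$-small colimits in each variable. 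It therefore suffices to check that $LX\otimes_0 LY\simeq L(X\otimes Y)$ is $\kappa$-compact for $X,Y\in\mathsf{C}^\kappa$. This holds because $X\otimes Y\in\mathsf{C}^\kappa$ and $L$ preserves $\kappa$-compact objects.
\end{enumerate}

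The main obstacle is bookkeeping rather than content: one has to make sure that the $\kappa$-compact objects of $\mathsf{C}_0$ are exactly the retracts of $\mathsf{E}$, and that the tensor closure argument reduces correctly to the generators $L(\mathsf{C}^\kappa)$. Both follow from \cite[Lem.~5.4.2.4]{lurie2009HTT} together with the $\kappa$-small colimit stability established above.
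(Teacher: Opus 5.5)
Your proposal is correct and follows the same outline as the paper's proof: the reflection theorem, the fact that $L$ preserves $\kappa$-compact objects because $\iota$ preserves $\kappa$-filtered colimits, generation of $\mathsf{C}_0$ by $L(\mathsf{C}^\kappa)$, \cite[Prop.~2.2.1.9]{higheralgebra} via the internal-Hom condition, and then the unit and tensor-closure checks. The deviations are minor: you obtain generation from Proposition~\ref{Mathews lemma} applied to the $\kappa$-small-colimit closure $\mathsf{E}$, whereas the paper applies $L$ to a $\kappa$-filtered presentation of $\iota A$, which shows directly that every $\kappa$-compact object of $\mathsf{C}_0$ is a retract of a single $L(X)$ and so gives tensor closure without your (valid) induction over $\kappa$-small colimits; and you derive colimit-preservation of $\otimes_0$ from the symmetric monoidality of $L$ (cleanly, $L(W)\otimes_0 Y\simeq L(W\otimes\iota Y)$ with $W=\operatornamewithlimits{colim}\iota X_i$), rather than from $\underline{\operatorname{Hom}}(\iota-,\iota-)$ being an internal Hom in $\mathsf{C}_0$.
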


\begin{proof}
    By the reflection theorem \cite[Thm.~1.1]{reflectionprinciple}, the category $\mathsf{C}_0$ is presentable and the inclusion $i\colon \mathsf{C}_0\hookrightarrow\mathsf{C}$ admits a left adjoint $L$. Since $i$ preserves $\kappa$-filtered colimits, the adjunction equivalence
\[\operatorname{Map}_{\mathsf{C}_0}(L(X),Y)\simeq\operatorname{Map}_{\mathsf{C}}(X,i(Y))\]
implies that $L(X)$ is $\kappa$-compact for every $\kappa$-compact object $X\in\mathsf{C}$.

For every $A\in\mathsf{C}_0$, choose a presentation $i(A)\simeq\operatornamewithlimits{colim}_{j\in\mathsf{J}}X_j$ as a small $\kappa$-filtered colimit of objects $X_j\in\mathsf{C}^\kappa$. Applying $L$, we obtain
\[A\simeq L(i(A))\simeq\operatornamewithlimits{colim}_{j\in\mathsf{J}}L(X_j).\]
Since $\mathsf{C}^\kappa$ is essentially small and each $L(X_j)$ is $\kappa$-compact in $\mathsf{C}_0$, the objects $L(X)$, for $X\in\mathsf{C}^\kappa$, form an essentially small collection of $\kappa$-compact objects generating $\mathsf{C}_0$ under $\kappa$-filtered colimits. Hence $\mathsf{C}_0$ is $\kappa$-presentable, and $L$ is a morphism in $\mathsf{Pr}^{\mathsf{L}}_\kappa$. Moreover, if $A$ is $\kappa$-compact, its identity factors through one of the objects $L(X_j)$ in the preceding presentation. Hence every $\kappa$-compact object of $\mathsf{C}_0$ is a retract of $L(X)$ for some $X\in\mathsf{C}^\kappa$; see also \cite[Cor.~5.5.7.3]{lurie2009HTT}.

Now assume the additional hypotheses. By \cite[Lem.~7.5]{reflectionprinciple}, the condition on internal Homs is equivalent to $L$ being \emph{compatible with the symmetric monoidal structure} in the sense of \cite[Def.~2.2.1.6]{higheralgebra}. It therefore follows from \cite[Prop.~2.2.1.9]{higheralgebra} that $\mathsf{C}_0$ inherits a symmetric monoidal structure for which $L$ is symmetric monoidal. Writing $\otimes_0$ for the resulting tensor product, we have
\[A\otimes_0 B\simeq L(i(A)\otimes i(B)), \qquad 1_{\mathsf{C}_0}\simeq L(1_{\mathsf{C}}).\]
The condition on internal Homs then implies that $\underline{\operatorname{Hom}}(i(-),i(-))$ is an internal Hom in $\mathsf{C}_0$. Thus $\otimes_0$ preserves small colimits separately in each variable.

Finally, the unit $L(1_{\mathsf{C}})$ is $\kappa$-compact. If $A,B\in\mathsf{C}_0^\kappa$, choose $X,Y\in\mathsf{C}^\kappa$ such that $A$ and $B$ are retracts of $L(X)$ and $L(Y)$, respectively. Then $A\otimes_0 B$ is a retract of
\[L(X)\otimes_0 L(Y)\simeq L(X\otimes Y),\]
which is $\kappa$-compact since $X\otimes Y$ is $\kappa$-compact. It follows that $\mathsf{C}_0$ is a commutative algebra in $\mathsf{Pr}^{\mathsf{L}}_\kappa$. Since $L$ is symmetric monoidal and preserves $\kappa$-compact objects, it is a morphism in $\mathsf{CAlg}(\mathsf{Pr}^{\mathsf{L}}_\kappa)$, as desired.
\end{proof}

We now turn to commutative algebra objects. The following is a commutative-algebra version of standard monadic compactness results. The ordinary-categorical argument appears in \cite[Thm.~2.18]{low2016heart}. For the higher-categorical free-algebra and bar-resolution arguments, compare \cite[Lems.~6.15--6.16]{chromaticnullstellensatz}; for the stable associative analogue, compare \cite[Prop.~3.2(2)]{efimovKtheory}. We include a proof in the generality needed here.

\begin{proposition}\label{forgetful functor preserves compacts}
Let $\kappa\geq \aleph_1$ be a regular cardinal and let $\mathsf{C}$ be a commutative algebra in $\mathsf{Pr}^\mathsf{L}_\kappa$. Then the forgetful functor
\[U\colon \mathsf{CAlg}(\mathsf{C})\to \mathsf{C}\]
preserves and reflects $\kappa$-compact objects.
\end{proposition}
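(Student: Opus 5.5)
The plan is to use the free--forgetful adjunction $\operatorname{Free}\dashv U$, where $\operatorname{Free}(X)\simeq\coprod_{n\geq0}(X^{\otimes n})_{h\Sigma_n}$. The two directions are handled separately.

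\emph{$U$ preserves $\kappa$-compact objects.} First I check that $\operatorname{Free}$ preserves $\kappa$-compact objects. This is immediate from adjunction, since $U$ preserves $\kappa$-filtered colimits \cite[Cor.~3.2.3.2]{higheralgebra}. So for $X\in\mathsf{C}^\kappa$ the algebra $\operatorname{Free}(X)$ is $\kappa$-compact in $\mathsf{CAlg}(\mathsf{C})$.

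Next I show that $U\operatorname{Free}(X)$ is $\kappa$-compact in $\mathsf{C}$. Because $\kappa$-compact objects of $\mathsf{C}$ are closed under tensor products, each $X^{\otimes n}$ is $\kappa$-compact. The homotopy orbits $(X^{\otimes n})_{h\Sigma_n}$ are colimits over $B\Sigma_n$, which is a $\kappa$-small anima. Hence they are $\kappa$-small colimits and remain $\kappa$-compact. The countable coproduct over $n$ is $\kappa$-small precisely because $\kappa\geq\aleph_1$; this is where the hypothesis on $\kappa$ enters.

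By the proof of Lemma~\ref{commutative algebras in the functor category}, the $\kappa$-compact objects of $\mathsf{CAlg}(\mathsf{C})$ are exactly the retracts of $\kappa$-small colimits of free algebras on $\kappa$-compact objects. The key difficulty is that $U$ does not preserve $\kappa$-small colimits in general; for example, coproducts become tensor products and pushouts become relative tensor products. I would therefore show directly that the full subcategory $\mathsf{E}\subseteq\mathsf{CAlg}(\mathsf{C})$ of algebras $A$ with $U(A)\in\mathsf{C}^\kappa$ is closed under $\kappa$-small colimits and retracts.

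Retracts are clear. For $\kappa$-small colimits, I reduce to three cases: the initial object, pushouts, and $\kappa$-small filtered colimits. These suffice by \cite[Props.~4.4.2.7]{lurie2009HTT} together with the decomposition of $\kappa$-small colimits into finite ones and $\kappa$-small filtered ones.
\begin{itemize}
    \item The initial object is handled by the $\kappa$-compactness of the unit.
    \item $\kappa$-small filtered colimits, and more generally sifted colimits, are computed in $\mathsf{C}$ \cite[Cor.~3.2.3.2]{higheralgebra}.
    \item For a pushout $B\otimes_A C$, I use the bar construction: $U(B\otimes_A C)\simeq|B\otimes A^{\otimes\bullet}\otimes C|$. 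Each term is $\kappa$-compact by closure under tensor products. The geometric realization is a colimit over $\Delta^{\mathrm{op}}$, which is countable and hence $\kappa$-small since $\kappa\geq\aleph_1$.
\end{itemize}
The main obstacle is identifying the relative tensor product with this bar construction in a way that is valid in the generality of $\mathsf{CAlg}(\mathsf{Pr}^{\mathsf{L}}_\kappa)$. I would invoke \cite[Thm.~4.4.2.8 and Prop.~3.2.4.7]{higheralgebra}, which computes pushouts of commutative algebras as relative tensor products of modules.

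\emph{$U$ reflects $\kappa$-compact objects.} Suppose $U(A)$ is $\kappa$-compact. I write $A$ via the monadic bar resolution as $A\simeq|\operatorname{Free}^{\bullet+1}(UA)|$ in $\mathsf{CAlg}(\mathsf{C})$. Every term in this resolution is free on a $\kappa$-compact object, since $U\operatorname{Free}$ preserves $\mathsf{C}^\kappa$ by the first step. Each term is therefore $\kappa$-compact in $\mathsf{CAlg}(\mathsf{C})$, and $A$ is a countable, hence $\kappa$-small, colimit of $\kappa$-compact objects. It follows that $A$ is $\kappa$-compact by \cite[Tag~\href{https://kerodon.net/tag/064X}{064X}]{kerodon}.
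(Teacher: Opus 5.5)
Your proposal is correct. The reflection half is exactly the paper's argument: the monadic bar resolution of $A$ by free algebras on the $\kappa$-compact objects $T^{n}(U(A))$, where $T=U\circ\operatorname{Free}$. So is the overall shape of the preservation half: you show that the subcategory $\mathsf{E}$ of algebras with $\kappa$-compact underlying object contains the free algebras on $\mathsf{C}^\kappa$ and is closed under $\kappa$-small colimits and retracts.

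The genuine difference is in how you prove closure under $\kappa$-small colimits. You decompose them into the initial object, pushouts and $\kappa$-small filtered colimits, and handle pushouts through $B\sqcup_A C\simeq B\otimes_A C$ and the two-sided bar construction. The paper never decomposes. It replaces each $D_i$ in a $\kappa$-small diagram by its monadic bar resolution and takes the colimit over $i$ termwise. Because $\operatorname{Free}$ preserves colimits, the $n$-th term is $\operatorname{Free}(X_n)$ with $X_n=\operatorname{colim}_i T^n(U(D_i))\in\mathsf{C}^\kappa$. Then $U$ of the colimit is the geometric realization in $\mathsf{C}$ of the $\kappa$-compact objects $T(X_n)$, since $U$ preserves sifted colimits.

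The paper's route is purely monadic. It works for any monadic adjunction whose right adjoint preserves sifted colimits and whose monad preserves $\kappa$-compact objects. It therefore needs none of the input you flag as the main obstacle: the equivalence $\mathsf{CAlg}(\mathsf{C})_{A/}\simeq\mathsf{CAlg}(\mathsf{Mod}_A(\mathsf{C}))$, the bar model for relative tensor products, and the reduction of $\kappa$-small colimits to your three shapes. Your route is more explicit and applies the closure of $\mathsf{C}^\kappa$ under tensor products directly to pushouts, at the price of those extra citations.

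One small point: the proof of Lemma~\ref{commutative algebras in the functor category} only says that free algebras on $\kappa$-compact objects form a set of $\kappa$-compact generators. The fact you actually use is that $\mathsf{CAlg}(\mathsf{C})^\kappa$ lies in the closure of these free algebras under $\kappa$-small colimits and retracts. This follows from Proposition~\ref{Mathews lemma} together with \cite[Lem.~5.4.2.4]{lurie2009HTT}, which is essentially the detection-of-equivalences step in the paper's proof. It is harmless for your argument, since you prove that $\mathsf{E}$ is closed under both operations.
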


\begin{proof}
Let $F\dashv U$ denote the free–forgetful adjunction, and put $T\colonequals U\circ F$. We first observe that both $F$ and $T$ preserve $\kappa$-compact objects. For $F$, this follows from the fact that its right adjoint $U$ preserves sifted colimits \cite[Cor.~3.2.3.2]{higheralgebra}. As for $T$, we use the free-algebra formula
\[T(X)\simeq \coprod_{n\geq 0}\operatorname{Sym}^n(X)\simeq \coprod_{n\geq 0}(X^{\otimes n})_{S_n}\]
of \cite[Ex.~3.1.3.14]{higheralgebra}. If $X$ is $\kappa$-compact, then, since $\mathsf{C}$ is an object of $\mathsf{CAlg}(\mathsf{Pr}^\mathsf{L}_\kappa)$, each tensor power $X^{\otimes n}$ is $\kappa$-compact, including $X^{\otimes 0}=1$. Moreover, each classifying anima $\mathsf{B}S_n$ is $\aleph_1$-small; hence \cite[Tag~\href{https://kerodon.net/tag/064X}{064X}]{kerodon} first gives that each orbit $(X^{\otimes n})_{S_n}$ is $\kappa$-compact and then, since the coproduct is countable, that $T(X)$ is $\kappa$-compact.

Now let $A$ be a commutative algebra in $\mathsf{C}$ such that $U(A)$ is $\kappa$-compact. The standard augmented bar resolution has terms $B_n(A)\simeq F(T^n(U(A)))$, and its augmentation induces an equivalence
\[\operatornamewithlimits{colim}_{[n]\in \Delta^\mathrm{op}}B_n(A)\xrightarrow{\ \sim\ }A;\]
see \cite[Prop.~4.7.3.14 and Lem.~3.2.2.6]{higheralgebra}. By the preceding paragraph, every $B_n(A)$ is $\kappa$-compact, so another application of \cite[Tag~\href{https://kerodon.net/tag/064X}{064X}]{kerodon} shows that $A$ is $\kappa$-compact as well.

For the converse, let $\mathsf{K}\subseteq \mathsf{CAlg}(\mathsf{C})$ be the full subcategory spanned by the algebras whose underlying objects are $\kappa$-compact. We have just shown that
\[\mathsf{K}\subseteq \mathsf{CAlg}(\mathsf{C})^\kappa.\]
In order to prove equality, we first claim that $\mathsf{K}$ is stable under $\kappa$-small colimits. Let $D\colon \mathsf{I}\to \mathsf{K}\subseteq \mathsf{CAlg}(\mathsf{C})$ be a $\kappa$-small diagram. Applying the bar resolution objectwise and taking the colimit over $\mathsf{I}$ gives a simplicial commutative algebra
\[S_\ast\colonequals \operatornamewithlimits{colim}_{i\in \mathsf{I}} B_\ast(D_i)\in \mathsf{Fun}(\Delta^\mathrm{op},\mathsf{CAlg}(\mathsf{C})).\]

By the commutation of colimits with colimits, the geometric realization of $S_\ast$ is $\mleft|S_\ast\mright|\simeq \operatornamewithlimits{colim}_{i\in \mathsf{I}}D_i$. Moreover, since $F$ preserves small colimits, we have that
\[S_n\simeq \operatornamewithlimits{colim}_{i\in \mathsf{I}} F(T^n(U(D_i)))\simeq F(X_n), \quad \text{for}\quad X_n\colonequals \operatornamewithlimits{colim}_{i\in \mathsf{I}}T^n(U(D_i)).\]
The objects $U(D_i)$ are $\kappa$-compact, and $T$ preserves $\kappa$-compact objects. Since $\kappa$-compact objects are stable under $\kappa$-small colimits, each $X_n$ is therefore $\kappa$-compact. Hence so is
\[U(S_n)\simeq T(X_n).\]

The forgetful functor $U$ preserves sifted colimits, and in particular geometric realizations. It follows that
\[U\mleft(\operatornamewithlimits{colim}_{i\in \mathsf{I}}D_i\mright)\simeq U\mleft(\operatornamewithlimits{colim}_{[n]\in \Delta^\mathrm{op}}S_n\mright)\simeq \operatornamewithlimits{colim}_{[n]\in \Delta^\mathrm{op}} U(S_n).\]
Once again, this is a $\kappa$-small colimit of $\kappa$-compact objects, and is thus $\kappa$-compact. This establishes the claim.

Next, using the nomenclature of Proposition~\ref{Mathews lemma}, we claim that $\mathsf{K}$ detects equivalences in $\mathsf{CAlg}(\mathsf{C})$. Let $\varphi\colon A\to B$ be a morphism in $\mathsf{CAlg}(\mathsf{C})$ such that
\[\operatorname{Map}_{\mathsf{CAlg}(\mathsf{C})}(C,A)\xrightarrow{\ \varphi\circ -\ } \operatorname{Map}_{\mathsf{CAlg}(\mathsf{C})}(C,B)\]
is an equivalence for every $C\in \mathsf{K}$. For each $X\in \mathsf{C}^\kappa$, the free algebra $F(X)$ belongs to $\mathsf{K}$, as $U(F(X))=T(X)$ is $\kappa$-compact. The adjunction $F\dashv U$ therefore shows that
\[\operatorname{Map}_\mathsf{C}(X,U(A))\xrightarrow{\ U(\varphi)\circ -\ } \operatorname{Map}_\mathsf{C}(X,U(B))\]
is an equivalence for all $X\in \mathsf{C}^\kappa$.

Since $\mathsf{C}^\kappa$ detects equivalences in $\mathsf{C}$, the map $U(\varphi)$ is an equivalence, and hence so is $\varphi$, because $U$ is conservative. Proposition~\ref{Mathews lemma} now gives an equivalence
\[\mathsf{Ind}_\kappa(\mathsf{K})\xrightarrow{\ \sim \ } \mathsf{CAlg}(\mathsf{C}).\]
Finally, $\mathsf{K}$ is stable under retracts: indeed, $U$ preserves retracts and $\kappa$-compact objects of $\mathsf{C}$ are stable under retracts \cite[Tag~\href{https://kerodon.net/tag/064Z}{064Z}]{kerodon}. Consequently, \cite[Lem.~5.4.2.4]{lurie2009HTT} identifies the $\kappa$-compact objects of $\mathsf{Ind}_\kappa(\mathsf{K})$ with the objects of $\mathsf{K}$. Under the preceding equivalence, this gives
\[\mathsf{CAlg}(\mathsf{C})^\kappa=\mathsf{K},\]
which proves that $U$ preserves $\kappa$-compact objects.
\end{proof}

We conclude with a compactness result for localizations of presheaf categories, viewed as objects of $\mathsf{Pr}^{\mathsf{L}}_\kappa$. This result will be used to prove Proposition~\ref{CondAb is mu-compact}.

\begin{proposition}\label{compactness of local objects}
Let $\mu\geq \kappa$ be uncountable regular cardinals, let $\mathsf{C}$ be a $\mu$-small category, and let $W$ be a $\mu$-small set of morphisms between $\kappa$-compact objects of
\[\mathsf{P}(\mathsf{C})=\mathsf{Fun}(\mathsf{C}^{\mathrm{op}},\mathsf{Ani}).\]
Then the full subcategory of $\mathsf{P}(\mathsf{C})$ spanned by the $W$-local objects is a $\mu$-compact object of $\mathsf{Pr}^\mathsf{L}_\kappa$.
\end{proposition}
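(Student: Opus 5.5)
Write $\mathsf{P}_W\subseteq\mathsf{P}(\mathsf{C})$ for the full subcategory of $W$-local objects. The plan is to exhibit $\mathsf{P}_W$ as a $\mu$-small colimit in $\mathsf{Pr}^\mathsf{L}_\kappa$ of objects that are already known to be $\mu$-compact, and then use that $\mu$-compact objects are closed under $\mu$-small colimits (\cite[Tag~\href{https://kerodon.net/tag/064X}{064X}]{kerodon}). The starting point is the universal property of localization: a colimit-preserving functor $\mathsf{P}(\mathsf{C})\to\mathsf{D}$ factors through $\mathsf{P}_W$ exactly when it inverts the maps in $W$. We need this inside $\mathsf{Pr}^\mathsf{L}_\kappa$. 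The localization $L\colon\mathsf{P}(\mathsf{C})\to\mathsf{P}_W$ preserves $\kappa$-compact objects, because $\mathsf{P}_W$ is closed under $\kappa$-filtered colimits; this holds since the sources and targets of the maps in $W$ are $\kappa$-compact. So $L$ is a morphism in $\mathsf{Pr}^\mathsf{L}_\kappa$, and by Proposition~\ref{enhanced reflection theorem} the category $\mathsf{P}_W$ is $\kappa$-presentable.

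Each $w\colon A\to B$ in $W$ is a morphism between $\kappa$-compact objects. By the claim in the proof of Lemma~\ref{Q is nice}, it therefore corresponds to a morphism $\mathsf{P}(\Delta^1)\to\mathsf{P}(\mathsf{C})$ in $\mathsf{Pr}^\mathsf{L}_\kappa$. Let $r\colon\mathsf{P}(\Delta^1)\to\mathsf{Ani}$ be the localization inverting the nonidentity arrow; it is induced by $\Delta^1\to\ast$ and lies in $\mathsf{Pr}^\mathsf{L}_\kappa$. I would then show that
\[\mathsf{P}_W\simeq\mathsf{P}(\mathsf{C})\sqcup_{\coprod_{w\in W}\mathsf{P}(\Delta^1)}\coprod_{w\in W}\mathsf{Ani}\]
in $\mathsf{Pr}^\mathsf{L}_\kappa$. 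To check this, map into an arbitrary $\mathsf{D}\in\mathsf{Pr}^\mathsf{L}_\kappa$. A morphism out of the pushout is a morphism $\mathsf{P}(\mathsf{C})\to\mathsf{D}$ together with, for each $w$, a factorization of the induced arrow through $\mathsf{Ani}$. Since $r$ is a localization, such a factorization is a property and not extra structure, namely that the image of $w$ is an equivalence. This is exactly the universal property of $L$ restricted to $\mathsf{Pr}^\mathsf{L}_\kappa$. The identification uses that $\mathsf{Fun}^{\mathrm{L},\kappa}(\mathsf{P}_W,\mathsf{D})\to\mathsf{Fun}^{\mathrm{L},\kappa}(\mathsf{P}(\mathsf{C}),\mathsf{D})$ is fully faithful with image the $W$-inverting functors, and that this image automatically preserves $\kappa$-compacts once the composite with $L$ does. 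The latter holds because every $\kappa$-compact object of $\mathsf{P}_W$ is a retract of $L(X)$ with $X$ $\kappa$-compact, again by Proposition~\ref{enhanced reflection theorem}.

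With the pushout in hand, it remains to check compactness. The categories $\mathsf{Ani}$ and $\mathsf{P}(\Delta^1)$ are $\kappa$-compact in $\mathsf{Pr}^\mathsf{L}_\kappa$: this is the computation in Lemma~\ref{Q is nice} with $K=\ast$ and $K=\Delta^1$. Hence they are $\mu$-compact, since $\mu\geq\kappa$. The coproducts are indexed by the $\mu$-small set $W$, so they are $\mu$-compact. For $\mathsf{P}(\mathsf{C})$, the same argument as in Lemma~\ref{Q is nice} gives
\[\operatorname{Map}_{\mathsf{Pr}^\mathsf{L}_\kappa}(\mathsf{P}(\mathsf{C}),\mathsf{D})\simeq\operatorname{Map}_{\mathsf{Cat}}(\mathsf{C},\mathsf{D}^\kappa),\]
because representables are compact. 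The functor $\mathsf{D}\mapsto\mathsf{D}^\kappa$ preserves $\kappa$-filtered, hence $\mu$-filtered, colimits, and $\mathsf{C}$ is $\mu$-compact in $\mathsf{Cat}$ as a $\mu$-small category (\cite[Tag~\href{https://kerodon.net/tag/068N}{068N}]{kerodon}). So $\mathsf{P}(\mathsf{C})$ is $\mu$-compact. A pushout of $\mu$-compact objects is $\mu$-compact, which concludes the argument.

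The step I expect to be the main obstacle is the pushout identification, specifically making sure the universal property holds in $\mathsf{Pr}^\mathsf{L}_\kappa$ and not merely in $\mathsf{Pr}^\mathsf{L}$, that is, the preservation of $\kappa$-compact objects. A second point is why $r$ gives property rather than structure. For that I would use that $\mathsf{Ani}$ is the localization of $\mathsf{P}(\Delta^1)$ at the arrow, so that $r^*$ is fully faithful on mapping animae.
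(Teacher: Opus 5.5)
Your proof is correct and is essentially the paper's. It uses the same pushout square in $\mathsf{Pr}^\mathsf{L}_\kappa$: your $\mathsf{P}(\Delta^1)$ is $\mathsf{Fun}(\Delta^1,\mathsf{Ani})$, and $r$ is the left adjoint of $\mathsf{Eq}(\mathsf{Ani})\hookrightarrow\mathsf{Fun}(\Delta^1,\mathsf{Ani})$. It then does the same $\mu$-compactness bookkeeping, via the argument of Lemma~\ref{Q is nice} and closure of $\mu$-compact objects under $\mu$-small colimits.

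The only difference is how the pushout is established. The paper obtains it dually, writing the $W$-local objects as a pullback in $\mathsf{Pr}^\mathsf{R}_\kappa$ (computed in $\widehat{\mathsf{Cat}}$) of the functors $Z\mapsto\bigl(\operatorname{Map}(Y,Z)\to\operatorname{Map}(X,Z)\bigr)$ along $\mathsf{Eq}(\mathsf{Ani})$. There, your retract argument showing that factored functors preserve $\kappa$-compact objects is replaced by the observation that these functors preserve $\kappa$-filtered colimits.
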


\begin{proof}
Let $\mathsf{Eq}(\mathsf{Ani})\subseteq \mathsf{Fun}(\Delta^1,\mathsf{Ani})$ be the full subcategory spanned by the equivalences. For each morphism $w\colon X\to Y$ in $W$, consider the functor $T_w\colon \mathsf{P}(\mathsf{C})\to \mathsf{Fun}(\Delta^1,\mathsf{Ani})$ sending $Z\in\mathsf{P}(\mathsf{C})$ to the morphism
\[\operatorname{Map}_{\mathsf{P}(\mathsf{C})}(Y,Z)\to \operatorname{Map}_{\mathsf{P}(\mathsf{C})}(X,Z).\]
The full subcategory $\mathsf{P}_0\subseteq\mathsf{P}(\mathsf{C})$ defined by the pullback
\[\begin{tikzcd}[ampersand replacement=\&]
	{\mathsf{P}_0} \& {\displaystyle\prod_{w\in W}\mathsf{Eq}(\mathsf{Ani})} \\
	{\mathsf{P}(\mathsf{C})} \& {\displaystyle\prod_{w\in W}\mathsf{Fun}(\Delta^1,\mathsf{Ani})}
	\arrow[from=1-1, to=1-2]
	\arrow[hook, from=1-1, to=2-1]
	\arrow[hook, from=1-2, to=2-2]
	\arrow[from=2-1, to=2-2]
\end{tikzcd}\]
is therefore precisely the full subcategory spanned by the $W$-local objects.

Since the maps in $W$ are between $\kappa$-compact objects, the functors $T_w$ lie in $\mathsf{Pr}^\mathsf{R}_\kappa$, and so the preceding square is a pullback in $\mathsf{Pr}^\mathsf{R}_\kappa$. Indeed, $\mathsf{Pr}^\mathsf{R}_\kappa$ admits small limits and the inclusion
\[\mathsf{Pr}^\mathsf{R}_\kappa\hookrightarrow\widehat{\mathsf{Cat}}\]
preserves them \cite[Prop.~5.5.7.6]{lurie2009HTT}. Passing to left adjoints therefore yields a pushout square
\[\begin{tikzcd}[ampersand replacement=\&]
	{\displaystyle\coprod_{w\in W}\mathsf{Fun}(\Delta^1,\mathsf{Ani})} \& {\mathsf{P}(\mathsf{C})} \\
	{\displaystyle\coprod_{w\in W}\mathsf{Eq}(\mathsf{Ani})} \& {\mathsf{P}_0}
	\arrow[from=1-1, to=1-2]
	\arrow[from=1-1, to=2-1]
	\arrow[from=1-2, to=2-2]
	\arrow[from=2-1, to=2-2]
\end{tikzcd}\]
in $\mathsf{Pr}^\mathsf{L}_\kappa$.

The same argument as in the proof of Lemma~\ref{Q is nice} shows that $\mathsf{P}(\mathsf{C})$ is a $\mu$-compact object of $\mathsf{Pr}^\mathsf{L}_\kappa$. It also shows that $\mathsf{Fun}(\Delta^1,\mathsf{Ani})$ is $\mu$-compact, by taking $\mathsf{C}=(\Delta^1)^{\mathrm{op}}$, while $\mathsf{Eq}(\mathsf{Ani})\simeq\mathsf{Ani}$ is $\mu$-compact by taking $\mathsf{C}=\ast$. Since $W$ is $\mu$-small, \cite[Tag~\href{https://kerodon.net/tag/064X}{064X}]{kerodon} implies that all vertices of the pushout square are $\mu$-compact, as desired.
\end{proof}

\printbibliography

@unpublished{lurie2018spectral,
  shorthand={SAG},
  title={Spectral algebraic geometry},
  author={Lurie, Jacob},
  note={Available on \href{https://www.math.ias.edu/~lurie/}{the author's website}},
  year={2018}
}

@unpublished{higheralgebra,
  shorthand={HA},
  title={Higher Algebra},
  author={Lurie, Jacob},
  note={Available on \href{https://www.math.ias.edu/~lurie/}{the author's website}},
  year={2017}
}

@misc{haine2025fullyfaithfulfunctorspushouts,
      title={Fully faithful functors and pushouts of $\infty$-categories}, 
      author={Peter J. Haine and Maxime Ramzi and Jan Steinebrunner},
      year={2025},
      eprint={2503.03916},
      archivePrefix={arXiv},
      primaryClass={math.CT},
}

@misc{aoki2025higherpresentablecategorieslimits,
      title={Higher presentable categories and limits}, 
      author={Ko Aoki},
      year={2025},
      eprint={2510.13503},
      archivePrefix={arXiv},
      primaryClass={math.CT}
}

@misc{kerodon,
  shorthand = {Ker},
  author       = {Jacob Lurie},
  title        = {Kerodon},
  howpublished = {Available at \url{https://kerodon.net}},
  year         = {2025},
}

@unpublished{heyer20246functorformalismssmoothrepresentations,
      title={6-Functor Formalisms and Smooth Representations}, 
      author={Claudius Heyer and Lucas Mann},
      year={2024},
      eprint={2410.13038},
      archivePrefix={arXiv},
      primaryClass={math.CT},
}

@unpublished{camargo2026notessolidgeometry,
      title={Notes on Solid Geometry}, 
      author={Rodríguez Camargo, Juan Esteban},
      year={2026},
      eprint={2603.03012},
      archivePrefix={arXiv},
      primaryClass={math.AG},
}

@unpublished{standardtstructures,
      title={Standard $t$-structures}, 
      author={Peter J. Haine and Mauro Porta and Jean-Baptiste Teyssier},
      year={2025},
      eprint={2504.07473},
      archivePrefix={arXiv},
      primaryClass={math.CT},
}

@online{ClausenScholzeAnalyticStacks,
  author       = {Clausen, Dustin and Scholze, Peter},
  title        = {Analytic Stacks},
  note={Available at \href{https://www.youtube.com/playlist?list=PLx5f8IelFRgGmu6gmL-Kf_Rl_6Mm7juZO}{the IHES YouTube channel}},
  date = {2023}
}

@book{lurie2009HTT,
  shorthand={HTT},
  title={\href{https://www.math.ias.edu/~lurie/papers/HTT.pdf}{Higher Topos Theory}},
  author={Lurie, Jacob},
  series={Annals of Mathematics Studies},
  year={2009},
  volume={170},
  publisher={Princeton University Press},
  %ISBN={978-0-691-14048-3}
}

@book{borceux1994HCA2,
  title={\href{https://doi.org/10.1017/CBO9780511525865}{Handbook of Categorical Algebra 2: Categories and Structures}},
  author={Borceux, Francis},
  series={Encyclopedia of Mathematics and its Applications},
  year={1994},
  volume={51},
  publisher={Cambridge University Press},
  %ISBN={978-0-521-44179-7}
}

@article{chromaticnullstellensatz,
    author = {Burklund, Robert and Schlank, Tomer M. and Yuan, Allen},
     TITLE = {\href{https://annals.math.princeton.edu/articles/22772}{The chromatic Nullstellensatz}},
   JOURNAL = {Ann. of Math.},
  FJOURNAL = {Annals of Mathematics},
  year = {2022},
      NOTE = {To appear. \href{https://arxiv.org/abs/2207.09929}{arXiv:2207.09929}},
}

@article{efimovKtheory,
    author = {Efimov, Alexander I.},
     TITLE = {\href{https://arxiv.org/abs/2405.12169}{$K$-theory and localizing invariants of large categories}},
   JOURNAL = {Publ. Math. Inst. Hautes \'Etudes Sci.},
  FJOURNAL = {Publications math\'ematiques de l'IH\'ES},
  year = {2024},
      NOTE = {To appear. \href{https://arxiv.org/abs/2405.12169}{arXiv:2405.12169}},
}

@article{reflectionprinciple,
    AUTHOR = {Ragimov, Shaul and Schlank, Tomer},
     TITLE = {\href{https://doi.org/10.1112/topo.70060}{The $\infty$-categorical reflection theorem and applications}},
   JOURNAL = {J.~Topol},
      YEAR = {2026},
      volume = {19},
    NUMBER = {1},
}

@article{basechange,
 author = {Haine, Peter J.},
 title = {\href{https://doi.org/10.1016/j.jpaa.2025.107993}{From nonabelian basechange to basechange with coefficients}},
 journal = {J. Pure Appl. Algebra},
 volume = {229},
 number = {7},
 pages = {30},
 year = {2025}
}

@article{aaron,
    author = {Mazel-Gee, Aaron},
    title = {\href{https://doi.org/10.2140/agt.2019.19.3217}{The universality of the Rezk nerve}},
journal = {Algebr. Geom. Topol.},
volume = {19},
number = {7},
pages = {3217--3260},
year = {2019}
}

@article{mathew2016galois,
  title={\href{https://doi.org/10.1016/j.aim.2015.12.017}{The Galois group of a stable homotopy theory}},
  author={Mathew, Akhil},
  journal={Adv. Math.},
  volume={291},
  pages={403--541},
  year={2016}
}

@article{AokiTTG,
   title={\href{http://dx.doi.org/10.1007/s00209-023-03210-z}{Tensor triangular geometry of filtered objects and sheaves}},
   volume={303},
   number={3},
   journal={Math. Z.},
   author={Aoki, Ko},
   year={2023}
}

@article{low2016heart,
  author  = {Low, Zhen Lin},
  title   = {\href{https://doi.org/10.70930/tac/6vu6kq36}{The heart of a combinatorial model category}},
  journal = {Theory Appl. Categ.},
  volume  = {31},
  number  = {2},
  pages   = {31--62},
  year    = {2016},
}

@article{CartierCyclotomic,
 author = {Antieau, Benjamin and Nikolaus, Thomas},
 title = {\href{https://doi.org/10.1090/jams/951}{Cartier modules and cyclotomic spectra}},
 fjournal = {Journal of the American Mathematical Society},
 journal = {J. Am. Math. Soc.},
 volume = {34},
 number = {1},
 pages = {1--78},
 year = {2021}
}

@article{monads,
    author = {Simon Henry and Nicholas J. Meadows},
    title = {\href{https://dx.doi.org/10.21136/HS.2025.05}{Higher theories and monads}},
    journal = {High. Struct.},
    year = {2025},
    pages = {227--268},
    volume={9},
    number={1},
}

@incollection{Monk1989,
  author    = {Monk, James Donald},
  title     = {\href{https://euclid.colorado.edu/~monkd/monk48.pdf}{The Number of {B}oolean Algebras}},
  booktitle = {Handbook of Boolean Algebras},
  editor    = {Monk, James Donald and Bonnet, Robert},
  volume    = {2},
  pages     = {469--489},
  publisher = {North-Holland},
  address   = {Amsterdam},
  year      = {1989}
}

@misc{montagnani2025axiomaticapproachanalytic1affineness,
      title={An axiomatic approach to analytic $1$-affineness}, 
      author={Matteo Montagnani and Emanuele Pavia},
      year={2025},
      eprint={2509.04341},
      archivePrefix={arXiv},
      primaryClass={math.AG},
      %url={https://arxiv.org/abs/2509.04341}, 
}

@article{CLL,
    author = {Bastiaan Cnossen and Tobias Lenz and Sil Linskens},
    title = {\href{https://doi.org/10.1112/topo.70044}{Parametrized stability and the universal property of global spectra}},
    journal = {J. Topol.},
    year = {2025},
    volume = {18},
    number = {4},
}

@phdthesis{Kipp,
    author = {Kipp, Niklas},
    title = {\href{https://doi.org/10.5283/epub.78438}{A six-functor formalism for syntomic cohomology of $p$-adic formal schemes}},
    school = {Universit{\"a}t Regensburg},
    year = {2026}
}

\textsc{Wern Juin Gabriel Ong, University of Bonn, Bonn 53115, Germany}

\textit{Email address:} \href{mailto:wgabrielong@uni-bonn.de}{wgabrielong@uni-bonn.de}

\medskip

\textsc{Gabriel Ribeiro, Department of Mathematics, ETH Zurich, 8092 Zurich, Switzerland}

\textit{Email address:} \href{mailto:gabriel.ribeiro@math.ethz.ch}{gabriel.ribeiro@math.ethz.ch}

\end{document}